\newcommand{\bbE}{{\mathbb{E}}}
\newcommand{\bbN}{{\mathbb{N}}}
\newcommand{\bbR}{{\mathbb{R}}}
\newcommand{\bbC}{{\mathbb{C}}}
\newcommand{\bbZ}{{\mathbb{Z}}}
\newcommand{\bbQ}{{\mathbb{Q}}}
\newcommand{\bbT}{{\mathbb{T}}}
\newcommand{\mcA}{{\mathcal{A}}}
\newcommand{\mcB}{{\mathcal{B}}}
\newcommand{\mcE}{{\mathcal{E}}}
\newcommand{\mcF}{{\mathcal{F}}}
\newcommand{\mcH}{{\mathcal{H}}}
\newcommand{\mcI}{{\mathcal{I}}}
\newcommand{\mcK}{{\mathcal{K}}}
\newcommand{\mcL}{{\mathcal{L}}}
\newcommand{\mcM}{{\mathcal{M}}}
\newcommand{\mcP}{{\mathcal{P}}}
\newcommand{\mcR}{{\mathcal{R}}}
\newcommand{\mcS}{{\mathcal{S}}}
\newcommand{\mcT}{{\mathcal{T}}}
\newcommand{\mcV}{{\mathcal{V}}}
\newcommand{\mcW}{{\mathcal{W}}}
\newcommand{\mcZ}{{\mathcal{Z}}}
\newcommand{\mfp}{{\mathfrak{p}}}
\newcommand{\ba}{{\mathbf{a}}}
\newcommand{\bb}{{\mathbf{b}}}
\newcommand{\bc}{{\mathbf{c}}}
\newcommand{\be}{{\mathbf{e}}}
\newcommand{\bh}{{\mathbf{h}}}
\newcommand{\bi}{{\mathbf{i}}}
\newcommand{\bj}{{\mathbf{j}}}
\newcommand{\bk}{{\mathbf{k}}}
\newcommand{\bq}{{\mathbf{q}}}
\newcommand{\bu}{{\mathbf{u}}}
\newcommand{\bv}{{\mathbf{v}}}
\newcommand{\bw}{{\mathbf{w}}}
\newcommand{\bx}{\mathbf{x}}
\newcommand{\bz}{\mathbf{z}}
\newcommand{\bA}{{\mathbf{A}}}
\newcommand{\bB}{{\mathbf{B}}}
\newcommand{\bC}{{\mathbf{C}}}
\newcommand{\bD}{{\mathbf{D}}}
\newcommand{\bH}{{\mathbf{H}}}
\newcommand{\bI}{{\mathbf{I}}}
\newcommand{\bL}{{\mathbf{L}}}
\newcommand{\bM}{{\mathbf{M}}}
\newcommand{\bP}{{\mathbf{P}}}
\newcommand{\bQ}{{\mathbf{Q}}}
\newcommand{\bR}{{\mathbf{R}}}
\newcommand{\bS}{{\mathbf{S}}}
\newcommand{\bZero}{{\mathbf{0}}}
\newcommand{\bOne}{{\mathbf{1}}}
\newcommand{\IND}{{\mathbbm{1}}}
\newcommand{\supp}{{\text{supp}}}
\newcommand{\stpos}{\tbump{-1px}{{\ensuremath{\underline{\gg}}}}}
\newcommand{\Zd}{\ensuremath{\mathbb{Z}^d}}
 \newcommand{\adot}{{\cdot}}
\newcommand{\fatcup}{{\scalebox{1.7}{$\cup$}}}
\newcommand{\fatsqcup}{{\scalebox{1.7}{$\sqcup$}}}
\newcommand{\AZd}{{\mathcal{A}^{\mathbb{Z}^d}}}
\newcommand{\sigmaX}{{\sigma_{\text{max}}}}
\newcommand{\btheta}{\boldsymbol\theta}
\newcommand{\bomega}{\boldsymbol\omega}
\newcommand{\bdelta}{\boldsymbol\delta}
\newcommand{\bzeta}{\boldsymbol\zeta}
\newcommand{\bnu}{\boldsymbol\nu}
\newcommand{\qq}[1]{\ensuremath{\left\lfloor #1\right\rfloor}}
\newcommand{\qr}[1]{\ensuremath{\left\lceil #1 \right\rceil}}
\newtheorem{thm}{Theorem}[section]
\newtheorem{lem}[thm]{Lemma}
\newtheorem{prop}[thm]{Proposition}
\newtheorem{cor}[thm]{Corollary}
\newtheorem*{theorem*}{Theorem}
\theoremstyle{definition}
\newtheorem{defn}[thm]{Definition}
\newtheorem{exmp}[thm]{Example}
\newcommand{\snote}[1]{}
\newcommand{\bump}{{\hspace{0.05cm}}}
\newcommand{\tbump}[2]{\ensuremath{\hspace{#1} \text{ #2 } \hspace{#1}}}
\begin{document}
\large\textbf{\centerline{Spectral Theory of $\Zd$ Substitutions}}

\vspace{4px}
\centerline{Alan Bartlett}
\normalsize\centerline{University of Washington}

{\small
\begin{abstract}
	In this paper, we generalize and develop results of Queff\'elec allowing us to characterize the spectrum of an aperiodic $\Zd$ substitution.  Specifically, we describe the Fourier coefficients of mutually singular measures of pure type giving rise to the maximal spectral type of the translation operator on $L^2$, without any assumptions on primitivity or height, and show singularity for aperiodic bijective commutative $\Zd$ substitutions.  Moreover, we provide a simple algorithm to determine the spectrum of aperiodic $\bq$-substitutions, and use this to show singularity of Queff\'elec's noncommutative bijective substitution, as well as the Table tiling, answering an open question of Solomyak.  Finally, we show that every ergodic matrix of measures on a compact metric space can be diagonalized, which we use in the proof of the main result.\let\thefootnote\relax\footnote{
This research is part of a Ph.D. Thesis (under the direction of Boris Solomyak) at the University of Washington, was supported by NSF grants RTG-0838212 and DMS-1361424.}
\end{abstract}
}
\normalsize

Substitutions are of interest to us as models of aperiodic phenomena - for example, mathematical quasicrystals, see [\ref{TAO}].  Here, we consider higher dimensional analogues of substitutions of constant length, or substitutions on $\Zd$-indexed sequences which replace letters in a finite alphabet with a rectangular block of letters, and which we call $\bq$-substitutions.   As usual, we study the translation operator on the hull of a substitution, or the collection of all $\Zd$-indexed sequences whose local patterns can be produced by the substitution.  Our primary interest is the spectrum of the translation action, and our analysis is based on the work of Queff\'elec, which describes the spectrum of one-sided primitive and aperiodic constant length substitutions of trivial height.   Our formulation emphasizes the arithmetic properties of $\bq$-substitutions and the recognizability properties afforded by aperiodicity, describing the spectrum of such a substitution by identifying its discrete, singular continuous, and absolutely continuous spectral components as a sum of mutually singular measures of pure type\snote{\footnote{That is, these measures are either purely discrete, purely singular continuous, or purely absolutely continuous, with respect to Lebesgue measure on the $d$-Torus.}} with readily computable Fourier coefficients.  This is done using a representation consistent with the abelianization (which gives rise to the substitution matrix) and separates the analysis into two parts: the \textit{correlation measures}, which depend on how letters are arranged upon substitution or their \textit{configuration}, and the \textit{spectral hull}, a convex set which depends only on substituted letters and not their arrangement within the blocks.

An important notion in tiling theory is that of \textit{local rules} which force global (aperiodic) order. A result of Moz\'es [\ref{mozes}] asserts that self-similar tilings corresponding to $\Zd$ substitutions for $d>1$ can be obtained from local pattern matching rules (by increasing the size of the alphabet),  whereas this assertion is false for $d=1$, see [3] for details and more recent work in this direction.
Moreover, it is known that mathematical diffraction spectrum is included in the dynamical spectrum (that of the translation operator on $L^2$, which we study here) as a consequence of Dworkin's argument [\ref{dworkin}] (see also [\ref{LMS}], [\ref{BLV}] and references therein), and so $\bq$-substitutions are of interest as they allow us to describe dynamical and diffractive properties of higher dimensional mathematical quasicrystals.

Although our main result is the extension of Queff\'elec's work in [\ref{queffelec}] to aperiodic $\bq$-substitutions, our primary goal is an algorithm for determining their spectrum, and so we summarize the paper while discussing the main results in this context.  For us, this means characterization of the maximal spectral type $\sigmaX$ (see \S \ref{spectral theory section}) of the translation operator on $L^2$ in terms of its discrete, singular continuous, and absolutely continuous components.  Loosely speaking, the algorithm relies on a representation of a substitution $\mcS$ in matrix algebras, by associating \textit{instruction, substitution, and coincidence matrices} $\mcR_\bj, M_\mcS \in \bM_\mcA(\bbC)$ and $C_\mcS \in \bM_{\mcA^2}(\bbC)$ given by
$$\scalebox{0.9}{$\left(\mcR_\bj\right)_{\alpha, \beta} :=  \scalebox{0.9}{${\small \begin{cases} \,1 & \text{if } \alpha = \left(\mcS \beta\right)(\bj) \\ \, 0 & \text{if } \alpha \neq \left(\mcS \beta\right)(\bj) \end{cases}}$} \hspace{0.75in} M_\mcS = {\sum}_{\bj \in [\bZero,\bq)} \mcR_\bj \hspace{0.75in} C_\mcS := {\sum}_{\bj \in [\bZero,\bq)} \mcR_\bj \otimes \mcR_\bj$}$$
\noindent respectively, where $[\bZero,\bq) = \{ \bj \in \Zd \, : \, 0 \leq j_i < q_i \, \forall i \, \}$ and $\otimes$ is the Kronecker product, see \S \ref{configurations}.  

Due to results of Michel [\ref{michel}] extended by Cortez and Solomyak [\ref{cortez and solomyak}], we know that the invariant measures of the substitution subshift $X_\mcS$ correspond to the \textit{core of $M_\mcS$}, a convex subset of $\bbC^\mcA$ consisting of strictly positive linear combinations of the Perron vectors of $M_\mcS$.  Here, the extreme points of the core correspond to ergodic invariant measures supporting minimal components of the subshift, see \S \ref{invariant measures section}. Thus, fixing a positive linear combination $\bu \in \bbC^\mcA$ of the Perron vectors of the primitive components of $\mcS$  has the effect of fixing an invariant measure $\mu \in \mcM(X_\mcS,T)$ on the subshift supported by all of its ergodic invariant measures.

Our main result, theorem \ref{queffelec two} in \S \ref{queffelec section} establishes a similar representation for the spectrum of $\mcS$.   Specifically, it identifies the spectrum of $\mcS$ with the \textit{spectral hull} $\mcK$, a convex subset of $\bbC^{\mcA^2}$ consisting of \textit{strongly positive} left $Q$-eigenvectors of $C_\mcS$, a positivity condition relating to positive definiteness in $\bM_\mcA(\bbC)$, see \S \ref{spectral hull}.  The extreme points $\mcK^\ast$ correspond to measures ergodic for the $\bq$-shift on $\bbT^d \subset \bbC^d$ (conjugate to multiplication by $\bq$ on $\bbR^d / \Zd$), which gives us a decomposition of the spectrum into mutually singular measures of pure, and readily identifiable, type.

The correspondence here is explicitly provided by taking the vectors in the spectral hull to corresponding linear combinations of \textit{correlation measures} $\sigma_{\alpha \beta}$, or spectral measures for indicator functions of sequences with fixed value at the origin.  Using arithmetic properties of $\bq$-substitutions (proposition \ref{qsub}) and recognizability properties of aperiodic $\bq$-substitutions (Moss\'e's theorem \ref{aperiodicity implies recognizability}, extended by Solomyak), we can explicitly compute the Fourier coefficients of the correlation measures via a linear recursion (theorem \ref{fourier recursion}) in the instruction matrices.  By the main result, this allows us to compute the Fourier coefficients of mutually singular measures of pure type giving rise to the spectrum of $\mcS$ via convolution with a $\bq$-adic support measure on $\bbT^d$, the $d$-torus.

For an aperiodic $\bq$-substitution $\mcS$ on $\mcA$, compute the maximal spectral type $\sigmaX$ by finding
{\small
\begin{enumerate}
	\item the matrices $\mcR_\bj$ for $\bj \in [\bZero,\bq)$, $M_\mcS$ and $C_\mcS,$ and the primitive reduced forms (\ref{matrix PRF})  of $\mcS$ and $\mcS \otimes \mcS$
	
	\item a strictly positive linear combination $\bu \in \bbC^\mcA$ of the Perron vectors of $M_\mcS$
	
	\item the spectral hull, or the strongly positive $Q$-eigenspace of $C_\mcS^t$, and its extreme points $\mcK^\ast$
	
	\item the Fourier coefficients $\widehat{\sigma_{\alpha \beta}}(\bk)$, computed via theorem \ref{fourier recursion} for $\alpha \beta \in \mcA^2$ and $\bk \in \Zd.$
	
	\item the Fourier coefficients of the measures $\lambda_\bv = \sum_{\alpha \beta \in \mcA^2} v_{\alpha \beta} \sigma_{\alpha \beta}$ for $\bv \in \mcK^\ast \subset \bbC^{\mcA^2}$
\end{enumerate}
}
\noindent so that, by theorem \ref{queffelec two}, the maximal spectral type $\sigmaX$ of $\mcS$ is the sum of $\bomega_\bq \ast \lambda_\bv$ as $\bv$ ranges over the extreme points of the spectral hull, where $\bomega_\bq$ is a support measure for the $\bq$-adic roots of unity in $\bbT^d$ and $\ast$ is convolution of measures.  We work out the above algorithm for the Thue-Morse substitution as we develop our results, in addition to several detailed examples in \S \ref{examples}.

We organize the paper as follows: in \S \ref{sds section}, we discuss some preliminaries for the theory of $\Zd$-indexed substitution dynamical systems, including their invariant measures and spectrum.  In \S \ref{analysis of q sub} we develop arithmetic properties of $\bq$-substitutions which, along with an aperiodicity result of Moss\'e extended by Solomyak, allow us to prove a recursive identity related to invariant measures which arise when studying their spectral theory.  In \S \ref{spectral theory}, we separate the spectral characterization problem of an aperiodic $\bq$-substitution into a study of its correlation measures and spectral hull; see theorems \ref{queffelec one} and \ref{queffelec two}.  There, we use the measure recursion to explicitly compute Fourier coefficients of the correlation measures (theorem \ref{fourier recursion}), and describe an algorithm for characterizing the spectral hull (proposition \ref{characterization of K}) of a $\bq$-substitution.  These combine via theorem \ref{queffelec two} to give explicit formulae for the Fourier coefficients of measures of pure type which determine the spectrum of an aperiodic $\bq$-substitution.   Further, we show that aperiodic bijective commutative $\bq$-substitutions have purely singular (to Lebesgue) spectrum, generalizing a result of Baake and Grimm in [\ref{baake and grimm}].  Following this, we compute several examples in \S \ref{examples}, including the spectrum of the Table substitution tiling [\ref{robinson}], and correct an example of Queff\'elec which was mistakingly identified as having Lebesgue component in its spectrum (see example \ref{queffelec example}).  In \S \ref{appendix} we prove a diagonalization result for operator valued measures ergodic for a continuous transformation of a compact metric space, allowing us to prove theorem \ref{queffelec two}, whose proof is delayed as it is rather involved and not relevant to the algorithm or statement of the result.  Finally, at the end we include a brief notation and terminology reference for the benefit of the reader.

Before beginning, we make a remark on a particularly important notational convention.  We will be working extensively with the ring $\Zd,$ and wish to do so by interpreting all operations and relations on $\Zd$ coordinatewise.  We represent $\Zd$ integers in boldface $\bi, \bj, \bk,$ etc, and denote the components of $\bk$ with $k_i$ for $1 \leq i \leq d.$  The symbols $\bZero,\bOne$ represent the $\Zd$ integers all of whose coordinates are $0, 1$ respectively, and for $1 \leq i \leq d,$ let $\bOne_i \in \Zd$ be the integer $0$ in all coordinates but the $i$-th, where it is $1,$ so that $\bOne = \sum_1^d \bOne_i.$  For $\ba, \bb \in \Zd,$ the inequalities $\ba < \bb$ or $\ba \leq \bb$ should be interpreted as holding in each coordinate simultaneously, i.e. $a_i < b_i,$ for $1 \leq i \leq d,$ and so defines a partial order on $\Zd.$ Additionally, whenever $\ba \leq \bb,$ the interval notation $[\ba,\bb]$ or $[\ba, \bb)$ should be interpreted componentwise in the usual way, giving rise to (semi-)rectangles in $\Zd.$   For $t \in \bbZ,$ we have $t\ba = (ta_1,\ldots, ta_d),$ with $\ba + \bb$ and $\ba \bb$ representing the usual sum and componentwise product, and we define $\frac{\ba}{\bh} \in \bbQ^d$ as the componentwise quotient for $\bh \geq \bOne.$  The dot product of two vectors will be denoted $\bu^t \bv$.  Finally, for $\bz \in \bbT^d$, $\bk \in \Zd$ write $\bz^n = (z_1^n,\ldots, z_d^n)$ and $\bz^\bk = (z_1^{k_1}, \ldots, z_d^{k_d})$, so that $\bz^n, \bz^\bk \in \bbT^d.$  With these notations established, we proceed to the next section where we describe the full $\Zd$-shift on a finite alphabet and substitutions of constant length in $\Zd,$ which we call $\bq$-substitutions.


\section{Substitution Dynamical Systems}\label{sds section}

Fix a dimension $d \geq 1.$  An \textit{alphabet} is a finite set $\mcA$ consisting of at least $2$ \textit{letters} which we will frequently denote with symbols $\alpha, \beta, \gamma, \delta.$ Often, we will consider alphabets of the form $\{0,1,\ldots, s-1\},$ and $s$ will always refer to the size of the alphabet.  Consider the collection of all functions from $\Zd \to \mcA,$ which can be identified with the product space $\mcA^{\Zd}$ of all $\Zd$-indexed sequences with values in $\mcA.$  The elements of $\AZd$ are \textit{sequences}, denoting them with letters $\bA, \bB, \bC,$ etc.   Endowing $\mcA$ with the discrete topology, consider the topology of pointwise convergence on $\AZd,$ or equivalently the product topology when viewed as a sequence space.  Addition on $\Zd$ gives rise to a $\Zd$-action of commuting automorphisms, which act by translation on $\AZd$ sending $\bk \mapsto T^\bk$ where $T^\bk : \AZd \to \AZd$ sends $\bA$ to the sequence $T^\bk \bA$ defined by $T^\bk \bA(\bj) := \bA(\bj + \bk).$ We call this action \textit{the shift} and denote it by $T.$  The pair $(\AZd, T)$ is an invertible topological dynamical system, \textit{the full shift,} and we let $\mcB$ denote the $\sigma$-algebra of its Borel measurable sets.

By a \textit{block} (or \textit{word} in the $d=1$ case) we mean a map $\omega$ from a finite subset of $\Zd$ into $\mcA,$ denoting the domain of a block by $\text{supp}(\omega)$, and letting $\mcA^+$ denote the collection of all blocks in $\AZd.$ Here, blocks differ from convention in two significant ways: they need not be contiguous, and they need not start at $\bZero$ index.  We will routinely identify blocks with their \textit{graphs,} or the image of the map $\bj \mapsto (\bj, \omega(\bj)) \in \Zd \times \mcA,$ as this is often convenient and coincides with the tiling perspective of substitutions, as treated by Radin [\ref{radin}]. For $\omega \in \mcA^+$ and $\bA \in \mcA^+ \cup \AZd,$ we say $\omega$ is \textit{extended by} $\bA$ (writing $\omega \leq \bA$) whenever $\bA$ extends $\omega$ as a function into $\mcA$; we say $\omega$ is a \textit{subblock} (or \textit{subword} in the $d=1$ case) of $\bA$ if $T^\bk \omega$ is extended by $\bA,$ for some $\bk \in \Zd.$  If $\omega \in \mcA^+$ is a block, the \textit{cylinder over $\omega$} is the collection
$$[\omega] := \{ \bC \in \AZd : \bC(\bj) = \omega(\bj) \text{ for } \bj \in \text{supp}(\omega) \} = \{ \bC \in \AZd : \omega \leq \bC \}$$
\noindent so that cylinders over blocks $\mcA^+$ correspond to the standard basis for the topology of $\AZd.$  The alphabet $\mcA$ is a discrete compact set, and so $\AZd$ is totally disconnected as every cylinder is both open and closed, and compact by the Tychonoff theorem.  As every $\bA \in \AZd$ is in the intersection of the cylinders $[\bA_n]$ where $\bA_n$ is the subblock of $\bA$ supported in $[-n \bOne, n\bOne],$ this implies $\AZd$ is a perfect set.  Thus, the full shift $(\AZd, T)$ is an invertible topological dynamical system, consisting of a $\Zd$-action of commuting automorphisms acting on a Cantor set of functions $\Zd \to \mcA.$  

A \textit{substitution} is a map $\mcS : \mcA \to \mcA^+,$ replacing each letter by a block.  In the $d=1$ case, substitutions of many types are considered, but we wish to consider only the class of substitutions which generalize substitutions of constant length.  Fix $\bq > \bOne,$ and write $Q := q_1 \cdots q_d = \text{Card}[\bZero,\bq).$ A substitution $\mcS$ is a \textit{$\bq$-substitution} if all the blocks $\mcS(\alpha)$ have $[\bZero,\bq)$ as their common support. 
For such substitutions, we can inflate about $\bZero$ by $\bq$ and subdivide mod $\bOne$ by identifying $\Zd$ with $\bq \Zd + [\bZero,\bq)$, allowing $\mcS$ to replace the letter at each index with the $[\bZero,\bq)$-block associated with it.  This inflate and subdivision process extends $\mcS$ to sequences over arbitrary subsets of $\Zd,$ and gives rise to many arithmetic properties of $\bq$-substitutions, which we discuss in \S \ref{analysis of q sub}.  We now show how to associate a subshift to a given substitution, after which we detail some essential results and preliminaries for the spectral theory of substitution subshifts.

The language $\mcL_\mcS \subset \mcA^+$ of a substitution is the collection of all blocks $\omega \in \mcA^+$ which appear as subwords of $\mcS^n(\gamma)$ for some $n \in \bbN$ and $\gamma \in \mcA.$  Associating blocks with their graphs, this can be thought of as the collection of all finite patterns attainable by the substitution; by definition, it is both shift-invariant and closed under the action of $\mcS.$  The \textit{substitution subshift of $\mcS$} is the collection $X_\mcS$ consisting of those sequences in $\AZd,$ all of whose subblocks appear in the language of $\mcS$.  By the \textit{reduced language} of $\mcS,$ we mean the collection of all blocks in $\mcA^+$ that appear in some word $\bA \in X_\mcS,$ and can be strictly smaller than the language; as this has no effect on the measure theoretic or topological structure of $X_\mcS,$ we will often assume our language is reduced.  

Let $\mcS$ be a $\bq$-substitution on $\mcA$.  As the alphabet is finite, we can always find some $h > 0$ and $\eta: [-\bOne,\bZero] \to \mcA$ so that $(\mcS^h\eta)(\bc) = \eta(\bc)$ for $\bc \in [-\bOne,\bZero]$.  Then $\eta$ acts as a \textit{seed} for the subshift: by construction, the sequence of blocks $\mcS^{nh}\eta$ defined on $[-\bq^{nh},\bq^{nh})$ are nested, as $\mcS^{nh}\eta$ extends $\mcS^{mh}\eta$ whenever $n \geq m.$  As $\AZd$ is endowed with the topology of point-wise convergence and as $\bq > \bOne$, the sequence $\mcS^{nh}\eta$ converges to a unique limit point $\bD_\eta \in \AZd$ which is also in $X_\mcS$ by construction, and so $X_\mcS$ is nonempty for any $\bq$-substitution on $\mcA$.

Thus $X_\mcS$ is a nonempty closed and shift-invariant (by definition) subset of $\AZd$ and $(X_\mcS,T)$ forms a topological subshift of the full shift $\AZd$ called a \textit{substitution dynamical system.}  A useful property of substitution subshifts is their independence of the iterate used: for $n > 0,$ $X_{\mcS^n} = X_\mcS,$ sometimes referred to as \textit{telescope invariance,} see [\ref{cortez and solomyak}, Lemma 2.9].  Note that $\mcS$ restricts from $\AZd$ to a map on $X_\mcS$, and the Borel $\sigma$-algebra for $X_\mcS$ is generated by the cylinders over blocks in the reduced language $\mcL_\mcS.$  As our goal is to study the spectral theory of substitution dynamical systems, we adopt the convention that all cylinders are intersected with $X_\mcS.$  Finally, let $\mcM(X_\mcS,T)$ denote the space of $T$-invariant ($\mu = \mu \circ T^\bk$ for all $\bk \in \Zd$) Borel probability measures on $X_\mcS;$ note that $\mcM(X_\mcS,T)$ is a compact convex set, the extreme points of which are ergodic, see [\ref{walters}].

\subsection{Invariant Measures}\label{invariant measures section}

Consider the vector space $\bbC^\mcA$ of formal linear combinations in the letters of $\mcA,$ represented in the basis $\be_\alpha,$ for $\alpha \in \mcA$, and $\bM_\mcA(\bbC)$ the space of $\mcA \tbump{-4px}{$\times$} \mcA$-indexed matrices.  Given a substitution $\mcS$ on $\mcA,$ its \textit{substitution matrix} $M_\mcS \in \bM_\mcA(\bbC)$ is the nonnegative matrix whose $\alpha,\gamma$ entry is the number of times $\alpha$ appears in the word $\mcS(\gamma).$  This representation is often called the \textit{abelianization of $\mcS$} as it represents the symbols produced by the substitution irrespective of position.  The \textit{expansion} of a substitution is the spectral radius of its substitution matrix; in the case of $\bq$-substitutions, the expansion is $Q = \text{Card}[\bZero,\bq)$ as the substitution matrix is $Q$-column stochastic, see \S \ref{configurations}.  A substitution is \textit{primitive} if for some $n > 0,$ $\alpha$ appears in the block $\mcS^n(\gamma),$ for every $\alpha, \gamma \in \mcA.$   Thus, the substitution matrix of a primitive substitution is a primitive matrix: there exists some $n > 0$ so that $M_{\mcS^n} = M_\mcS^n$ has strictly positive entries.  By the Perron-Frobenius theorem,  the spectral radius of a primitive matrix is a simple eigenvalue with a strictly positive eigenvector $\bu,$ normalized to a probability vector and called the \textit{Perron vector} of $\mcS,$ see [\ref{gantmacher}, Theorem 8.1,2].   

The following result of Michel [\ref{michel}] relates the invariant measures of a primitive substitution subshift to the $Q$-eigenspace of its substitution matrix, see [\ref{radin}, Lemma 1.5] for the $\Zd$ setting.
\begin{thm}[Michel]\label{michel theorem} 
	If $\mcS$ is a primitive $\bq$-substitution, then $\mcM(X_\mcS,T) = \{ \mu \}$ is uniquely ergodic, $\mu \circ \mcS = \frac{1}{Q}\mu$, and if $\bu \in \bbC^\mcA$ is the Perron vector of $M_\mcS,$ then $\mu[\alpha] = u_\alpha$ for $\alpha \in \mcA$.
\end{thm}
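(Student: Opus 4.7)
My plan is to use Perron--Frobenius to identify the candidate invariant measure via letter frequencies, and then establish unique ergodicity by proving uniform convergence of those frequencies on $X_\mcS$.  Because each block $\mcS(\gamma)$ has support $[\bZero,\bq)$ of cardinality $Q$, every column of $M_\mcS$ sums to $Q$, so $\bOne^t M_\mcS = Q\,\bOne^t$.  Primitivity and Perron--Frobenius then give a strictly positive right eigenvector $\bu$ for the simple eigenvalue $Q$, which I normalize by $\bOne^t \bu = 1$, together with the standard asymptotic $Q^{-n} M_\mcS^n \to \bu\,\bOne^t$ as $n \to \infty$.  Consequently the letter $\alpha$ appears in $\mcS^n(\gamma)$ exactly $(M_\mcS^n)_{\alpha,\gamma}$ times on a support of cardinality $Q^n$, so its relative frequency in $\mcS^n(\gamma)$ converges to $u_\alpha$ uniformly in $\gamma$.

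Next, I would transfer this asymptotic to arbitrary $\bA \in X_\mcS$ using telescope invariance: since $X_\mcS = X_{\mcS^n}$, any sufficiently large window $\bA|_{[\bZero,\bN)}$ can be decomposed as a disjoint union of translates of the substituted blocks $\mcS^n(\gamma)$, $\gamma \in \mcA$, together with a boundary layer of relative size tending to $0$ as $\bN \to \infty$.  Summing the counts $(M_\mcS^n)_{\alpha,\gamma}$ over the pieces and dividing by the window size forces the relative frequency of $\alpha$ in $\bA|_{[\bZero,\bN)}$ to converge to $u_\alpha$, uniformly in $\bA$.  This uniform frequency convergence implies that any $\nu \in \mcM(X_\mcS,T)$ satisfies $\nu[\alpha] = u_\alpha$ (e.g.\ by the Birkhoff ergodic theorem), and primitivity propagates this determination to the cylinders over all blocks in $\mcL_\mcS$ by counting $\omega$-occurrences inside each $\mcS^n(\gamma)$ as $n \to \infty$.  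Since those cylinders form a $\pi$-system generating the Borel $\sigma$-algebra, any such $\nu$ equals $\mu$, yielding unique ergodicity together with the formula $\mu[\alpha] = u_\alpha$.

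For the scaling identity $\mu \circ \mcS = \tfrac{1}{Q}\mu$, the $\bq$-inflation structure carries $[\alpha]$ into the $\bq$-aligned cylinder $[\mcS\alpha]$, whose frequency in a typical $\bA \in X_\mcS$ is the density of $\bq$-aligned positions where $\bA$ decodes to $\alpha$, namely $Q^{-1} u_\alpha = Q^{-1}\mu[\alpha]$.  Verifying the identity on the generating cylinders over letters, and extending via the generated $\sigma$-algebra, yields the stated scaling.  The main technical obstacle is making the partition step of the second paragraph precise: controlling the boundary layer of $\bA|_{[\bZero,\bN)}$ and ensuring that every ancestor type $\gamma \in \mcA$ contributes with positive uniform density rely crucially on primitivity together with telescope invariance, after which the remaining steps reduce to routine measure-theoretic and Perron--Frobenius facts.
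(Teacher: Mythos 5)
The paper does not prove this theorem; it cites Michel [15] and, for the $\Zd$ case, Radin's Lemma~1.5, so there is no in-paper argument to compare against. Your overall route — column-stochasticity of $M_\mcS$, Perron--Frobenius asymptotics $Q^{-n}M_\mcS^n \to \bu\,\bOne^t$, transfer to $X_\mcS$ by tiling large windows with $n$-supertiles plus a vanishing boundary, and Oxtoby-style unique ergodicity from uniform Birkhoff convergence — is the standard proof in the literature, and the window-decomposition step you flag as the main technical obstacle is indeed where care is needed (one uses that any $\bA|_{[\bZero,\bN)}$ sits inside some $\mcS^m(\gamma)$ and inherits the $\bq^n\Zd$-grid decomposition, with boundary fraction bounded by the carry estimate of Lemma~\ref{small carries}).

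The genuine gap is in your final paragraph, the scaling identity $\mu \circ \mcS = \tfrac{1}{Q}\mu$. First, $\mcS[\alpha]$ is \emph{not} the cylinder $[\mcS\alpha]$ but the strictly smaller set $[\mcS\alpha] \cap \mcS(X_\mcS)$, and ``the density of $\bq$-aligned positions where $\bA$ decodes to $\alpha$'' presupposes that decoding is well-defined, which is precisely recognizability (Theorem~\ref{aperiodicity implies recognizability}) and hence requires aperiodicity — an assumption you never invoke and which the scaling identity actually needs to be meaningful, since otherwise $\mcS$ need not be injective on $X_\mcS$ and $\mcS(E)$ need not be Borel. Second, verifying the identity only on the initial cylinders $[\alpha]$ and ``extending via the generated $\sigma$-algebra'' does not work directly: the sets $\mcS[\alpha]$ do not generate the Borel $\sigma$-algebra of $X_\mcS$, so the formula $\mu(\mcS[\alpha]) = Q^{-1}\mu[\alpha]$ on those sets does not bootstrap to arbitrary $E$. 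The clean argument is to use the partition $X_\mcS = \bigsqcup_{\bj\in[\bZero,\bq)} T^\bj\mcS(X_\mcS)$ from (\ref{ifs attractor}) to conclude $\mu(\mcS(X_\mcS)) = 1/Q$, define $\nu(E) := Q\,\mu(\mcS(E))$, verify $T$-invariance of $\nu$ via the commutation relation $\mcS\,T^\bk = T^{\bk\bq}\mcS$ of Proposition~\ref{qsub}, and then invoke the unique ergodicity you have already established to conclude $\nu = \mu$. This both repairs the logic and makes the dependence on aperiodicity explicit.
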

\noindent As the collection of cylinders over the language of a substitution generate the Borel $\sigma$-algebra of its subshift, an invariant measure $\mu$ is uniquely determined by the measures of the \textit{initial cylinders} $[\alpha] = \{ \bA \in X_\mcS : \bA(\bZero) = \alpha \}$ for $\alpha \in \mcA$.  In the case of non-primitive substitutions, there will be a nonempty proper subset $\mcA_0 \subsetneq \mcA$ such that $\mcS^h$ restricts to a substitution on the subalphabet $\mcA_0,$ for some $h > 0.$   Using this, we can express a substitution in its \textit{primitive reduced form}:
\begin{prop}
\label{PRF}
	If $\mcS$ is a $\bq$-substitution on $\mcA$, then there is an $h > 0$ and a partition of the alphabet $\mcA = \mcE_1 \sqcup \cdots \sqcup \mcE_K \sqcup \mcT$, where $\sqcup$ denotes disjoint union, so that
	\begin{multicols}{2}
	\begin{itemize}
		\item $\mcS^h: \mcE_j \to \mcE_j^+$ is primitive for $1 \leq j \leq K$
		\item $\gamma \in \mcT$ implies $\mcS^h(\gamma) \notin \mcT^+$
	\end{itemize}
	\end{multicols}
\end{prop}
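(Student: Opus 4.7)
The plan is to analyze $\mcS$ via the directed graph $G$ on vertex set $\mcA$ with an edge $\gamma \to \alpha$ whenever $\alpha$ appears in $\mcS(\gamma)$ (equivalently, $(M_\mcS)_{\alpha\gamma} > 0$). The strongly connected components (SCCs) of $G$ partition $\mcA$, and this combinatorial structure yields the desired partition in the proposition: the $\mcE_j$ will come from the minimal (sink) SCCs after refinement by their Frobenius periods, and $\mcT$ will collect the transient letters.

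Call an SCC $\mcE \subseteq \mcA$ a \emph{sink} if no edge leaves $\mcE$ in the condensation of $G$; equivalently, $\mcS(\mcE) \subseteq \mcE^+$. For a sink SCC $\mcE$, the restriction $M_\mcS|_\mcE$ is irreducible and nonnegative, so by the Frobenius structure theorem it has a period $p = p(\mcE) \geq 1$, and $\mcE$ admits a unique decomposition $\mcE = F_0 \sqcup \cdots \sqcup F_{p-1}$ which $\mcS$ cyclically permutes ($\mcS(F_i) \subseteq F_{i+1 \bmod p}^+$), with $\mcS^p$ restricted to each $F_i$ primitive. Take $h$ to be a positive common multiple of all periods $p(\mcE)$ over sink SCCs, chosen also with $h \geq |\mcA|$. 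Enumerate $\mcE_1, \ldots, \mcE_K$ as all the sub-classes $F_i$ obtained from all sink SCCs, and let $\mcT$ consist of the remaining letters (those lying in non-sink SCCs). Then $\mcS^h$ preserves each $\mcE_j$ (since $h$ is a multiple of the containing period), and $\mcS^h|_{\mcE_j}$ is a positive power of a primitive substitution, hence itself primitive, which gives the first bullet.

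For the second bullet, fix $\gamma \in \mcT$. Since $\gamma$ lies in a non-sink SCC, the condensation has a directed path from $\gamma$'s SCC into some sink SCC of length at most $|\mcA|$; lifting to $G$ exhibits a letter $\alpha \in \mcS^n(\gamma)$ for some $n \leq |\mcA|$ with $\alpha$ in a sink SCC. Since $h \geq |\mcA| \geq n$ and $h$ is a multiple of every period, iteration keeps the descendants of $\alpha$ inside that sink SCC and in particular inside some $F_i$, so $\mcS^h(\gamma)$ contains at least one letter from some $\mcE_j$, giving $\mcS^h(\gamma) \notin \mcT^+$. The main obstacle is the simultaneous choice of $h$: it must be divisible by all periods $p(\mcE)$ (for preservation of each $\mcE_j$ under $\mcS^h$) and large enough that every transient letter reaches a sink within $h$ iterations; both are handled by taking $h$ a sufficiently large common multiple of $\{p(\mcE) : \mcE \text{ sink SCC}\}$.
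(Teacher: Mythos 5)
Your proof is correct and takes the same approach as the paper, which simply cites Gantmacher's normal form for reducible nonnegative matrices together with the Frobenius theory of the index of imprimitivity; your language of strongly connected components and sink SCCs is the combinatorial mirror of that block-triangular decomposition, and your period refinement $\mcE = F_0 \sqcup \cdots \sqcup F_{p-1}$ is exactly the Frobenius cyclic structure. Your write-up is more careful than the paper's one-sentence argument, since you explicitly verify the transient condition (second bullet) and pin down an $h$ that simultaneously handles the periods and the escape of transients, details the paper leaves to the reader.
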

\noindent Clearly, $K=1$ and $\mcT = \emptyset$ if and only if $\mcS$ is primitive.  We call the partition $\{\mcE_1,\ldots, \mcE_K, \mcT\}$ the \textit{ergodic decomposition of $\mcS,$} its members $\mcE_j$ the \textit{ergodic classes of $\mcS,$} and $\mcT$ its \textit{transient part}, compare [\ref{queffelec}, \S 10.1].  As the transient part $\mcT$ is equal to $\mcA \smallsetminus \fatsqcup \mcE_j,$ it suffices to specify the ergodic classes $\mcE = \{ \mcE_1, \ldots, \mcE_K \},$ and we will do this frequently.  Restricting $\mcS$ to each ergodic class gives the \textit{primitive components of $\mcS,$} and we refer to their collective Perron vectors as the \textit{Perron vectors of $\mcS$}.  The minimal such $h > 0$ satisfying the above proposition is the \textit{index of imprimitivity}; as telescoping has no effect on the subshift, we will assume that all of our substitutions have index of imprimitivity $1$ as this directly impacts the existence of several limits in our analysis.
\begin{exmp}
 Consider the $2$-substitution $\mcS$ on the alphabet $\mcA = \{ 1,2,3,4,5, 6\}$ given by
 $$\mcS : {\tiny \scalebox{0.8}{$\begin{cases} 1 \mapsto 25 & \hspace{12px} 4 \mapsto 43 \\ 2 \mapsto 13 & \hspace{12px} 5 \mapsto 31 \\ 3 \mapsto 52 & \hspace{12px} 6 \mapsto 25 \end{cases}$}}  \tbump{4px}{with} M_{\mcS} = {\tiny \scalebox{0.55}{$\begin{pmatrix} 0 & 1 & 0 & 0 & 1 & 0 \\ 1 & 0 & 1 & 0 & 0 & 0\\ 0 & 1 & 0 & 0 & 1 & 0 \\ 0 & 1 & 1 & 1 & 1 & 0 \\ 1 & 0 & 1 & 0 & 0 & 0 \\ 0 & 1 & 0 & 0 & 1 & 0\end{pmatrix}$}} \tbump{12px}{ and } {\tiny \mcS^2 :  \scalebox{0.8}{$\begin{cases} 1 \mapsto 1331 & \hspace{12px}  4 \mapsto 4352 \\ 2 \mapsto 2552 & \hspace{12px} 5 \mapsto 5225 \\ 3 \mapsto 3113 & \hspace{12px}  6 \mapsto 1331 \end{cases}$}} \tbump{4px}{with} M_{\mcS}^2 = {\tiny \scalebox{0.55}{$\begin{pmatrix} 2 & 0 & 2 & 0 & 0 & 0 \\ 0 & 2 & 0 & 0 & 2 & 0\\ 2 & 0 & 2 & 0 & 0 & 0 \\ 0 & 1 & 1 & 1 & 1 & 0 \\ 0 & 2 & 0 & 0 & 2 & 0 \\ 2 & 0 & 2 & 0 & 0 & 0\end{pmatrix}$}}$$
Thus, its index of imprimitivity is $2$ with ergodic classes  $\{1,3\}$ and $\{2,5\}$ and transient part $\{4,6\}.$  Note that reordering the basis consistent with the ergodic decomposition puts $M_\mcS^2$ into block upper triangular form - the diagonal blocks corresponding to the primitive components.
\end{exmp}
We comment briefly on a few aspects of the substitution matrix and Perron-Frobenius theory, see [\ref{gantmacher}] noting that our matrices are $Q$-column (not row) stochastic.  The spectral radius $\rho$ of $M_\mcS$ is the expansion of the substitution, and Frobenius theory [\ref{gantmacher}, III \S 2 Theorem 2] tells us that all eigenvalues of modulus $\rho$ correspond to roots of unity (times $\rho$).  The index of imprimitivity is then the smallest $h>0$ such that $\lambda^h = Q^h$ for every eigenvalue $\lambda$ of modulus $Q$ for $M_\mcS$, allowing one to find the index of imprimitivity from the eigenvalues of the substitution matrix, see also [\ref{gantmacher}, III \S 5].   A normal form for reducible matrices is described in  [\ref{gantmacher}, III \S 4 (69)], allowing us to write $M_\mcS$ via a permutation of the basis in block form 
\begin{equation}\label{matrix PRF}	M_\mcS ={\tiny \scalebox{0.7}{$\begin{pmatrix} M_1 & & & \tilde{M_1} \\ & \ddots & & \vdots \\ & & M_K & \tilde{M}_K \\ & & & M_\mcT \end{pmatrix}$}} \tbump{0.25in}{with matrices} {\tiny \scalebox{0.9}{$\begin{cases} M_i \in \bM_{\mcE_i}(\bbC)  & \text{for } 1 \leq i \leq K  \\ \tilde{M_i} \in \bM_{\mcE_i \tbump{-2px}{$\times$} \mcT}(\bbC) & \text{for } 1 \leq i \leq K \\ M_\mcT \in \bM_{\mcT}(\bbC) \end{cases}$}}\end{equation}
where all unrepresented blocks are $\bZero.$  Moreover, the diagonal blocks $M_1, \ldots, M_K$ are all primitive when exponentiated by the index of imprimitivity, and so this serves as a proof of the above proposition.   As $M_\mcS$ is $Q$-stochastic by column, an application of [\ref{gantmacher}, III \S 4 Theorem 6] shows the spectral radius of the $M_1,\ldots, M_K$ are all $Q$, whereas that of $M_\mcT$ is strictly less than $Q.$  As the matrices $M_i^h$ are primitive, their $Q^h$-eigenvectors are determined by the Perron vectors of $\mcS$ which by the above representation of $M_\mcS$ will be mutually orthogonal, and proves the following:
\begin{lem}\label{perron decomposition}
	If $\mcS$ is a $\bq$-substitution, the Perron vectors of $\mcS$ span the $Q$-eigenvectors of $M_\mcS$.  All other eigenvalues of the substitution matrix are strictly smaller than $Q$ in modulus.
\end{lem}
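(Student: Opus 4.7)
The strategy is to exploit the block upper triangular decomposition (\ref{matrix PRF}) of $M_\mcS$, combined with the standing assumption (stated just before proposition \ref{PRF}) that the index of imprimitivity is $1$, so that each diagonal block $M_i$ (not merely some power thereof) is primitive. By Perron-Frobenius applied to each primitive $M_i$, the spectral radius $Q$ is a simple eigenvalue with strictly positive Perron eigenvector $\bu_i$, and every other eigenvalue of $M_i$ has modulus strictly less than $Q$; meanwhile the citation to [\ref{gantmacher}, III \S 4 Theorem 6] made just above the lemma guarantees that the spectral radius of $M_\mcT$ is strictly less than $Q$. Since block upper triangular matrices have spectrum equal to the union (with multiplicity) of the spectra of the diagonal blocks, the second assertion of the lemma is immediate.

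For the first assertion, I would first produce the easy inclusion: extend each Perron vector $\bu_i \in \bbC^{\mcE_i}$ by zero on all other coordinates to obtain $\tilde{\bu}_i \in \bbC^\mcA$. A direct block-matrix computation, using that the block $\tilde M_i$ couples only to the $\mcT$-coordinates (which are zero in $\tilde{\bu}_i$), gives $M_\mcS \tilde{\bu}_i = Q \tilde{\bu}_i$, so each $\tilde{\bu}_i$ lies in the $Q$-eigenspace of $M_\mcS$; and because their supports $\mcE_1, \ldots, \mcE_K$ are pairwise disjoint, they are linearly independent.

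The main obstacle is the reverse inclusion: showing every $Q$-eigenvector of $M_\mcS$ is a linear combination of the $\tilde{\bu}_i$. Given $\bv$ with $M_\mcS \bv = Q \bv$, decompose $\bv = \bv_1 + \cdots + \bv_K + \bv_\mcT$ along the block structure of (\ref{matrix PRF}). The bottom block of the eigenvector equation reads $M_\mcT \bv_\mcT = Q \bv_\mcT$; since $Q$ exceeds the spectral radius of $M_\mcT$, the operator $QI - M_\mcT$ is invertible and hence $\bv_\mcT = \bZero$. Substituting back, the upper blocks collapse to $M_i \bv_i = Q \bv_i$ for each $1 \leq i \leq K$, and simplicity of the $Q$-eigenvalue of the primitive matrix $M_i$ forces $\bv_i$ to be a scalar multiple of $\bu_i$, equivalently $\bv_i = c_i \tilde{\bu}_i$ (after the extension convention). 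Thus $\bv \in \mathrm{span}\{\tilde{\bu}_1, \ldots, \tilde{\bu}_K\}$, and in fact the Perron vectors form a basis for the $Q$-eigenspace, which is precisely the claim.
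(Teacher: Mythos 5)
Your proof is correct and follows essentially the same approach as the paper's: both invoke the Gantmacher normal form (\ref{matrix PRF}), the primitivity of the diagonal blocks $M_i$ (under the standing index-of-imprimitivity-$1$ convention), and the strict spectral-radius gap for $M_\mcT$. The only difference is that you spell out the block-by-block eigenvector computation that the paper leaves implicit after citing Gantmacher.
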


A substitution $\mcS$ is \textit{aperiodic} if $X_\mcS$ contains no shift-periodic elements, or if $T^\bk \bA = \bA$ implies $\bk = \bZero$ for all $\bA \in X_\mcS.$  In [\ref{cortez and solomyak}], Cortez and Solomyak use results of Bezuglyi and others in [\ref{BKMS}] characterizing the invariant measures on aperiodic and stationary Bratteli diagrams to extend Michel's theorem \ref{michel theorem} to aperiodic $\bq$-substitutions; our version is a corollary of [\ref{cortez and solomyak}, Theorem 3.8]:
\begin{thm}[Cortez and Solomyak]\label{invariant measures}  The ergodic measures of an aperiodic $\bq$-substitution are the uniquely ergodic measures corresponding to its primitive components.
\end{thm}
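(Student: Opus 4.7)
The plan is to prove the two directions of the identification separately. For the forward direction, each primitive component $\mcS|_{\mcE_j}$ (obtained from proposition \ref{PRF} under the standing assumption that the index of imprimitivity is $1$) is itself an aperiodic $\bq$-substitution on the smaller alphabet $\mcE_j$: any shift-periodic point of $X_{\mcS|_{\mcE_j}} \subset X_\mcS$ would contradict aperiodicity of $\mcS$. Michel's theorem \ref{michel theorem} then yields a unique $T$-invariant Borel probability measure $\mu_j$ on $X_{\mcS|_{\mcE_j}}$, which is ergodic there and remains ergodic when extended by zero to all of $X_\mcS$.

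For the reverse direction, let $\mu \in \mcM(X_\mcS,T)$ be ergodic and set $u_\alpha := \mu[\alpha]$ for $\alpha \in \mcA$. The first step is to show $\bu$ is a $Q$-eigenvector of $M_\mcS$. By the Moss\'e--Solomyak recognizability theorem \ref{aperiodicity implies recognizability}, every $\bA \in X_\mcS$ admits a unique representation $\bA = T^\bk \mcS \bB$ with $\bB \in X_\mcS$ and $\bk \in [\bZero,\bq)$, giving a partition $X_\mcS = \bigsqcup_{\bk \in [\bZero,\bq)} T^\bk \mcS X_\mcS$ whose pieces each carry $\mu$-measure $1/Q$ by $T$-invariance of $\mu$. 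Partitioning the cylinder $[\alpha]$ according to this decomposition and unpacking the condition $(\mcS\beta)(\bk) = \alpha$ yields
\[
u_\alpha \;=\; \frac{1}{Q}\sum_{\bk \in [\bZero,\bq)}\sum_{\beta :\, (\mcS\beta)(\bk) = \alpha} u_\beta \;=\; \frac{1}{Q}(M_\mcS \bu)_\alpha,
\]
so $M_\mcS \bu = Q \bu$. By lemma \ref{perron decomposition} and the block-triangular normal form (\ref{matrix PRF})---whose transient diagonal block $M_\mcT$ has spectral radius strictly less than $Q$---the vector $\bu$ lies in the nonnegative cone spanned by the Perron vectors $\bu_1,\ldots,\bu_K$ of the primitive components and vanishes on $\mcT$.

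The final step is to promote this letter-frequency decomposition to a decomposition of $\mu$ itself. Writing $\bu = \sum_j c_j \bu_j$ with $c_j \geq 0$, the closed $T$-invariant sets $X_j := X_{\mcS|_{\mcE_j}} \subset X_\mcS$ are characterized (using $\mcS \mcE_j \subset \mcE_j^+$ together with recognizability) as precisely those $\bA$ all of whose letters lie in $\mcE_j$, and are therefore pairwise disjoint. Since $u_\gamma = 0$ for every $\gamma \in \mcT$, an invariance argument shows the set of $\bA \in X_\mcS$ containing any transient letter is $\mu$-null, whence $\mu\bigl(\bigsqcup_j X_j\bigr) = 1$. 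Ergodicity of $\mu$ then forces $\mu(X_j) = 1$ for a single $j$, and Michel's theorem applied to $\mcS|_{\mcE_j}$ gives $\mu = \mu_j$.

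The main obstacle is rigorously carrying out the null-set argument in the last paragraph: showing that sequences containing a transient letter---and \emph{a fortiori} those mixing letters from distinct ergodic classes---have $\mu$-measure zero, and that an ergodic $\mu$ cannot "spread" across multiple $X_j$'s via transient boundaries. A clean way to execute this, which is the route taken in [\ref{cortez and solomyak}], is to use recognizability to identify $X_\mcS$ with the path space of a stationary Bratteli diagram and to invoke the classification of ergodic invariant measures on such diagrams from [\ref{BKMS}]: the ergodic classes of the diagram correspond exactly to $\mcE_1,\ldots,\mcE_K$, while the transient vertices contribute no ergodic measure, giving precisely the stated equivalence.
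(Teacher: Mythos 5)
Your final fallback---identify $X_\mcS$ with the path space of a stationary Bratteli diagram, observe that the distinguished eigenvalues of [\ref{BKMS}] are exactly those of $M_\mcS$ restricted to the ergodic classes (since the transient block $M_\mcT$ has spectral radius strictly below $Q$), and invoke [\ref{BKMS}, Cor.\ 5.6]---is precisely the paper's proof, which consists of exactly that one-paragraph deduction. So you correctly locate both the hard step and its resolution.

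The elementary route you sketch on the way, however, has a second gap you do not flag. The identity
\[
u_\alpha \;=\; \tfrac{1}{Q}\,(M_\mcS \bu)_\alpha
\]
does not follow from the recognizability partition and $T$-invariance alone. What the partition $[\alpha] = \bigsqcup_{(\bk,\beta)} T^\bk\mcS[\beta]$ actually yields, after applying $T$-invariance, is $u_\alpha = \sum_\beta (M_\mcS)_{\alpha\beta}\,\mu(\mcS[\beta])$. To replace $\mu(\mcS[\beta])$ by $\tfrac{1}{Q}u_\beta$ you need the scaling law $\mu\circ\mcS = \tfrac{1}{Q}\mu$, and in the imprimitive case this is a \emph{consequence} of the theorem rather than a hypothesis (the paper remarks on exactly this right after the statement); knowing each of the $Q$ pieces $T^\bk\mcS X_\mcS$ carries mass $1/Q$ does not tell you how that mass distributes over the subcylinders $T^\bk\mcS[\beta]$. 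The eigenvector conclusion can still be rescued: iterating gives $\bu = M_\mcS^n\,\bp^{(n)}$ with $p^{(n)}_\beta := \mu(\mcS^n[\beta])$ and $\sum_\beta p^{(n)}_\beta = Q^{-n}$, so $Q^n\bp^{(n)}$ is a probability vector; passing to a subsequential limit and using $\tfrac{1}{Q^n}M_\mcS^n \to \bP$ (which is where the index-of-imprimitivity-one convention enters) puts $\bu$ in the range of the Perron projection $\bP$. But this is extra work, and even with it in hand you are still left with the null-set gap you identified, so the detour does not produce a proof independent of the Bratteli-diagram machinery the paper cites.
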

\begin{proof}
	 By lemma \ref{perron decomposition} and (\ref{matrix PRF}) above, the eigenvalues corresponding to the transient letters $M_\mcT$ are strictly dominated by the expansion of the substitution.  Thus, the \textit{distinguished eigenvalues,} see [\ref{BKMS}], are those corresponding to $M_\mcS$ restricted to its ergodic classes, which is the substitution matrix of $\mcS$ when restricted to its ergodic classes, and the result follows from [\ref{BKMS}, Corollary 5.6]
\end{proof}
In particular, there are at most $\text{Card}(\mcA)$ ergodic measures for any substitution subshift and the study of invariant measures for $\bq$-substitutions reduces to the primitive case.  By the above, one can express any invariant measure as a convex sum of the uniquely ergodic measures for the primitive components and thus extend the scaling property of theorem \ref{michel theorem} to nonprimitive subshifts.  Moreover, the vector $\bu = (\mu([\alpha]))_{\alpha \in \mcA}$ will be the same convex combination of mutually orthogonal Perron vectors corresponding to the primitive components of $\mcS.$ 

\subsection{Spectral Theory}\label{spectral theory section}

Given a measure $\mu \in \mcM(X_\mcS,T),$ we are interested in the (unitary) translation operator $f \mapsto f \circ T$ on $L^2(\mu),$ the space of complex valued square integrable functions on $X_\mcS,$ the substitution subshift.  For each pair $f,g \in L^2(\mu),$ there is a complex Borel measure $\sigma_{f,g}$ on the $d$-Torus $\bbT^d$ called \textit{the spectral measure for $f, g,$} with Fourier coefficients satisfying the identity
\begin{equation}\label{spectral coefficients} \widehat{\sigma_{f,g}}(\bk) := \int_{\bbT^d} z_1^{-k_1}\cdots z_d^{-k_d} \bump d\sigma_{f,g} = \int_X f \circ T^{-\bk} \bump \overline{g} \bump d\mu  \end{equation}
\noindent Note that $\sigma_{f} := \sigma_{f,f}$ is a positive measure for every nonzero $f \in L^2(\mu)$ by Bochner's theorem; one extends to arbitrary pairs $f,g$ using polarization.  By the spectral theorem for unitary operators, there is a \textit{maximal function} $F \in L^2(\mu)$ such that for any $g \in L^2(\mu),$ the spectral measure for $g$ is absolutely continuous with respect to that of $F,$ or $\sigma_g \ll \sigma_F,$ and every measure $\lambda \ll \sigma_F$ is the spectral measure of some $g \in L^2(\mu).$  Although the maximal function is not unique, their spectral measures have the same \textit{type}: they are mutually absolutely continuous.  We denote the type of $\sigma_F$ by $\sigmaX(\mu),$ the \textit{maximal spectral type} of $(X_\mcS,T,\mu)$.   In light of theorem \ref{invariant measures}, we define the \textit{spectrum of $\mcS$} as the sum of the maximal spectral types of the primitive components, so that if $\mcE_\mcS$ denotes the ergodic measures of $X_\mcS$:
\begin{equation}\label{sigmaX def} \textstyle \sigmaX := \sum_{\mu \in \mcE_\mcS}  \sigmaX(\mu) \end{equation}
\noindent noting that this is well defined up to measure equivalence.  Using identity (\ref{spectral coefficients}) and the Krein-Milman theorem, one can see that $\sigma_{f,g}(\mu) \ll \sigmaX$ for every $\mu \in \mcM(X_\mcS,T)$ and $f,g \in L^2(\mu),$ justifying the definition.   By Lebesgue decomposition, $\sigmaX$ can be separated into \textit{pure types}, or its \textit{discrete, singular} and \textit{absolutely continuous} components on $\bbT^d$ and is essentially our goal.

In the case of $\bq$-substitutions and their substitution subshifts, the discrete component is well understood: it is a multiplicative subgroup of $\bbT^d$ corresponding to $\bq$ and the \textit{height} of a substitution, as described by Dekking in [\ref{dekking}] (and extended to $\Zd$ by Frank in [\ref{frank zd}]) which also includes a complete classification of the pure discrete case: see section \ref{height} for more details.  Study of the continuous spectrum is based on the work of Queff\'elec, which relates the maximal spectral type of a substitution to \textit{correlation measures} $\sigma_{\alpha \beta}$ for $\alpha, \beta \in \mcA,$ the spectral measures with respect to translation for the indicator functions of initial cylinders $[\alpha]$ and $[\beta]$.  

Due to Dekking's work, our interest is primarily in the continuous spectrum of $\mcS,$ and distinguishing the purely singular case from those with Lebesgue components in their spectrum.   Note that this is a nontrivial problem: the  Thue-Morse substitution is an example of a substitution with purely singular spectrum, possessing both discrete and continuous components, whereas the Rudin-Shapiro substitution has a discrete and absolutely continuous component, with no singular continuous spectrum.  Moreover, one can form a \textit{substitution product} of Thue-Morse and Rudin-Shapiro to obtain a substitution with discrete, singular continuous, and absolutely continuous components in its spectrum, which we consider in example \ref{substitution product example}; see also [\ref{BGG}, \S 2].   For higher dimensional examples, we have the work of Baake and Grimm which shows a large class of substitutions on $2$ symbols to be purely singular to Lebesgue spectrum, see [\ref{baake and grimm}], as well as Frank's paper [\ref{frank lebesgue}] describing a collection of substitutions with Lebesgue spectral component.  

\section{Aperiodic $\bq$-Substitutions}\label{analysis of q sub}
	
	The goal of this section is to represent $\bq$-substitutions as a \textit{configuration} of \textit{instructions}, describing how the substitution replaces symbols by location, and exposing their arithmetic properties.  Moreover, we describe a representation of $\bq$-substitutions in matrix algebras which, along with recognizability properties afforded by aperiodicity, allows us to iteratively compute the invariant measure of an arbitrary cylinder of $X_\mcS,$ or the frequency with which any finite pattern appears in the subshift.  This allows us to explicitly compute Fourier coefficients (\ref{fourier coefficients}) of the correlation measures, as $\widehat{\sigma_{\alpha \beta}}(\bk)$ is the measure of a cylinder specified by the block $\bZero \to \alpha$ and $\bk \mapsto \beta$.


\subsection{Arithmetic Base $\bq$ in $\Zd$}\label{arithmetic}

We take a moment to establish some basic arithmetic notions, most of which are consequences of the classical division algorithm on $\bbZ$ applied to each coordinate.  Essentially, we are just formalizing base $q$ arithmetic in each coordinate separately.  Recall that for $\ba \geq \bZero$
$$[\bZero,\ba) = \big\{ \bj  \in \Zd : 0 \leq j_i < a_i \text{ for } 1 \leq i \leq d \big\}$$
\noindent Fix $\bq > \bOne,$ then for every $n \in \bbN$ and each $\bk \in \Zd,$ the division algorithm modulo $\bq^n$ applied componentwise in $\Zd$ provides a unique pair $\qr{\bk}_n  \in [\bZero,\bq^n) \text{ and }\qq{\bk}_n \in \Zd$ satisfying
$$\bk = \qr{\bk}_n + \qq{\bk}_n \bq^n$$
\noindent so that $\qr{\cdot}_n: \Zd \to [\bZero,\bq^n)$ is the \textit{remainder,} and $\qq{\cdot}_n: \Zd \to \Zd$ the \textit{quotient}, mod $\bq^n$.  Writing
$$\bk_n := \qr{\qq{\bk}_n}_1 = \lfloor\qr{\bk}_{n+1}\rfloor_n$$
\noindent for $\bk \in \Zd$ and $n \geq 0$ gives a unique \textit{digit sequence} $(\bk_j)_{j \in \bbN} \in [\bZero,\bq)^\bbN$ such that for $n \geq 0$
$$\bk = \bk_0 + \bk_1 \bq + \cdots + \bk_{n-1}\bq^{n-1} + \qq{\bk}_n \bq^n$$
\noindent referred to as the \textit{$n$-th $\bq$-adic expansion} of $\bk,$ and we call $\bk_n$ the \textit{$n$-th digit} of $\bk.$ Thus, for $\bk \in \Zd,$ $\qr{\bk}_n$ can be represented by the first $n$ digits of $\bk$, and $\qq{\bk}_n$ by the digits starting at the $n$-th place and above.  The \textit{power} $\mfp:= \mfp(\bk)$ of $\bk$ is the minimal $p \geq 0$ with $\bk \in (-\bq^p,\bq^p)$ and is such that $\qq{\bk}_n = \qq{\bk}_{\mfp(\bk)}$ and $\bk_n = \bk_{\mfp(\bk)}$ for $n \geq \mfp(\bk)$.   For $n \geq 0,$ and $\bk \in \Zd,$ the \textit{$n$-carry set for $\bk$} is 
\begin{equation}\label{carry sets}
	\Delta_n(\bk) := \big\{ \bj \in [\bZero,\bq^n) : \qq{\bj+\bk}_n  \neq \bZero \big\}
\end{equation}
\noindent Thus, for $n \geq \mfp(\bk),$ the collection $\Delta_n(\bk) + \bq^n \Zd$ consists of those integers which require a \textit{carry or borrow} operation at the $n$-th place when $\bq$-adicly added to $\bk$, and has a useful statistical property:
\begin{lem}\label{small carries}
The frequency of carries at the $n$-th place of $\bq$-adic addition with $\bk$ goes to $0,$ or
$$\lim_{n \to \infty} \frac{1}{Q^n} \text{Card} \bump \Delta_n(\bk) = 0$$
\noindent for every $\bk \in \Zd,$ and this convergence is exponentially fast.
\end{lem}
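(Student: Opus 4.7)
The plan is to unpack $\Delta_n(\bk)$ coordinatewise and apply a simple union-bound argument. The key observation is that for $n \geq \mfp(\bk)$, one has $\qq{\bj+\bk}_n = \bZero$ if and only if $\bj + \bk \in [\bZero,\bq^n)$, where the interval is interpreted componentwise. Hence
\[
\Delta_n(\bk) \;=\; \big\{ \bj \in [\bZero,\bq^n) : \bj + \bk \notin [\bZero,\bq^n) \big\} \;=\; \bigcup_{i=1}^{d} E_i^{(n)},
\]
where $E_i^{(n)} := \{ \bj \in [\bZero,\bq^n) : j_i + k_i \notin [0,q_i^n) \}$ collects the points whose $i$-th coordinate is the one causing the failure.

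Next I would bound each $E_i^{(n)}$ separately. Fix $i$ and assume $n \geq \mfp(\bk)$ so that $|k_i| < q_i^n$. The set of one-dimensional indices $j_i \in [0, q_i^n)$ with $j_i + k_i \notin [0,q_i^n)$ consists of the $|k_i|$ integers near one endpoint (near $q_i^n - 1$ if $k_i > 0$, near $0$ if $k_i < 0$), and the remaining coordinates range freely over $[0,q_1^n)\times\cdots \times [0,q_d^n)$ minus the $i$-th factor. Thus $\text{Card}\,E_i^{(n)} \leq |k_i| \cdot Q^n/q_i^n$.

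Summing and dividing by $Q^n$ yields
\[
\frac{1}{Q^n} \text{Card}\,\Delta_n(\bk) \;\leq\; \sum_{i=1}^{d} \frac{|k_i|}{q_i^n},
\]
which tends to $0$ exponentially because each $q_i \geq 2$. Setting $q_{\min} := \min_i q_i > 1$, one obtains the explicit bound $\text{Card}\,\Delta_n(\bk)/Q^n \leq \|\bk\|_1 \cdot q_{\min}^{-n}$, giving the claimed exponential rate.

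There is no real obstacle here — the argument is essentially the statement that most of an axis-aligned rectangle in $\Zd$ lies in its own interior under bounded translation, and the volumes of the "boundary slabs" shrink like $Q^n/q_i^n$. The only care needed is in treating $\bk$ with mixed sign coordinates and in recording that $\qq{\cdot}_n$ really does detect movement outside $[\bZero,\bq^n)$ in both positive and negative directions, so that the union-bound decomposition above is valid.
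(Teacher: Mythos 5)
Your proof is correct. Both you and the paper exploit the same underlying geometry --- the carry set $\Delta_n(\bk)$ is a thin boundary slab of the rectangle $[\bZero,\bq^n)$, and its proportion vanishes under the $\bq^n$-normalization --- but the technical route differs. The paper first reduces to $\bk \geq \bZero$ by symmetry, then replaces $\Delta_n(\bk)$ by the larger, explicitly computable set $\Delta_n(\bq^{\mfp(\bk)}) = [\bZero,\bq^n)\smallsetminus[\bZero,\bq^n-\bq^{\mfp(\bk)})$, and subtracts rectangle volumes. You instead keep $\bk$ as is, decompose $\Delta_n(\bk)$ into the coordinate failure sets $E_i^{(n)}$, and apply a union bound, which dispenses with the sign reduction and yields the tidier explicit rate $\text{Card}\,\Delta_n(\bk)/Q^n \leq \|\bk\|_1\, q_{\min}^{-n}$ (versus the paper's $\sum_i q_i^{\mfp(\bk)-n}$, with $|k_i|$ coarsened to $q_i^{\mfp(\bk)}$). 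Both establish the exponential decay; yours is marginally cleaner, the paper's makes the ``complement of an inner rectangle'' picture more explicit.
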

\begin{proof}
	As the cardinality of $\Delta_n(\bk)$ does not depend on the signs of the components of $\bk$ (they merely switch the side of the semirectangle thru which the translation occurs) we can assume without loss of generality that $\bk \in \bbN^d$.  Then for $n \geq \mfp := \mfp(\bk),$ we have $\Delta_n(\bk) \subset \Delta_n(\bq^\mfp)$ and as $\Delta_n(\bq^\mfp) = [\bZero,\bq^n) \smallsetminus [\bZero,\bq^n - \bq^\mfp)$ we compute the cardinality and we obtain	
	$$\textstyle \text{Card}\bump\Delta_n(\bk) = Q^n - {\prod}_{i=1}^n(q_i^n - q_i^\mfp) \leq Q^n  - Q^n \prod_{i=1}^d \big( 1 - \nicefrac{q_i^\mfp}{q_i^n} \big) $$	
\noindent as $Q = q_1 \cdots q_d$.  Dividing by $Q^n$ and letting $n \to \infty$ gives the desired result.
\end{proof}

We briefly describe the above in the context of a $\bq$-substitution $\mcS$.  For every $n \geq 0,$ tile $\Zd$ with the superblocks $[\bZero,\bq^n)$ placed at each location of the $\bq^n \Zd$ lattice; these represent the domains of the translated superblocks $T^\bj \mcS^n\gamma$. The $n$-th quotient $\qq{\bk}_n$ indicates the superblock in the $\bq^n\Zd$ lattice containing $\bk$, and the $n$-th remainder $\qr{\bk}_n$ tells us where $\bk$ sits inside that superblock.  The power $\mfp(\bk)$ represents the smallest $p$ such that $\bk$ falls into a superblock of size $\bq^p$ attached to the origin at one of its corners.   As $\qq{\bj + \bk}_n$ represents the superblock of size $\bq^n$ containing $\bj + \bk$, the carry set $\Delta_n(\bk)$ corresponds to the locations within a superblock which will leave that superblock when translated by $\bk$, the proportion of which is negligible as $n \to \infty$.


\subsection{Instructions and Configurations}\label{configurations}

Let $\mcS$ be a $\bq$-substitution on $\mcA.$  For each $\bj \in [\bZero,\bq),$ the map sending $\gamma$ to $\mcS\gamma(\bj),$ the $\bj$-th letter of the word $\mcS(\gamma),$ is a map $\mcA \to \mcA$ called the \textit{$\bj$-th instruction of $\mcS,$} and is denoted by $\mcR_\bj$ for $\bj \in [\bZero,\bq).$    For $n \geq 0$ and $\bj \in [\bZero,\bq^n)$, denote the $\bj$-th instruction of $\mcS^n$ by $\mcR_\bj^{(n)}$ (distinct from $\mcR_\bj^n$), extending to $\bj \in \Zd$ by reducing modulo $\bq^n$ as this has no effect on the substitution or its iterates.  We call the $\mcR_\bj^{(n)}$ for $\bj \in \Zd$ and $n \geq 0$ the \textit{generalized instructions} of $\mcS$ as they permit an alternate characterization of $\bq$-substitutions which exposes their arithmetic properties, see also [\ref{queffelec}, \S 5.1].  
\begin{prop}\label{qsub} 

Let $\mcS$ be a $\bq$-substitution on $\mcA.$  For every $n \in \bbN, \bj \in \Zd$, and $\bA \in \AZd$
$$(\mcS^n\bA)(\bj)  = \mcR_{\bj_0}  \mcR_{\bj_1} \cdots \mcR_{\bj_{n-1}}(\bA(\qq{\bj}_n)) \tbump{12px}{ and }  \mcR_\bj^{(n)} = \mcR_{\bj_0}  \mcR_{\bj_1} \cdots \mcR_{\bj_{n-1}}$$
\noindent where $\qq{\cdot}_n$ is the quotient mod $\bq^n$, and $\bj_i \in [\bZero,\bq)$ the $\bq$-adic digits of $\bj,$ as is our convention, and
$$T^\bj \mcS^n = T^{\bj_0} \mcS \cdots T^{\bj_{n-1}} \mcS  T^{\qq{\bj}_n} = T^{\qr{\bj}_n} \mcS^n T^{\qq{\bj}_n}$$
\end{prop}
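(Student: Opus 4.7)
The plan is a straightforward induction on $n$, where the base case $n=1$ is essentially the definition of the instructions, and the inductive step is powered by a short arithmetic identity about nested quotients together with the intertwining relation $\mcS T^\bk = T^{\bq\bk} \mcS$.

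\textbf{Base case.} For $n=1$, the inflate-and-subdivide description of $\mcS$ says exactly that, identifying $\Zd$ with $\bq\Zd + [\bZero,\bq)$, $(\mcS \bA)(\bj) = (\mcS \bA(\qq{\bj}_1))(\bj_0)$, which is $\mcR_{\bj_0}(\bA(\qq{\bj}_1))$ by the definition of the $\bj_0$-th instruction. The identity $\mcR_\bj^{(1)} = \mcR_{\bj_0}$ for $\bj \in [\bZero,\bq)$ is immediate. For the translation identity, again inflate-and-subdivide gives $\mcS T^\bk = T^{\bq\bk}\mcS$, so that $T^\bj \mcS = T^{\bj_0} T^{\bq \qq{\bj}_1} \mcS = T^{\bj_0} \mcS T^{\qq{\bj}_1}$.

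\textbf{Inductive step.} Assume all three identities hold for $n$ and write $\bj$ base $\bq$ as $\bj = \bj_0 + \bq \qq{\bj}_1$. First I would verify the arithmetic fact that $\qq{\qq{\bj}_1}_n = \qq{\bj}_{n+1}$ and that the $i$-th digit of $\qq{\bj}_1$ is $\bj_{i+1}$; this is just the componentwise division algorithm applied to base $\bq^{n+1} = \bq \cdot \bq^n$. Then
\[(\mcS^{n+1}\bA)(\bj) = (\mcS(\mcS^n \bA))(\bj) = \mcR_{\bj_0}\bigl((\mcS^n \bA)(\qq{\bj}_1)\bigr),\]
and by the inductive hypothesis this equals $\mcR_{\bj_0} \mcR_{\bj_1} \cdots \mcR_{\bj_n}(\bA(\qq{\bj}_{n+1}))$. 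Specializing $\bA$ to the constant sequence $\gamma$ and $\bj$ to an element of $[\bZero,\bq^{n+1})$ (so $\qq{\bj}_{n+1} = \bZero$) gives the formula for $\mcR_\bj^{(n+1)}$. For the translation identity, iterate the $n=1$ version: $T^\bj \mcS^{n+1} = T^{\bj_0}\mcS T^{\qq{\bj}_1} \mcS^n$, and apply the inductive hypothesis to $T^{\qq{\bj}_1}\mcS^n$, whose first digit is $\bj_1$ and whose top quotient at level $n$ is $\qq{\bj}_{n+1}$.

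\textbf{The closed form} $T^\bj \mcS^n = T^{\qr{\bj}_n}\mcS^n T^{\qq{\bj}_n}$ then follows either by collapsing the telescoping product using $T^{\bq^i \bk}\mcS = \mcS T^{\bq^{i-1}\bk}$ once more, or more directly by writing $\bj = \qr{\bj}_n + \qq{\bj}_n \bq^n$ and iterating $\mcS T^\bk = T^{\bq\bk}\mcS$ exactly $n$ times to move the factor $T^{\qq{\bj}_n \bq^n}$ past $\mcS^n$, producing $\mcS^n T^{\qq{\bj}_n}$.

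The only real obstacle is bookkeeping: one has to be careful that reducing indices mod $\bq^n$ inside an instruction has no effect (which is consistent with $\mcR_\bj^{(n)} = \mcR_{\qr{\bj}_n}^{(n)}$), and that the digit enumeration commutes with the quotient operation. Once those arithmetic identities are in hand, each of the three statements reduces to a single application of the inductive hypothesis, so the proof is short and essentially mechanical.
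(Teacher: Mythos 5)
Your proposal is correct and follows essentially the same route as the paper's own proof: establish the $n=1$ case from the inflate-and-subdivide description, obtain the general formula by induction on $n$ via the decomposition $\mcS^{n+1}\bA = \mcS(\mcS^n\bA)$ together with the digit-shift identities $(\qq{\bj}_1)_i = \bj_{i+1}$ and $\qq{\qq{\bj}_1}_n = \qq{\bj}_{n+1}$, specialize $\bj \in [\bZero,\bq^n)$ for the generalized instructions, and derive the translation identity by first proving the intertwining $\mcS T^\bk = T^{\bq\bk}\mcS$ and then peeling off digits.
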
 
\begin{proof}
\noindent Fix a $\bq$-substitution $\mcS$ and $\bA \in \AZd.$  The proof of the first statement follows by a simple inductive argument on the $n=1$ case.  The sequence $\mcS \bA$ is obtained by concatenating the blocks $\mcS(\bA(\ba))$ at the coordinates $\ba \bq$ for $\ba \in \Zd.$  It follows that the letter in the $\bb + \ba \bq$-th position of $\mcS \bA$ comes from the $\bb$-th letter of $\mcS(\bA(\ba))$, so that as $\mcR_\bb(\alpha) = \mcS(\alpha)_\bb$ for $\bb \in [\bZero,\bq),$ we have $\mcS\bA(\bb+\ba \bq) = \mcR_\bb(\bA(\ba)).$  This proves the $n=1$ case, as $\bb = \qr{\bb + \ba \bq}_1$ and $\ba = \qq{\bb + \ba \bq}_1.$  Writing $\mcS^n\bA = \mcS(\mcS^{n-1}\bA)$ gives the inductive step necessary to prove the first result.   The second follows by taking $\bj \in [\bZero,\bq^n)$, so that $\mcR_\bj^{(n)}(\bA(\bZero)) = \mcS^n(\bA(\bZero))(\bj) = \mcR_{\bj_0} \circ \cdots \circ \mcR_{\bj_{n-1}}(\bA(\bZero))$.

Finally, fix $n \geq 0$ and $\bj \in \Zd$.  For each $\bk \in \Zd$ and $\bA \in \AZd$, use the above to write
$$\big( \mcS^n T^\bj \bA \big)(\bk) = \mcR_{\bk}^{(n)}\big( T^\bj \bA(\qq{\bk}_n) \big) = \mcR_{\bk}^{(n)}\big( \bA(\bj + \qq{\bk}_n) \big) = \big( \mcS^n \bA \big)(\bj \bq^n + \bk) = \big( T^{\bj\bq^n} \mcS \bA \big)(\bk)$$
so that $\mcS^n T^\bj = T^{\bj \bq^n} \mcS^n$.  Writing $\bj = \bj_0 + \qq{\bj}_1 \bq$ we can commute $T$ with $\mcS$ one at a time writing
$$T^{\bj_0 + \qq{\bj}_1 \bq}\mcS^n = T^{\bj_0} \mcS \circ T^{\qq{\bj}_1} \mcS^{n-1} = T^{\bj_0} \mcS \circ T^{\qr{\qq{\bj}_1}_1} \mcS \circ T^{\qq{\bj}_2} \mcS^{n-2} = T^{\bj_0} \mcS \circ T^{\bj_1} \mcS \circ T^{\qq{\qq{\bj}_1}_1} \mcS^{n-2}$$
as $\qr{\qq{\bj}_1}_1 = \bj_1$ and $\qq{\qq{\bj}_1}_1 = \qq{\bj}_2$.  One then iterates the above to complete the proof.
\end{proof}
As we are only concerned with $\bq$-substitutions, we prefer proposition \ref{qsub} over the inflate and subdivide perspective, as it conveniently describes how translation and substitution interact.  If the instructions are all bijections on $\mcA$, we say $\mcS$ is a \textit{bijective substitution;} if they all commute with each other, we say $\mcS$ is a \textit{commutative} substitution.  The following example is classical and is an example of an aperiodic bijective commutative $\bq$-substitution; in theorem \ref{abc theorem}, we show that all such substitutions have spectrum singular to Lebesgue measure on $\bbT^d$.
\begin{exmp}\label{TM example}
	The Thue-Morse substitution is a $2$-substitution on the alphabet $\{0,1\}$ given by
	$$\small{\tau: \begin{cases} 0 \longmapsto 01 \\ 1 \longmapsto 10 \end{cases} \tbump{12px}{ with instructions } \mcR_0 = \begin{cases} 0  \longmapsto  0 \\ 1  \longmapsto 1\end{cases} \tbump{12px}{ and } \mcR_1 = \begin{cases} 0  \longmapsto 1 \\ 1  \longmapsto 0 \end{cases}}$$
	\noindent as $\tau$ is the identity in the first position, and the transposition in the second.  As $12 = 0 \cdot 2^0 + 0 \cdot 2^1 + 1 \cdot 2^2 + 1 \cdot 2^3,$ we have $\mcR_{12}^{(4)} = \mcR_0 \mcR_0 \mcR_1 \mcR_1 = \mcR_0$ as $\mcR_1^2 = \mcR_0$ is the identity, and so $(\tau^4\alpha)_{12} = \alpha$ for $\alpha = 0,1$ - note that $\tau^4(0) = 01101001100101101001011001101001$.   In this way, one can see that the generalized instructions $\mcR_\bj^{(n)}$ for $\tau$ are also the identity or transposition according to the parity of $1$'s in the binary expansion of $\bj$ below the $n$-th place.
\end{exmp}

We would like to distinguish those aspects of a $\bq$-substitution which depend on its collection of instructions (counted with multiplicity), from the arrangement of those instructions within the substitution by position. A \textit{$\bq$-configuration of instructions on $\mcA$} is a map $\mcR : [\bZero,\bq) \to \mcA^\mcA$ which assigns to every $\bj \in [\bZero,\bq)$ an instruction $\mcR_\bj: \mcA \to \mcA.$  By proposition \ref{qsub}, every $\bq$-configuration determines a $\bq$-substitution, and conversely by writing $\mcR_\bj(\gamma) := \mcS\gamma(\bj)$.  We will always represent the instructions of a substitution with the symbol of its configuration $\mcR: \bj \mapsto \mcR_\bj,$ the $\bj$-th instruction.  Using the generalized instructions as in proposition \ref{qsub}, we can extend any $\bq$-configuration $\mcR$ representing $\mcS$ to a $\bq^n$-configuration $\mcR^{(n)}$ representing $\mcS^n$.  Two substitutions $\mcS$ and $\tilde{\mcS}$, with configurations $\mcR$ and $\tilde{\mcR}$, are \textit{configuration equivalent} if there is a bijection $\iota$ of their domains for which $\tilde{\mcR}\circ \iota = \mcR$, so that they have the same instructions with different arrangements.  Properties of a substitution which are shared by all configuration equivalent substitutions are called \textit{configuration invariants}.  
	
For a matrix $\bR \in M_\mcA(\bbC),$ the space of $\mcA\times\mcA$-matrices with complex coefficients, observe that, if we write $\bR = (R_{\alpha, \gamma} )_{\alpha, \gamma \in \mcA}$ then $R_{\alpha, \gamma} =  \be_\alpha^t \bR \be_\gamma$  where $\be_\alpha$ is the $\alpha^\text{th}$-coordinate vector.  As we are representing $\mcA$ in $\bbC^\mcA$ by sending $\alpha \mapsto \be_\alpha,$ this induces a representation of an instruction $\mcR : \mcA \to \mcA$ by its \textit{instruction matrix} $\mcR \in \bM_\mcA(\bbC)$.   Formally, we can view an instruction as a $\bOne$-substitution on $\mcA$ and compute its substitution matrix $\mcR \in \bM_\mcA(\bbC),$ this is equivalent.  One can compute the instruction matrix from its instruction (or conversely) via the relation 
$$\alpha = \mcR(\gamma) \iff  \be_\alpha^t \bR \be_\gamma =\mcR_{\alpha,\gamma} =  1 \tbump{12px}{equivalently} \alpha \neq \mcR(\gamma) \iff   \be_\alpha^t \bR \be_\gamma =\mcR_{\alpha,\gamma} = 0$$
\noindent as instruction matrices have coefficients $0$ or $1$.   Moreover, this representation carries composition of functions into matrix multiplication so that, as in proposition \ref{qsub},  the generalized instruction matrices are products of the instruction matrices.  Note that the substitution matrix is the sum of its instruction matrices: $M_\mcS = \sum_{\bj \in [\bZero,\bq)} \mcR_\bj,$ and so $M_{\mcS^n} = M_\mcS^n$.  As they represent functions, the instruction matrices are naturally column-stochastic, and thus substitution matrices of $\bq$-substitutions are $Q$-column stochastic: their column sums are $Q$.  Note that, as matrix addition is commutative, the substitution matrix is also a configuration invariant, as are the measures of the \textit{initial cylinders} $[\alpha]$ for $\alpha \in \mcA,$ by Michel's theorem \ref{michel theorem}.
\begin{exmp}\label{TM instructions}
	For the Thue-Morse substitution, the instruction and substitution matrices are 
	$$\small{\mcR_0 = \begin{pmatrix} 1 & 0 \\ 0 & 1 \end{pmatrix} \tbump{12px}{ and } \mcR_1 = \begin{pmatrix} 0 & 1 \\ 1 & 0 \end{pmatrix} \tbump{24px}{ so that } M_\tau = \begin{pmatrix} 1 & 1 \\ 1 & 1 \end{pmatrix}}$$
	\noindent and the substitution matrix is (row) stochastic, typical in the bijective case.  As a positive (thus primitive) matrix, it has a unique probability eigenvector $\bu = (\nicefrac{1}{2}, \nicefrac{1}{2})$, its Perron vector.  
\end{exmp}
We will always conflate the notation for an instruction with its instruction matrix, extending this to the generalized instructions as well, and so $\mcR_\bj^{(n)}$ can be interpreted as belonging to $\mcA^\mcA$ or $\bM_\mcA(\bbC)$ interchangeably based on context.   We now describe a product on $\bq$-substitutions: given two alphabets $\mcA$ and $\tilde{\mcA},$ let $\mcA \tilde{\mcA}$ denote their \textit{product alphabet}, or those pairs $\alpha \tilde{\gamma}$ with $\alpha \in \mcA, \tilde{\gamma} \in \tilde{\mcA}.$  
\begin{defn}\label{substitution product}
	Let $\mcS, \tilde{\mcS}$ be $\bq$-substitutions on alphabets $\mcA, \tilde{\mcA}$ respectively.  Their \textit{substitution product} $\mcS \otimes \tilde{\mcS}$ is the $\bq$-substitution on $\mcA \tilde{\mcA}$ with configuration $\mcR \otimes \tilde{\mcR}$ whose $\bj$-th instruction is
	$$(\mcR \otimes \tilde{\mcR})_\bj : \mcA \tilde{\mcA} \to \mcA  \tilde{\mcA} \tbump{8px}{ with } (\mcR \otimes \tilde{\mcR})_\bj : \alpha \tilde{\gamma} \longmapsto \mcR_\bj(\alpha) \tilde{\mcR}_\bj(\tilde{\gamma}),$$
	\noindent obtained by concatenating the configurations $\mcR$ and $\tilde{\mcR}$ of the respective substitutions.
\end{defn}
Whenever $\tilde{\mcA} = \mcA,$ we write $\mcA \mcA = \mcA^2$ and, if $\tilde{\mcS} = \mcS$, the substitution $\mcS \otimes \mcS$ is called the \textit{bisubstitution} of $\mcS$ and its substitution matrix $C_\mcS := M_{\mcS \otimes \mcS}$ the \textit{coincidence matrix} of $\mcS$, after Queff\'elec, see [\ref{queffelec}, \S 10].  The substitution product is always aperiodic stable as periodic sequences in the hull of a substitution product would necessarily be periodic in both factors.   On the other hand, it is not always primitive stable: the bisubstitution of a primitive substitution is in general itself not primitive.  Using the primitive reduced form of proposition \ref{PRF}, however, it can always be made primitive on its ergodic classes by telescoping appropriately.  In \S  \ref{substitution product section} we discuss an example of a primitive substitution product of Thue-Morse and Rudin-Shapiro, illustrating an interesting relationship that can hold between the spectrum of a substitution product and that of its factors.  

We offer some justification for the notation: given two matrices $A, B \in M_\mcA(\bbC),$ their \textit{Kronecker product} is the matrix $A \otimes B \in M_{\mcA^2}(\bbC)$  whose $(\alpha \beta, \gamma \delta) \in \mcA^2 \times \mcA^2$ entry is $A_{\alpha \gamma} B_{\beta \delta}$ and $\be_\alpha \otimes \be_\beta = \be_{\alpha \beta}$ is the standard unit vector in $\bbC^{\mcA^2}$ corresponding to the word $\alpha \beta.$   With this notation, observe that the coefficients of the Kronecker product become
\begin{equation}\label{tensor conjugation} (A \otimes B)_{\alpha \beta, \gamma \delta} = \be_{\alpha \beta}^t A \otimes B \be_{\gamma \delta} = ( \be_\alpha^t A \be_\gamma ) ( \be_\beta^t B \be_\delta ) = A_{\alpha \gamma} B_{\beta \delta}\end{equation}
\noindent This implies the Kronecker products \textit{mixed product property} 
\begin{equation}\label{mixed product property} (A \otimes B)(C \otimes D) = (AC)\otimes(BD) \end{equation}
\noindent Using this, one checks that the (generalized) instruction matrices of a substitution product are Kronecker products of the corresponding (generalized) instruction matrices of its factors.  Moreover, for every $n \geq 0,$ integers $\bj,\bk \in \Zd$ and letters $\alpha, \beta, \gamma, \delta \in \mcA$, the above gives
\begin{equation}\label{tensor selector}
	\alpha  = \mcR_{\bj}^{(n)}(\gamma) \tbump{4px}{and} \beta = \mcR_\bk^{(n)}(\delta) \tbump{16px}{$\iff$}  \be_{\alpha \beta}^t \mcR_{\bj}^{(n)} \otimes \mcR_{\bk}^{(n)} \be_{\gamma \delta} = 1
\end{equation}
\noindent so that for us the Kronecker product is just formalizing the conjunction \textit{and} within a linear algebraic context, and will be used as above to represent simultaneous conditions on substitutions.   For more discussion of the Kronecker product, see [\ref{HJ}, \S 4].  We work out some of the above details in the case of the Thue-Morse example, for clarification.
\begin{exmp}\label{TM bisub}
	For the Thue-Morse substitution $\tau$, the bialphabet and bisubstitution are
	$$\mcA^2 = \{00, 01, 10, 11\} \tbump{12px}{ and } \tau \otimes \tau : \scalebox{0.8}{${\small\begin{cases} 00 \mapsto 00 \, 11, & 01 \mapsto 01 \, 10 \\ 11 \mapsto 11 \, 00, & 10 \mapsto 10 \, 01\end{cases}}$}$$
	\noindent and so the instruction matrices for $\tau \otimes \tau$ (with basis of $\bbC^{\mcA^2}$ ordered $\be_{00}, \be_{01}, \be_{10}, \be_{11}$) are
	$${\tiny \mcR_0 \otimes \mcR_0 = \begin{pmatrix} 1 & \\ & 1 \end{pmatrix} \otimes \begin{pmatrix} 1 & \\ & 1 \end{pmatrix} = \scalebox{0.75}{ $ \begin{pmatrix} 1 & & & \\ & 1 & & \\ & & 1 & \\ & & & 1 \end{pmatrix}$} \tbump{12px}{ {\normalsize and} } \mcR_1 \otimes \mcR_1 = \begin{pmatrix} & 1 \\ 1 & \end{pmatrix} \otimes \begin{pmatrix}  & 1\\ 1 &  \end{pmatrix}  = \scalebox{0.75}{$\begin{pmatrix} & & & 1 \\ & & 1 & \\ & 1 & & \\ 1 & & & \end{pmatrix}$} }$$
	and the tensor products $\mcR\otimes \tilde{\mcR}$ of instruction matrices can be computed by \textit{placing a copy} of $\tilde{\mcR}$ at every $1$ in $\mcR,$ and $\bZero \in \bM_\mcA(\bbC)$ at every $0$ in $\mcR$, see also example \ref{TM sigma}.  The coincidence matrix is
	$$C_{\tau} = M_{\tau \otimes \tau} = \mcR_0 \otimes \mcR_0 + \mcR_1 \otimes \mcR_1 = \scalebox{0.75}{$\tiny{\begin{pmatrix} 1 & & & 1 \\ & 1 & 1 & \\ & 1 & 1 & \\ 1 & & & 1 \end{pmatrix}}$} \tbump{12px}{ which is similar to } \scalebox{0.75}{$\tiny{\begin{pmatrix} 1 & 1  & & \\ 1 & 1 & & \\ &  & 1 & 1\\  & & 1 & 1 \end{pmatrix}}$}$$
	\noindent  by reordering the basis to $\be_{00}, \be_{11}, \be_{01}, \be_{10}$.  Thus, the primitive reduced form of $\tau \otimes \tau$ is two copies of $\tau,$ one each on the alphabets $\mcE_0 = \{00,11\}$ and $\mcE_1=\{01,10\}$ which together form the ergodic classes of the bisubstitution; there is no transient part, as is always the case for bijective substitutions - their instructions are invertible, so every letter is in a closed orbit.
\end{exmp}
We now describe how aperiodicity of the subshift gives rise to useful topological structure allowing us to exploit the arithmetic properties of $\bq$-substitutions to study their measure theoretic structure and ultimately their spectral theory.


\subsection{Aperiodicity and the Subshift}\label{aperiodicity}

Recall that a $\bq$-substitution is aperiodic if $X_\mcS$ contains no periodic points; in the $d=1$ case, there is a useful criteria for verifying aperiodicity in primitive $q$-substitutions, based on a result of Pansiot [\ref{pansiot}, Lemma 1].  The specific advantage in the $\bbZ$ setting is provided by the following equivalence: a primitive substitution ($d=1$) is periodic if and only if its subshift is finite, and hence has a word of finite length which generates the entire language.  By a ($\mcS$-)\textit{neighborhood of $\alpha,$} we mean a word $\gamma \alpha \delta$ which appears in the reduced language of $\mcS,$ or the set of all subwords appearing in some substitution sequence $\bA \in X_\mcS.$ 
\begin{lem}[Pansiot's Lemma]\label{pansiot lemma}
	 A primitive $q$-substitution (the $\bbZ$ case) which is one-to-one on $\mcA$ is aperiodic if and only if $\mcS$ has a letter with at least two distinct neighborhoods.
\end{lem}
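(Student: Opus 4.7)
The plan is to prove each direction by contrapositive.

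$(\Rightarrow)$ Suppose every letter appearing in $X_\mcS$ has a unique neighborhood. Then the assignments $\alpha \mapsto R(\alpha)$ and $\alpha \mapsto L(\alpha)$ of unique right and left neighbors define a pair of inverse bijections on the finite set $\mcA_0$ of letters occurring in $X_\mcS$. Every $\bA \in X_\mcS$ satisfies $\bA(i+1) = R(\bA(i))$ for all $i \in \bbZ$, so iterating this deterministic recursion on the finite set $\mcA_0$ forces $\bA$ to be purely periodic, contradicting aperiodicity.

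$(\Leftarrow)$ Suppose $X_\mcS$ contains a periodic point. Since $\mcS$ is primitive, $X_\mcS$ is minimal, hence is the finite shift orbit of a single periodic sequence $\bA$ of minimal period $p$; I will show that every letter appearing in $\bA$ has a unique neighborhood, which is the desired contrapositive. Choose $n$ large enough that $q^n \geq p$. Since $\mcS^n\bA \in X_\mcS$, it is a translate of $\bA$, say $\mcS^n\bA = T^{r_n}\bA$, and in particular has period $p$. For any $\alpha = \bA(i)$, the block $\mcS^n(\alpha)$ occupies positions $[iq^n, (i+1)q^n)$ of $\mcS^n\bA$, and hence coincides with the length-$q^n$ subword of $\bA$ starting at phase $\phi(\alpha) := iq^n - r_n \pmod{p}$.

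Because $q^n \geq p$ and $p$ is the minimal period of $\bA$, distinct phases in $\bbZ/p$ produce distinct length-$q^n$ subwords of $\bA$. Consequently, if $\alpha$ occurs at two positions $i,j$ in $\bA$, then the two realisations of $\mcS^n(\alpha)$ must have equal phase, forcing $(i-j)q^n \equiv 0 \pmod{p}$; in particular $\phi: \mcA_0 \to \bbZ/p$ is well defined. Since $\mcS$, and therefore $\mcS^n$, is one-to-one on $\mcA$, distinct letters $\alpha \neq \beta$ give distinct blocks $\mcS^n(\alpha) \neq \mcS^n(\beta)$, which by the phase-distinguishing property yields $\phi(\alpha) \neq \phi(\beta)$; thus $\phi$ is injective. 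Finally, given $\bA(i) = \bA(j) = \alpha$, the congruence $(i-j)q^n \equiv 0 \pmod{p}$ implies $\phi(\bA(i \pm 1)) = \phi(\bA(j \pm 1))$, and injectivity of $\phi$ delivers $\bA(i \pm 1) = \bA(j \pm 1)$. Hence $\alpha$ has a unique neighborhood in $\bA$, and therefore in the reduced language.

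The forward direction is essentially pigeonhole on a deterministic recursion; the main obstacle is the reverse direction, where unique neighborhoods must be extracted from the rigid combinatorial structure imposed by periodicity. The critical ingredient is that one-to-oneness of $\mcS$ passes to $\mcS^n$ and, combined with the choice $q^n \geq p$, makes the phase map $\phi$ injective; without one-to-oneness, distinct letters could share a phase and the argument would collapse.
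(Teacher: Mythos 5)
The paper does not actually prove Pansiot's Lemma; it states it and cites [Pansiot, Lemma 1], so there is no proof in the paper to compare against. I will therefore just assess your argument on its own.

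Your proof is correct. The forward direction is standard: unique neighborhoods make the right-successor map $R$ (and its inverse $L$) a permutation of the finitely many letters occurring in $X_\mcS$, and any bi-infinite sequence obeying $\bA(i+1)=R(\bA(i))$ under a finite-order permutation is periodic. The backward direction is where the real work is, and you have the right structure: primitivity gives minimality, so a single periodic point of minimal period $p$ forces $X_\mcS$ to be exactly its finite orbit; $\mcS$-invariance of $X_\mcS$ then gives $\mcS^n\bA = T^{r_n}\bA$; and with $q^n\geq p$ the key combinatorial fact is that a bi-infinite word of minimal period $p$ has pairwise distinct length-$q^n$ windows across phases in $\bbZ/p$ (if two such windows agreed you would get a shorter period). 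This makes the phase map $\phi$ well-defined on letters, and injectivity of $\mcS$ (hence of $\mcS^n$ on $\mcA$, since $\mcS$ injective on letters implies injective on equal-length words) forces $\phi$ to be injective, so that occurrences of a letter are rigidly locked in phase and its neighbors are determined. This matches the heuristic the paper offers just before the lemma — that primitivity plus periodicity forces finiteness of the subshift — and correctly isolates where the one-to-one hypothesis is used (without it, e.g.\ $0\mapsto 001$, $1\mapsto 001$ gives a periodic hull with a letter having two neighborhoods). One cosmetic slip: with the paper's convention $T^{r}\bA(j)=\bA(j+r)$ you should have $\phi(\alpha)=iq^n+r_n \pmod p$, not $iq^n-r_n$, but this does not affect the argument since only the difference $(i-j)q^n$ enters.
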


As $\mcS: \mcA \to \mcA^+$ is a map on a finite set, injectivity is trivial to verify, and primitivity can easily be checked by taking powers of the substitution matrix, or by iterating the substitution.  One then examines the iterates $\mcS^{nh}(\gamma)$ for neighbor pairs ($h$ is the index of imprimitivity); as soon as a letter with distinct neighborhoods appears, aperiodicity is verified.  Unfortunately, we have no reasonable criteria for extending Pansiot's result to the case $d > 1,$ as it relies on the fact that periodicity is equivalent to finiteness of the hull for $d = 1$, which is false in higher dimensions.
\begin{exmp}\label{TM aperiodic}
	Consider the Thue-Morse substitution of example \ref{TM example}.  It is primitive, and as $\tau^2(0) = 0110$ and $\mcR_0$ is the identity, $1$ has multiple neighborhoods appearing in the language: $011$ and $110$.  As it is evidently injective on $\mcA$, it is therefore aperiodic.  
\end{exmp}	
We are interested in aperiodic substitutions as they allow for \textit{unique local desubstitution} - a notion made precise by the following result.  Originally proven by Moss\'e [\ref{mosse}] in the one-dimensional case, and extended by Solomyak [\ref{solomyak aperiodicity}] to self-similar $\bbR^d$ tilings, it is stated here for $\bq$-substitutions. 
\begin{thm}[Moss\'e (96), Solomyak (98)]
\label{aperiodicity implies recognizability}
	A $\bq$-substitution $\mcS$ on $\mcA$ is aperiodic if and only if for every $\bA \in X_\mcS$ there exists a unique $\bk \in [\bZero,\bq)$ and $\bB \in X_\mcS$ with $T^{\bk} \mcS(\bB) = \bA.$
\end{thm}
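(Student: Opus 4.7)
I would treat the two implications separately: the reverse direction is short and direct, while the forward direction is the substantive step.

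\emph{Reverse direction (recognizability $\Rightarrow$ aperiodicity).} Assume the stated decomposition property and suppose $T^\bj\bA = \bA$ for some $\bA \in X_\mcS$ and some $\bj \neq \bZero$. Write $\bA = T^\bk\mcS(\bB)$ uniquely with $\bk \in [\bZero,\bq)$ and $\bB \in X_\mcS$, apply $T^\bj$, and use the commutation identity from proposition~\ref{qsub} with $\bi = \bj + \bk$:
$$\bA \;=\; T^\bj\bA \;=\; T^{\bj+\bk}\mcS(\bB) \;=\; T^{\qr{\bj+\bk}_1}\mcS\bigl(T^{\qq{\bj+\bk}_1}\bB\bigr).$$
Since $T^{\qq{\bj+\bk}_1}\bB \in X_\mcS$ and $\qr{\bj+\bk}_1 \in [\bZero,\bq)$, the uniqueness hypothesis forces $\qr{\bj+\bk}_1 = \bk$ and $T^{\qq{\bj+\bk}_1}\bB = \bB$, giving $\bj \in \bq\Zd$ and a nonzero period $\bj/\bq$ for $\bB$. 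Iterating on $\bB$ produces $\bj \in \bq^n\Zd$ for every $n \geq 1$, hence $\bj = \bZero$, a contradiction.

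\emph{Forward direction: existence of the decomposition.} Given $\bA \in X_\mcS$, each $\bA|_{[-n\bOne, n\bOne]}$ lies in the reduced language, so appears as a subblock of some $\mcS^{m_n}(\gamma_n)$ at translate $\bi_n$. Writing $\mcS^{m_n}(\gamma_n) = \mcS(\mcS^{m_n-1}(\gamma_n))$ and splitting $\bi_n = \qr{\bi_n}_1 + \qq{\bi_n}_1\bq$ via proposition~\ref{qsub}, the subblock agrees with $T^{\bk_n}\mcS(\bC_n)$ on $[-n\bOne, n\bOne]$ for $\bk_n := \qr{\bi_n}_1 \in [\bZero,\bq)$ and some $\bC_n \in X_\mcS$ extending the relevant subregion of $\mcS^{m_n-1}(\gamma_n)$. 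By finiteness of $[\bZero,\bq)$ and compactness of $X_\mcS$, pass to a subsequence with $\bk_n \equiv \bk$ and $\bC_n \to \bB$ to obtain $\bA = T^\bk\mcS(\bB)$.

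\emph{Forward direction: uniqueness --- the main obstacle.} Suppose $T^\bk\mcS(\bB) = T^{\bk'}\mcS(\bB')$ with $\bk, \bk' \in [\bZero,\bq)$ and, toward contradiction, $\bk \neq \bk'$. Iterating the existence argument at higher scales via proposition~\ref{qsub} promotes the pair $(\bk,\bk')$ to a pair of distinct offsets in $[\bZero,\bq^n)$ at every scale $n$, so that $\bA$ would admit two incompatible $\bq^n$-supersymbol tilings of $\Zd$. The Moss\'e--Solomyak strategy converts this into a periodicity contradiction via \emph{synchronizing blocks}: finite patterns in $\mcL_\mcS$ whose every occurrence inside any $\mcS^n(\gamma)$ is pinned to a fixed position modulo $\bq^n$. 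For aperiodic primitive $\bq$-substitutions, the polynomial pattern-complexity of the subshift combined with the linear repetitivity of each primitive component forces synchronizing blocks to exist and to recur with bounded gaps, so every sufficiently large window in $\bA$ meets one. The two competing offsets then place such a block at incompatible relative positions inside their respective supersymbols, and descending through the scales produces --- by compactness --- a nonzero shift-periodic limit in $X_\mcS$, contradicting aperiodicity. Once $\bk = \bk'$, the identity $\mcS(\bB) = \mcS(\bB')$ combined with the same synchronization one level up yields $\bB = \bB'$. The technical heart of the proof, and the main difficulty of Solomyak's $d > 1$ extension, is the construction and combinatorial control of these synchronizing blocks, since the interaction of $[\bZero,\bq)$-supersymbol boundaries is considerably more subtle than in the one-dimensional setting where Moss\'e's arguments apply.
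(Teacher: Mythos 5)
The paper does not prove Theorem~\ref{aperiodicity implies recognizability}; it cites the result to Moss\'e~[\ref{mosse}] and Solomyak~[\ref{solomyak aperiodicity}] and uses it as a black box, so there is no in-paper argument to compare against.

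On its own merits, your proposal proves the easy parts and leaves the hard part as a sketch. The reverse direction is correct: from a nonzero period $\bj$ of $\bA$, the commutation identity of proposition~\ref{qsub} together with the uniqueness hypothesis forces $\bj\in\bq\Zd$, and iterating gives $\bj\in\bq^n\Zd$ for every $n$, whence $\bj=\bZero$. The existence half of the forward direction is also a reasonable compactness sketch, and existence in fact does not require aperiodicity, which already tells you the real content of the theorem is uniqueness. The gap is precisely there: your uniqueness paragraph is a description of a strategy, not a proof. You name synchronizing blocks and explain what they are supposed to accomplish, but you never construct them, never prove they exist, never establish the bounded-gaps recurrence, and never carry out the descent through the scales $\bq^n$ that produces the nonzero periodic limit. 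Those steps constitute essentially the entire technical content of Moss\'e's theorem and of Solomyak's higher-dimensional extension, and they cannot be dispatched by invoking ``polynomial pattern-complexity'' and ``linear repetitivity forces synchronizing blocks to exist'' --- neither assertion is justified in your sketch, and the appeal to primitivity is itself misplaced since the statement applies to all aperiodic $\bq$-substitutions, not only primitive ones. A reader could not reconstruct the uniqueness argument from what you have written; as a proof proposal it establishes the easy implication and the easy half of the hard one, and replaces the substantive part with a paraphrase of its intended conclusion.
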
 
\noindent Known as \textit{recognizability}, the above property gives conditions on the language permitting a convenient topological description of the subshift.  Iterating theorem \ref{aperiodicity implies recognizability} gives us cancellation laws
\begin{equation}\label{ifs cancellation} T^\bj \mcS^n(\bA) = T^\bk \mcS^n(\bB) \tbump{20px}{$\iff$} \qr{\bj}_n = \qr{\bk}_n \tbump{4px}{ and } T^{\qq{\bj}_n} \bA = T^{\qq{\bk}_n} \bB \end{equation}
for $n \geq 0,$ $\bj, \bk \in \Zd$ and $\bA, \bB \in X_\mcS$, using proposition \ref{qsub}, and letting us express the subshift  as
\begin{equation}\label{ifs attractor}
X_\mcS = {\bigsqcup}_{\bj_0 \in [\bZero,\bq)} T^{\bj_0}\mcS(X_\mcS) = {\bigsqcup}_{\bj \in [\bZero,\bq^n)} T^{\bj_0}  \mcS \circ  \cdots \circ T^{\bj_{n-1}}\mcS(X_\mcS)  = {\bigsqcup}_{\bj \in [\bZero,\bq^n)} T^\bj \mcS^n (X_\mcS) 
\end{equation}
\noindent where ${\sqcup}$ denotes a disjoint union.   Consider the substitutions $T^\bk \mcS$, each mapping $\mcA \to \mcA^{\bk + [\bZero,\bq^n)}$ for some $\bk \in \Zd$.  They are $\bq$-substitutions in every sense \textit{except} that their substituted blocks lie at $\bk$ as opposed to the origin, and as $\mcS$ is $\bq$-expansive on the supports of blocks, these substitutions are all contractions (nonstrict, consider distinct fixed points) on the full shift.  Thus, we can interpret the above as describing $X_\mcS$ as the attractor for an iterated function system whose $n$-th iterates are given by $T^\bj \mcS^n = T^{\bj_0} \mcS \circ \cdots \circ T^{\bj_{n-1}} \mcS$ for $\bj \in [\bZero,\bq^n)$ with expansion $\bj_0 + \cdots + \bj_{n-1} \bq^{n-1}$.

We now consider the orbit of the cylinder sets $[\eta]$ for $\eta \in \mcA^+$ (a topological basis) under this iterated function system in order to study the Borel structure of the substitution subshift.  We would like to arrange them by degree, and so refer to the sets $T^\bj \mcS^n[\eta]$ for $\eta \in \mcA^+$, and $\bj \in [\bZero,\bq^n)$ as the \textit{$n$-th iterated cylinders} of $X_\mcS$.   We are interested in how these iterated cylinders are nested by degree, and as $T^\bj \mcS^n[\eta] \subset [\omega] \iff \omega \leq T^\bj \mcS^n(\eta)$, this question can also be answered using blocks: if $\omega \leq T^\bj \mcS^n(\eta)$ for $\bj \in [\bZero,\bq^n)$ and $\eta \in \mcA^+$, then for $\bk \in \supp(\omega)$ proposition \ref{qsub} gives us
$$\omega(\bk) = (T^\bj\mcS^n\eta)(\bk)  = (\mcS^n\eta)(\bj+\bk) = \mcR_{\bj+\bk}^{(n)} \big(\eta(\, \qq{\bj+\bk}_n)\big)$$
\noindent so that $\omega \leq T^\bj \mcS^n(\eta)$ implies $\qq{\bj + \supp(\omega)}_n \subset \supp(\eta).$  If $\omega \leq T^\bj \mcS^n(\eta)$ and $\eta \leq \eta'$, then $\omega \leq T^\bj \mcS^n(\eta'),$ and so it suffices to specify the smallest $\eta \in \mcA^+$ with $\omega \leq T^\bj \mcS^n(\eta)$ for some $\bj \in [\bZero,\bq^n)$.
\begin{defn}\label{inverse substitution}
For $\omega \in \mcA^+$ and $n \geq 0,$ the \textit{$n$-th desubstitutes} of $\omega$ are given by
$$\mcS^{-n}(\omega) := \left \{ (\bj, \eta) \in [\bZero,\bq^n)\times \mcA^+ \tbump{1px}{$:$} \supp(\eta) = \qq{\,\bj + \supp(\omega)}_n \text{ and } \omega \leq T^{\bj}\mcS^n(\eta) \right \}$$
\end{defn}
\noindent As $\eta \leq \eta' \implies [\eta'] \subset [\eta]$, the pairs $(\bj, \eta) \in \mcS^{-n}(\omega)$ give the largest cylinders $[\eta]$ which are carried into $[\omega]$ in $n$-steps, with $\bj$ giving the sequence of maps as in proposition \ref{qsub}, and so describe how the iterated cylinders fit within one another, along with the Borel structure of $X_\mcS$.
\begin{cor}\label{topological structure}
	Let $\mcS$ be an aperiodic $\bq$-substitution on $\mcA$.  For every $n \geq 0$
	\begin{enumerate}
		\item the maps $T^\bj\mcS : X_\mcS \longrightarrow X_\mcS$ are embeddings for all $\bj \in \Zd$
		\item the \textit{$n$-th iterated initial cylinders} $\mcP_n = \big\{T^\bj \mcS^n[\alpha] \tbump{1px}{$:$} \bj \in [\bZero,\bq^n), \hspace{2px} \alpha \in \mcA \big\}$ partition $X_\mcS$
		\item iterates of the $n$-th desubstitutes partition the cylinders: $[\omega] = {\bigsqcup}_{(\bj,\eta) \in \mcS^{-n}(\omega)} \,T^\bj \mcS^n[\eta]$
	\end{enumerate} 
\end{cor}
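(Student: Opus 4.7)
The plan is to prove the three parts in sequence, leveraging the recognizability theorem \ref{aperiodicity implies recognizability} and the iterated decomposition (\ref{ifs attractor}), which are already in place by the time this corollary is stated.

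For part (1), I would first note that $T^\bj\mcS$ is continuous (both $T^\bj$ and $\mcS$ are continuous in the product topology on $\AZd$) and maps $X_\mcS$ to $X_\mcS$ by shift-invariance of the subshift and invariance of the language under $\mcS$. For injectivity, it suffices to handle $\mcS$ itself, since $T^\bj$ is a homeomorphism of $\AZd$. But if $\mcS(\bA) = \mcS(\bB)$ with $\bA, \bB \in X_\mcS$, then $T^\bZero \mcS(\bA) = T^\bZero \mcS(\bB)$, and the uniqueness clause of theorem \ref{aperiodicity implies recognizability} forces $\bA = \bB$. Since $X_\mcS$ is compact and Hausdorff, a continuous injection into a Hausdorff space is automatically an embedding, completing (1); equivalently one may cite the cancellation law (\ref{ifs cancellation}) directly.

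For part (2), I would start from (\ref{ifs attractor}), which already gives the disjoint decomposition $X_\mcS = \bigsqcup_{\bj \in [\bZero,\bq^n)} T^\bj \mcS^n(X_\mcS)$. Partitioning $X_\mcS$ into initial cylinders as $X_\mcS = \bigsqcup_{\alpha \in \mcA}[\alpha]$ and applying the injective map $T^\bj \mcS^n$ from part (1), one has $T^\bj \mcS^n(X_\mcS) = \bigsqcup_{\alpha \in \mcA} T^\bj \mcS^n[\alpha]$, and combining gives the claim. Disjointness across different $\bj$ comes from (\ref{ifs attractor}), and disjointness across different $\alpha$ for a fixed $\bj$ comes from injectivity of $T^\bj \mcS^n$.

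For part (3), let $\bA \in [\omega]$. By part (2) there is a unique pair $\bj \in [\bZero,\bq^n)$ and $\bB \in X_\mcS$ with $\bA = T^\bj \mcS^n(\bB)$. Using proposition \ref{qsub}, for each $\bk \in \supp(\omega)$ we have $\omega(\bk) = \bA(\bk) = \mcR_{\bj+\bk}^{(n)}\bigl(\bB(\qq{\bj+\bk}_n)\bigr)$; restricting $\bB$ to $\qq{\bj+\supp(\omega)}_n$ produces a block $\eta$ with $(\bj, \eta) \in \mcS^{-n}(\omega)$ and $\bA \in T^\bj \mcS^n[\eta]$. Conversely, any $\bA \in T^\bj \mcS^n[\eta]$ for $(\bj,\eta) \in \mcS^{-n}(\omega)$ satisfies $\omega \leq T^\bj\mcS^n(\eta) \leq \bA$, so $\bA \in [\omega]$. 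Disjointness of the union follows since $\bj$ is uniquely determined by $\bA$ via part (2), the preimage $\bB$ is then unique by injectivity, and $\eta$ is forced to be the restriction of $\bB$ to the prescribed support. The main bookkeeping obstacle is matching the support condition in the definition of $\mcS^{-n}(\omega)$ with the restriction of $\bB$, but this is handled cleanly by the identity $\qq{\bj+\bk}_n \in \supp(\eta)$ derived from proposition \ref{qsub}.
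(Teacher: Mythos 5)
Your proof is correct and follows essentially the same route as the paper: part (1) via recognizability plus the compact-to-Hausdorff injection argument, part (2) via the disjoint decomposition (\ref{ifs attractor}) refined by the initial-cylinder partition, and part (3) by applying proposition \ref{qsub} to restrict the unique preimage $\bB$ to $\qq{\bj+\supp(\omega)}_n$. The only cosmetic difference is in part (2), where the paper emphasizes that $(\mcS\bB)(\bZero)$ depends only on $\bB(\bZero)$ while you route the argument through (\ref{ifs attractor}) and injectivity, but the substance is the same.
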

\begin{proof}
As $\mcS: X_\mcS \to \mcS(X_\mcS)$ is a continuous surjection from a compact space to a Hausdorff space, the first claim follows as $T^\bj \mcS^n$ is injective: each $\bB \in X_\mcS$ has at most one $\bA \in X_\mcS$ with $\mcS(\bA) = \bB,$ by theorem \ref{aperiodicity implies recognizability}.  Thus $\mcS : X_\mcS \xrightarrow{\sim} \mcS(X_\mcS)$ is a homeomorphism, and so $T^\bj \mcS \vert_{X_\mcS}$ is an embedding.  

As $(\mcS\bB)(\bZero) = \mcS(\bB(\bZero))$ depends only on the definition of $\bB$ at $\bZero$, the second statement follows immediately from theorem \ref{aperiodicity implies recognizability}: the existence of $\bj \in [\bZero,\bq^n)$ and $\bB$ implies $\mcP_n$ covers, and uniqueness implies its members are disjoint.

For the third and final claim, fix $\omega \in \mcA^+,$ and $n \geq 0.$  That the union is disjoint follows immediately from theorem \ref{aperiodicity implies recognizability}, and we already know that the right hand side is contained in $[\omega]$, so we complete the proof by showing the reverse containment; to that end, let $\bA \in [\omega].$  By theorem \ref{aperiodicity implies recognizability}, there is a unique $\bB \in X_\mcS$ and $\bj \in [\bZero,\bq^n)$ such that $\bA = T^\bj \mcS^n(\bB)$, and so $T^\bj \mcS^n(\bB) \in [\omega].$  If $\omega$ is defined at $\bk$ then, using proposition \ref{qsub}, this implies that
$$\omega(\bk) = (T^\bj \mcS^n\bB)(\bk) = (\mcS^n\bB)(\bj+\bk) = \mcR_{\bj+\bk}^{(n)}(\bB(\qq{\bj+\bk}_n)) = \mcR_{\bj+\bk}^{(n)}(\eta(\qq{\bj+\bk}_n))$$
\noindent where $\eta \in \mcA^+$ is the block obtained by restricting $\bB$ to $\qq{\bj+\supp(\omega)}_n$.    Thus, we have $\bB \in [\eta]$ and so $\bA = T^\bj \mcS^n(\bB) \in T^\bj \mcS^n[\eta]$, showing that $\bA$ is also an element of the left-hand side.  \end{proof}

If we let $\alpha \in \mcA$ denote the block $\bZero \mapsto \alpha$, one checks that $\qq{\bj+\supp(\alpha)}_n = \bZero$ for $\bj \in [\bZero,\bq^n)$ as $\supp(\alpha)$ is $\bZero,$ so that the support of any $\eta$ coming from  $\mcS^{-n}(\alpha)$ is $\bZero$ also.    For more general $\omega$ with $\Omega := \supp(\omega)$, if $\omega \leq T^\bj \mcS^n(\eta)$ for some $\eta \in \mcA^+$ and $\bj \notin \Delta_p(\Omega) := \fatcup_{\bk \in \Omega} \Delta_p(\bk)$ then $\omega$ is extended by $T^\bj \mcS^n(\gamma)$ for $\gamma = \eta(\bZero).$  Thus, in this context, lemma \ref{small carries} shows that the proportion of $(\bj,\eta) \in \mcS^{-n}(\omega)$ corresponding to $\eta$ not defined entirely at the origin goes to $0$ as $n \to \infty$.  

Before we proceed to the spectral theory of aperiodic $\bq$-substitutions, we give a final application of aperiodicity which will allow us to relate the correlation measures $\sigma_{\alpha \beta}$ to the spectrum of our substitution.   The \textit{iterated initial indicators} for a $\bq$-substitution $\mcS$ on $\mcA$ are the indicator functions $\IND_{T^\bk \mcS^n[\gamma]} \in L^2(X_\mcS)$ of the iterated initial cylinders $T^\bk \mcS^n[\gamma]$ for $n \geq 0,$ $\bk \in \Zd,$ and $\gamma \in \mcA$. 
\begin{prop}\label{iterated indicator}
	If $\mcS$ is an aperiodic $\bq$-substitution on $\mcA$, then the iterated initial indicators $\IND_{T^\bk \mcS^n[\gamma]}$ for $\bk \in \Zd, n \geq 0,$ and $\gamma \in \mcA,$ have dense span in $L^2(\mu)$, for all $\mu \in \mcM(X_\mcS,T).$
\end{prop}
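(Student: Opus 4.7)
The plan is to show that the linear span of the iterated initial indicators can approximate every cylinder indicator $\IND_{[\omega]}$ in $L^2(\mu)$-norm. Since the cylinders $\{[\omega] : \omega \in \mcA^+\}$ generate the Borel $\sigma$-algebra of $X_\mcS$, their indicators have dense span (via simple functions), so this reduction suffices.

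Fix $\omega \in \mcA^+$ with $\Omega := \supp(\omega)$ and $n \geq 0$. The crucial tool is item (3) of corollary \ref{topological structure}, which yields the exact identity
$$\IND_{[\omega]} = \sum_{(\bj,\eta) \in \mcS^{-n}(\omega)} \IND_{T^\bj \mcS^n[\eta]}.$$
I would split the index set into a \emph{good} part $\bj \notin \Delta_n(\Omega) := \fatcup_{\bk \in \Omega} \Delta_n(\bk)$ and a \emph{bad} part $\bj \in \Delta_n(\Omega)$. For a good $\bj$, the definition of the carry set in (\ref{carry sets}) forces $\qq{\bj+\bk}_n = \bZero$ for \emph{every} $\bk \in \Omega$; hence $\supp(\eta) = \qq{\bj+\Omega}_n = \{\bZero\}$ and $[\eta] = [\gamma]$ with $\gamma := \eta(\bZero)$. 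Thus every good summand is exactly an iterated initial indicator $\IND_{T^\bj \mcS^n[\gamma]}$.

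The main step is to control the bad piece $B_n := \IND_{E_n}$, where $E_n$ is the disjoint union of the bad $T^\bj \mcS^n[\eta]$. Since $E_n \subseteq \bigcup_{\bj \in \Delta_n(\Omega),\, \alpha \in \mcA} T^\bj \mcS^n[\alpha]$ and $\mu$ is $T$-invariant, the measure $\mu(T^\bj \mcS^n[\alpha])$ is independent of $\bj$, and item (2) of corollary \ref{topological structure} gives $Q^n \sum_{\alpha} \mu(\mcS^n[\alpha]) = 1$. Therefore
$$\mu(E_n) \leq \sum_{\bj \in \Delta_n(\Omega)} \sum_{\alpha \in \mcA} \mu(\mcS^n[\alpha]) = \frac{\card \Delta_n(\Omega)}{Q^n},$$
which tends to $0$ by lemma \ref{small carries} applied to each $\bk \in \Omega$ and bounded by a union. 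Consequently $\|B_n\|_{L^2(\mu)}^2 = \mu(E_n) \to 0$, and $\IND_{[\omega]}$ is approximated in $L^2(\mu)$ by the good sum, a finite linear combination of iterated initial indicators.

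The main obstacle I expect is the identification of the good desubstitutes with genuine initial cylinders, which hinges on the observation that avoiding carries against \emph{every} point of $\Omega$ simultaneously collapses the quotient set $\qq{\bj+\Omega}_n$ to the single point $\{\bZero\}$. Everything else is bookkeeping via $T$-invariance of $\mu$ and the partition property, and since neither ergodicity nor unique ergodicity is invoked, the argument applies uniformly to every $\mu \in \mcM(X_\mcS,T)$.
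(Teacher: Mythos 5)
Your proposal is correct and takes essentially the same route as the paper: both decompose $\IND_{[\omega]}$ via corollary \ref{topological structure}.3, split the sum along $\Delta_n(\Omega)$, identify the carry-free terms as genuine iterated initial indicators, and kill the remainder using lemma \ref{small carries}. Your version spells out the measure estimate $\mu(E_n)\leq \card\Delta_n(\Omega)/Q^n$ (via corollary \ref{topological structure}.2 and $T$-invariance) and the $L^1=L^2$ reduction for indicators a bit more explicitly than the paper, but the argument is the same.
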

\begin{proof}
	As the functions $\IND_{[\omega]}$ for $\omega \in \mcA^+$ have dense span in $L^2(\mu)$, we can accomplish this by showing that every $\IND_{[\omega]}$ can be approximated by the linear combinations of $\IND_{T^\bk \mcS^n[\alpha]}$, the iterated initial indicators.  Note that corollary \ref{topological structure}.3 can be interpreted for indicator functions, giving
$$\IND_{[\omega]} = {\sum}_{(\bj,\eta) \in \mcS^{-p}(\omega)} \IND_{T^\bj \mcS^p([\eta])}$$	
\noindent Separate the above sum into two parts: those terms with $\bj \notin \Delta_p(\Omega)$ which correspond to blocks $\eta$ defined only at $\bZero$, and those with $\bj \in \Delta_p(\Omega)$ which correspond to $\eta$ with larger support, and so
$$\IND_{[\omega]} - {\sum}_{(\bj,\eta) \in \mcS^{-p}(\omega) \text{ and } \bj \notin \Delta_p(\Omega)} \IND_{T^\bj \mcS^p([\eta])} = {\sum}_{(\bj,\eta) \in \mcS^{-p}(\omega) \text{ and } \bj \in \Delta_p(\Omega)} \IND_{T^\bj \mcS^p([\eta])}$$
\noindent so that taking absolute values and integrating against $\mu,$ we can apply lemma \ref{small carries} and the right (and thus left) side goes to $0$.  This in turn shows that $\IND_{[\omega]}$ can be approximated by linear combinations of the iterated initial indicators, as those terms over $\bj \notin \Delta_p(\Omega)$ are supported entirely at the origin.  Thus, linear combinations of the iterated initial indicators approximate the indicators of the standard cylinders, and thus themselves have dense span in $L^2(\mu).$  
\end{proof}


\section{Spectral Theory}\label{spectral theory}

The goal of this section is to state the central result of this paper, theorem \ref{queffelec two}, which allows us to identify the spectrum of an aperiodic $\bq$-substitution $\mcS$ with finitely many measures ergodic for the $\bq$-\textit{shift}, discussed below.  This is significant as these mutually singular measures of pure type are given by linear combinations of the correlation measures $\sigma_{\alpha \beta} := \sigma_{\IND_{[\alpha]}, \IND_{[\beta]}}$ over its \textit{spectral hull} $\mcK$, a convex subset of $\bbC^{\mcA^2}$ determined by the coincidence matrix $C_\mcS$ of the $\bq$-substitution.  First, we discuss aspects of $\bq$-substitutions relating to their discrete spectrum.

\subsection{Height and the $\bq$-Shift}\label{height}
Most of our understanding of the discrete spectrum of constant length substitutions is due to the work of Dekking and the notion of \textit{height} of a substitution, see [\ref{dekking}].   The following description of height in the $\Zd$ case is based on [\ref{frank zd}, \S 3.1] and [\ref{queffelec}, \S 6.1.1].  Given a primitive and aperiodic $\bq$-substitution $\mcS$ on $\mcA,$ fix a substitution sequence $\bA$ in the hull $X_\mcS.$ 

For a sublattice $\mcL \subset \Zd$ (a subgroup of $\Zd$ spanning $\bbR^d$) let $\mcL_0$ be a set of class representatives, so that $\Zd$ is the disjoint union of $\bj + \mcL$ for $\bj \in \mcL_0.$   Let $\bA(\bj + \mcL)$ denote the letters appearing in positions $\bj + \bh$ for $\bh \in \mcL.$  By primitivity we know $\mcA = \bigcup_{\bj \in \mcL_0} \bA(\bj + \mcL),$ though in some cases the $\bA(\bj + \mcL)$ may form a partition of $\mcA.$  When that happens, one can identify all the letters in each $\bA(\bj + \mcL)$ with a single representative, and this gives a map $\mcA \to \mcA.$  When extended pointwise to $\AZd,$ this map takes $\bA$ to a periodic sequence which, by primitivity, must therefore take $X_\mcS$ onto a finite set.  If we let $\mcL = \Zd,$ this reduces $X_\mcS$ to a singleton.   The \textit{height lattice} of $\mcS$ is the smallest lattice (in the subset order) satisfying the conditions
\begin{equation}\label{height lattice}\mcA = {\bigsqcup}_{\bj \in \mcL_0} \bA(\bj + \mcL) \tbump{0.5in}{and} \mcL + \bq \Zd = \Zd\end{equation}
\noindent Moreover, the above map sending the $\bA(\bj + \mcL)$ to a representative above gives rise to the maximal equicontinuous factor of $(X_\mcS, T, \mu),$ see [\ref{frank zd}].  Given a lattice $\mcL \subset \Zd$ , its dual lattice $\mcL^\ast \subset \bbR^d$ is the collection of all $\btheta \in \bbR^d$ for which $\btheta^t \bh = \sum \theta_k h_k \in \bbZ$.  One checks that $\mcL^\ast$ is a subgroup of $\bbR^d$ and so its image under the exponential map
$\bbR^d \mapsto \bbT^d$ with $(\theta_1,\ldots, \theta_d) \longmapsto \big( e^{2\pi i \theta_1}, \ldots, e^{2\pi i \theta_d} \big)$ is a multiplicative subgroup of $\bbT^d$ which we call the \textit{$\mcL$-th roots of unity}; let $\bnu_\mcL$ denote its Haar measure.  It consists of those $\bz \in \bbT^d$ for which $z_1^{h_1}\cdots z_d^{h_d} = 1$ for all $\bh \in \mcL$.  Letting $\bnu_\bh := \bnu_{\bh \Zd}$
\begin{equation}\label{lattice measure}
\widehat{\bnu_\mcL}(\bk) = \scalebox{0.8}{$\begin{cases} \vspace{8px} \end{cases} \hspace{-12px}\begin{matrix} 1 & \text{if } \bk \in \mcL \\ 0 & \text{if } \bk \notin \mcL \end{matrix}$} \tbump{0.5in}{and} \widehat{\bnu_\bh}(\bk) =  \scalebox{0.8}{$\begin{cases}\vspace{8px} \end{cases} \hspace{-12px} \begin{matrix} 1 & \text{ if } \qr{\bk}_n = \bZero\\ 0 & \text{ if } \qr{\bk}_n \neq \bZero \end{matrix}$}
\end{equation}
\noindent For $\bq \geq 1,$ define the \textit{$\bq$-adic support measure} $\bomega_\bq$ for the $\bq$-adic roots of unity
\begin{equation}\label{omegaq}
\textstyle \bomega_\bq := \sum_{n \geq 1} 2^{-n} \nu_{\bq^n}
\end{equation}
\noindent corresponding to the $\bq$-adic rationals in $\bbR^d/\Zd$, or those rationals with denominator $\bq^n$ for $n \geq 0.$  

The following describes the discrete spectrum of an aperiodic $\bq$-substitution in terms of its expansion $\bq$ and height $\mcL$, see [\ref{queffelec}, Thm 6.1, 6.2] for $d=1$, and [\ref{frank zd}, \S 3.1] for the general case.  
\begin{thm}[Dekking]
\label{dekking thm}
	If $\mcS$ is a primitive and aperiodic $\bq$-substitution with height lattice $\mcL,$ then the discrete component of $\sigmaX$ is equivalent to $\bomega_\bq \ast \bnu_\mcL.$   If  $\mcL = \Zd,$ the spectrum is pure discrete if and only if a generalized instruction of $\mcS$ is constant.
\end{thm}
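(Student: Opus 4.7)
The plan is to attack the two assertions separately, starting with the equivalence of the discrete component to $\bomega_\bq \ast \bnu_\mcL$.

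For the first claim, I would construct the maximal equicontinuous factor of $(X_\mcS,T,\mu)$ explicitly as a compact abelian group $G = \bbZ_\bq \times (\Zd / \mcL)$, where $\bbZ_\bq$ is the $\bq$-adic odometer. The odometer factor arises directly from the IFS representation \eqref{ifs attractor}: by recognizability (Theorem \ref{aperiodicity implies recognizability}), iterated desubstitution assigns every $\bA \in X_\mcS$ a unique address sequence $(\bj_0, \bj_1, \ldots) \in \prod_n [\bZero,\bq)$, and the shift $T^\bOne$ corresponds to $+\bOne$ addition on this $\bq$-adic address space. This yields a factor map $\pi_\bq : X_\mcS \to \bbZ_\bq$, whose dual spectrum is precisely the $\bq$-adic roots of unity with support measure $\bomega_\bq$. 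The height factor comes from \eqref{height lattice}: quotienting by the relation $\bA \sim \bA'$ iff $\bA(\bj+\mcL) = \bA'(\bj+\mcL)$ for every $\bj \in \mcL_0$ gives a factor onto the finite cyclic $\Zd$-system $\Zd/\mcL$, whose dual is the $\mcL$-roots of unity with measure $\bnu_\mcL$. The condition $\mcL + \bq\Zd = \Zd$ ensures the joint factor $\pi_\bq \times \pi_\mcL$ is surjective onto $G$ with Haar measure pushed forward from $\mu$, and minimality (by primitivity) plus the Halmos-von Neumann theorem identifies this as the maximal equicontinuous factor. Taking duals, the discrete part of $\sigmaX$ is equivalent to the convolution of the two support measures, namely $\bomega_\bq \ast \bnu_\mcL$.

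For the second claim, specialize to $\mcL = \Zd$, where $\bnu_\mcL = \delta_\bOne$ so the discrete component reduces to $\bomega_\bq$ and the maximal equicontinuous factor is the bare odometer $\pi_\bq$. Pure discreteness is equivalent to $\pi_\bq$ being a measurable isomorphism. For the implication $(\Leftarrow)$, suppose some generalized instruction $\mcR_\bj^{(n)}$ is constant, say $\mcR_\bj^{(n)}(\gamma) = \alpha$ for all $\gamma$. Proposition \ref{qsub} then gives $(\mcS^n \bA)(\bj) = \alpha$ for every $\bA \in X_\mcS$, so every point of $X_\mcS$ has letter $\alpha$ at every position $\bj + \bq^n \bm$ of its $n$-th superblock decomposition. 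Combined with the IFS decomposition \eqref{ifs attractor}, the odometer address $(\bj_0,\bj_1,\ldots)$ determines the letter at every position of $\bA$ by looking at the appropriate level-$n$ refinement, so $\pi_\bq$ is injective $\mu$-a.e. and the system is isomorphic to the odometer, which has pure discrete spectrum. For $(\Rightarrow)$, the cleanest route is the contrapositive: if no generalized instruction is constant, then for every $n$ and every $\bj \in [\bZero,\bq^n)$ there exist $\gamma \neq \gamma'$ with $\mcR_\bj^{(n)}(\gamma) \neq \mcR_\bj^{(n)}(\gamma')$; using recognizability and Proposition \ref{iterated indicator}, one then exhibits pairs $\bA, \bA' \in X_\mcS$ with the same odometer address but different letter at some position, showing $\pi_\bq$ is not $\mu$-a.e.\ injective. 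Since the odometer is the maximal equicontinuous factor, this strict inclusion produces a nontrivial continuous component in $\sigmaX$.

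The main obstacle will be making the converse direction precise: translating ``no coincidence'' into a quantitative statement about the fiber of $\pi_\bq$ having $\mu$-positive measure with multiple elements. The delicate point is combining the partition property of Corollary \ref{topological structure} with a compactness/diagonalization argument in $\prod_n[\bZero,\bq)$ to produce two distinct points in $X_\mcS$ with the same address; in the $\Zd$ setting this requires more care than the one-dimensional Dekking argument since the height lattice can have rank less than $d$, though the hypothesis $\mcL = \Zd$ eliminates this complication here. The first half of the theorem, by contrast, reduces to identifying two already-known factor structures and invoking the spectral theorem for equicontinuous group actions.
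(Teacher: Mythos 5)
The paper does not give its own proof of this theorem; it is quoted as a known result, with references to Queff\'elec (for $d=1$) and Frank (for general $d$). Your outline follows exactly the standard route from those sources — identify the maximal equicontinuous factor as $\bbZ_\bq \times (\Zd/\mcL)$ via the odometer address and the height quotient, invoke Halmos--von Neumann, and take duals — so the first half of your argument is on track, correct, and is the approach the cited literature takes.

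There is a genuine gap in the $(\Leftarrow)$ direction of the second claim. You say that a single constant generalized instruction $\mcR_\bj^{(n)} \equiv \alpha$ ``determines the letter at every position of $\bA$'' from the odometer address; it does not. It determines the letter at positions congruent to $\bj$ (shifted by the first address digit) modulo $\bq^n$, a set of density only $1/Q^n$. The key step you are skipping is the density amplification: if $\mcR_\bj^{(n)}$ is constant, then by proposition~\ref{qsub} any generalized instruction $\mcR_{\bj'}^{(kn)}$ whose $\bq^n$-adic expansion hits $\bj$ in at least one place is also constant, and the proportion of such $\bj'$ in $[\bZero,\bq^{kn})$ is $1 - (1 - Q^{-n})^k \to 1$ as $k \to \infty$. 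It is this sequence of constant columns of density tending to $1$ that makes $\pi_\bq$ $\mu$-a.e.\ injective; without it, you have shown only that a sparse set of coordinates is address-determined, which does not give a measure isomorphism. A smaller slip: your assertion ``$(\mcS^n\bA)(\bj) = \alpha$ for every $\bA \in X_\mcS$'' is correct, but the conclusion about ``every point of $X_\mcS$'' requires accounting for the offset $\bk \in [\bZero,\bq^n)$ from theorem~\ref{aperiodicity implies recognizability}, since a generic $\bB \in X_\mcS$ equals $T^\bk \mcS^n\bC$ and the constant column sits at positions $\bj - \bk$ modulo $\bq^n$. You flag the $(\Rightarrow)$ direction as incomplete, which is honest; note that the paper effectively makes that direction available downstream via its observation that in the primitive case the coincidence condition is equivalent to $\mcS \otimes \mcS$ having a single ergodic class, which together with theorem~\ref{queffelec two} forces a nontrivial continuous extreme point of $\mcK$ whenever no coincidence exists.
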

The criteria for the pure discrete case are known as \textit{trivial height} ($\mcL = \Zd$) and the \textit{coincidence condition} (a constant generalized instruction), respectively.  In the primitive case the coincidence condition is equivalent to the bisubstitution possessing only one ergodic class, as was observed by Queff\'elec.  More generally, the discrete spectrum of an aperiodic $\bq$-substitution is given by the sum of $\bomega_\bq \ast \bnu_\mcL$ as $\mcL$ ranges over the height lattices of its primitive components.   We comment here that although methods exist to compute the height of a given substitution, this is not necessary in order to compute the spectrum, as our algorithm produces the discrete component of the measure and one can even use this to determine the height; see example \ref{height example} and [\ref{bartlett}, Ex 4.4.2].

By theorem \ref{dekking thm}, a primitive $\bq$-substitution of trivial height has pure discrete spectrum if and only if some generalized instruction $\mcR_j^{(n)}$ is constant (\textit{Dekking's coincidence condition}).  It follows that primitive and aperiodic bijective $\bq$-substitutions cannot be pure discrete in the case of trivial height, and so their spectrum will always contain continuous measures.  The correlation measures, which are central to Queff\'elec's analysis, give us the tools necessary to study the entire spectrum.  

An essential ingredient is the \textit{$\bq$-shift}, a topological dynamical system on $\bbT^d$ given by
$$\bS_\bq: \bbT^d \to \bbT^d \tbump{0.5in}{with} \bz \mapsto \bz^\bq \tbump{0.25in}{or} (z_1,\ldots, z_d) \longmapsto  (z_1^{q_1}, \ldots, z_d^{q_d})$$
\noindent and which is topologically conjugate to the times $\bq$ map sending $\bx \mapsto \bq \bx \bump(\text{mod } \bOne)$ on $\bbR^d / \Zd.$ For a measure $\nu \in \mcM(\bbT^d)$ (complex measures on $\bbT^d$) its Fourier coefficients are given by
$$\textstyle \widehat{\nu}(\bk) := \int_{\bbT^d} z_1^{-k_1} \cdots z_d^{-k_d}  \, d\nu(z_1,\ldots z_d) \hspace{0.5in} \text{for } \bk \in \Zd$$
\noindent as in equation (\ref{spectral coefficients}).  By the following proposition, normalized Lebesgue measure ($m$) on $\bbT^d$ is ergodic for the $\bq$-shift.  The proof is standard, see [\ref{queffelec} \S 3.1.1] for the one-dimensional case.
\begin{prop}\label{spectral qshift identities}
	A nonzero measure $\nu \in \mcM(\bbT^d)$ is $\bq$-shift invariant or strong-mixing, respectively, if and only if their Fourier coefficients satisfy, for every $\ba, \bb \in \Zd$, the identities
	$$\widehat{\nu}(\ba\bq) = \widehat{\nu}(\ba) \tbump{0.55in}{ {resp.} } {\lim}_{p \to \infty}\,  \widehat{\nu}(\bb + \ba \bq^p) = \widehat{\nu}(\bb) \widehat{\nu}(\ba) $$
\end{prop}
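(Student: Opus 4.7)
The plan is to prove both equivalences by testing against the exponential characters $e_\ba(\bz) := \bz^\ba = z_1^{a_1}\cdots z_d^{a_d}$ for $\ba \in \Zd$, which span a uniformly dense subalgebra of $C(\bbT^d)$ via the Stone--Weierstrass theorem and therefore determine any complex Borel measure on $\bbT^d$. The central observation is the pullback identity $e_\ba \circ \bS_\bq^p = e_{\ba \bq^p}$, which is immediate from $\bS_\bq(\bz) = \bz^\bq$ and the componentwise exponent conventions established at the start of the paper.

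For the invariance claim, if $\nu = \nu \circ \bS_\bq^{-1}$, then integrating $e_{-\ba}$ against the identity gives $\widehat{\nu}(\ba) = \int e_{-\ba}\, d\nu = \int (e_{-\ba} \circ \bS_\bq)\, d\nu = \widehat{\nu}(\ba \bq)$. Conversely, if $\widehat{\nu}(\ba \bq) = \widehat{\nu}(\ba)$ holds for every $\ba \in \Zd$, then $\nu$ and $\nu \circ \bS_\bq^{-1}$ have identical Fourier coefficients, so they agree as measures by uniqueness. For strong-mixing, testing the defining limit $\int (f \circ \bS_\bq^p)\, g\, d\nu \to \int f\, d\nu \int g\, d\nu$ with $f = e_{-\ba}$ and $g = e_{-\bb}$ yields $\int \bz^{-\ba \bq^p - \bb}\, d\nu = \widehat{\nu}(\bb + \ba \bq^p) \to \widehat{\nu}(\ba)\widehat{\nu}(\bb)$, which is exactly the displayed identity.

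The main obstacle is the converse for mixing, where the limit identity must be promoted from characters to arbitrary $f, g \in L^2(\nu)$. I would approximate $f$ and $g$ in $L^2(\nu)$-norm by trigonometric polynomials (using density of $C(\bbT^d)$ in $L^2(\nu)$ combined with Stone--Weierstrass), reduce by linearity to the character case where the hypothesis applies directly, and close with a standard three-$\varepsilon$ estimate using the $L^2(\nu)$-isometry $f \mapsto f \circ \bS_\bq^p$ from invariance together with the uniform bound $|\widehat{\nu}(\bk)| \le |\nu|(\bbT^d)$. Throughout this step the invariance prerequisite for mixing is assumed, or is extracted from the $\ba = \bZero$ case of the limit identity which forces $\widehat{\nu}(\bZero) = 1$ and thus the probability normalization required for the $L^2$ formulation to make sense.
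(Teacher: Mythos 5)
Your proposal is correct and follows the standard Fourier/Stone--Weierstrass argument that the paper itself only alludes to (it cites Queff\'elec \S 3.1.1 for $d=1$ rather than giving a proof). One small point worth making explicit in a writeup: the limit identity does in fact force invariance on its own, since taking $\ba = \bZero$ gives $\widehat{\nu}(\bZero) = 1$, and then the convergence of $\widehat{\nu}(\ba\bq^p)$ together with the shifted index $p \mapsto p+1$ yields $\widehat{\nu}(\ba\bq) = \widehat{\nu}(\ba)$, so the probability/invariance prerequisite for the $L^2$ density step is extracted from the hypothesis rather than merely assumed.
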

\noindent Note that ergodicity is equivalent to strong and weak mixing in this case as the $\bq$-shift is a continuous endomorphism of $\bbT^d$ (as a group), see [\ref{walters}, Thm 1.28].   The \textit{invariant contracted map} (see [\ref{queffelec}, def 7.1]) is the map $\pi : \mcM(\bbT^d) \to \mcM(\bbT^d)$ sending $\nu \mapsto\pi(\nu)$ with Fourier coefficients
\begin{equation}\label{invariant contracted} 
\widehat{\pi(\nu)}(\bk) = \scalebox{0.8}{$\begin{cases} \vspace{8px} \end{cases} \hspace{-12px} \begin{matrix} \widehat{\nu}(\qq{\bk}_1) & \text{if } \qr{\bk}_1 = \bZero \\ 0  & \text{if } \qr{\bk}_1 \neq \bZero \end{matrix}$} \tbump{12px}{ and we write } \Pi := {\sum}_{n \geq 1} 2^{-n} \pi^n
\end{equation}
\noindent The following shows that for any $\bq$-shift invariant measure,  the iterates of the invariant contracted map are uniformly weighted averages of translates of that measure along the $\bq^n$-th roots of unity.
\begin{lem}\label{invariant contracted map}
	If $\rho \in \mcM(\bbT^d, \bS_\bq)$ then $\Pi(\rho) = \bomega_\bq \ast \rho,$  the convolution of $\bomega_\bq$ with $\rho$.
\end{lem}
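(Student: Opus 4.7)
The plan is to verify the equality of the two measures by computing Fourier coefficients and invoking uniqueness. Since the Fourier coefficients determine a complex Borel measure on $\bbT^d$, it suffices to show $\widehat{\Pi(\rho)}(\bk) = \widehat{\bomega_\bq \ast \rho}(\bk)$ for every $\bk \in \Zd$.

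First, I would establish by induction on $n \geq 1$ the identity
$$\widehat{\pi^n(\rho)}(\bk) = \scalebox{0.8}{$\begin{cases} \vspace{8px} \end{cases} \hspace{-12px} \begin{matrix} \widehat{\rho}(\qq{\bk}_n) & \text{if } \qr{\bk}_n = \bZero \\ 0 & \text{if } \qr{\bk}_n \neq \bZero \end{matrix}$}$$
The base case $n=1$ is the definition of $\pi$ in (\ref{invariant contracted}). For the inductive step, apply the definition of $\pi$ to $\pi^{n-1}(\rho)$: the coefficient vanishes unless $\qr{\bk}_1 = \bZero$, in which case it equals $\widehat{\pi^{n-1}(\rho)}(\qq{\bk}_1)$. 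Applying the inductive hypothesis and noting that for $\bk$ with $\qr{\bk}_1 = \bZero$ one has $\qr{\qq{\bk}_1}_{n-1} = \bZero \iff \qr{\bk}_n = \bZero$, with $\qq{\qq{\bk}_1}_{n-1} = \qq{\bk}_n$, the formula drops out.

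Next, I would use the hypothesis that $\rho$ is $\bS_\bq$-invariant. By proposition \ref{spectral qshift identities} iterated $n$ times, $\widehat{\rho}(\ba \bq^n) = \widehat{\rho}(\ba)$ for every $\ba \in \Zd$. When $\qr{\bk}_n = \bZero$ we have $\bk = \qq{\bk}_n \bq^n$, and so $\widehat{\rho}(\qq{\bk}_n) = \widehat{\rho}(\bk)$. Combining with the indicator description in (\ref{lattice measure}), the formula above becomes
$$\widehat{\pi^n(\rho)}(\bk) = \widehat{\nu_{\bq^n}}(\bk)\, \widehat{\rho}(\bk).$$

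Finally, summing the defining series for $\Pi$ and using the Fourier convolution identity yields
$$\widehat{\Pi(\rho)}(\bk) = {\sum}_{n \geq 1} 2^{-n} \widehat{\pi^n(\rho)}(\bk) = \Big( {\sum}_{n \geq 1} 2^{-n} \widehat{\nu_{\bq^n}}(\bk) \Big) \widehat{\rho}(\bk) = \widehat{\bomega_\bq}(\bk)\, \widehat{\rho}(\bk) = \widehat{\bomega_\bq \ast \rho}(\bk),$$
and the Fourier uniqueness theorem concludes the proof. The main step is the inductive identification of $\widehat{\pi^n(\rho)}(\bk)$; beyond that, the argument is essentially the observation that $\bq$-shift invariance is precisely what turns the contracted map's $\qq{\bk}_n$ back into $\bk$, at which point the series collapses to $\widehat{\bomega_\bq}$. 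I do not anticipate any substantial obstacle; convergence of the series $\sum 2^{-n} \pi^n$ in $\mcM(\bbT^d)$ (and thus the permissibility of the term-by-term Fourier computation) follows because $\pi$ is a contraction in total variation, as $\pi(\nu)$ is an average of pushforwards of $\nu$ by the finitely many translations fixing $\bZero$ modulo $\bq$.
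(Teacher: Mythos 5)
Your proof is correct and follows essentially the same route as the paper's: iterate the definition of $\pi$ to get the closed form for $\widehat{\pi^n\rho}(\bk)$, invoke $\bq$-shift invariance via Proposition \ref{spectral qshift identities} to rewrite $\widehat{\rho}(\qq{\bk}_n)$ as $\widehat{\rho}(\bk)$ on the support of $\widehat{\nu_{\bq^n}}$, identify the result as $\widehat{\nu_{\bq^n} \ast \rho}(\bk)$, and sum the geometric series using Fourier unicity. The only difference is that you spell out the induction and the total-variation contraction estimate explicitly, whereas the paper treats both as immediate.
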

\begin{proof}
	If $\rho$ is $\bq$-shift invariant, then proposition \ref{spectral qshift identities} gives us $\widehat{\rho}(\qq{\bk}_n) = \widehat{\rho}(\qq{\bk}_n \bq^n)$ so that 
$$\widehat{\pi^n\rho}(\bk) = {\small \begin{cases} \widehat{\rho}(\qq{\bk}_n) & \text{if } \qr{\bk}_n = \bZero \\ 0 & \text{otherwise} \end{cases} } = {\small \begin{cases} \widehat{\rho}(\qr{\bk}_n + \qq{\bk}_n \bq^n) & \text{if } \qr{\bk}_n = \bZero \\ \bZero & \text{if } \qr{\bk}_n \neq \bZero \end{cases}} = \widehat{\nu_{\bq^n}}(\bk) \cdot \widehat{\rho}(\bk) = \widehat{\nu_{\bq^n}\ast \rho}(\bk)$$
\noindent and the result follows from Fourier unicity, linearity and the definition of the map $\Pi$.
\end{proof} 
\noindent Note that the height measure $\bnu_\mcL$ of theorem \ref{dekking thm} is $\bq$-shift invariant as the condition $\mcL + \bq \Zd = \bbZ^d$ merely generalizes the notion of \textit{relatively prime} to higher dimensions.  When $d=1$, if $h,q \in \bbZ$ are relatively prime, we have $aq \in h\bbZ$ if and only if $a \in h \bbZ$.  In the $\Zd$ case, when $\mcL = \bh \Zd$ for example,  the condition $\bh\Zd + \bq\Zd = \Zd$ reduces component-wise to the $d=1$ case.  By identities (\ref{lattice measure}) and proposition \ref{spectral qshift identities} above, the discrete component of $\sigmaX$ is $\bq$-shift invariant, and $\Pi(\bnu_\mcL) = \bomega_\bq \ast \bnu_\mcL$.  


\subsection{The Correlation Measures and $\Sigma$}\label{sigma section}

Recall that for $\alpha, \beta \in \mcA,$ the \textit{correlation measure} $\sigma_{\alpha \beta} \in \mcM(\bbT^d)$ is the spectral measure for the pair of indicator functions $\IND_{[\alpha]}$ and $\IND_{[\beta]}.$   Using the spectral theorem (\ref{spectral coefficients}), we compute the Fourier coefficients of the correlation measure $\sigma_{\alpha \beta}$ using translation invariance and obtain for $\bk \in \Zd$ 
\begin{equation}\label{fourier coefficients} \widehat{\sigma_{\alpha \beta}}(\bk) = \int_{X_\mcS} \IND_{[\alpha]} \circ T^{-\bk} \cdot \IND_{[\beta]} d\mu = \mu(T^\bk[\alpha] \cap [\beta]) = \mu( [\alpha] \cap T^{-\bk}[\beta])\end{equation}
\noindent and so $\widehat{\sigma_{\alpha \beta}}(\bk) = \overline{\widehat{\sigma_{\beta \alpha}}(-\bk)} = \widehat{\sigma_{\beta \alpha}}(-\bk)$ as $\mu \in \mcM(X_\mcS,T)$ is real valued.  Moreover, the above relates the Fourier coefficients of the correlation measures $\sigma_{\alpha \beta} \in \mcM(\bbT^d)$ to the $\mu$-frequencies with which two-letter patterns appear in substitution sequences.  This is the primary application of aperiodicity: using corollary \ref{topological structure}.3 to compute the measures of cylinders combined with the above allows us to compute the Fourier coefficients of the correlation measures explicitly.  

As $\sigmaX$ is the smallest type, or equivalence class of measures, satisfying $\sigma_f \ll \sigmaX$ for all $f \in L^2(\mu)$, and the iterated indicator functions $\IND_{T^\bk \mcS^n[\alpha]}$ have dense span, we can use their spectral measures to characterize the spectrum up to type.   The following generalization of Queff\'elec's result reveals a relationship between the correlation measures and the $\bq$-shift (via the map $\Pi$ of the previous section) which gives rise to the spectrum of $\bq$-substitutions; see [\ref{queffelec}, \S 7.1.2].
\begin{thm}\label{queffelec one}
	If $\mcS$ is an aperiodic $\bq$-substitution, then the maximal spectral type of $(X_\mcS,T,\mu)$ 
	$$\textstyle \sigmaX \sim \sum_{\alpha \in \mcA} \sum_{n \geq 1} 2^{-n} \sigma_{\IND_{\mcS^n[\alpha]}} \sim \sum_{\alpha \in \mcA} \Pi(\sigma_{\alpha \alpha})$$
	\noindent where $\sim$ denotes equivalence, so that the spectrum is determined by the correlation measures.
\end{thm}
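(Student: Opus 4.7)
The plan is to establish the two equivalences in turn. For the first, $\sigmaX \sim \sum_{\alpha, n \geq 1} 2^{-n}\sigma_{\IND_{\mcS^n[\alpha]}}$: proposition \ref{iterated indicator} provides that the iterated initial indicators $\IND_{T^\bk \mcS^n[\alpha]}$ have dense linear span in $L^2(\mu)$, and the redundant $n=0$ case can be dropped by expressing $\IND_{[\alpha]}$ as a finite sum of $n=1$ iterated indicators via corollary \ref{topological structure}.3. Since $T$ is unitary, its translates share a common spectral type, so the $T$-orbits of the countable family $\{\IND_{\mcS^n[\alpha]} : n \geq 1, \alpha \in \mcA\}$ generate $L^2(\mu)$. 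A standard consequence of the spectral theorem is then that $\sigmaX$ is equivalent to any sum of these individual spectral measures with strictly positive weights producing a finite measure; the weights $2^{-n}$ suffice since $\|\IND_{\mcS^n[\alpha]}\|^2 = \mu(\mcS^n[\alpha]) \leq 1$.

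The heart of the argument is the Fourier identity $\sigma_{\IND_{\mcS^n[\alpha]}} = Q^{-n}\pi^n\sigma_{\alpha\alpha}$. Starting from
\[
\widehat{\sigma_{\IND_{\mcS^n[\alpha]}}}(\bk) = \mu\big(T^\bk \mcS^n[\alpha] \cap \mcS^n[\alpha]\big)
\]
via (\ref{spectral coefficients}), I would split on whether $\bk \in \bq^n\Zd$. When $\bk = \bq^n \bk'$, proposition \ref{qsub} yields $T^\bk \mcS^n = \mcS^n T^{\bk'}$; the injectivity of $\mcS^n$ on $X_\mcS$ (corollary \ref{topological structure}.1) combined with the scaling $\mu \circ \mcS^n = Q^{-n}\mu$ (extending Michel's theorem \ref{michel theorem} to the aperiodic case via the convex decomposition of $\mu$ into primitive components afforded by theorem \ref{invariant measures}) reduces the intersection measure to $Q^{-n}\widehat{\sigma_{\alpha\alpha}}(\bk')$. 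When $\bk \notin \bq^n\Zd$, I would factor $T^\bk \mcS^n = T^{\qr{\bk}_n}\mcS^n T^{\qq{\bk}_n}$ by proposition \ref{qsub}, apply it to any candidate $\bA = \mcS^n \bC = T^\bk \mcS^n \bB$ with $\bB, \bC \in [\alpha]$, and invoke the cancellation law (\ref{ifs cancellation}) from recognizability (theorem \ref{aperiodicity implies recognizability}) to force $\qr{\bk}_n = \bZero$, a contradiction, so the intersection is empty. Matching both cases against the iterated definition (\ref{invariant contracted}) of $\pi^n$ and invoking Fourier unicity yields the identity.

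With that identity in hand, the second equivalence follows from mutual absolute continuity of sums with different strictly positive weights over a common countable family of positive measures:
\[
\sum_{\alpha} \sum_{n \geq 1} 2^{-n}\sigma_{\IND_{\mcS^n[\alpha]}} = \sum_{\alpha} \sum_{n \geq 1} (2Q)^{-n}\pi^n\sigma_{\alpha\alpha} \sim \sum_{\alpha} \sum_{n \geq 1} 2^{-n}\pi^n\sigma_{\alpha\alpha} = \sum_{\alpha \in \mcA}\Pi(\sigma_{\alpha\alpha}).
\]
The main obstacle is the $\bk \notin \bq^n\Zd$ case of the Fourier identity: one must carefully observe that since $\qr{\bk}_n \in [\bZero,\bq^n)$ we have $\qr{\qr{\bk}_n}_n = \qr{\bk}_n$ and $\qq{\qr{\bk}_n}_n = \bZero$, so that applying (\ref{ifs cancellation}) to $T^{\qr{\bk}_n}\mcS^n(T^{\qq{\bk}_n}\bB) = \mcS^n\bC$ directly forces $\qr{\bk}_n = \bZero$, yielding the contradiction.
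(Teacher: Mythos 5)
Your proof is correct and follows essentially the same path as the paper's: density of iterated initial indicators (proposition \ref{iterated indicator}), the observation that $\sigma_{\IND_{\mcS^n[\alpha]}}$ is computed via the cancellation law (\ref{ifs cancellation}) and the scaling $\mu\circ\mcS^n = Q^{-n}\mu$, and a comparison of sums with positive weights. The one stylistic difference is that you invoke the standard ``countable cyclic generating family'' consequence of the spectral theorem where the paper explicitly approximates a maximal function $F$ and uses $|\sigma_{f,g}|\ll\sigma_f$ term by term; both are valid and equivalent, and you also state the scalar identity $\sigma_{\IND_{\mcS^n[\alpha]}} = Q^{-n}\pi^n\sigma_{\alpha\alpha}$ more explicitly than the paper does before passing to equivalence of types, which makes the second step clearer.
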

\begin{proof}
Let $F \in L^2(\mu)$ be a maximal function for the spectral map $\sigma,$ and $g_n$ a sequence of linear combinations of iterated indicators approximating $F$ in $L^2(\mu)$.   As the $n$-th initial indicators $\IND_{T^\bk\mcS^n[\alpha]}$ for $\bk \in \Zd$ and $\alpha \in \mcA$ span the $m$-th initial indicators for all $m \leq n$ by corollary \ref{topological structure}.3, each $g_n$ can be be written as linear combinations of $\IND_{T^\bk \mcS^{m_n}[\alpha]}$ with coefficients $c_{n,\bk,\alpha}$ for $\bk \in \Zd$, $\alpha \in \mcA$ and $m_n$ sufficiently large.  As the spectral map is continuous and bilinear, we let $n \to \infty$
$$\sigmaX \sim \sigma_F = \lim_{n \to \infty} \sigma_{g_n} = \lim_{n \to \infty}  {\sum}_{\alpha, \beta \in \mcA} {\sum}_{\bj, \bk \in \Zd} c_{n, \bk, \alpha} \overline{c_{n, \bj, \beta}} \sigma_{\IND_{T^\bk \mcS^{m_n}[\alpha]} ,\IND_{T^\bj \mcS^{m_n}[\beta]}} $$
Using standard properties of the spectral map [\ref{queffelec}, prop 2.4] and translation invariance of $\mu$ 
$$\sigma_{\IND_{T^\bk \mcS^m[\alpha]} ,\IND_{T^\bj \mcS^m[\beta]}} \ll \sigma_{\IND_{\mcS^m[\alpha]}} \tbump{12px}{ and } \sigma_{\alpha \beta} \ll \sigma_{\alpha \alpha}$$
for $m \geq 0$ and every $\bj,\bk \in \Zd$ and $\alpha, \beta \in \mcA$, so that the above combine to give us
$$\sigmaX \sim \lim_{n \to \infty} \sigma_{g_n} \ll {\sum}_{m \geq 1} {\sum}_{\alpha \in \mcA} 2^{-m} \sigma_{\IND_{\mcS^m[\alpha]}} \ll \sigmaX$$
by definition of the maximal spectral type, and giving the first equivalence.

By definition of the spectral map and identities (\ref{spectral coefficients}), for $\alpha \in \mcA,$ $n \geq 0$, and $\bk \in \Zd$, we have
$$\widehat{\sigma_{\IND_{\mcS^n[\alpha]}}}(\bk) = \int_{\bbT^d} \IND_{\mcS^n[\alpha]} \circ T^{-\bk} \cdot \IND_{\mcS^n[\beta]} \, d\mu = \int_{\bbT^d} \IND_{T^\bk \mcS^n[\alpha]} \cdot \IND_{\mcS^n[\alpha]} d\mu = \mu(\mcS^n[\alpha] \cap T^\bk \mcS^n[\alpha])$$
so that $\mcS^n[\alpha] \cap T^\bk \mcS^n[\alpha] = \emptyset$ whenever $\qr{\bk}_n \neq \bZero$ (whenever $\bk$ not divisible by $\bq^n$) by (\ref{ifs cancellation}) of corollary \ref{topological structure}, and $\mcS^n([\alpha] \cap T^{\qq{\bk}_n}[\alpha] )$ otherwise.  Using the scaling property of invariant measures, $\mu\circ \mcS^n = \nicefrac{1}{Q^n} \mu,$ gives the second equivalence as $\mu([\alpha] \cap T^{\qq{\bk}_n}[\alpha]) = \widehat{\sigma_{\alpha\alpha}}(\qq{\bk}_n) = \widehat{\pi^n\sigma_{\alpha \alpha}}(\bk)$ and the definition of $\Pi$ as we are only identifying the spectrum up to type.
\end{proof}
\noindent We are interested in linear combinations $\lambda$ of the correlation measures which are $\bq$-shift invariant and which are equivalent to $\sum \sigma_{\alpha \alpha}$: by lemma \ref{invariant contracted map} and linearity of $\Pi$
\begin{equation}\label{q invariance of Pi}
	{\textstyle \lambda \in \mcM(\bbT^d, \bS_\bq) \tbump{4px}{ and } \lambda \sim {\sum}_{\alpha \in \mcA} \, \sigma_{\alpha \alpha} \tbump{0.25in}{$\implies$} \bomega_\bq \ast \lambda  \sim \Pi\left({\sum}_{\alpha \in \mcA} \, \sigma_{\alpha \alpha}\right) \sim \sigmaX}
\end{equation}
and so would give rise to the spectrum of $\mcS$ via convolution with $\bomega_\bq$, a pure discrete measure with $\bq$-shift invariant support.  We now detail how the recursive structure provided by aperiodicity is used to compute the correlation measures and ultimately identify measures satisfying the above.
\begin{defn}\label{sigma definition}
	The vector valued measure $\Sigma := \sum_{\alpha \beta \in \mcA^2} \sigma_{\alpha \beta} \be_{\alpha \beta}$ is the \textit{correlation vector} of $\mcS.$
\end{defn}
\noindent Queff\'elec defines $\Sigma$ as an $\mcA \times \mcA$ matrix valued measure, see [\ref{queffelec}, \S 7.1.3], although it is more natural for us as a (column) vector.  The following \textit{Fourier recursion theorem} provides an infinite family of relations amongst the Fourier coefficients of the correlation measures, and allows for the explicit computation of the correlation measures.  It both generalizes and extends [\ref{queffelec} \S 8.3 (8.7)].
\begin{thm}\label{fourier recursion}
	Let $\mcS$ be an aperiodic $\bq$-substitution on $\mcA.$  Then for $p \in \bbN$ and $\bk \in \Zd$
	$$\widehat{\Sigma}(\bk) = \frac{1}{Q^p} {\sum}_{\bj \in [\bZero,\bq^p)} \mcR_\bj^{(p)} \otimes \mcR_{\bj+\bk}^{(p)} \hspace{3px} \widehat{\Sigma}\big(\qq{\bj+\bk}_p \big) =  \lim_{n \to \infty} \frac{1}{Q^n} {\sum}_{\bj \in [\bZero,\bq^n)}  \mcR_\bj^{(n)} \otimes \mcR_{\bj+\bk}^{(n)}  \hspace{3px} \widehat{\Sigma}(\bZero)$$
	\noindent where $\qq{\bj+\bk}_p$ is the quotient of $\bj+\bk$ {\rm mod} $\bq^p$ and $\mcR^{(n)}$ the generalized instructions, see \S \ref{arithmetic}, \ref{configurations}.
\end{thm}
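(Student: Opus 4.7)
The plan is to compute $\widehat{\Sigma}(\bk)$ componentwise as a measure of two-point cylinders, partition those cylinders at scale $p$ using the recognizability machinery of corollary \ref{topological structure}, and assemble the resulting combinatorics into Kronecker products of generalized instructions.

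First I would fix $\alpha,\beta \in \mcA$ and $\bk \in \Zd$, and interpret
$\widehat{\sigma_{\alpha\beta}}(\bk) = \mu\bigl([\alpha] \cap T^{-\bk}[\beta]\bigr)$
via identity \eqref{fourier coefficients}. Letting $\omega$ denote the two-point block $\bZero \mapsto \alpha$, $\bk \mapsto \beta$, I apply corollary \ref{topological structure}.3 to partition
$$[\omega] = \bigsqcup_{(\bj,\eta)\in\mcS^{-p}(\omega)} T^{\bj}\mcS^{p}[\eta]$$
and use the scaling identity $\mu\circ\mcS^{p} = Q^{-p}\mu$ from Michel's theorem (extended by theorem \ref{invariant measures}) to write
$\mu([\omega]) = Q^{-p}\sum_{(\bj,\eta)}\mu([\eta])$. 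By definition \ref{inverse substitution}, each such $\eta$ has support contained in $\{\bZero,\,\qq{\bj+\bk}_{p}\}$, so letting $\gamma := \eta(\bZero)$ and $\delta := \eta(\qq{\bj+\bk}_{p})$ (where $\gamma=\delta$ when $\qq{\bj+\bk}_{p}=\bZero$), the measure $\mu([\eta])$ is precisely $\widehat{\sigma_{\gamma\delta}}(\qq{\bj+\bk}_{p})$, again by \eqref{fourier coefficients}.

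Next I would translate the condition $(\bj,\eta)\in\mcS^{-p}(\omega)$ into a linear-algebraic selector. By proposition \ref{qsub}, the constraints $\omega(\bZero)=\alpha$ and $\omega(\bk)=\beta$ become
$\alpha = \mcR_{\bj}^{(p)}(\gamma)$ and $\beta = \mcR_{\bj+\bk}^{(p)}(\delta)$, which by the Kronecker-product identity \eqref{tensor selector} is exactly the statement that $\be_{\alpha\beta}^{t}(\mcR_{\bj}^{(p)}\otimes\mcR_{\bj+\bk}^{(p)})\be_{\gamma\delta}=1$. Summing over $\gamma,\delta\in\mcA$ and $\bj\in[\bZero,\bq^{p})$ then collapses the partition into the matrix-vector identity
$\widehat{\Sigma}(\bk) = Q^{-p}\sum_{\bj\in[\bZero,\bq^{p})}\mcR_{\bj}^{(p)}\otimes\mcR_{\bj+\bk}^{(p)}\,\widehat{\Sigma}(\qq{\bj+\bk}_{p})$, which is the first equality.

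For the second equality I would split the sum according to whether $\bj$ lies in the carry set $\Delta_{p}(\bk)$ of \eqref{carry sets}. For $\bj\in[\bZero,\bq^{p})\smallsetminus\Delta_{p}(\bk)$ we have $\qq{\bj+\bk}_{p}=\bZero$, so those terms already have the desired form $\mcR_{\bj}^{(p)}\otimes\mcR_{\bj+\bk}^{(p)}\widehat{\Sigma}(\bZero)$. The remaining terms, indexed by $\Delta_{p}(\bk)$, are uniformly bounded: the instruction matrices $\mcR_{\bj}^{(p)}$ are $0/1$ column-stochastic so their Kronecker products have entries in $\{0,1\}$, and $\|\widehat{\Sigma}(\cdot)\|_{\infty}\leq 1$. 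Hence the contribution of $\Delta_{p}(\bk)$ to both the "true" and the "target" sums is controlled in norm by $\mathrm{Card}(\Delta_{p}(\bk))/Q^{p}$, which tends to $0$ exponentially by lemma \ref{small carries}. Passing to the limit yields the claimed formula.

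The main obstacle is keeping the bookkeeping of the desubstitutes $\mcS^{-p}(\omega)$ honest: one must verify that the definition of $\mcS^{-p}(\omega)$ really does match, block for block, the combinatorial condition encoded by $\mcR_{\bj}^{(p)}\otimes \mcR_{\bj+\bk}^{(p)}$, including the boundary case $\qq{\bj+\bk}_{p}=\bZero$ where the two letters of $\eta$ coincide (so only the diagonal correlation measures $\widehat{\sigma_{\gamma\gamma}}(\bZero)=\mu[\gamma]$ contribute). Once that identification is carefully justified via proposition \ref{qsub}, the computation is essentially mechanical, and the telescoping to the limit is an immediate consequence of lemma \ref{small carries}.
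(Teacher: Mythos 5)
Your proposal is correct and takes essentially the same approach as the paper: interpret $\widehat{\sigma_{\alpha\beta}}(\bk)$ as $\mu([\omega])$ for the two-point block $\omega$, partition $[\omega]$ at scale $p$ via corollary \ref{topological structure}.3 and the desubstitutes $\mcS^{-p}(\omega)$, encode membership in $\mcS^{-p}(\omega)$ as a Kronecker-product selector via proposition \ref{qsub} and \eqref{tensor selector}, and pass to the limit by isolating the carry set $\Delta_p(\bk)$ and applying lemma \ref{small carries}. The only cosmetic difference is that you invoke the scaling identity $\mu\circ\mcS^p = Q^{-p}\mu$ up front, whereas the paper folds it into a selector-weighted identity; otherwise the two arguments coincide step for step.
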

\begin{proof}
	As $\widehat{\Sigma}(\bk) = {\sum}_{\alpha \beta \in \mcA^2} \widehat{\sigma_{\alpha \beta}}(\bk) \be_{\alpha \beta}$, and $\widehat{\sigma_{\alpha \beta}}(\bk) = \mu([\alpha] \cap T^{-\bk}[\beta])$, the proof is essentially just disjoint additivity applied to corollary \ref{topological structure}.3, using the Kronecker products to reindex the sum.  First, fix $\alpha \beta \in \mcA^2$ and $\bk \in \Zd,$ then define $\omega \in \mcA^+$ by writing
	$$\omega : \bZero \mapsto \alpha \tbump{2px}{ and } \bk \mapsto \beta  \tbump{4px}{ so that } [\omega] = [\alpha] \cap T^{-\bk}[\beta] \tbump{12px}{and} \widehat{\sigma_{\alpha \beta}}(\bk) = \mu([\omega])$$
	Now, for each $\bj \in [\bZero,\bq^n)$ and pair $\gamma \delta \in \mcA^2$ attempt to define a $\eta \in \mcA^+$ by writing 
	$$\eta : \bZero \mapsto \gamma \tbump{2px}{ and } \eta: \qq{\bj+\bk}_n \mapsto \delta \tbump{12px}{ so that } \eta \in \mcA^+ \tbump{4px}{whenever}  \qq{\bj+\bk}_n = \bZero \implies \gamma = \delta$$
	ignoring all other cases.  By definition of the $n$-th desubstitute, $(\bj, \eta) \in \mcS^{-n}(\omega)$ if and only if
	$$\alpha = (T^\bj \mcS^n\eta)(\bZero), \hspace{8px} \beta = (T^\bj\mcS^n\eta)(\bk) \tbump{12px}{ and } \eta \in \mcA^+ \text{ is well defined}$$
	or equivalently, by the definition of the generalized instructions and proposition \ref{qsub},
	$$\alpha = \mcR_\bj^{(n)}(\eta(\bZero)), \hspace{8px} \beta = \mcR_{\bj+\bk}^{(n)}(\eta(\qq{\bj+\bk}_n)) \tbump{12px}{ and } \qq{\bj+\bk}_n = \bZero \implies \gamma = \delta$$
	so that $(\bj,\eta) \in \mcS^{-n}(\omega)$ if and only if (viewing cylinders as subsets of $\AZd$ for the moment)
	$$\alpha = \mcR_\bj^{(n)}(\gamma), \hspace{8px} \beta = \mcR_{\bj+\bk}^{(n)}(\delta) \tbump{12px}{ and } [\eta] = [\gamma] \cap T^{-\qq{\bj+\bk}_n}[\delta] \neq \emptyset \subset \AZd$$
	Using this and (\ref{tensor selector}) , one checks that for $\bj \in [\bZero,\bq^n)$ and $\gamma \delta \in \mcA^2$ the block $\eta: \big\{ {\tiny \begin{matrix} \hspace{12px} \bZero \hspace{10px} \mapsto \gamma \\ \qq{\bj+\bk}_n \mapsto \delta \end{matrix}}$ satisfies
	{\small $$\frac{1}{Q^n} \be_{\alpha\beta}^t \mcR_\bj^{(n)} \otimes \mcR_{\bj+\bk}^{(n)} \be_{\gamma \delta} \, \mu( [\eta]) = \scalebox{0.9}{${\small \begin{cases} \mu\left(T^\bj \mcS^n[\eta]\right) & \text{if } (\bj, \eta) \in \mcS^{-n}(\omega) \\ 0 & \text{otherwise} \end{cases}}$}$$}
	and so adding up over all $\bj \in [\bZero,\bq^n)$ and $\gamma \delta \in \mcA^2$ we obtain the identity
	{\small $$\frac{1}{Q^n}{\sum}_{\bj \in [\bZero,\bq^n)} {\sum}_{\gamma \delta \in \mcA^2} \left( \be_{\alpha \beta}^t \mcR_\bj^{(n)} \otimes \mcR_{\bj+\bk}^{(n)} \be_{\gamma \delta} \right) \, \mu\big([\gamma] \cap T^{-\qq{\bj+\bk}_n}[\delta]\big) = {\sum}_{(\bj,\eta) \in \mcS^{-n}(\omega)} \mu(T^\bj \mcS^n[\eta])$$}
	so that corollary \ref{topological structure}.3 and (\ref{fourier coefficients}) show that the Fourier coefficients of $\sigma_{\alpha \beta}$ can be computed
	{\small $$\widehat{\sigma_{\alpha \beta}}(\bk) = \mu[\omega] = \frac{1}{Q^n} {\sum}_{\bj \in [\bZero,\bq^n)} {\sum}_{\gamma \delta \in \mcA^2} \left( \be_{\alpha \beta}^t \mcR_\bj^{(n)} \otimes \mcR_{\bj+\bk}^{(n)} \be_{\gamma \delta} \right) \, \widehat{\sigma_{\gamma \delta}}(\qq{\bj+\bk}_n)$$}
	which gives the first equality as $\widehat{\Sigma}(\qq{\bj+\bk}_n) = \sum_{\gamma \delta \in \mcA^2} \widehat{\sigma_{\gamma \delta}}(\qq{\bj+\bk}_n) \be_{\gamma \delta}$.
	
	To show the limit identity, simply separate the sum over $\bj \in [\bZero,\bq^n)$ into those integers for which $\qq{\bj+\bk}_n = \bZero$ ($\bj \notin \Delta_p(\bk)$) from those for which $\qq{\bj+\bk}_n \neq \bZero$ ($\bj \in \Delta_p(\bk)$), writing
	{\small \begin{align*}
		\widehat{\Sigma}(\bk) &= \frac{1}{Q^n} {\sum}_{\bj \in [\bZero, \bq^n) \smallsetminus \Delta_n(\bk)} \mcR_\bj^{(n)} \otimes \mcR_{\bj+\bk}^{(n)} \widehat{\Sigma}(\bZero) + \frac{1}{Q^n} {\sum}_{\bj \in \Delta_n(\bk)} \mcR_\bj^{(n)} \otimes \mcR_{\bj+\bk}^{(n)} \widehat{\Sigma}(\qq{\bj+\bk}_n) \\
			&= \frac{1}{Q^n} {\sum}_{\bj \in [\bZero,\bq^n)} \mcR_\bj^{(n)} \otimes \mcR_{\bj+\bk}^{(n)} \widehat{\Sigma}(\bZero) + \frac{1}{Q^n} {\sum}_{\bj \in \Delta_n(\bk)} \mcR_\bj^{(n)} \otimes \mcR_{\bj+\bk}^{(n)} \left( \widehat{\Sigma}(\qq{\bj+\bk}_n) - \widehat{\Sigma}(\bZero) \right)
	\end{align*}}
	\noindent so that letting $n \to \infty$ and using the carry estimates of lemma \ref{small carries} gives the desired result, as the $\mcR_\bj^{(n)} \otimes \mcR_{\bj+\bk}^{(n)}$ are column stochastic and (\ref{fourier coefficients}) which shows that $|\widehat{\Sigma}(\bk)| \leq \text{Card}(\mcA)$ for $\bk \in \Zd$.
\end{proof}
\begin{exmp}\label{TM sigma}
	We consider the Thue-Morse example and compute some Fourier coefficients of the correlation vector $\Sigma.$   First, we find Kronecker products of the instruction matrices $\mcR_0$ and $\mcR_1$
	$${\tiny \mcR_0 \otimes \mcR_0 =\scalebox{0.75}{ $ \begin{pmatrix} 1 & & & \\ & 1 & & \\ & & 1 & \\ & & & 1 \end{pmatrix}$} \hspace{0.25in} \mcR_0 \otimes \mcR_1 = \scalebox{0.75}{$\tiny{\begin{pmatrix} & 1 & & \\ 1 & & & \\ & & & 1 \\ & & 1 & \end{pmatrix}}$} \hspace{0.25in} \mcR_1 \otimes \mcR_0 = \scalebox{0.75}{$\tiny{\begin{pmatrix} & & 1 & \\ & & & 1 \\ 1 & & & \\ & 1 & & \end{pmatrix}}$} \hspace{0.25in} \mcR_1 \otimes \mcR_1 = \scalebox{0.75}{$\begin{pmatrix} & & & 1 \\ & & 1 & \\ & 1 & & \\ 1 & & & \end{pmatrix}$} }$$
	Now, applying the Fourier recursion theorem \ref{fourier recursion} for $k = 1$ and $p = 1$ we have
	$$\widehat{\Sigma}(1) = \frac{1}{2} \mcR_0 \otimes \mcR_{0+1} \widehat{\Sigma}(\qq{0 + 1}_1) + \frac{1}{2} \mcR_1 \otimes \mcR_{1 + 1} \widehat{\Sigma}(\qq{1+1}_1) = \frac{1}{2} \mcR_0 \otimes \mcR_1\widehat{\Sigma}(0) + \frac{1}{2} \mcR_1 \otimes \mcR_0 \widehat{\Sigma}(1)$$
	We can then solve for $\widehat{\Sigma}(1)$ above and use $\widehat{\Sigma}(0) = \frac{1}{2} \sum_{\gamma \in \mcA} \be_{\gamma \gamma}$ as the Perron vector is $(\frac{1}{2}, \frac{1}{2})^t$ so
	$$\widehat{\Sigma}(0) = \frac{1}{2}(1, 0, 0, 1)^t \tbump{12px}{ and } \widehat{\Sigma}(1) = (2\bI - \mcR_1 \otimes \mcR_0)^{-1}\mcR_0 \otimes \mcR_1 \widehat{\Sigma}(0) = \frac{1}{6}(1, 2, 2, 1)^t$$
	with the basis ordered lexicographically: $\be_{00}, \be_{01}, \be_{10}, \be_{11}.$  If $k = 5,$ then for $p=3$ (as $2^2 < 5 < 2^3$)
	\begin{align*}
		\widehat{\Sigma}(5) &= \frac{1}{8} {\sum}_{j=0}^{2} \mcR_j^{(3)} \otimes \mcR_{j+5}^{(3)} \, \widehat{\Sigma}(0) + \frac{1}{8}{\sum}_{j=3}^7 \mcR_j^{(3)} \otimes \mcR_{j +5}^{(3)} \, \widehat{\Sigma}(1) \\
			&= \frac{1}{8}( \mcR_0 \otimes \mcR_0 + \mcR_1 \otimes \mcR_0 + \mcR_1 \otimes \mcR_1)\widehat{\Sigma}(0) + \frac{1}{8}(2\mcR_0 \otimes \mcR_0 + 2\mcR_1 \otimes \mcR_1 + \mcR_0 \otimes \mcR_1)\widehat{\Sigma}(1) \\
			&= \frac{1}{16} {\tiny \scalebox{0.75}{ $ \begin{pmatrix} 1 & & 1 & 1 \\ & 1 & 1 & 1 \\ 1 & 1 & 1 & \\ 1 & 1 & & 1 \end{pmatrix}$}} {\tiny \scalebox{0.75}{$\begin{pmatrix} 1 \\ 0 \\ 0 \\ 1\end{pmatrix}$}} + \frac{1}{48} {\tiny \scalebox{0.75}{ $ \begin{pmatrix} 2 & 1 &  & 2 \\ 1 & 2 & 2 &  \\  & 2 & 2 & 1 \\ 2 & & 1 & 2 \end{pmatrix}$}} {\tiny \scalebox{0.75}{$\begin{pmatrix} 1 \\ 2 \\ 2 \\ 1\end{pmatrix}$}} = \frac{1}{4} (1,1,1,1)^t
	\end{align*}
	\noindent as $\mcR_\bj^{(n)}$ is $\mcR_0$ or $\mcR_1$ corresponding to the parity of $1$'s in its binary expansion, see example \ref{TM example}.
\end{exmp}
\noindent In \S \ref{examples}, we use the Fourier recursion as in the above example to compute $\widehat{\Sigma}(\bk)$ (and ultimately $\sigmaX$) for several examples, but first we use it to determine the set of $\bq$-shift invariant probability measures in the span of the correlation measures.

\subsection{The Spectral Hull - $\mcK$}\label{spectral hull}

As we are interested in linear combinations of the correlation measures, for $\bv \in \bbC^{\mcA^2}$ write
\begin{equation}\label{lambda v}
	\textstyle \lambda_\bv := \bv^t \Sigma = \sum_{\alpha \beta \in \mcA^2} v_{\alpha \beta} \sigma_{\alpha \beta}
\end{equation}
\noindent which defines a linear map taking a $\bbC^{\mcA^2}$ vector onto the corresponding linear combination of the correlation measures.  As we are looking for $\bq$-shift invariant measures in the span, we compute the Fourier coefficients $\widehat{\Sigma}(\ba \bq)$ for $\ba \in \Zd$ using theorem \ref{fourier recursion} giving us the identity
\begin{equation}\label{q invariance of sigma}
\textstyle \widehat{\Sigma}(\ba\bq) = \frac{1}{Q} \sum_{\bj \in [\bZero,\bq)} \mcR_\bj^{(1)} \otimes \mcR_{\bj+\ba \bq}^{(1)} \widehat{\Sigma}(\qq{\bj+\ba\bq}_1)	= \frac{1}{Q}C_\mcS \widehat{\Sigma}(\ba)
\end{equation}
\noindent as $\qq{\bj+\ba\bq}_1 = \ba$ and $\qr{\bj+\ba\bq}_1 = \bj$ for $\bj \in [\bZero,\bq),$ and as $C_\mcS = \sum \mcR_\bj \otimes \mcR_\bj$.  
\begin{lem}\label{qshift invariance}
If $\bv$ is a left $Q$-eigenvector of $C_\mcS,$ then $\lambda_\bv = \bv^t \Sigma$ is invariant for the $\bq$-shift.
\end{lem}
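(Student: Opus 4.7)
The plan is to reduce the claim to the Fourier-coefficient criterion for $\bq$-shift invariance from proposition \ref{spectral qshift identities} and then invoke the identity (\ref{q invariance of sigma}) already established for $\widehat{\Sigma}$. By proposition \ref{spectral qshift identities}, a measure $\nu\in\mcM(\bbT^d)$ is $\bq$-shift invariant if and only if $\widehat{\nu}(\ba\bq)=\widehat{\nu}(\ba)$ for every $\ba\in\Zd$, so it suffices to verify this single relation for $\lambda_\bv$.

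First I would note that, since $\lambda_\bv = \bv^t\Sigma = \sum_{\alpha\beta}v_{\alpha\beta}\sigma_{\alpha\beta}$ is a finite linear combination of finite complex measures, the Fourier transform is linear in $\bv$, giving $\widehat{\lambda_\bv}(\bk) = \bv^t\widehat{\Sigma}(\bk)$ for every $\bk\in\Zd$. Next I would apply (\ref{q invariance of sigma}) to obtain
\[
\widehat{\lambda_\bv}(\ba\bq) \;=\; \bv^t\widehat{\Sigma}(\ba\bq) \;=\; \tfrac{1}{Q}\,\bv^t C_\mcS\,\widehat{\Sigma}(\ba).
\]
Finally, using the hypothesis that $\bv$ is a left $Q$-eigenvector of $C_\mcS$, i.e.\ $\bv^t C_\mcS = Q\bv^t$, the right-hand side simplifies to $\bv^t\widehat{\Sigma}(\ba) = \widehat{\lambda_\bv}(\ba)$. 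Since $\ba\in\Zd$ was arbitrary, proposition \ref{spectral qshift identities} concludes that $\lambda_\bv$ is $\bq$-shift invariant.

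There is really no obstacle here: all the substantive content has already been done in theorem \ref{fourier recursion} (which produced (\ref{q invariance of sigma})) and in the spectral identity (\ref{spectral coefficients}) defining $\widehat{\sigma_{\alpha\beta}}$. The lemma is essentially the observation that the coincidence matrix $C_\mcS$ acts on Fourier coefficients of $\Sigma$ as the obstruction to $\bq$-shift invariance, so that annihilating that obstruction on the left (via a left $Q$-eigenvector) produces an invariant linear combination. The only thing worth being careful about is the bookkeeping with transposes: $\lambda_\bv$ is contracted with $\bv$ on the left, and $C_\mcS$ acts on $\widehat{\Sigma}$ on the left in (\ref{q invariance of sigma}), which is why the \emph{left} $Q$-eigenspace of $C_\mcS$ (equivalently, the $Q$-eigenspace of $C_\mcS^t$) is the relevant object, as foreshadowed by step~3 of the algorithm stated in the introduction.
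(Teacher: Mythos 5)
Your proof is correct and follows the paper's argument exactly: apply identity (\ref{q invariance of sigma}) to get $\widehat{\lambda_\bv}(\ba\bq)=\tfrac{1}{Q}\bv^t C_\mcS\widehat{\Sigma}(\ba)$, cancel the factor $\tfrac{1}{Q}$ using the left $Q$-eigenvector hypothesis, and invoke proposition \ref{spectral qshift identities}. Your remark about the transpose bookkeeping ($C_\mcS^t\bv = Q\bv$ versus $\bv^t C_\mcS = Q\bv^t$) is a correct reading of the paper's convention and matches how the paper phrases the same step.
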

\begin{proof}
	Let $\bv$ be a left $Q$-eigenvector of $C_\mcS$ so that $C_\mcS^t \bv  = Q \bv$.  Then by (\ref{q invariance of sigma})
	$$\textstyle  \widehat{\lambda_\bv}(\ba\bq) = \bv^t \widehat{\Sigma}(\ba\bq) = \frac{1}{Q} \bv^t  C_\mcS \widehat{\Sigma}(\ba) = \bv^t \widehat{\Sigma}(\ba) = \widehat{\lambda_\bv}(\ba)$$
	\noindent and so $\lambda_\bv$ is invariant for the $\bq$-shift by proposition \ref{spectral qshift identities}.
\end{proof}
Thus, the left $Q$-eigenspace of the coincidence matrix gives rise to $\bq$-shift invariant linear combinations of correlation measures.   As the correlation measures $\sigma_{\alpha \beta}$ for $\alpha \neq \beta$ are not positive measures, we cannot be sure that a general linear combination will also be positive.  The following gives a useful condition guaranteeing positivity, and is due to Queff\'elec, see [\ref{queffelec}, Prop 10.3].  
\begin{defn}\label{strong positivity}
	For $\bv = (v_{\alpha \beta})_{\alpha \beta \in \mcA^2} \in \bbC^{\mcA^2},$ let $\mathring{\bv} = (v_{\alpha \beta})_{\alpha, \beta \in \mcA} \in \bM_\mcA(\bbC)$ be its \textit{associated matrix}.   We say $\bv$ is \textit{strongly semipositive} if $\mathring{\bv}$ is positive semidefinite: \textit{writing} $\bv \stpos 0$ whenever $\mathring{\bv} \stpos 0$.  
\end{defn}
\noindent One reads the entries of a $\bbC^{\mcA^2}$ vector into the entries of its $\mcA \times \mcA$ associated matrix along each row sequentially in order.  In the $s = 2$ case, the forward and inverse maps are, respectively, 
$$\small{\hspace{0.35in}\begin{pmatrix} a & b & c & d \end{pmatrix}^\text{t} \longmapsto \begin{pmatrix} a & b \\ c & d \end{pmatrix} \hspace{0.75in} \begin{pmatrix} a_{11} & a_{12} \\ a_{21} & a_{22} \end{pmatrix} \longmapsto \begin{pmatrix} a_{11} & a_{12} & a_{21} & a_{22} \end{pmatrix}^\text{t}}$$
\noindent The associated matrix relates the Kronecker and matrix products: for $\bA, \bB \in \bM_\mcA(\bbC)$ and $\bv \in \bbC^{\mcA^2}$
\begin{equation}\label{associated kronecker product}
{( \bA \otimes \bB )^\circ \bv} = \bA \mathring{\bv} \bB^t
\end{equation}
\noindent and which is used in the proof of theorem \ref{abc theorem} to express the spectrum of an aperiodic bijective commutative $\bq$-substitution as a matrix Riesz product, but most importantly, the following:
\begin{lem}\label{schur inner product}
	If $\bv \in \bbC^{\mcA^2}$ is nonzero and strongly semipositive then $\bv^t \Sigma$ is a positive measure.
\end{lem}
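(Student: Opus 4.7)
The plan is to reduce the general strongly semipositive case to the rank-one case by spectral decomposition of $\mathring{\bv}$. The rank-one case is essentially tautological from sesquilinearity of the spectral map and Bochner's theorem (which is how the $\sigma_f$ were defined to be positive in the first place).

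First I would fix the correspondence between vectors in $\bbC^\mcA$ and functions in $L^2(\mu)$: for $\bw = (w_\alpha)_{\alpha \in \mcA} \in \bbC^\mcA$, set
\[
f_\bw \;=\; \sum_{\alpha \in \mcA} w_\alpha \, \IND_{[\alpha]} \;\in\; L^2(\mu).
\]
Using the definition $\sigma_{\alpha\beta}=\sigma_{\IND_{[\alpha]},\IND_{[\beta]}}$ and the fact that $\sigma_{\cdot,\cdot}$ is linear in its first argument and conjugate-linear in its second (visible directly from \eqref{spectral coefficients}), I would compute
\[
\sigma_{f_\bw} \;=\; \sigma_{f_\bw,f_\bw} \;=\; \sum_{\alpha,\beta \in \mcA} w_\alpha \overline{w_\beta}\, \sigma_{\alpha\beta}.
\]
By Bochner's theorem applied to $f_\bw$, the measure on the left is a positive measure on $\bbT^d$. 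On the right, the coefficient of $\sigma_{\alpha\beta}$ is $w_\alpha \overline{w_\beta}$, i.e.\ the $(\alpha,\beta)$ entry of the rank-one positive semidefinite matrix $\bw\bw^{\ast}$. Hence the lemma holds whenever $\mathring{\bv}$ is a rank-one PSD matrix of the form $\bw\bw^{\ast}$.

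Next I would invoke the spectral theorem for Hermitian matrices to handle general $\bv \stpos 0$. Since $\mathring{\bv} \in \bM_\mcA(\bbC)$ is PSD, diagonalize it as $\mathring{\bv} = \sum_{k} \lambda_k\, \bw^{(k)}(\bw^{(k)})^{\ast}$, where the $\lambda_k \ge 0$ are its eigenvalues and the $\bw^{(k)} \in \bbC^\mcA$ are an orthonormal basis of eigenvectors. Reading off entries gives $v_{\alpha\beta} = \sum_k \lambda_k w^{(k)}_\alpha \overline{w^{(k)}_\beta}$, and substituting into $\lambda_\bv = \sum_{\alpha\beta} v_{\alpha\beta}\sigma_{\alpha\beta}$ and interchanging the finite sums yields
\[
\lambda_\bv \;=\; \sum_k \lambda_k \sum_{\alpha,\beta} w^{(k)}_\alpha \overline{w^{(k)}_\beta} \,\sigma_{\alpha\beta} \;=\; \sum_k \lambda_k\, \sigma_{f_{\bw^{(k)}}}.
\]
This is a nonnegative linear combination of positive measures, hence a positive measure, completing the proof.

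There is no real obstacle: the only thing to be careful about is the conjugation convention (which argument of $\sigma_{\cdot,\cdot}$ is antilinear) so that strongly semipositive $\mathring{\bv}$ corresponds exactly to matrices of the form $\bw\bw^{\ast}$ in the rank-one case, rather than $\bw\bw^{t}$. Once that is pinned down by \eqref{spectral coefficients}, the argument is just the standard observation that a matrix of spectral measures $(\sigma_{f_\alpha,f_\beta})_{\alpha,\beta}$ is a positive operator-valued measure on $\bbT^d$.
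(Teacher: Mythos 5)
Your proof is correct, and it takes a genuinely different route from the paper's. The paper's argument goes through the Schur product theorem: writing $\lambda_\bv(A) = \bv^t\Sigma(A) = \bOne^t\bigl(\mathring{\bv}\circ\mathring{\Sigma}(A)\bigr)\bOne$ and observing that $\mathring{\Sigma}(A)$ is Hermitian positive semidefinite (by sesquilinearity of the spectral map and Bochner), so that the Hadamard product with the PSD matrix $\mathring{\bv}$ is again PSD and the all-ones quadratic form on it is nonnegative, pointwise on Borel sets. Your approach instead spectral-decomposes $\mathring{\bv}=\sum_k\lambda_k\,\bw^{(k)}(\bw^{(k)})^\ast$ and recognizes $\lambda_\bv=\sum_k\lambda_k\,\sigma_{f_{\bw^{(k)}}}$ as a nonnegative combination of the manifestly positive autospectral measures $\sigma_{f}$. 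The two arguments are close in content --- your decomposition is essentially the standard proof of the Schur product theorem inlined and then applied to $\mathring{\Sigma}$ --- but yours is more self-contained, avoids citing Schur's theorem, and makes the positive-definiteness of $\mathring{\Sigma}$ (which the paper merely notes "one checks") appear automatically rather than as a separate lemma. The only point that needed care, which you handled correctly, is the conjugation convention in $\sigma_{f,g}$ from the definition \eqref{spectral coefficients}: it is conjugate-linear in the second slot, so the rank-one case is $\mathring{\bv}=\bw\bw^\ast$, not $\bw\bw^t$, matching the Hermitian form of strong semipositivity.
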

\begin{proof}
	By the Schur product theorem, the Hadamard product ($A \tbump{-3px}{$\circ$} B$)  of two positive (semi)definite matrices is positive (semi)definite, see [\ref{HJ}, \S 5].  As $\bv^t \bw = \sum v_i w_i = \bOne^t (\mathring{\bv} \tbump{-3px}{$\circ$} \mathring{\bw}) \bOne$, we have $\bv^t \bw \geq 0$ whenever both $\bv$ and $\bw$ are strongly semipositive.  Using bilinearity of the spectral map $f,g \mapsto \sigma_{f,g}$ and the spectral theorem, one checks that $\mathring{\Sigma}$ is Hermitian positive definite, so that $\lambda_\bv$ is a positive measure whenever $\bv \stpos 0$ as these conditions are determined pointwise for measures.
\end{proof}
\noindent Finally, recall from the Fourier recursion theorem \ref{fourier recursion} that $\widehat{\Sigma}(\bZero) = \sum_{\alpha \in \mcA} \mu([\alpha]) \be_{\alpha \alpha}$ and so
$$\lambda_\bv(\bbT^d) = \widehat{\lambda_\bv}(\bZero) = \bv^t \widehat{\Sigma}(\bZero) = \textstyle{ \sum_{\alpha \in \mcA} v_{\alpha \alpha} \mu([\alpha])}$$
\noindent so that $\bv \stpos 0$ gives rise to a probability measure if and only if $\bv^t \widehat{\Sigma}(\bZero) = \sum v_{\alpha \alpha} \mu([\alpha]) = 1.$ In the primitive case, this is equivalent to $v_{\alpha \alpha} = 1$ for $\alpha \in \mcA,$ so that the probability measures arising from strong semipositivity via $\lambda$ is a closed and bounded convex set.  The above discussion prompts the following definition, see also [\ref{queffelec}, Def 11.1].
\begin{defn}\label{convex set K}
The \textit{spectral hull} $\mcK(\mcS)$ of a $\bq$-substitution $\mcS$ is the collection
$$\mcK(\mcS) := \{ \bv \in \bbC^{\mcA^2} : C_\mcS^t \bv = Q \bv \text{ and } \bv \stpos 0 \text{ and } \bv^t \widehat{\Sigma}(\bZero) = 1 \}$$
\noindent and we let $\mcK^\ast := \text{Ext}(\mcK) \subset \mcK$ denote the extreme points of the spectral hull.
\end{defn}
\noindent An important example is the vector $\bv = \bOne_{\mcA^2} \in \bbC^{\mcA^2}$, which is a $Q$-eigenvector of the $Q$-stochastic coincidence matrix $C_\mcS^t$ and strongly semipositive as $\mathring{\bOne_{\mcA^2}}$ is positive semidefinite.  As $\sum_{\alpha \in \mcA} u_\alpha = 1$ we have $\bOne_{\mcA^2} \in \mcK(\mcS)$ for every aperiodic $\bq$-substitution $\mcS$ so that $\mcK^\ast$ is nonempty.  Observe that
$${\textstyle \lambda_{\bOne_{\mcA^2}} = {\sum}_{\alpha\beta \in \mcA^2} \sigma_{\alpha \beta} =  {\sum}_{\alpha \in \mcA} {\sum}_{\beta \in \mcA} \sigma_{\IND_{[\alpha]}, \IND_{[\beta]}} = \sigma_{\scalebox{0.6}{${\sum}_{\alpha} \IND_{[\alpha]}$}} = \bdelta_\bOne}$$
\noindent the unit Dirac mass at $\bOne \in \bbT^d$, as $\sum_{\alpha \in \mcA} \IND_{[\alpha]}= 1$: note that $\bdelta_\bOne$ is part of the spectrum of \textit{every} $\Zd$-action on a compact metric space, and is ergodic for the $\bq$-shift.  Combining the above gives:
\begin{prop}\label{K invariance}
	For $\bv \in \mcK(\mcS)$, $\lambda_\bv = \bv^t \Sigma \ll \sigmaX$ is a $\bq$-shift invariant probability measure.
\end{prop}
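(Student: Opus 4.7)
The plan is to verify, one at a time, the three assertions bundled into the statement: that $\lambda_\bv$ is a positive measure of total mass one, that it is $\bq$-shift invariant, and that it is absolutely continuous with respect to $\sigmaX$. Each of these will drop out of exactly one of the three defining conditions on $\mcK(\mcS)$ in Definition \ref{convex set K}, combined with a result proved earlier in this section. I do not anticipate any genuine obstacle: the proposition is essentially a dictionary between the three conditions packaged into $\mcK(\mcS)$ and the three conclusions advertised.

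For positivity and total mass, I would first apply Lemma \ref{schur inner product} directly: strong semipositivity $\bv \stpos 0$ is exactly the hypothesis there, and yields that $\lambda_\bv = \bv^t \Sigma$ is a positive Borel measure on $\bbT^d$. The normalization is then a one-line Fourier computation, namely $\lambda_\bv(\bbT^d) = \widehat{\lambda_\bv}(\bZero) = \bv^t \widehat{\Sigma}(\bZero) = 1$, using the third defining clause of $\mcK(\mcS)$. The $\bq$-shift invariance is handed to us by Lemma \ref{qshift invariance}, whose hypothesis $C_\mcS^t \bv = Q\bv$ is precisely the first clause in the definition of $\mcK(\mcS)$; applying that lemma to our specific $\bv$ gives $\widehat{\lambda_\bv}(\ba \bq) = \widehat{\lambda_\bv}(\ba)$ for all $\ba \in \Zd$, which by Proposition \ref{spectral qshift identities} is equivalent to $\bS_\bq$-invariance.

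It remains to show $\lambda_\bv \ll \sigmaX$. Here I would chain two standard absolute-continuity relations. First, because $\IND_{[\alpha]} \in L^2(\mu)$, the definition of the maximal spectral type gives $\sigma_{\alpha \alpha} \ll \sigmaX$ for every $\alpha \in \mcA$. Second, using the general property $\sigma_{f,g} \ll \sigma_f$ of the spectral map (already invoked in the proof of Theorem \ref{queffelec one}), we get $\sigma_{\alpha\beta} \ll \sigma_{\alpha\alpha} \ll \sigmaX$ for every pair $\alpha\beta \in \mcA^2$. Since $\lambda_\bv = \sum_{\alpha\beta \in \mcA^2} v_{\alpha\beta}\, \sigma_{\alpha\beta}$ is a finite linear combination of such measures, this yields $\lambda_\bv \ll \sigmaX$ and completes the proof. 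The only place where one might pause is the last step, if one were uncertain which $\mu$ is being used to define the $\sigma_{\alpha\beta}$; but this is already fixed earlier in the paper as a convex combination of the uniquely ergodic measures of the primitive components of $\mcS$, and the definition (\ref{sigmaX def}) of $\sigmaX$ as a sum over $\mcE_\mcS$ is precisely what makes the absolute continuity hold.
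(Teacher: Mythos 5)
Your proposal is correct and follows essentially the same route as the paper, which presents this proposition as an immediate consequence of the immediately preceding material (Lemma \ref{schur inner product} for positivity, Lemma \ref{qshift invariance} for $\bq$-shift invariance, the computation $\widehat{\lambda_\bv}(\bZero) = \bv^t\widehat{\Sigma}(\bZero) = 1$ for total mass, and the chain $\sigma_{\alpha\beta} \ll \sigma_{\alpha\alpha} \ll \sigmaX$ noted in the proof of Theorem \ref{queffelec one} for absolute continuity). You have simply made explicit what the paper's one-line ``Combining the above gives'' refers to; the ingredients and their roles match exactly.
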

\noindent Thus, the spectral hull is a nonempty closed convex subset of $\bbC^{\mcA^2}$ with finitely many extreme points whose elements give rise (via $\bv \mapsto \lambda_\bv$) to $\bq$-shift invariant probability measures in the span of the correlation measures.   If we knew that $\lambda_\bw \sim \sum \sigma_{\alpha \alpha}$ for some $\bw \in \mcK$, then we could combine lemma \ref{invariant contracted map} and theorem \ref{queffelec one} to show that $\sigmaX \sim \bomega_\bq \ast \lambda_\bw$.  This is accomplished in theorem \ref{queffelec two} which asserts this for all positive linear combinations of the extreme points of the spectral hull.

Before we continue, we discuss an alternate characterization of the spectral hull which is more suitable for computation.   First, telescope the bisubstitution $\mcS \otimes \mcS$ to have index of imprimitivity $1$, ergodic classes $\mcE_1, \ldots, \mcE_J$, and transient part $\mcT$, see proposition \ref{PRF}.  Let $\mcP_\mcT$ denote the orthogonal projection onto the span of the transient coordinate vectors of $\bbC^{\mcA^2}$ so that, reordering the basis of $\mcA^2$ according to the ergodic decomposition, the reduced normal form (\ref{matrix PRF}) for $C_\mcS^t$ gives us
$$ C_\mcS^t \approx {\tiny \scalebox{0.8}{$\begin{bmatrix} C_{1,1} & & &  \\ & \ddots & &  \\ & & C_{J,J} &  \\ C_{\mcT,1} &\cdots & C_{\mcT,J} & C_{\mcT,\mcT} \end{bmatrix}$}} \tbump{12px}{\normalsize{ and we write } } C_\mcT^t = C_\mcS^t \mcP_\mcT \approx  {\tiny \scalebox{0.8}{$\begin{bmatrix} \tbump{6px}{$\bZero$} & & &  \\ & \ddots & &  \\ & & \tbump{6px}{$\bZero$} &  \\ \tbump{6px}{$\bZero$} &\cdots & \tbump{6px}{$\bZero$} & C_{\mcT,\mcT} \end{bmatrix}$}}$$
\noindent The square blocks along the diagonal of $C_\mcS^t$ correspond to the ergodic classes $\mcE_j$ and are primitive and $Q$-stochastic, excepting those in $\mcT,$ the transient pairs.  For $w_1,\ldots, w_J \in \bbC$, write 
\begin{equation}\label{ergodic class representation}\mcV_\mcE := \mcV_\mcE(w_1,\ldots, w_J) := \textstyle  \sum_{j=1}^J \sum_{\alpha \beta \in \mcE_j} \, w_j \, \be_{\alpha \beta} \in \bbC^{\mcA^2}\end{equation}
\noindent so that $\mcV_\mcE$ is componentwise constant on the ergodic classes, and zero on the transient part.   The following allows us to compute the spectral hull explicitly, see also [\ref{queffelec}, Proposition 10.2].
\begin{prop}\label{characterization of K}
	If $\mcS$ is a $\bq$-substitution on $\mcA$ and $\mcE$ is the ergodic decomposition of its bisubstitution, then $\bv \in \mcK(\mcS)$ if and only if $\mathring{\bv}$ is self-adjoint with nonnegative eigenvalues and satisfies 		
	\vspace{-0.15in}	
	$$ \scalebox{0.9}{${\sum}_{\alpha \in \mcA} v_{\alpha \alpha} u_\alpha = 1 \tbump{0.15in}{and} \bv = \mcV_\mcE - (Q\bI - C_\mcT^t)^{-1} (Q \bI - C_\mcS^t) \mcV_\mcE \tbump{0.15in}{for some} w_1,\ldots, w_J \in \bbC$}$$
	\vspace{-0.15in}
	In particular, if $\mcT = \emptyset$ then $\bv = \mcV_\mcE$; if $\mcS$ is primitive then $\bv_{\alpha \alpha} = 1$ for all $\alpha \in \mcA$.
\end{prop}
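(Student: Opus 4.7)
The plan is to decode the constraint $C_\mcS^t \bv = Q\bv$ block by block using the reduced normal form for $C_\mcS$, then translate the result into the closed-form expression stated in the proposition.  The normalization and positivity constraints in the definition of $\mcK(\mcS)$ simply pass through:  $\bv^t\widehat{\Sigma}(\bZero) = \sum_\alpha v_{\alpha\alpha}u_\alpha$ since $\widehat{\Sigma}(\bZero) = \sum_\alpha \mu([\alpha])\be_{\alpha\alpha} = \sum_\alpha u_\alpha \be_{\alpha\alpha}$ by Michel's theorem and theorem~\ref{invariant measures}, while $\bv \stpos 0$ is, by definition, the statement that $\mathring{\bv}$ is Hermitian positive semidefinite, i.e.\ self-adjoint with nonnegative eigenvalues.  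So the core of the argument is the characterization of the left $Q$-eigenspace of $C_\mcS$.

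First, after telescoping $\mcS \otimes \mcS$ to index of imprimitivity $1$ and reordering the basis of $\bbC^{\mcA^2}$ to respect the ergodic decomposition, the block representation (\ref{matrix PRF}) for $C_\mcS^t$ displayed above the statement is block lower-triangular with primitive $Q$-row-stochastic diagonal blocks $C_{j,j}^t$ on the ergodic classes and the transient block $C_{\mcT,\mcT}^t$ in the corner, whose spectral radius is strictly less than $Q$ by lemma \ref{perron decomposition} applied to $\mcS \otimes \mcS$.  Partition a vector $\bv$ consistently as $\bv = (\bv_1,\ldots,\bv_J,\bv_\mcT)$.  The equation $C_\mcS^t \bv = Q\bv$ then splits into the ergodic equations $C_{j,j}^t \bv_j = Q \bv_j$ for $1 \leq j \leq J$ and the transient equation $\sum_{j=1}^J \tilde{C}_j^t \bv_j + C_{\mcT,\mcT}^t \bv_\mcT = Q \bv_\mcT$, where $\tilde{C}_j^t$ is the $\mcT \times \mcE_j$ block of $C_\mcS^t$.

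The second step is to solve these.  Since each $C_{j,j}$ is primitive and $Q$-column-stochastic, its transpose is primitive and $Q$-row-stochastic, so by Perron--Frobenius the right $Q$-eigenspace of $C_{j,j}^t$ is one-dimensional and spanned by $\bOne_{\mcE_j}$, giving $\bv_j = w_j \bOne_{\mcE_j}$ for a unique $w_j \in \bbC$.  The operator $Q\bI - C_{\mcT,\mcT}^t$ is invertible (again by lemma \ref{perron decomposition} applied to the transient block) so the transient equation determines $\bv_\mcT$ as
\[
  \bv_\mcT = (Q\bI - C_{\mcT,\mcT}^t)^{-1} \sum_{j=1}^J w_j \tilde{C}_j^t \bOne_{\mcE_j}.
\]
This shows the left $Q$-eigenspace of $C_\mcS$ is $J$-dimensional and parametrized by $(w_1,\ldots,w_J) \in \bbC^J$.

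The last step is to recognize this parametrization as the stated formula.  Writing $\mcV_\mcE = \mcV_\mcE(w_1,\ldots,w_J)$ as in (\ref{ergodic class representation}), a direct block computation using $C_{j,j}^t \bOne_{\mcE_j} = Q\bOne_{\mcE_j}$ gives that $(Q\bI - C_\mcS^t)\mcV_\mcE$ vanishes on every ergodic block and equals $-\sum_j w_j \tilde{C}_j^t \bOne_{\mcE_j}$ on the transient block.  Since $C_\mcT^t = C_\mcS^t \mcP_\mcT$ acts as $C_{\mcT,\mcT}^t$ on the transient coordinates and as zero on the ergodic ones, $(Q\bI - C_\mcT^t)^{-1}$ acts as $(Q\bI - C_{\mcT,\mcT}^t)^{-1}$ on the transient block and as $Q^{-1}\bI$ on the ergodic blocks; applying it to $(Q\bI - C_\mcS^t)\mcV_\mcE$ therefore produces zero on the ergodic blocks and $-(Q\bI - C_{\mcT,\mcT}^t)^{-1}\sum_j w_j \tilde{C}_j^t \bOne_{\mcE_j}$ on the transient block.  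Subtracting from $\mcV_\mcE$ recovers exactly the general solution derived above, completing the equivalence.  The case $\mcT = \emptyset$ collapses to $\bv = \mcV_\mcE$, and in the primitive case the single scalar $w_1$ is pinned down by the normalization $\sum_\alpha v_{\alpha\alpha} u_\alpha = w_1 \sum_\alpha u_\alpha = w_1 = 1$.

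The main obstacle is not analytic but notational: carefully tracking how the block representation of $C_\mcS$ transposes, how $C_\mcT$ is built from it, and how $\mcV_\mcE$ interacts with both, so that the explicit inversion $(Q\bI - C_\mcT^t)^{-1}$ on the transient block lines up exactly with the Perron--Frobenius solution obtained directly.  Once bookkeeping is set up, each identity is routine.
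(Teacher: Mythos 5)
Your proposal is correct and follows essentially the same route as the paper: pass to the reduced normal form of $C_\mcS^t$ after telescoping, use Perron--Frobenius on the primitive ergodic blocks to force $\bv$ to be constant on each $\mcE_j$, and then invert $Q\bI - C_{\mcT,\mcT}$ to recover the transient coordinates. The only difference is cosmetic: you first parametrize the $Q$-eigenspace explicitly and then verify the closed form, whereas the paper rearranges $(Q\bI - C_\mcS^t)\bv = \bZero$ directly into $(Q\bI - C_\mcT^t)\bv_\mcT = -(Q\bI - C_\mcS^t)\mcV_\mcE$ and solves; these are the same computation in a different order. (One small notational slip: the paper's $C_{j,j}$ already denotes a block of $C_\mcS^t$, so no extra transpose is needed when invoking Perron--Frobenius on it.)
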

\begin{proof}
	Strong semipositivity is equivalent to self-adjointness and nonnegative spectrum by usual properties of positive semidefiniteness, and as $\widehat{\Sigma}(\bZero) = \sum u_\alpha \be_{\alpha \alpha}$, we have $\bv^t \widehat{\Sigma}(\bZero) = 1$ if and only if $\sum v_{\alpha \alpha} u_\alpha = 1.$  It remains to show equivalence of $C_\mcS^t \bv = Q \bv$ with the above identity
	
	Let $\bv_\mcT := \mcP_\mcT \bv$.  As the blocks $C_{j,j}$ corresponding to $C_\mcS^t$ on the $\mcE_j$ are primitive and $Q$-stochastic, one can use the above primitive reduced form of $C_\mcS^t$ to see that its $Q$-eigenvectors are constant on ergodic classes, so $C_\mcS^t \bv = Q\bv$ if and only if $\bv - \bv_\mcT = \mcV_\mcE(w_1,\ldots,w_J)$ for some constants $w_1,\ldots, w_J \in \bbC$, or equivalently 
	$$(Q\bI - C_\mcS^t) \bv = \bZero \tbump{12px}{ $\iff$ } (Q \bI - C_\mcS^t) \mcV_\mcE = (Q \bI - C_\mcS^t)\bv - (Q \bI- C_\mcS^t) \bv_\mcT = - (Q \bI - C_\mcT^t) \bv_\mcT$$
	\noindent as $C_\mcS^t \mcP_\mcT = C_\mcT^t$ and $\bv_\mcT = \mcP_\mcT \bv$.  By the discussion preceeding lemma \ref{perron decomposition} we know that the spectral radius of $C_\mcT$ is less than $Q$, and so $Q \bI - C_\mcT^t$ is invertible.  Solving for $\bv_\mcT$ and writing $\bv = \mcV_\mcE + \bv_\mcT$ gives the final equivalence and the result follows from the definition of $\mcK(\mcS)$.  When $\mcT = \emptyset,$ the statement is clear from the above reduced normal form of $C_\mcS^t$, and when $\mcS$ is primitive the letters $\alpha \alpha$ for $\alpha \in \mcA$ form a single ergodic class, and so as $\sum u_\alpha = 1$ and $\bv$ is constant on its only ergodic class, it follows that $v_{\alpha \alpha} = 1,$ completing the proof.
\end{proof}
\begin{exmp}\label{TM spectral hull}
	For the Thue-Morse substitution $\tau,$ we computed the ergodic classes of the bisubstitution in example \ref{TM bisub} obtaining $\mcE_0 = \{00,11\}$ and $\mcE_1 = \{01,10\}$, with no transient part.  As $\tau$ is primitive, $\sum v_{\alpha \alpha} u_\alpha = 1$ is equivalent to requiring $w_1 = 1$, so we compute eigenvalues of $\mathring{\bv}$ 
	$$0 = \text{det}(\mathring{\bv} - \lambda \bI) =  \text{det} {\tiny \begin{pmatrix} 1 - \lambda & w_2 \\ w_2 & 1 - \lambda \end{pmatrix}}  = ( 1 - \lambda + w_2 )(1 - \lambda - w_2) \tbump{12px}{$\iff$} \lambda = 1 \pm w_2$$
	so that $\bv \in \mcK(\tau)$ if and only if $\bv = \be_{00} + \be_{11} + w_2 \be_{01} + w_2 \be_{10}$ for $-1 \leq w_2 \leq 1$.  Therefore
	$$\mcK(\tau) = \{ \be_{00} + \be_{11} + w \be_{01} + w \be_{10}  : -1 \leq w \leq 1 \} \tbump{12px} { so that if } \lambda_w := \sigma_{00} + \sigma_{11} + w \sigma_{10} + w \sigma_{01}$$
	then $\lambda_w$ is a $2$-shift invariant probability measure on $\bbT$ for $-1 \leq w \leq 1$ by proposition \ref{K invariance}.	Note that $\lambda_1 = \sum \sigma_{\alpha \beta} = \bdelta_1$, the point mass at $1 \in \bbT$.
\end{exmp}


\subsection{The Spectral Decomposition}\label{queffelec section}
 
We are now prepared to state the main result of our paper.  Recall that theorem \ref{queffelec one} relates the correlation vector $\Sigma$ to the maximal spectral type, and proposition \ref{K invariance} identifies the spectral hull $\mcK$ as a convex set of coefficients for linear combinations of the correlation measures giving rise to $\bq$-shift invariant probability measures $\bv^t \Sigma$.  The following theorem shows that the extreme points $\mcK^\ast$ correspond to extreme points of $\mcM(\bbT^d,\bS_\bq)$, and that any positive linear combination of these ergodic measures give rise to the maximal spectral type by convolution with $\bomega_\bq$, by (\ref{q invariance of Pi}) and proposition \ref{K invariance}.  Queff\'elec proved a similar result for aperiodic primitive substitutions of constant length and trivial height, compare [\ref{queffelec}, Thms 10.1, 10.2, 11.1], which we extend.
\begin{thm}\label{queffelec two}
	If $\mcS$ is an aperiodic $\bq$-substitution then its spectrum is determined by $\Sigma$ and $\mcK^\ast$:
	$$\textstyle{\sigmaX \sim  \bomega_\bq \ast \sum_{\bw \in \mcK^\ast} \lambda_\bw}$$
	and the measures $\lambda_\bw = \bw^t \Sigma$ for $\bw \in \mcK^\ast$ are $\bq$-shift ergodic probability measures on $\bbT^d$.
\end{thm}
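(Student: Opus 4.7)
The plan is to combine Theorem \ref{queffelec one} with the diagonalization result from Section \ref{appendix} applied to the $\mcA \times \mcA$ matrix-valued measure $\mathring{\Sigma}$. By Theorem \ref{queffelec one}, $\sigmaX \sim \sum_{\alpha \in \mcA} \Pi(\sigma_{\alpha \alpha})$; by Proposition \ref{K invariance} and Lemma \ref{qshift invariance} the finite sum $\Lambda := \sum_{\bw \in \mcK^\ast} \lambda_\bw$ is a $\bq$-shift invariant positive measure. Because $\Pi$ coincides with convolution by $\bomega_\bq$ on $\bq$-shift invariant measures (Lemma \ref{invariant contracted map}), the target equivalence $\sigmaX \sim \bomega_\bq \ast \Lambda$ reduces to showing $\sum_\alpha \sigma_{\alpha\alpha} \sim \Lambda$ up to measure type, together with the ergodicity of each $\lambda_\bw$.

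To this end, view $\mathring{\Sigma}$ as a Hermitian positive semidefinite matrix-valued Borel measure on $\bbT^d$ (established inside the proof of Lemma \ref{schur inner product}), whose Fourier coefficients satisfy the $\bq$-equivariance $\widehat{\Sigma}(\ba\bq) = Q^{-1} C_\mcS \widehat{\Sigma}(\ba)$ from (\ref{q invariance of sigma}). Pass to the primitive reduced form of the bisubstitution supplied by Proposition \ref{characterization of K}, which isolates the transient block from each primitive component: the transient block is absorbed under iteration of $\pi$ (its spectral radius for $C_\mcS^t$ is strictly less than $Q$, cf.\ Lemma \ref{perron decomposition}), while on each ergodic class $C_\mcS^t$ restricts to a $Q$-stochastic primitive block, so that after averaging against $\bomega_\bq$ the block becomes a genuinely $\bq$-shift invariant matrix-valued measure. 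Applying the diagonalization theorem of Section \ref{appendix} then produces a simultaneous orthogonal diagonalization by scalar measures, each ergodic for the $\bq$-shift.

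To match the diagonal components with $\mcK^\ast$, observe that every ergodic scalar diagonal measure arising from the diagonalization is of the form $\lambda_\bv = \bv^t \Sigma$ for some coefficient vector $\bv$; strong semipositivity of $\mathring{\bv}$ follows from the diagonal component being a positive measure with Hermitian structure inherited from $\mathring\Sigma$, the eigenvector identity $C_\mcS^t \bv = Q \bv$ follows from (\ref{q invariance of sigma}) together with the $\bq$-invariance produced by the diagonalization, and the normalization $\bv^t \widehat\Sigma(\bZero) = 1$ follows from it being a probability measure, so $\bv \in \mcK$. Extremality of the ergodic measure in $\mcM(\bbT^d, \bS_\bq)$ then forces extremality of $\bv$ in $\mcK$, and conversely any nontrivial convex decomposition of $\lambda_\bw$ for $\bw \in \mcK^\ast$ would pull back through the affine map $\bv \mapsto \lambda_\bv$ to a nontrivial decomposition of $\bw$ in $\mcK$, contradicting extremality; hence $\lambda_\bw$ is $\bq$-shift ergodic. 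Summing the diagonal entries yields $\Lambda \sim \operatorname{tr}\mathring\Sigma = \sum_\alpha \sigma_{\alpha\alpha}$, closing the loop from the first reduction.

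The main obstacle is carrying out the simultaneous diagonalization of $\mathring{\Sigma}$ in a way compatible with both positive semidefiniteness (so the diagonal entries are genuine positive scalar measures) and $\bq$-equivariance (so those entries are $\bq$-shift ergodic). Generic spectral decompositions of matrix-valued measures do not respect these two constraints together, so an ergodic-theoretic argument is required; this is the content of Section \ref{appendix}. Once it is available, the remaining matching with $\mcK^\ast$ is a purely linear-algebraic identification using the characterization of the spectral hull in Proposition \ref{characterization of K} and the definitions of Section \ref{spectral hull}.
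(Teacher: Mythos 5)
There is a genuine gap in your plan: you propose to apply the diagonalization theorem of \S\ref{appendix} to the $\mcA\times\mcA$ matrix $\mathring{\Sigma}$, but $\mathring{\Sigma}$ is \emph{not} $\bq$-shift invariant, so Theorem~\ref{ergodic diagonalization} does not apply to it. The $\bq$-equivariance from (\ref{q invariance of sigma}) reads $\widehat{\Sigma}(\ba\bq) = \frac{1}{Q}C_\mcS \widehat{\Sigma}(\ba)$, which via (\ref{associated kronecker product}) becomes $\widehat{\mathring{\Sigma}}(\ba\bq) = \frac{1}{Q}\sum_\bj \mcR_\bj\,\widehat{\mathring{\Sigma}}(\ba)\,\mcR_\bj^t$ --- a twisted relation through the instruction matrices, not the identity $\widehat{\mathring{\Sigma}}(\ba\bq)=\widehat{\mathring{\Sigma}}(\ba)$ that $\bq$-shift invariance demands. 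Moreover, your suggestion that ``averaging against $\bomega_\bq$'' repairs this is incorrect: a short check of the definitions shows $\pi\nu$ (and hence $\Pi\nu$) is $\bq$-shift invariant if and only if $\nu$ already was. So neither $\mathring{\Sigma}$ nor $\Pi(\mathring{\Sigma})$ is in $\mcM_n(\bbT^d,\bS_\bq)$, and the hypotheses of the diagonalization theorem fail at the outset.

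The paper resolves this by moving up to the $\mcA^2\times\mcA^2$ level: it introduces the \emph{bicorrelation matrix} $\mcZ$, whose Fourier coefficients are $\widehat{\mcZ}(\bk) = \lim_n \frac{1}{Q^n}\sum_\bj \mcR_\bj^{(n)}\otimes\mcR_{\bj+\bk}^{(n)}$ (Proposition~\ref{coefficients exist}), together with the projection $\mcP = \widehat{\mcZ}(\bZero)$ onto the $Q$-eigenspace of $C_\mcS$. It is the product $\mcP\mcZ$ that is shown to be genuinely $\bq$-shift \emph{invariant} --- via the computation in (\ref{PZ invariant}) --- and also \emph{ergodic} --- via (\ref{PZ mixing}), using the carry estimates of Lemma~\ref{small carries}. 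Only then can Theorem~\ref{ergodic diagonalization} be applied. This is not merely a bookkeeping difference: the spectral hull $\mcK$ lives in $\bbC^{\mcA^2}$ and the eigenvector relation $C_\mcS^t\bv=Q\bv$ is a condition on $s^2$-vectors, so the correspondence between eigenmeasures and $\mcK^\ast$ (Proposition~\ref{spectrum of PZ is given by K}) is naturally and necessarily an $\mcA^2$-level statement. Your $\mathring{\Sigma}$ approach would need to reconstruct all of this indirectly, and the linchpin of your reduction --- that diagonalization of a single $s\times s$ matrix measure respects both positivity and ergodicity --- simply has no object to attach to, because the $s\times s$ matrix $\mathring{\Sigma}$ has the wrong symmetry for the $\bq$-shift.

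Additionally, your closing step ``Summing the diagonal entries yields $\Lambda \sim \operatorname{tr}\mathring{\Sigma}$'' slides past the need for Corollary~\ref{trace Z generates sigmaX}, which establishes $\sigmaX\sim\Pi(|\mcZ|)$ via absolute continuity arguments for the off-diagonal $\sigma_{\alpha\beta}^{\gamma\delta}$, and past Proposition~\ref{Z lower triangular}, which controls the residual block $\mcW$ so that it does not contribute a new spectral type. These are not cosmetic: without them you cannot pass from the diagonalized block $\Lambda$ to the maximal spectral type.
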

\noindent The proof is delayed to \S \ref{appendix} as it relies on a number of details that are not necessary to understand the main results, and are entirely unrelated to our computation and analysis of the spectrum, which we now detail. As the extremal measures $\lambda_\bw$ for $\bw \in \mcK^\ast$ are $\bq$-shift ergodic probability measures, they are mutually singular and cannot be decomposed further into $\bq$-shift invariant components, so that theorem \ref{queffelec two} separates the spectrum of $\mcS$ into finitely many distinct and $\bq$-shift invariant indecomposable components.  Moreover, theorem \ref{queffelec two} tells us that these ergodic measures arise as linear combinations of correlation measures with coefficients in the spectral hull, both of which are computable via the Fourier recursion theorem \ref{fourier recursion} for $\Sigma$, and proposition \ref{characterization of K} for $\mcK^\ast$.  This is discussed at length in \S \ref{examples} where we also include several examples.  Before that, we discuss some consequences of the above result that we can obtain immediately.
\begin{cor}\label{pure types}
For $\bw \in \mcK^\ast,$ the measures $\bomega_\bq \ast \lambda_\bw$ are either pure discrete, pure singular continuous, or Lebesgue measure on $\bbT^d,$ and describe distinct components of the spectrum of $\mcS.$  
\end{cor}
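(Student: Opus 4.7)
The plan is to exploit the $\bS_\bq$-ergodicity of each $\lambda_\bw$ supplied by theorem \ref{queffelec two}, together with the fact that $\bomega_\bq$ is a discrete $\bS_\bq$-invariant probability measure, and then read off the three pure-type components via Lebesgue decomposition.

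First I will show that each $\lambda_\bw$ is itself of pure type with respect to Haar measure $m$ on $\bbT^d$. Writing the Lebesgue decomposition $\lambda_\bw = \lambda_d + \lambda_{sc} + \lambda_{ac}$, I will verify that each summand is $\bS_\bq$-invariant: this follows from uniqueness of the decomposition together with the facts that $\bS_\bq$ is a surjective endomorphism of $\bbT^d$ preserving $m$ (so its forward and inverse images send $m$-null sets to $m$-null sets) and sends countable sets to countable sets in both directions. Each summand is $\leq \lambda_\bw$, hence absolutely continuous with respect to $\lambda_\bw$, and by ergodicity of $\lambda_\bw$ the Radon-Nikodym derivative is constant, so each summand is a scalar multiple of $\lambda_\bw$. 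Since the three pure-type classes are mutually singular, at most one scalar can be positive, and $\lambda_\bw$ is of pure type.

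Next I will identify the absolutely continuous case with Lebesgue measure. Since $m$ is itself $\bS_\bq$-ergodic (one-line Fourier argument: invariance gives $\widehat{f}(\bk) = \widehat{f}(\bq^n \bk)$ for all $n$, and Riemann--Lebesgue forces $\widehat{f}(\bk) = 0$ for $\bk \neq \bZero$), any absolutely continuous $\bS_\bq$-ergodic probability measure $\lambda_\bw$ must coincide with $m$, since otherwise $\lambda_\bw$ and $m$ would be distinct ergodic measures and hence mutually singular, contradicting $\lambda_\bw \ll m$. Then $\bomega_\bq \ast \lambda_\bw = \bomega_\bq \ast m = m$ by translation invariance of $m$.

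For the pure discrete and pure singular continuous cases I will compute the convolution directly, using that $\bomega_\bq$ is supported on the countable set $C$ of $\bq$-adic rationals: $\bomega_\bq \ast \lambda_\bw = \sum_{c \in C} \bomega_\bq(\{c\}) \, (T_c)_\ast \lambda_\bw$. In the discrete case each translate is supported on a countable set and their union is countable, so the sum is purely discrete; in the singular continuous case each translate is supported on a Lebesgue-null set and has no atoms, so the countable union of null sets is still null and the countable sum of atomless measures has no atoms, giving singular continuity. Finally, grouping the summands in $\sigmaX \sim \sum_{\bw \in \mcK^\ast} \bomega_\bq \ast \lambda_\bw$ from theorem \ref{queffelec two} by pure type exhibits these as the Lebesgue-decomposition components of $\sigmaX$, which are pairwise mutually singular by construction. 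The main subtlety I anticipate is verifying that the three pure-type classes are genuinely preserved under $\bS_\bq$, so that the ergodicity argument in the first step goes through; this reduces to the standard behaviour of the surjective expanding endomorphism $\bS_\bq$ on countable and on $m$-null Borel subsets of $\bbT^d$.
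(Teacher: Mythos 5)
Your proof is correct, but it establishes the purity of each $\lambda_\bw$ by a different route than the paper. The paper compares the ergodic measure $\lambda_\bw$ against two specific $\bq$-shift invariant reference measures: against Lebesgue measure $m$ (using mutual singularity of distinct ergodic measures) to settle absolutely continuous versus singular, and against Dekking's discrete measure $\bnu_\mcL$ (via theorem \ref{dekking thm}) to settle discrete versus continuous. Your approach is more self-contained: you show directly that the Lebesgue decomposition $\lambda_\bw = \lambda_d + \lambda_{sc} + \lambda_{ac}$ has $\bS_\bq$-invariant components, because the push-forward $(\bS_\bq)_\ast$ preserves each of the three pure-type classes (discrete sets go to discrete sets, $m$-null sets go to $m$-null sets in both directions since $\bS_\bq$ is a finite-to-one local diffeomorphism preserving $m$, and atomless stays atomless since fibers are finite). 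Ergodicity then forces each component to be a scalar multiple of $\lambda_\bw$, so at most one survives. This buys you independence from Dekking's theorem for the discrete/continuous dichotomy, at the mild price of needing the pushforward-preservation facts you flag as the main subtlety. You also verify explicitly that convolution with $\bomega_\bq$ preserves each pure type by writing it as a countable weighted sum of translates, where the paper asserts this in one line.

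One small caution on the ergodicity step: since $\bS_\bq$ is noninvertible, the claim that the Radon--Nikodym derivative $d\nu/d\lambda_\bw$ is $\bS_\bq$-invariant in the literal sense $f\circ\bS_\bq = f$ does not follow directly; what one gets is that $f$ is fixed by the transfer operator of $\bS_\bq$. The conclusion $\nu = c\,\lambda_\bw$ for a $\bq$-shift invariant $\nu \ll \lambda_\bw$ with $\lambda_\bw$ ergodic is nonetheless standard (for instance via the ergodic decomposition and mutual singularity of distinct ergodic measures), so this is a phrasing issue rather than a gap. Your treatment of the ``distinct components'' clause, grouping summands by pure type and appealing to mutual singularity of the Lebesgue-decomposition pieces, matches the level of detail the paper itself gives.
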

\begin{proof}
	We know from the above theorem that each $\lambda_\bw$ is an ergodic probability measure for the $\bq$-shift on $\bbT^d.$  Therefore, as Lebesgue measure is ergodic for the $\bq$-shift (proposition \ref{spectral qshift identities}), $\lambda_\bw$ is either Lebesgue measure $m$, or purely singular.  
	
	By Dekking's theorem \ref{dekking thm}, the discrete component of $\sigmaX$ is $\bomega_\bq \ast \bnu_\mcL,$ where $\bnu_\mcL$ defined by (\ref{lattice measure}) is a discrete $\bq$-shift invariant measure corresponding to the height lattice, see \S \ref{height}.   By ergodicity, however, the measures $\lambda_\bv$ for $\bv \in \mcK^\ast$ have no (a.e.)-proper invariant subsets and so either $\lambda_\bv \ll \bnu_\mcL$ and is pure discrete, or $\lambda_\bv \perp \bnu_\mcL$ in which case it is purely continuous.
	
	Finally, the singular continuous case follows as it is a mutually singular type to Lebesgue and discrete measures, by definition, and all those measures are ergodic and pure types.  As convolution with $\bomega_\bq$ does not change the purity of a measure and $\bomega_\bq \ast m = m$, $\lambda$ is pure discrete, purely singular continuous, or Lebesgue measure $m,$ respectively, if and only if $\lambda \ast \bomega_\bq$ is as well.   That they describe distinct components follows by theorem \ref{queffelec two} and mutual singularity.
\end{proof}
\begin{exmp}\label{TM spectrum}
	For the Thue-Morse substitution $\tau,$ we know from example \ref{TM sigma} that
	$$\widehat{\Sigma}(1) = \scalebox{1}{$\frac{1}{6}$}(1,2,2,1)^t \tbump{0.5in}{ and } \widehat{\Sigma}(5) = \scalebox{1}{$\frac{1}{4}$}(1,1,1,1)^t$$
	From example \ref{TM spectral hull}, we know that $\mcK^\ast$ consists of two vectors corresponding to $w = \pm 1$ or
	$$\bv_1 = \be_{00} + \be_{11} + \be_{01} + \be_{10} = (1,1,1,1)^t \tbump{8px}{ and } \bv_2 = \be_{00} + \be_{11} - \be_{01} - \be_{10} = (1,-1,-1,1)^t$$
	Thus, $\lambda_{\bv_1} = \sum_{\alpha \beta \in \mcA^2} \sigma_{\alpha \beta} = \bdelta_1$ the Dirac-Delta mass at $1 \in \bbT$ (as $\widehat{\bdelta_1}(k) = 1$ for all $k$), and 
	$$\widehat{\lambda_{\bv_2}}(1) = \bv_2^t \widehat{\Sigma}(1) = -\scalebox{1}{$\frac{1}{3}$} \neq 0 \tbump{0.5in}{ and } \widehat{\lambda_{\bv_2}}(5) = \bv_2^t \widehat{\Sigma}(5) = 0$$
	\noindent so that $\lambda_{\bv_2}$ is not $m$ as $\widehat{m}(\bk) = \bZero$ for $\bk \neq \bZero$, and Thue-Morse has purely singular spectrum.  Morever, as $\widehat{\lambda_{\bv_2}}(1) \neq \widehat{\lambda_{\bv_2}}(5),$ we know $\lambda_{\bv_2}$ is not discrete as its Fourier coefficients are not $1$- or $2$-periodic, the only possible heights for a substitution on $2$ letters.   Thus, $\lambda_{\bv_1} = \bdelta_1$ gives rise to the discrete component, $\lambda_{\bv_2}$ the singular continuous component, and $\sigmaX \sim \bomega_2 + \bomega_2 \ast \lambda_{\bv_2}$.  Thus, Thue-Morse has singular spectrum with a singular continuous component, as expected.
\end{exmp}

Recall that two $\bq$-substitutions are configuration equivalent if they have the same collection of instructions, counted with multiplicity.  A property of a substitution is a \textit{configuration invariant} if all configuration equivalent substitutions share that property or, equivalently, if it does not depend on how the configuration arranges the instructions.  The following is immediate:
\begin{prop}\label{configurations prop}
	For a $\bq$-substitution $\mcS$ on $\mcA,$ the matrices $M_\mcS$ and $C_\mcS,$ the Perron vectors and ergodic decompositions of $\mcS$ and $\mcS \otimes \mcS,$ as well as $\widehat{\Sigma}(\bZero)$ and $\mcK(\mcS),$ are configuration invariants.
\end{prop}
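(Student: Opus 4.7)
The plan is to verify configuration invariance for each listed quantity in turn, exploiting the fact that each is built from the multiset of instruction matrices via operations that ignore the assignment of instructions to positions. The proof is essentially a sequence of observations rather than a single computation, so I would organize it as a short chain of implications.

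First, I would note that $M_\mcS = \sum_{\bj \in [\bZero,\bq)} \mcR_\bj$ and $C_\mcS = \sum_{\bj \in [\bZero,\bq)} \mcR_\bj \otimes \mcR_\bj$ are configuration invariants simply because addition of matrices is commutative: reordering the indexing set $[\bZero,\bq)$ via any bijection $\iota$ as in the definition of configuration equivalence leaves both sums unchanged. This is the base case and the rest cascades from it.

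Next, I would argue that the ergodic decomposition of $\mcS$ depends only on $M_\mcS$. The partition $\mcA = \mcE_1 \sqcup \cdots \sqcup \mcE_K \sqcup \mcT$ of proposition \ref{PRF} is determined by the strongly connected component structure of the directed graph on $\mcA$ with an edge $\gamma \to \alpha$ precisely when $(M_\mcS)_{\alpha,\gamma} > 0$, since the ergodic classes are the closed communicating classes and $\mcT$ collects the transient states; this graph is read off directly from $\text{supp}(M_\mcS)$. With the ergodic decomposition invariant, the primitive components are the restrictions of $M_\mcS$ to each $\mcE_j$ (up to the telescoping power), and so their Perron vectors, uniquely determined by the Perron--Frobenius theorem on each primitive block, are configuration invariants of $\mcS$. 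The identical argument, applied to $\mcS \otimes \mcS$ whose substitution matrix is $C_\mcS$, gives configuration invariance of the ergodic decomposition and Perron vectors of the bisubstitution.

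For $\widehat{\Sigma}(\bZero)$, I would use the identity $\widehat{\Sigma}(\bZero) = \sum_{\alpha \in \mcA} \mu([\alpha])\,\be_{\alpha \alpha}$ derived in the proof of theorem \ref{fourier recursion}, together with the remark following theorem \ref{invariant measures} that $(\mu([\alpha]))_{\alpha \in \mcA}$ is a fixed convex combination of the Perron vectors of the primitive components of $\mcS$. Since both the Perron vectors and the convex coefficients (coming from the chosen $\bu$ in the algorithm, not from the configuration) are configuration invariants, so is $\widehat{\Sigma}(\bZero)$. Finally, the spectral hull $\mcK(\mcS) = \{\bv \in \bbC^{\mcA^2} : C_\mcS^t \bv = Q\bv,\ \bv \stpos 0,\ \bv^t \widehat{\Sigma}(\bZero) = 1\}$ is defined entirely in terms of $C_\mcS$ and $\widehat{\Sigma}(\bZero)$, both already shown invariant, so invariance of $\mcK(\mcS)$ is immediate.

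There is no genuine obstacle here; the only minor subtlety is making explicit that the ergodic decomposition is a purely combinatorial function of the support pattern of $M_\mcS$, so that invariance of $M_\mcS$ propagates to invariance of the primitive components and their Perron data. Once that observation is in place the rest is a chain of one-line consequences.
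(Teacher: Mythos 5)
Your argument is correct and fills in, cleanly, the verification that the paper dismisses as ``immediate.'' The base observation---that $M_\mcS$ and $C_\mcS$ are configuration invariants by commutativity of addition---is exactly what the paper states explicitly at the end of \S\ref{configurations}, and the cascade you build from it (ergodic decomposition as a function of $\supp(M_\mcS)$, Perron vectors from the primitive blocks, $\widehat{\Sigma}(\bZero)$ from the Perron data together with the externally chosen convex weights, and $\mcK(\mcS)$ from $C_\mcS$ and $\widehat{\Sigma}(\bZero)$) is the intended chain of one-line consequences. The one point worth being slightly careful about---and which you do flag correctly---is that the index of imprimitivity $h$ enters the ergodic decomposition; since $M_{\mcS^h} = M_\mcS^h$ and $h$ itself is read off from the eigenvalues of $M_\mcS$ of modulus $Q$, this too is determined by $M_\mcS$, so the decomposition is invariant. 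No gaps.
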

As theorem \ref{queffelec two} relates the spectrum of $\mcS$ to the measures $\lambda_\bv = \bv^t \Sigma$ for $\bv \in \mcK(\mcS),$ the spectral theory of aperiodic $\bq$-substitutions can be separated into the study of their correlation vectors and extremal properties of their spectral hulls.  As $\mcK^\ast$ is a configuration invariant, however, this shows us that any property of the spectrum which depends on the configuration of $\mcS$ is determined by the correlation vector $\Sigma.$     It is immediate that the spectrum of a substitution is \textit{not} invariant with respect to configuration equivalence: not only can the spectrum exist on different dimensional tori, but the height (see \S \ref{height}) of a substitution depends heavily on its configuration.  One can, however, use the Fourier recursion theorem to study the effect changes in configuration have on a given substitution, and it is evident from identities such as (\ref{sigma c}) that the structure of the configuration relative to the carry sets $\Delta_p(\bk)$ accounts for much of this difference.   We now discuss a result on singularity of spectrum for a large collection of $\bq$-substitutions before moving on to discuss the algorithm for computing the measures $\lambda_\bv$ for $\bv \in \mcK^\ast,$ as well as a number of examples.


\subsection{Aperiodic Bijective Commutative $\bq$-Substitutions}\label{abc section}

In the $\bbZ$ case, one can combine [\ref{queffelec}, Prop 3.19 and Thm 8.2] to show that all aperiodic bijective commutative substitutions on $\mcA^\bbZ$ have pure singular spectrum (not explicitly stated by Queff\'elec) and this generalizes to $\bq$-substitutions as well. This generalizes a result of Baake and Grimm in [\ref{baake and grimm}] for $\bq$-substitutions on two letters, noting that all bijective substitutions on two letters are necessarily commutative: there are only two bijective instructions, one of which is the identity.  
\begin{thm}\label{abc theorem}
	Aperiodic bijective commutative $\bq$-substitutions have purely singular spectrum.
\end{thm}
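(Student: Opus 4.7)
My plan is to reduce to the primitive case and then express each extremal spectral measure $\lambda_\bv$, $\bv \in \mcK^\ast$, via a matrix Riesz product whose structure precludes a Lebesgue component. By Proposition~\ref{PRF}, Theorem~\ref{invariant measures}, and the definition (\ref{sigmaX def}) of $\sigmaX$, the maximal spectral type decomposes across the primitive components of $\mcS$; since each ergodic class $\mcE_j$ is closed under the bijective commutative instructions (the orbit of any letter under the abelian group generated by the $\mcR_\bj$ is a union of ergodic classes), both properties are inherited by the components, and I may assume $\mcS$ is primitive.

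Under primitivity and bijectivity, $M_\mcS = \sum_\bj \mcR_\bj$ is a sum of permutation matrices, hence doubly $Q$-stochastic, and the Perron vector is uniform: $\widehat{\Sigma}(\bZero)^\circ = \tfrac{1}{s}\bI$ with $s = |\mcA|$. Setting $\hat{S}(\bk) := \widehat{\Sigma}(\bk)^\circ$ and applying (\ref{associated kronecker product}) to Theorem~\ref{fourier recursion}, together with $(\mcR_{\bj+\bk}^{(n)})^t = (\mcR_{\bj+\bk}^{(n)})^{-1}$ (a permutation matrix is orthogonal), I obtain
\[
\hat{S}(\bk) = \frac{1}{Q^n}\sum_{\bj \in [\bZero,\bq^n)} \mcR_\bj^{(n)}\, \hat{S}\big(\qq{\bj+\bk}_n\big)\, \big(\mcR_{\bj+\bk}^{(n)}\big)^{-1}.
\]
Taking $n \to \infty$ and discarding the vanishing carry contribution via Lemma~\ref{small carries} yields
\[
\hat{S}(\bk) = \frac{1}{s}\lim_{n \to \infty}\frac{1}{Q^n}\sum_{\bj \in [\bZero,\bq^n)} \mcR_\bj^{(n)}\big(\mcR_{\bj+\bk}^{(n)}\big)^{-1},
\]
and every summand lies in the finite abelian group $G \subset \text{GL}(\bbC^\mcA)$ generated by the instructions.

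Commutativity supplies a common unitary basis diagonalizing all of $G$, in which each summand becomes a diagonal of roots of unity indexed by characters of $G$; the dependence on $\bj$ then splits digit by digit, producing a matrix Riesz product $\hat{S}(\bk) = \tfrac{1}{s}\prod_{m\geq 0} A_m(\bk)$ with factors indexed by the $\bq$-adic digits of $\bk$. The identity $\bv^t\bw = \text{tr}(\mathring{\bv}^t \mathring{\bw})$ then gives $\widehat{\lambda_\bv}(\bk) = \text{tr}(\mathring{\bv}^t \hat{S}(\bk))$, expressing each ergodic Fourier coefficient as a trace of a matrix Riesz product. To rule out $\lambda_\bv = m$ I use $\bq$-shift invariance: Lebesgue would force $\widehat{\lambda_\bv}(\bk) = 0$ for every $\bk \neq \bZero$, but evaluating the Riesz product at a specific $\bk$ (say a digit in $[\bZero,\bq)$) yields a nonzero contribution coming from nontrivial characters of $G$; those characters cannot all be trivial, for otherwise the substitution would collapse under a $G$-equivariant factor map, contradicting aperiodicity via Moss\'e's Theorem~\ref{aperiodicity implies recognizability}. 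Corollary~\ref{pure types} then classifies each $\lambda_\bv$ as pure discrete or pure singular continuous, and Theorem~\ref{queffelec two} concludes $\sigmaX$ is purely singular to Lebesgue.

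The main obstacle will be the rigorous digit-by-digit factorization of the averaged sum, since the digit $(\bj+\bk)_i$ depends on $\bj$ through lower-order carry propagation even after the top-level carry $\qq{\bj+\bk}_n$ is split off. I plan to address this by iterating the estimate of Lemma~\ref{small carries} at each digit position in turn, so that in the limit only the carry-free configurations contribute; on those, $(\bj+\bk)_i$ becomes the componentwise mod-$\bq$ digit, commutativity lets me rearrange the products of $\mcR$'s freely, and the factorization of $\hat{S}(\bk)$ emerges from the tensor-product-in-digits structure of $[\bZero,\bq^n)$.
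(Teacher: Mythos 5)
You have the right high-level strategy — unitarity of the instruction matrices, simultaneous diagonalization of the abelian group they generate, and a Riesz-product representation — but the step you flag as "the main obstacle" is a genuine gap that Lemma~\ref{small carries} cannot close. Your claimed factorization $\hat S(\bk) = \frac{1}{s}\prod_{m\ge 0}A_m(\bk)$ of the Fourier coefficients over the $\bq$-adic digits of $\bk$ is false. Commutativity does let you rewrite the summand as $\prod_m \mcR_{\bj_m}\mcR_{(\bj+\bk)_m}^{-1}$, but $(\bj+\bk)_m$ depends on the lower digits $\bj_0,\ldots,\bj_m$ through carry propagation, so the sum over $\bj\in[\bZero,\bq^n)$ does not distribute across the product over $m$. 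Lemma~\ref{small carries} controls only overflow past the top of the box — it says $\qq{\bj+\bk}_n\neq\bZero$ with vanishing frequency — and is silent about interior carries, which occur with bounded positive frequency at every position below $\mfp(\bk)$. The Thue--Morse example already refutes the ansatz: for $k=1$, a digit-product formula would give a single term, whereas $\widehat{\sigma_{00}}(1) = -\tfrac{1}{3}$ arises as an infinite geometric series over carry chains ($1 = 1 = -1+2 = -1-2+4 = \cdots$), and these disagree.

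The paper avoids this by factorizing the \emph{density of the measure}, not the Fourier coefficient. Setting $R(\bz)=\sum_{\bj\in[\bZero,\bq)}\mcR_\bj\,\bz^\bj$ and $\mathfrak{R}_n(\bz)=R(\bz^{\bq^{n-1}})\cdots R(\bz)$, one shows $\mathring{\Sigma}=\text{w}^\ast\text{-}\lim_n \frac{1}{Q^n}\mathfrak{R}_n^\ast\mathfrak{R}_n\,d\bz$; this product of trigonometric polynomials genuinely factors over scales as a polynomial identity, with no carry issues at all, and after conjugating by the simultaneous unitary diagonalizer $\bP$ of the $\mcR_\bj$, each diagonal eigenmeasure is a scalar generalized Riesz product. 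Singularity is then argued at the measure level: $\bq$-shift invariance passes to $\lambda_\text{ac}$, so $\lambda_\text{ac}\in\{\bZero,m\}$, and $\lambda_\text{ac}=m$ would force the eigenpolynomial $r(\bz)=\sum_\bj\chi(\mcR_\bj)\bz^\bj$ to be constant — impossible because eigenvalues of the unitary $\mcR_\bj$ are roots of unity, never zero, so every coefficient of $r$ is nonzero regardless of whether $\chi$ is trivial (your "the characters cannot all be trivial" reasoning targets the wrong obstruction). Finally, the paper concludes via Theorem~\ref{queffelec one} alone; it neither reduces to the primitive case nor invokes Theorem~\ref{queffelec two}, Corollary~\ref{pure types}, or the spectral hull $\mcK^\ast$, which makes for a considerably lighter conclusion than the route you sketch.
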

\begin{proof}
The argument is in two steps: first we show that $\sum \sigma_{\alpha \alpha}$ can be expressed as a sum of Riesz products in the commutative case, and second that all such measures are singular to Lebesgue measure.  The result then follows from theorem \ref{queffelec one} as $\sigmaX \sim \Pi\left(\sum_{\alpha \in \mcA} \sigma_{\alpha \alpha}\right)$, which is singular.

To realize the autocorrelations as Riesz products, it is convenient to express $\Sigma$ by its associated matrix $\mathring{\Sigma}$ and use the relation (\ref{associated kronecker product}) to express the Fourier recursion theorem \ref{fourier recursion} as
$$\textstyle \widehat{\mathring{\Sigma}}(\bk) = \lim_{n \to \infty} \frac{1}{Q^n} \sum_{\bj \in [\bZero,\bq^n)} \mcR_\bj^{(n)} \widehat{\mathring{\Sigma}}(\bZero) (\mcR_{\bj+\bk}^{(n)})^\ast$$
\noindent Using the instructions of $\mcS$, define matrix polynomials  $R(\bz)$ and $\mathfrak{R}_n(\bz)$ for $n \geq 0$ and  $\bz \in \bbT^d$ by
$$\textstyle R(\bz) = R(z_1,\ldots, z_d) := \sum_{\bj \in [\bZero,\bq)} \mcR_\bj z_1^{j_1}\cdots z_d^{j_d} \tbump{0.25in}{and} \mathfrak{R}_n(\bz) := R(\bS_\bq^{n-1}(\bz)) \cdots R(\bS_\bq (\bz))R(\bz)$$
\noindent where $\bS_\bq$ is the $\bq$-shift $\bz \mapsto \bz^\bq$ on $\bbT^d.$  Applying lemma \ref{small carries} as in the proof of theorem \ref{fourier recursion} one can express $\mathring{\Sigma}$ as a \textit{matrix} Riesz product (see [\ref{queffelec}, \S 8.1]) using normalized Lebesgue measure $d\bz$ on $\bbT^d$
$$\textstyle \mathring{\Sigma} = \text{w}^\ast\text{-}\lim_{n \to \infty} \frac{1}{Q^n} \mathfrak{R}_n^\ast  \mathfrak{R}_n d\bz$$
\noindent  As the instructions are bijective and commute, the matrices $\mcR_\bj$ are commuting permutation matrices and thus simultaneously unitarily diagonalizable - let $\bP \in \bM_\mcA(\bbC)$ be a unitary matrix diagonalizing each instruction $\mcR_\bj$ of $\mcS.$  Using the Riesz product description of $\mathring{\Sigma}$ above, we write
$$\textstyle \bP \mathring{\Sigma} \bP^\ast =  \text{w}^\ast\text{-}\lim_{n \to \infty}\frac{1}{Q^n} \Lambda(\bz)^\ast \Lambda(\bz^\bq)^\ast \cdots \Lambda(\bz^{\bq^{n-1}})^\ast \Lambda(\bz^{\bq^{n-1}}) \cdots \Lambda(\bz^\bq) \Lambda(\bz)$$
\noindent where $\Lambda(\bz) = \bP R(\bz) \bP^\ast$ is a diagonal matrix polynomial.  Thus, \textit{eigenmeasures} $\lambda$ on the diagonal of $\bP\bS\bP^\ast$ are determined by \textit{eigenpolynomials} $r(\bz)$ on the diagonal of $\bP R(\bz)\bP^\ast$ by $\text{weak}^\ast$ limits
$$\lambda = \textstyle \text{w}^\ast\text{-}\lim_{n \to \infty}\frac{1}{Q^n} \prod_{i = 0}^{n-1} |r(\bS_\bq^i (\bz))|^2 d\bz = \text{w}^\ast\text{-}\lim_{n \to \infty}\frac{1}{Q^n} \prod_{i = 0}^{n-1} |r(\bz^{\bq^i})|^2 d\bz $$
\noindent As $\sum \sigma_{\alpha \alpha} = \text{tr} \, \mathring{\Sigma}$ is the sum of the diagonal measures of $\bP\mathring{\Sigma}\bP^\ast$ we can apply theorem \ref{queffelec one} to express the spectrum in terms of the above measures, which are a type of generalized Riesz product (see [\ref{queffelec}, \S 1.3]).   We now show such measures (weak-star limits as above) are singular to Lebesgue measure, justifying the second claim at the start.   Consider the \textit{principal $\bq$-th root} map 
$$\bz = (e^{i\theta_1}, \ldots, e^{i \theta_d}) \tbump{8px}{$\longmapsto$} \bz^{\nicefrac{\bOne}{\bq}} = ( e^{i\nicefrac{\theta_1}{\bq_1}}, \ldots, e^{i\nicefrac{\theta_d}{q_d}})\tbump{12px}{ for } 0 \leq \theta_i < 2\pi$$
 denoting the push-forward of a measure $\rho$ on $\bbT^d$ under this map by $\rho(\bz^{\nicefrac{\bOne}{\bq}})$.  If $\lambda$ is a weak-star limit as above, then as $\bS_\bq(\bz^{\nicefrac{\bOne}{\bq}}) = \bz$ we can use the definitions to write
	$$\scalebox{0.9}{$\lambda(\bz^{\nicefrac{\bOne}{\bq}}) = \text{w}^\ast\text{-}\lim_{n \to \infty} \frac{1}{Q^n}\prod_{j < n} |r\big((\bz^{\nicefrac{\bOne}{\bq}})^{\bq^j} \big)|^2d(\bz^{\nicefrac{\bOne}{\bq}}) =  \text{w}^\ast\text{-}\lim_{n \to \infty} \frac{1}{Q^n} |r(\bz^{\nicefrac{\bOne}{\bq}}) |^2 \prod_{j < n-1} |r(\bz^{\bq^{j}}) |^2d(\bz^{\nicefrac{\bOne}{\bq}}) $}$$
	so that as the Jacobian of $\bz \mapsto \bz^{\nicefrac{\bOne}{\bq}}$ is $\frac{1}{Q} |\bz^{\nicefrac{\bOne}{\bq} - \bOne} | = \frac{1}{Q}$, we have $d(\bz^{\nicefrac{\bOne}{\bq}}) = \frac{1}{Q} d\bz$ which gives us
	\begin{equation}\label{rho inv}
		\textstyle \lambda(\bz^{\nicefrac{\bOne}{\bq}}) = \frac{1}{Q}  |r(\bz^{\nicefrac{\bOne}{\bq}}) |^2 \left(\text{w}^\ast\text{-}\lim_{n \to \infty} \prod_{j < n-1} r(\bz^{\bq^j}) d\bz \right) = \frac{1}{Q} |r(\bz^{\nicefrac{\bOne}{\bq}})|^2 \, \lambda(\bz)
	\end{equation}
	
	\noindent Denoting the $\bq$-th roots of unity by $\bzeta_\bj$ for $\bj \in [\bZero,\bq)$ so that  $\zeta_{\bj,1}^{k_1}\cdots \zeta_{\bj,d}^{k_d} = 1$ for $\bk \in \bq \Zd$ (see \S \ref{height}),
	\begin{equation}\label{r property} 
		\scalebox{0.9}{$\lambda \circ \bS_\bq^{-1} = \displaystyle{{\sum}_{\bj \in [\bZero,\bq)} \lambda \big( \bzeta_\bj \, \bz^{\nicefrac{\bOne}{\bq}} \big) = \frac{1}{Q} {\sum}_{\bj \in [\bZero,\bq)} |r\big( \bzeta_\bj \, \bz^{\nicefrac{\bOne}{\bq}}\big) |^2 \, \lambda(\bzeta_\bj^\bq \, \bz) = \Big( \frac{1}{Q} {\sum}_{\bj \in [\bZero,\bq)} |r\big( \bzeta_\bj \, \bz^{\nicefrac{\bOne}{\bq}}\big) |^2 \Big) \lambda(\bz) }$}
	\end{equation}
	\noindent  as $\zeta_{\bj,1}^{q_1}\cdots \zeta_{\bj,d}^{q_d}  = 1$ for all $\bj \in [\bZero,\bq)$.   We claim the term in the parenthesis is equal to $1$, so that $\lambda \circ \bS_\bq^{-1} = \lambda$ is $\bq$-shift invariant.  First, note that the sum of the $n$-th roots of unity is $0$, as this is the coefficient of $z^{n-1}$ in their minimal polynomial, and so one checks that
	$$\textstyle \sum_{\bj \in [\bZero,\bq)} \zeta_{\bj,1} \cdots \zeta_{\bj,d} =  0 \tbump{0.25in}{$\implies$} \sum_{\bj \in [\bZero,\bq)} \zeta_{\bj,1}^{k_1} \cdots \zeta_{\bj,d}^{k_d} = \Big \{\scalebox{0.75}{$\begin{matrix} 0 & \text{ if } \bk \notin \bq \Zd \\ Q & \text{ if } \bk \in \bq \Zd \end{matrix}$} $$
	as $\bq \mapsto \bq^\bk$ is a covering map of the $\bq$-th roots of unity onto the $\frac{\bq}{(\bk,\bq)}$-th roots of unity.   As the coefficients of $R(\bz)$ are simultaneously unitarily diagonalizable, there is a unit vector $\bv \in \bbC^{\mcA}$ with
	$$|r(\bz)|^2 = \bv^\ast R(\bz) R(\bz)^\ast \bv = \bv^\ast \left( {\sum}_{\bj,\bk \in [\bZero,\bq)} \mcR_\bj \mcR_\bk^\ast z_1^{j_1 - k_1} \cdots z_d^{j_d-k_d}\right) \bv$$
	If we let $\bw$ denote $\bz^{\nicefrac{\bOne}{\bq}}$, the roots of unity property lets us continue (\ref{r property}) obtaining
	\begin{align*}
		{\sum}_{\bi \in [\bZero,\bq)} |r(\bzeta_\bi \, \bw)|^2 &= \bv^\ast \left({\sum}_{\bj,\bk \in [\bZero,\bq)} \mcR_\bj \mcR_\bk^\ast  \left({\sum}_{\bi \in [\bZero,\bq)} \zeta_{\bi,1}^{j_1-k_1} \cdots \zeta_{\bi,d}^{j_d-k_d} \right)w_1^{j_1 - k_1}\cdots w_d^{j_d - k_d} \right) \bv \\
		&= \bv^\ast {\sum}_{\bj \in [\bZero,\bq)} \mcR_\bj \mcR_\bj^\ast \bv = Q |\bv|^2 = Q
	\end{align*} 
	so that $\lambda$ is $\bq$-shift invariant as claimed.   
	
	As $r(\bz)$ vanishes on a set of $0$ Lebesgue measure, and the push forward map preserves absolutely continuous components, the equality in (\ref{rho inv}) passes to $\lambda_\text{ac}$, the absolutely continuous part of $\lambda,$ and so the equality in (\ref{r property}) holds for $\lambda_\text{ac}$ as well, so it is also invariant.  As $m$ is ergodic for the $\bq$-shift, $\lambda_\text{ac} = m$ or is $0.$  As $m$ only satisfies (\ref{rho inv}) for constant $r(\bz)$, the component $\lambda_\text{ac} = \bZero$ when $r(\bz)$ is not constant.   As the monomials $z_1^{j_1} \cdots z_d^{j_d}$ are linearly independent for $\bj \in [\bZero,\bq)$, and $r(\bz)$ is an eigenfunction of $\bR(\bz) = \sum \mcR_\bj z_1^{j_1} \cdots z_d^{j_d}$, it follows that $r(\bz)$ can be constant if and only if there is a nonzero vector in the kernel of $\mcR_\bj$ for $\bj \neq \bZero.$  As the instruction matrices $\mcR_\bj$ are unitary, this is impossible, so that as $\bq \neq \bZero$ it follows that $r(\bz)$ is not a constant, and $\lambda$ is singular to Lebesgue measure.  As $\lambda$ is $\bq$-shift invariant, $\Pi(\lambda) = \lambda \ast \bomega_\bq$ is also singular.
\end{proof}


\section{Computing the Spectrum}\label{examples}

The goal of this section is to describe an algorithm for determining the spectrum of an aperiodic $\bq$-substitution in a manner consistent with theorem \ref{queffelec two}, using it to compute the spectrum of several examples in a manner similar to our treatment of Thue-Morse.  Specifically, we compute the extreme points of the spectral hull and the Fourier coefficients of the correlation measures, the corresponding linear combinations of which correspond to mutually singular measures of easily characterizable pure types giving rise to the spectrum of $\mcS$.  See [\ref{bartlett}, \S 4] for a more detailed description of the algorithm and examples.

Given an arbitrary $\bq$-substitution, one computes its primitive components as in proposition \ref{PRF} by computing orbits of $\mcS$ on letters and partitioning the alphabet into ergodic classes.  Identification of letter orbits for imprimitive substitutions can be simplified by telescoping the substitution with the index of imprimitivity, computed from the spectrum of the substitution matrix, see proposition \ref{PRF} and the following discussion.  One verifies aperiodicity on each component using Pansiot's lemma \ref{pansiot lemma} in the $\bbZ$ case, or via recognizability in general; as we will be making use of theorems \ref{fourier recursion} and \ref{queffelec two}, aperiodicity is a property we must assume of our substitutions, and so we let $\mcS$ be an aperiodic $\bq$-substitution on an alphabet $\mcA$.  

First, compute the instruction matrices $\mcR_\bj \in M_\mcA(\bbC)$ and the substitution matrix $M_\mcS = \sum_{\bj \in [\bZero,\bq)} \mcR_\bj$.  We let $\bu \in \bbC^\mcA$ be a strictly positive linear combination of its Perron vectors, corresponding to the primitive blocks of its primitive reduced form (\ref{matrix PRF}), obtained by reordering $\mcA$ according to the ergodic classes of $\mcS.$  This corresponds, via theorems \ref{michel theorem} and \ref{invariant measures}, to fixing an invariant measure $\mu$ on $X_\mcS$ supported by all the ergodic measures for the subshift, allowing us to compute the spectrum of $\mcS$ for all the primitive components simultaneously.  

Second, we determine $\mcK^\ast$, the extreme points of the spectral hull, by computing the ergodic classes $\mcE$ of the bisubstitution $\mcS \otimes \mcS$ and the coincidence matrix $C_\mcS = \sum_{\bj \in [\bZero,\bq)} \mcR_\bj \otimes \mcR_\bj$.  Partitioning $\mcA^2$ into letter orbits under the bisubstitution gives one the ergodic classes and transient part, and one applies proposition \ref{characterization of K} to determine the hull; note that if the transient part is empty, the spectral hull depends only on the ergodic classes.  Solving for $\bv$ as a function of $w_1,\ldots, w_J$, one imposes strong semipositivity by computing the eigenvalues of $\mathring{\bv}$ and enforcing nonnegativity, as well as the normalization condition, $\sum_{\alpha \in \mcA} v_{\alpha \alpha} u_\alpha = 1.$

Third, we determine the correlation measures by using the Fourier recursion theorem \ref{fourier recursion} so
$${\small \widehat{\Sigma}(\bk) = \scalebox{1}{$\frac{1}{Q^p}$} {\sum}_{\bj \in [\bZero,\bq^p)} \mcR_\bj^{(p)} \otimes \mcR_{\bj+\bk}^{(p)} \hspace{3px} \widehat{\Sigma}(\qq{\bj+\bk}_p) \tbump{0.25in}{for} p \in \bbN \tbump{0.25in}{and} \bk \in \Zd}$$
\noindent For $\bc \in [-\bOne,\bOne] \smallsetminus \bZero$ we can solve for $\widehat{\Sigma}(\bc)$ algebraically using the above: note that for $m > 0$ and $\bj \in [\bZero,\bq^m)$ the $\Zd$ integer $\qq{\bj + \bc}_m$ lies in the smallest rectangle containing $\bZero$ and $\bc$, as $\bc$ cannot shift $\bj$ in any dimension where $\bc$ is $0$.  For example, if $c =\bOne_i$, the $i$-th coordinate vector in $\Zd$, and $\bj \in [\bZero,\bq^p)$ for $p > 0$ then $\qq{\bj+\bOne_i}_p= \bOne_i$, or $\bZero,$ according to whether $\bj$ is in $\Delta_p(\bOne_i)$, or not, so that
$${\small \widehat{\Sigma}(\bOne_i) = \scalebox{1}{$\frac{1}{Q^p}$} {\sum}_{\bj \in \Delta_p(\bOne_i)} \mcR_\bj^{(p)} \otimes \mcR_{\bj+\bOne_i}^{(p)} \, \widehat{\Sigma}(\bOne_i) + \scalebox{1}{$\frac{1}{Q^p}$} {\sum}_{\bj \in [\bZero,\bq^p)\smallsetminus \Delta_p(\bOne_i)} \mcR_\bj^{(p)} \otimes \mcR_{\bj+\bOne_i}^{(p)} \, \widehat{\Sigma}(\bZero)}$$
from which one can solve for $\widehat{\Sigma}(\bOne_i)$ in order to obtain
\begin{equation}\label{sigma c}
	{\small \widehat{\Sigma}(\bOne_i) = \textstyle{\Big( Q^p \bI - \sum_{\bj \in \Delta_p(\bOne_i)} \mcR_\bj^{(p)} \otimes \mcR_{\bj+\bOne_i}^{(p)} \Big)^{-1} \sum_{\bj \in [\bZero, \bq^p) \smallsetminus \Delta_p(\bOne_i)} \mcR_\bj^{(p)} \otimes \mcR_{\bj+\bOne_i}^{(p)} \widehat{\Sigma}(\bZero)}}
\end{equation}
\noindent where the inverse matrix above exists for \textit{some} $p>0$ as the frequency of carries goes to $0$ and thus the spectral radius of the sum over $\Delta_p(\bOne_i)$ as well.  In this way, we compute $\widehat{\Sigma}(\bc)$ for $\bc = \bZero, \pm \bOne_i$ for $1 \leq i \leq d$.  Next, for $\bc$ of the form $\pm \bOne_i \pm \bOne_j,$ the Fourier recursion expresses $\widehat{\Sigma}(\pm \bOne_i \pm \bOne_j)$ in terms of $\widehat{\Sigma}(\bZero), \widehat{\Sigma}(\pm \bOne_i), \widehat{\Sigma}(\pm \bOne_j),$ and $\widehat{\Sigma}(\pm \bOne_i \pm \bOne_j)$ and again we can solve for $\widehat{\Sigma}(\pm \bOne_i \pm \bOne_j)$ for all $i$ and $j$ distinct; here the $\pm$ can be chosen independently between $i$ and $j$, but are otherwise consistent throughout.  Continue similarly for $\pm \bOne_i \pm \bOne_j \pm \bOne_k$ for distinct $i,j,k$, and etc, until all the values of $\widehat{\Sigma}$ on $[-\bOne,\bOne]$ are known.  For any $\bk \in \Zd$, $\widehat{\Sigma}(\bk)$ can be computed directly in terms of $\widehat{\Sigma}(\bc)$ for $\bc \in [-\bOne,\bOne]$ using Fourier recursion with $p \geq \mfp(\bk)$, or the smallest $p$ with $\bk \in (-\bq^p,\bq^p)$.

Now that $\mcK^\ast$ and $\widehat{\Sigma}(\bk)$ for $\bk \in [\bZero,\bOne]$ are known, we apply theorem \ref{queffelec two} to express the spectrum of $\mcS$ in terms of $\lambda_\bw$ for $\bw \in \mcK^\ast$: as pure, $\bq$-shift ergodic types, they fall into one of three categories:
\begin{itemize}
	\item Lebesgue measure $\iff$ for $\bk \neq \bZero$, we have $\widehat{\lambda_\bw}(\bk) = 0$.
	\item pure discrete $\iff$ $\widehat{\lambda_\bw}$ is constant modulo the height lattice $\mcL$, or periodic in the $\bbZ$ case.
	\item pure singular continuous $\iff$ $\widehat{\lambda_\bw}(\bk) \neq 0$ for some $\bk \neq 0$ and isn't constant on cosets of $\mcL$.
\end{itemize}
For each $\bw \in \mcK^\ast$ and as many $\bk \in \Zd$ as are necessary to classify according to the above, compute
$$\textstyle \widehat{\lambda_\bw}(\bk)  = \sum_{\alpha \beta \in \mcA^2} w_{\alpha \beta} \widehat{\sigma_{\alpha \beta}}(\bk) = \bw^t \widehat{\Sigma}(\bk)$$
\noindent  As convolution with $\bomega_\bq$ has no effect on the purity of the above types, such characterizations of $\lambda_\bw$ will pass on to the spectrum of $\mcS.$  Note that Queff\'elec considers the question of spectral multiplicity in [\ref{queffelec}, \S 11.2.1] where it is related to the rank of $\mathring{\Sigma}$, but we have not analyzed these results in our context.  We now work out several examples, computing the spectrum and showing singularity to Lebesgue when possible.

\subsection{Substititutions with Purely Singular Spectrum}

We have already computed the spectrum of Thue-Morse, showing it to be purely singular, in addition to theorem \ref{abc theorem} showing that all aperiodic bijective and commutative $\bq$-substitutions are singular (of which Thue-Morse is an example).  Our next example, due to Queff\'elec, is bijective but not commutative; in [\ref{queffelec}, Examples 9.3, 10.2.2.3, and 11.1.2.3] it was shown to have Lebesgue spectrum, however there were errors in the analysis - as we now show, it is in fact purely singular.
\begin{exmp}[Queff\'elec's $\zeta$]\label{queffelec example}
	Let $\zeta$ be the $3$-substitution on $\mcA = \{0,1,2\}$ given by
	$$\tiny{\zeta:  \begin{cases} \vspace{12px} \end{cases} \hspace{-8px} \begin{matrix} 0 \longmapsto 001	\\ 1 \longmapsto 122	\\ 2 \longmapsto 210	\end{matrix} \tbump{24px}{\normalsize with } M_\zeta = \begin{pmatrix} 1 & & \\ & 1 & \\ & & 1 \end{pmatrix} +  \begin{pmatrix} 1 & & \\ &  & 1 \\ & 1 &  \end{pmatrix} +  \begin{pmatrix}  & & 1 \\ 1 & & \\ & 1 & \end{pmatrix} = \begin{pmatrix} 2 & 0 & 1 \\ 1 & 1 & 1 \\ 0 & 2 & 1 \end{pmatrix}}$$
	\noindent with the instruction matrices $\mcR_0, \mcR_1, \mcR_2$ appearing sequentially above.  As $M_\zeta^2$ is positive, $\zeta$ is primitive.  As $\mcR_0$ is the identity and as $1$ can be followed by $0,1$ and $2$ in $\zeta^2(0),$ Pansiot's lemma applies and shows that $\zeta$ is aperiodic.  
	
	We compute the ergodic decomposition of $\zeta \otimes \zeta$: looking at letter orbits gives ergodic classes $\mcF_1 = \{ 00, 11, 22\}$ and $\mcF_2 = \{01, 10, 12, 21, 02, 20 \}$ and no transient pairs.  By proposition \ref{characterization of K}
	$$\bv \in \mcK \tbump{12px}{$\implies$}\bv = \big( 1, w, w, w, 1, w, w, w, 1 \big)^t \tbump{8px}{ equivalently } \mathring{\bv} = \tiny{\begin{pmatrix} 1 & w & w \\ w & 1 & w \\ w & w & 1 \end{pmatrix}}$$
	\noindent where the basis of $\bbC^{\mcA^2}$ is ordered lexicographically, $00,01,02,10,11,12,20,21,22$ which will be standard for the bialphabet.  Performing simultaneous row and column operations to $\mathring{\bv}$, we have
	$${\small \bv \in \mcK} \tbump{12px}{$\iff$}\scalebox{0.7}{{$\mathring{\bv} \approx {\small \begin{pmatrix} 1 & 0 & 0 \\ 0 & 1 - w^2 & 0 \\ 0 & 0 & \frac{1 +  w - 2 w^2}{1 + w} \end{pmatrix}}\tbump{8px}{ \normalsize{ so that $\mathring{\bv} \stpos 0$ implies }} \begin{cases} (1 + w)(1 - w) \geq 0, \\ (1 - w)(1 + 2w) \geq 0 \end{cases}$}}$$
	\noindent or $-\frac{1}{2} \leq w \leq 1,$ and so $\mcK$ has two extreme points given by the vectors
	$$\small{\bv_1 = \vec{\mcF_1} + \vec{\mcF_2} = \bOne, \tbump{8px}{ and } \bv_2 = \vec{\mcF_1} - \frac{1}{2} \vec{\mcF_2} = \big(1, -\frac{1}{2}, -\frac{1}{2}, -\frac{1}{2}, 1, -\frac{1}{2}, -\frac{1}{2}, -\frac{1}{2}, 1 \big)^t}$$
	\noindent The Perron vector of $M_\zeta$ is $\frac{1}{3} \bOne$ and so $\widehat{\Sigma}(0) = \frac{1}{3} \sum_{\gamma \in \mcA} \be_{\gamma \gamma}.$  As $\Delta_1(1) = \{2\}$ for $q = 3$, (\ref{sigma c}) gives
	\begin{align*}
	\widehat{\Sigma}(1) &= \textstyle{\Big( 3 \bI - \sum_{j \in \Delta_1(1)} \mcR_j^{(1)} \otimes \mcR_{j+1}^{(1)} \Big)^{-1} \sum_{j \notin \Delta_1(1)} \mcR_j^{(1)} \otimes \mcR_{j+1}^{(1)} \widehat{\Sigma}(0)} \\
		&= \textstyle{\big( 3 \bI - \mcR_2 \otimes \mcR_0 \big)^{-1} \big( \mcR_0 \otimes \mcR_1 + \mcR_1 \otimes \mcR_2 \big) \widehat{\Sigma}(0)} = \frac{1}{39}\big(5,6,2,6,2,5,2,5,6\big)^t
	\end{align*}
	\noindent Computing $\widehat{\Sigma}(2)$ using $p=1$ in the Fourier recursion theorem \ref{fourier recursion} gives
	{\small \begin{align*}
		\widehat{\Sigma}(2) &= \scalebox{1}{$\frac{1}{3}$} {\sum}_{j=0}^2 \mcR_j \otimes \mcR_{j+2} \widehat{\Sigma}(\qr{j + 2}_1) = \scalebox{1}{$\frac{1}{3}$}\mcR_0 \otimes \mcR_0 \widehat{\Sigma}(0) + \scalebox{1}{$\frac{1}{3}$}(\mcR_1 \otimes \mcR_0 + \mcR_2 \otimes \mcR_1) \widehat{\Sigma}(1)  \\
			&= \scalebox{1}{$\frac{1}{117}$}\big(7,7,25,25,7,7,7,25,7\big)^t
	\end{align*}}
	Combining the above, one checks that $\lambda_{\bv_1} = \bdelta_1,$  and that $\widehat{\lambda_{\bv_2}}(1) = 0,$ and thus $\widehat{\lambda_{\bv_2}}(3a) = 0$ for all $a$ by $3$-shift invariance.   Using the value of $\widehat{\Sigma}(2)$ above, however, we obtain 
	$$\scalebox{1}{$\widehat{\lambda_{\bv_2}}(2) = \frac{1}{117}\left( 1 \big( 7 \cdot 3 \big) - \frac{1}{2}\big(7 \cdot 3 + 25 \cdot 3\big)\right) \neq 0$}$$
	\noindent so that $\lambda_{\bv_2}$ is \textit{not} Lebesgue measure.  As $\sigmaX(\zeta) = \bomega_3 \ast ( \bdelta_1 + \lambda_{\bv_2} ),$ it follows that the spectrum of $\zeta$ is purely singular to Lebesgue spectrum on the circle $\bbT,$ as both $\bomega_3$ and $\bomega_3 \ast \lambda_{\bv_1} \perp \mathbf{m}.$	
\end{exmp}

Our first example with $d > 1$ comes from a domino tiling system in the plane known as the Table.  In [\ref{robinson}], Robinson described it as a substitution on $4$ symbols in $\bbZ^2$ as follows:
\begin{exmp}[The Table]\label{table example}
Let $\mcT$ be the $(2,2)$-substitution on $\{0,1,2,3\}$ given by
$$\small{\mcT: \hspace{28px} 0 \mapsto \begin{matrix} 3 & \hspace{-0.1in} 0 \\ 1 &  \hspace{-0.1in} 0 \end{matrix} \hspace{28px} 1 \mapsto \begin{matrix} 1 &  \hspace{-0.1in} 1 \\ 0 &  \hspace{-0.1in} 2 \end{matrix}  \hspace{28px} 2 \mapsto \begin{matrix} 2 &  \hspace{-0.1in} 3 \\ 2 &  \hspace{-0.1in} 1 \end{matrix} \hspace{28px}  3 \mapsto \begin{matrix} 0 &  \hspace{-0.1in} 2 \\ 3 &  \hspace{-0.1in} 3 \end{matrix}} $$
\noindent with substitution matrix $M_\mcT = \mcR_{(0,0)} + \mcR_{(0,1)} + \mcR_{(1,0)} + \mcR_{(1,1)}$ summed consecutively below:
$${\tiny M_\mcT =  \scalebox{0.8}{$\begin{pmatrix} & 1 & & \\ 1 & & & \\ & & 1 & \\ & & & 1 \end{pmatrix} + \begin{pmatrix} & & & 1 \\ & 1 & & \\ & & 1 & \\ 1 & & & \end{pmatrix} + \begin{pmatrix} 1 & & & \\ & & 1 & \\ & 1 & & \\ & & & 1 \end{pmatrix} +  \begin{pmatrix} 1 & & & \\ & 1 & & \\ & & & 1 \\ & & 1 & \end{pmatrix} = \begin{pmatrix} 2 & 1 & & 1 \\ 1 & 2 & 1 & \\ & 1 & 2 & 1 \\ 1 & & 1 & 2 \end{pmatrix}$}}$$
\noindent As $M_\mcT^2 > 0$, $\mcT$ is primitive, and aperiodicity follows from recognizability via theorem \ref{aperiodicity implies recognizability}, see [\ref{robinson}].  Although $\mcT$ is a bijective substitution, $R_{(0,0)}$ and $R_{(0,1)}$ do not commute, and so theorem \ref{abc theorem} does not apply.   Computing the spectral hull, one checks from letter orbits that the ergodic classes of the bisubstitution are: 
$$\mcF_1 = \{00,11,22,33\} \tbump{8px}{ and } \mcF_2 = \{01,02,03,10,12,13,20,21,23,30,31,32\}$$
\noindent which partition $\mcA^2$ completely.  Using proposition \ref{characterization of K}, we have $\bv \in \mcK$ if and only if
$$\scalebox{0.9}{$\tiny \mathring{\bv} = \begin{pmatrix} 1 & w & w & w \\ w & 1 & w & w \\ w & w & 1 & w \\ w & w & w & 1 \end{pmatrix} $} \stpos 0 \tbump{8px}{ \normalsize{$\implies$} } \scalebox{0.9}{$\small \begin{cases} (1 + w)(1 - w) \geq 0 \\ (1 - w)^3(1 + 3w) \geq 0 \end{cases}$} $$
\noindent as $\mathring{\bv} \stpos 0$ if and only if its principal minors are positive definite, and so $\mcK^\ast$ consists of two points:
$${\textstyle \bv_1 = \sum_{\alpha \in \mcA} \be_{\alpha \alpha} + \sum_{\gamma \neq \delta \in \mcA} \be_{\gamma \delta} \tbump{8px}{ and } \bv_2 = \sum_{\alpha \in \mcA} \be_{\alpha \alpha} -\frac{1}{3} \sum_{\gamma \neq \delta \in \mcA} \be_{\gamma \delta}}$$
One checks that the Perron vector of $M_\tau$ is $\bu = (\frac{1}{4}, \frac{1}{4}, \frac{1}{4}, \frac{1}{4})^t$ and so $\widehat{\Sigma}(0,0) = \frac{1}{4} \sum_{\gamma \in \mcA} \be_{\gamma \gamma}.$   Writing $\mcA^2$ in the lexicographic order and using (\ref{sigma c}) to solve for $\widehat{\Sigma}(1,0)$ one obtains
$$\widehat{\Sigma}(1,0) = \scalebox{1}{$\frac{1}{20}$}\big(0,2,1,2,0,2,2,1,5,0,0,0,0,1,2,2\big)^t$$  
\noindent so that combining the above we obtain the Fourier coefficients
$$\widehat{\lambda_{\bv_1}}((1,0)) = 1 \tbump{8px}{ and } \widehat{\lambda_{\bv_2}}((1,0)) = \bv_2^t \widehat{\Sigma}((1,0)) = \scalebox{1}{$-\frac{1}{15}$},$$
\noindent so that none of the measures coming from the spectral hull are Lebesgue, and thus the spectrum of the Table is singular to Lebesgue measure on $\bbT^2,$ the two-torus.
\end{exmp}
Note that both $\zeta$ and $\mcT$ are examples of bijective $\bq$-substitutions, as all of their instructions are bijections of $\mcA.$  The correction of Queff\'elec's example is significant, as it represented the only known case of a bijective substitution with Lebesgue component.   In addition to the above examples, computational software has been used to exclude Lebesgue component from the spectrum of all bijective $q$-substitutions for $q \leq 5$ (the $\bbZ$ case)  on alphabets of at most $5$ letters.  Together with theorem \ref{abc theorem}, this suggests that all aperiodic bijective $\bq$-substitutions may have spectrum singular to Lebesgue measure.  We now discuss a collection of (non-bijective) $\bq$-substitutions, due to Frank [\ref{frank lebesgue}], which have Lebesgue component in their spectrum and are based on the famous Rudin-Shapiro substitution.


\subsection{Substitutions with Lebesgue Component in Spectrum}

Although the spectrum of the Rudin-Shapiro substitution is well known, it is nonetheless interesting to see how the details work out allowing for all the terms $\bv^t \widehat{\Sigma}(k)$ for $k \neq 0$ to vanish.  
\begin{exmp}[Rudin-Shapiro]\label{RS example}
	Let $\rho$ be the $2$-substitution on $\{0,1,2,3\}$ 
	$${\tiny \rho:  \begin{cases} \vspace{20px} \end{cases} \hspace{-8px}\begin{matrix} 0 \longmapsto 02 \\ 1 \longmapsto 32 \\ 2 \longmapsto 01 \\ 3 \longmapsto 31 \end{matrix} \tbump{24px}{\normalsize{ with }} M_\rho = \scalebox{0.9}{$\begin{pmatrix} 1 & 0 & 1 & 0 \\ 0 & 0 & 0 & 0 \\ 0 & 0 & 0 & 0 \\ 0 & 1 & 0 & 1 \end{pmatrix} + \begin{pmatrix} 0 & 0 & 0 & 0 \\ 0 & 0 & 1 & 1 \\ 1 & 1 & 0 & 0 \\ 0 & 0 & 0 & 0 \end{pmatrix}$} = \scalebox{0.9}{$\begin{pmatrix} 1 & 0 & 1 & 0 \\ 0 & 0 & 1 & 1 \\ 1 & 1 & 0 & 0 \\ 0 & 1 & 0 & 1 \end{pmatrix}$}}$$
	\noindent As $M_\rho^3 > 0,$ $\rho$ is primitive, and as $\rho^2(0) = 0201$ with $\rho(0)_0 = 0$ the symbol $0$ can be proceeded by both the symbols $1$ and $2,$ it follows from Pansiot's lemma that $\rho$ is aperiodic.  We now compute the ergodic decomposition of $\rho \otimes \rho,$ the bisubstitution.  Here $\mcF_1 = \{ 00, 11, 22, 33\},$ and in this case the only other ergodic class is $\mcF_2 = \{ 03, 12, 21, 30 \},$ so that $\mcT = \{01,02,10,13,20,23,31,32 \}$ is the transient part.  Using proposition \ref{characterization of K} we have $\bv \in \mcK$ if and only if
	$$\mathring{\bv} = \scalebox{0.9}{\tiny $\begin{pmatrix} 1 &  &  &  \\  & 1 &  &  \\  &  & 1 &  \\  &  &  & 1 \end{pmatrix} + w \begin{pmatrix}  & & & 1 \\  &  & 1 &  \\  & 1 &  &  \\ 1 &  &  &  \end{pmatrix} + \frac{1}{2}(1+w) \begin{pmatrix}  & 1 & 1 &  \\ 1 &  &  & 1 \\ 1 &  & &  1 \\  & 1 & 1 &  \end{pmatrix} \stpos$} 0$$
	\noindent We can diagonalize $\mathring{\bv}$ as the above constant matrices are simultaneously diagonalizable, using
	$$\scalebox{0.9}{${\tiny S = \begin{pmatrix} 1 & 0 & -1 & 1 \\ -1 & -1 & 0 & 1 \\ -1 & 1 & 0 & 1 \\ 1 & 0 & 1 & 1 \end{pmatrix}} \tbump{12px}{ \normalsize{we have} } 
	{\tiny S^{-1} \mathring{\bv} S = \begin{pmatrix} 0 & & & \\ & 1 - w & & \\ & & 1 - w & \\ & & & 2 + 2w \end{pmatrix}}$}$$
	\noindent so that $\bv$ is strongly semipositive if and only if $-1 \leq w \leq 1$ and so $\mcK^\ast$ is given by the vectors
	$$\bv_1 = \bOne \tbump{8px}{ and } \bv_2 = (1,0,0,-1,0,1,-1,0,0,-1,1,0,-1,0,0,1)^t$$
	\noindent The Perron vector is $\frac{1}{4}\bOne$ so $\widehat{\Sigma}(0) = \sum_{\gamma \in \mcA} \frac{1}{4} \be_{\gamma \gamma}.$  As $\Delta_1(1) = \{1\}$ when $q = 2$, identity (\ref{sigma c}) gives
	$$\widehat{\Sigma}(1) = (2\bI - \mcR_1 \otimes \mcR_0)^{-1} \mcR_0 \otimes \mcR_1 \widehat{\Sigma}(0) = \scalebox{0.8}{$\frac{1}{8}$}(0,1,1,0,1,0,0,1,1,0,0,1,0,1,1,0)^t$$
	\noindent where the basis of $\bbC^{\mcA^2}$ is ordered lexicographically. Using theorem \ref{fourier recursion} for $k = 2$ and $p=1$ 
	$$\widehat{\Sigma}(2) = \scalebox{0.8}{$\frac{1}{2}$} (\mcR_0 \otimes \mcR_0 + \mcR_1 \otimes \mcR_1) \widehat{\Sigma}(1) = \scalebox{0.8}{$\frac{1}{8}$}(1,0,0,1,0,1,1,0,0,1,1,0,1,0,0,1)^t$$
	\noindent Note that $\widehat{\Sigma}(1) \perp \widehat{\Sigma}(2)$: the former lies in the span of transient vectors $\be_{\alpha \beta}$ for $\alpha \beta \in \mcT$, and the latter in the span of the ergodic vectors $\be_{\alpha \beta}$ for $\alpha \beta \in \mcE_1 \sqcup \mcE_2$.   By explicit computation and the Fourier recursion theorem \ref{fourier recursion} applied with $p=1$, one verifies the following relations for $n \in \bbZ$ 
	$$\scalebox{0.8}{$\begin{cases} 
	\widehat{\Sigma}(2n) =  \frac{1}{2} \left( \mcR_{0} \otimes \mcR_{0} + \mcR_{1} \otimes \mcR_1 \right) \widehat{\Sigma}(n) = \frac{1}{2} C_\rho\widehat{\Sigma}(n)
	\\
	 \frac{1}{2} C_\rho \widehat{\Sigma}(0) = \widehat{\Sigma}(0) 
	 	\\
	 \frac{1}{2}C_\rho \widehat{\Sigma}(1) = \frac{1}{2} C_\rho \widehat{\Sigma}(2) = \widehat{\Sigma}(2) 
 
	 \end{cases} \hspace{-0.3in}\tbump{8px}{and} \begin{cases} 
	 	\widehat{\Sigma}(2n+1)  =  \frac{1}{2} \big( \mcR_{0} \otimes \mcR_1 \widehat{\Sigma}(n) + \mcR_{1} \otimes \mcR_0 \widehat{\Sigma}(n+1)\big)
		\\
	\frac{1}{2} \big( \mcR_{0} \otimes \mcR_1 \widehat{\Sigma}(1) + \mcR_{1} \otimes \mcR_0 \widehat{\Sigma}(2)\big) =  \widehat{\Sigma}(1) 
	 \\
	 \frac{1}{2} \big( \mcR_{0} \otimes \mcR_1 \widehat{\Sigma}(2) + \mcR_{1} \otimes \mcR_0 \widehat{\Sigma}(1)\big)= \widehat{\Sigma}(1) 
\end{cases}$}$$ 
	Using these identities and relations, one checks recursively that the identities $\widehat{\Sigma}(2n+1) = \widehat{\Sigma}(1)$ for $n \in \bbZ$ and $\widehat{\Sigma}(2n) = \widehat{\Sigma}(2)$ for $n \neq 0$ are satisfied.   This can be used to show that $\Sigma$ is a vector linear combination of Lebesgue measure and discrete measures supported at $\pm 1$.
	
	As usual, $\lambda_{\bv_1} = \bdelta_1,$ and using the computed values of $\widehat{\Sigma}(k),$ one checks that $\widehat{\lambda_{\bv_2}}(k) = 0$ for $k \neq 0,$ and so $\lambda_{\bv_2}$ is Lebesgue measure.  As $m$ is $q$-shift invariant for all $q,$ we have $\sigmaX \sim \bomega_2 + m.$
\end{exmp}	

We now briefly discuss Frank's generalizations of Rudin-Shapiro.  A Hadamard matrix $\bH$ is an even dimensional matrix whose coefficients are $\pm 1$ with mutually orthogonal rows and columns.  Given a Hadamard matrix $\bH \in \bM_{2n}(\pm 1)$, consider the signed alphabet $\mcA = \{ \pm k \text{ for } 1 \leq k \leq 2n \}$ and define the \textit{induced instructions} of $\bH$ using the columns of $\bH$ to alternate the sign of letters: 
$$\mcI(\bH) := \big\{ \mcR : \mcA \to \mcA \tbump{4px}{ sending } \alpha \mapsto  \big(\text{sign}(\alpha) \, \bH_{|\alpha|, k} \big)  \, k \tbump{4px}{ for } 1 \leq k \leq 2n \big\}$$
\noindent If $\bq \in \Zd$ satisfies $Q = 2n,$ then any configuration $\mcR$ of $[\bZero,\bq)$ onto $\mcI(\bH)$ gives rise to a $\bq$-substitution on $\mcA$.  For example, in the $n=1$ case with Hadamard matrix $\bH$ and corresponding induced instructions $\mcR_0$ and $\mcR_1$ given by
$$\bH = \scalebox{0.7}{$\begin{pmatrix} 1 & -1 \\ -1 & 1 \end{pmatrix} \tbump{10px}{with} \mcR_0 : \begin{cases}   \hspace{8px} 1 \mapsto   \hspace{8px}  1, &   \hspace{8px} 2 \mapsto \hspace{8px}  1 \\ -1 \mapsto -1, & -2 \mapsto   -1 \end{cases} \tbump{10px}{and} \mcR_1 : \begin{cases}  \hspace{8px} 1 \mapsto -2, &  \hspace{8px} 2 \mapsto  - 2 \\ -1 \mapsto  \hspace{8px}  2, & -2 \mapsto \hspace{8px} 2 \end{cases}$}$$
corresponds to Rudin-Shapiro substitution by sending $1 \mapsto 0, -1 \mapsto 3, 2 \mapsto 1,$ and $-2 \mapsto 2.$
\begin{thm}[Frank]
	If $\bH$ is a $Q \tbump{-4px}{$\times$} Q$-Hadamard matrix with $Q = q_1 \cdots q_d$, then any $\bq$-substitution $\mcS$ with induced instructions $\mcI(\bH)$ has multiplicity $Q$ Lebesgue spectral components.
\end{thm}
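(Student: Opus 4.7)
The plan is to exploit the $\bbZ_2$ sign symmetry of $\mcA = \{\pm 1, \ldots, \pm Q\}$: since $\mcR_\bj(-\alpha) = -\mcR_\bj(\alpha)$, the involution $\tau: \alpha \mapsto -\alpha$ commutes with every instruction (and hence with the shift), so $\mu$ is $\tau$-invariant and the correlation measures satisfy $\sigma_{-\alpha,-\beta} = \sigma_{\alpha,\beta}$ and $\sigma_{-\alpha,\beta} = \sigma_{\alpha,-\beta}$. I would pass to the sign-adapted basis $e_\pm(k) = \tfrac{1}{\sqrt{2}}(e_{+k} \pm e_{-k})$, in which the instruction polynomial $R(\bz) = \sum_\bj \mcR_\bj \bz^\bj$ splits as $R_+(\bz) \oplus R_-(\bz)$. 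Writing $c : [\bZero,\bq) \to \{1,\ldots,Q\}$ for the configuration's indexing bijection so that $\mcR_\bj(\alpha) = \text{sign}(\alpha)\, \bH_{|\alpha|, c(\bj)}\, c(\bj)$, a direct calculation shows $R_+(\bz)$ is rank one, corresponding to the factor substitution $|\mcS|$ on $\{1,\ldots,Q\}$ with constant instructions $|\mcR_\bj| \equiv c(\bj)$, which has pure discrete spectrum by Dekking's coincidence condition (theorem \ref{dekking thm}); meanwhile $R_-(\bz) = D(\bz)\, \bH^T$ with $D(\bz) = \text{diag}(z^{c^{-1}(\ell)})_{\ell=1}^Q$.

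The key step uses the Hadamard orthogonality $\bH^T \bH = Q\bI$ to compute
$$R_-(\bz)\, R_-(\bz)^\ast \;=\; D(\bz)\, \bH^T \bH\, D(\bz)^\ast \;=\; Q\, \bI \qquad \text{for every } \bz \in \bbT^d,$$
since $|z^{c^{-1}(\ell)}| = 1$. Thus $R_-(\bz)/\sqrt{Q}$ is pointwise unitary, and by induction $\mathfrak{R}_{n,-}(\bz)\, \mathfrak{R}_{n,-}(\bz)^\ast \equiv Q^n \bI$. Since the Hadamard structure makes $M_\mcS$ doubly $Q$-stochastic (each row and column contains $Q$ ones, by bijectivity of $c$), the Perron vector of $\mcS$ is uniform and $\widehat{\mathring{\Sigma}}(\bZero) = \tfrac{1}{2Q}\bI$. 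The matrix Riesz product representation of $\mathring{\Sigma}$ adapted from the proof of theorem \ref{abc theorem} then collapses on the $e_-$ block to $\mathring{\Sigma}_- = \tfrac{1}{2Q}\, \bI \cdot m$, where $m$ is normalized Lebesgue measure on $\bbT^d$. Translating back to the original basis yields the identities $\sigma_{+k,+\ell} - \sigma_{+k,-\ell} = \tfrac{1}{2Q}\delta_{k,\ell}\, m$ for $1 \leq k, \ell \leq Q$.

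Setting $f_k := \IND_{[+k]} - \IND_{[-k]}$ for $1 \leq k \leq Q$ produces $Q$ linearly independent sign-antisymmetric functions in $L^2(\mu)$ whose cross-spectral measures are $\sigma_{f_k, f_\ell} = 2(\sigma_{+k,+\ell} - \sigma_{+k,-\ell}) = \tfrac{1}{Q}\delta_{k,\ell}\, m$: the $f_k$ are pairwise spectrally orthogonal and each $\sigma_{f_k}$ is a positive scalar multiple of Lebesgue, exhibiting Lebesgue spectrum with multiplicity $Q$ in the translation operator. Combined with the pure discrete contribution from the $e_+$ block, this gives the full Lebesgue-multiplicity-$Q$ part of $\sigmaX$. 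The main obstacle lies in justifying the matrix Riesz product representation outside the bijective-commutative setting of theorem \ref{abc theorem} and tracking the correct normalization through the change of basis; the Hadamard and sign-symmetry computations themselves reduce to elementary linear algebra once the change of basis is in place.
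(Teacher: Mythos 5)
The paper does not actually prove Frank's theorem: it is stated as an imported result and cited to reference [11] (Frank, \emph{Substitution sequences in} $\bbZ^d$ \emph{with a nonsimple Lebesgue component in the spectrum}). So there is no internal proof to compare against; what you have produced is a genuine reconstruction inside the paper's own machinery (Fourier recursion, associated matrix, and Riesz product), and it is, in outline, correct and rather illuminating.

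Your computations check out: $\tau$ commutes with every instruction, so each $\mcR_\bj$ block-diagonalizes in the $e_\pm$ basis, with $\mcR_\bj e_+(m) = e_+(c(\bj))$ and $\mcR_\bj e_-(m) = \bH_{m,c(\bj)} e_-(c(\bj))$, giving $R_-(\bz) = D(\bz)\bH^T$ and $R_-(\bz)R_-(\bz)^\ast = Q\bI$. The double stochasticity of $M_\mcS$ (each column sums to $Q$ trivially; each row of $\mcR_\bj$ sums to $Q$ exactly when $|\alpha| = c(\bj)$, so bijectivity of $c$ gives row sums $Q$) yields $\widehat{\mathring{\Sigma}}(\bZero) = \tfrac{1}{2Q}\bI$, and then the Riesz product collapses on the $e_-$ block. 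The obstacle you flag at the end is smaller than you fear: the Riesz-product representation of $\mathring\Sigma$ in the proof of theorem \ref{abc theorem} is derived solely from the Fourier recursion (theorem \ref{fourier recursion}) and the carry estimate (lemma \ref{small carries}), both of which hold for any aperiodic $\bq$-substitution; the bijective-commutative hypothesis enters that proof only later, to diagonalize simultaneously. What \emph{is} used is that $\widehat{\mathring{\Sigma}}(\bZero)$ is a scalar multiple of the identity, and you have independently verified that. The correct form to write down (matching the Fourier recursion via (\ref{associated kronecker product})) is $\mathring\Sigma = \text{w}^\ast\text{-}\lim_n Q^{-n}\,\mathfrak{R}_n(\bz)\,\widehat{\mathring\Sigma}(\bZero)\,\mathfrak{R}_n(\bz)^\ast d\bz$; on the $e_-$ block both $\mathfrak{R}_n\mathfrak{R}_n^\ast$ and $\mathfrak{R}_n^\ast\mathfrak{R}_n$ equal $Q^n\bI$, so the convention ambiguity you would otherwise have to track disappears.

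Two points you should tighten. First, the factor substitution $|\mcS|$ with constant instructions $|\mcR_\bj|\equiv c(\bj)$ is not aperiodic: its unique orbit is the $\bq\Zd$-periodic sequence $\bj\mapsto c(\qr{\bj}_1)$, so Dekking's theorem \ref{dekking thm} (which assumes aperiodicity) does not literally apply. The conclusion is still correct — a periodic factor trivially has pure discrete spectrum — but you should say so rather than invoke a theorem whose hypotheses fail. Second, your argument exhibits multiplicity \emph{at least} $Q$. To get \emph{exactly} $Q$ you need an upper bound, and the route consistent with this paper is via the rank of the Lebesgue-absolutely-continuous part of $\mathring\Sigma$ (the spectral-multiplicity discussion the paper defers to [\ref{queffelec}, \S 11.2.1]): your block computation shows that part is $\tfrac{1}{2Q}\bI$ on the $Q$-dimensional $e_-$ block and zero on $e_+$, giving rank exactly $Q$. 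Adding a sentence to that effect closes the claim.
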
 
As suggested in [\ref{frank lebesgue}, \S 5.1], any configuration of the instructions $\mcI(\bH)$ give rise to substitutions in $\Zd$ with Lebesgue spectrum.  As convolution with $\bomega_\bq$ has no effect on absolutely continuous spectrum, theorem \ref{queffelec two} tells us that presence of Lebesgue component is determined by the spectral hull and the correlation vector, the first of which is already configuration invariant.  This raises an interesting question: is presence of absolutely continuous component a configuration invariant in general?  If the configuration $\mcR$ gives a $\bq$-substitution with Lebesgue component in its spectrum, then there is a $\bv$ in the spectral hull for which $\widehat{\Sigma}(\bk)$ is orthogonal to $\bv$ for all $\bk \neq \bZero.$  As a change in configuration is a relabeling of the indices of the instructions, one can use theorem \ref{fourier recursion} to compare correlation vectors for substitutions with different configurations on the same instructions.

\subsection{Substitutions with Every Spectral Component}\label{substitution product section}

As the instructions for a substitution product are the Kronecker products of the instructions of its factors, and ergodic classes depend only on the orbits of the generalized instructions, it follows from the mixed product property that the ergodic classes for the bisubstitution partition the pairwise products of the ergodic classes of its factors and gives the following:
\begin{prop}\label{ergodic substitution product}
	Let $\mcS$ and $\tilde{\mcS}$ be $\bq$-substitutions on the alphabets $\mcA$ and $\tilde{\mcA}.$   Let $\mcE$ and $\tilde{\mcE}$ denote the ergodic classes of their respective bisubstitutions, with transient classes $\mcT$ and $\tilde{\mcT}.$  Then the ergodic classes of the bisubstitution of $\mcS \otimes \tilde{\mcS}$ partition the alphabets $\mcE_i \tilde{\mcE}_j$ as $i,j$ range over the indices for the respective bisubstitutions, and its transient part is given by $\mcA \tilde{\mcT} \cup \mcT \tilde{\mcA}.$
\end{prop}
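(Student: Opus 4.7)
The plan is to reduce the claim to a general statement about substitution products via the mixed product property, and then extract the ergodic decomposition from the factored orbit structure. Under the canonical identification $(\mcA \tilde{\mcA})^2 \cong \mcA^2 \tilde{\mcA}^2$ sending $(\alpha \tilde{\gamma})(\alpha' \tilde{\gamma}') \leftrightarrow (\alpha \alpha')(\tilde{\gamma} \tilde{\gamma}')$, the $\bj$-th instruction $(\mcR_\bj \otimes \tilde{\mcR}_\bj) \otimes (\mcR_\bj \otimes \tilde{\mcR}_\bj)$ of the bisubstitution of $\mcS \otimes \tilde{\mcS}$ rewrites via the mixed product property~(\ref{mixed product property}) as $(\mcR_\bj \otimes \mcR_\bj) \otimes (\tilde{\mcR}_\bj \otimes \tilde{\mcR}_\bj)$, which is precisely the $\bj$-th instruction of the substitution product of the factor bisubstitutions $\mcS \otimes \mcS$ and $\tilde{\mcS} \otimes \tilde{\mcS}$. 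Proposition~\ref{qsub} extends this factorization to all generalized instructions, so the problem reduces to showing that for substitutions $\mcU$ on $\mcA^2$ and $\tilde{\mcU}$ on $\tilde{\mcA}^2$ with ergodic classes $\{\mcE_i\}, \{\tilde{\mcE}_j\}$ and transient parts $\mcT, \tilde{\mcT}$, the PRF of $\mcU \otimes \tilde{\mcU}$ has ergodic classes partitioning $\bigsqcup_{i,j} \mcE_i \tilde{\mcE}_j$ with transient part $\mcA^2 \tilde{\mcT} \cup \mcT \tilde{\mcA}^2$.

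Next, because the generalized instructions factor coordinatewise, the forward orbit of any $(\xi, \tilde{\eta})$ under $\mcU \otimes \tilde{\mcU}$ projects to the forward orbits of $\xi$ in $\mcU$ and $\tilde{\eta}$ in $\tilde{\mcU}$. Hence if $\mcG$ is an ergodic class of $\mcU \otimes \tilde{\mcU}$ with $(\mcU \otimes \tilde{\mcU})^H$ primitive on $\mcG$, then its coordinate projections $\pi_1(\mcG) \subset \mcA^2$ and $\pi_2(\mcG) \subset \tilde{\mcA}^2$ are closed under $\mcU^H$ and $\tilde{\mcU}^H$ respectively, and strong connectivity on $\mcG$ descends to strong connectivity of each projection. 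So each projection lies inside a single ergodic class of its factor, forcing $\mcG \subset \mcE_i \tilde{\mcE}_j$ for some $i, j$. In particular no pair with a transient coordinate can lie in an ergodic class of $\mcU \otimes \tilde{\mcU}$, establishing $\mcA^2 \tilde{\mcT} \cup \mcT \tilde{\mcA}^2$ as a subset of the transient portion.

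For the reverse inclusion I would show that every $(\xi, \tilde{\eta}) \in \mcE_i \tilde{\mcE}_j$ is recurrent under $\mcU \otimes \tilde{\mcU}$, i.e., find $N > 0$ and $\bk \in [\bZero, \bq^{HN})$ with the simultaneous returns $\mcU^{HN}(\xi)_\bk = \xi$ and $\tilde{\mcU}^{HN}(\tilde{\eta})_\bk = \tilde{\eta}$. Single-factor returns exist by primitivity of $\mcU^H$ on $\mcE_i$ and $\tilde{\mcU}^H$ on $\tilde{\mcE}_j$, and iterating any one return produces a self-similar family of returns at all $\bq$-adic scales whose asymptotic density approaches the Perron weight of the letter within its ergodic class by theorem~\ref{michel theorem}. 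Once recurrence is established, proposition~\ref{PRF} applied to the restriction of $\mcU \otimes \tilde{\mcU}$ to each $\mcE_i \tilde{\mcE}_j$ yields the claimed partition into ergodic classes with no transient letters. The main obstacle is precisely this simultaneous-return step: aligning two positive-density $\bq$-adic return sets coming from distinct substitutions is not automatic from density counting alone, and the cleanest route I see is via minimality of the product substitution subshift restricted to $\mcE_i \tilde{\mcE}_j$, which would guarantee that every letter appears in every sequence of the subshift and hence at common positions for the two factor iterates.
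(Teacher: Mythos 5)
Your mixed-product reduction and orbit-projection argument are exactly what the paper sketches, and they correctly yield the one-sided inclusion: since the generalized instructions of the bisubstitution of $\mcS\otimes\tilde{\mcS}$ factor, under the identification $(\mcA\tilde{\mcA})^2\cong\mcA^2\tilde{\mcA}^2$, as $\mcR_\bj^{(n)}\otimes\mcR_\bj^{(n)}$ tensored with $\tilde{\mcR}_\bj^{(n)}\otimes\tilde{\mcR}_\bj^{(n)}$, every final strongly connected component $\mcG$ projects onto a closed strongly connected sub-alphabet of each factor bisubstitution, forcing $\mcG\subset\mcE_i\tilde{\mcE}_j$; hence $\mcA^2\tilde{\mcT}\cup\mcT\tilde{\mcA}^2$ is contained in the transient part. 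That is the ``immediate'' content to which the sentence preceding the proposition alludes, and you have it.

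The gap is the reverse inclusion, and you have isolated it correctly; but the simultaneous-return step is not merely difficult, it is false without further hypotheses, so the minimality patch you propose cannot work. Take $\mcS:0\mapsto 01,\ 1\mapsto 00$ and $\tilde{\mcS}:a\mapsto ba,\ b\mapsto aa$, both primitive and aperiodic. Then $\mcR_0$ is constant $0$ while $\tilde{\mcR}_0$ is the transposition, and $\mcR_1$ is the transposition while $\tilde{\mcR}_1$ is constant $a$. Each bisubstitution has a single (diagonal) ergodic class, $\mcE_1=\{00,11\}$ and $\tilde{\mcE}_1=\{aa,bb\}$, so $\mcE_1\tilde{\mcE}_1=\{00aa,00bb,11aa,11bb\}$. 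Yet every generalized instruction of the product bisubstitution ends (in the composition of proposition~\ref{qsub}) with either $(\mcR_0\otimes\mcR_0)\otimes(\tilde{\mcR}_0\otimes\tilde{\mcR}_0)$, whose image has first $\mcA^2$-coordinate $00$, or with $(\mcR_1\otimes\mcR_1)\otimes(\tilde{\mcR}_1\otimes\tilde{\mcR}_1)$, whose image has second $\tilde{\mcA}^2$-coordinate $aa$; in either case the image never equals $11bb$, so $11bb$ has no preimage and is transient despite lying in $\mcE_1\tilde{\mcE}_1$. (Equivalently, $\mcS\otimes\tilde{\mcS}$ itself fails to be primitive: $1b$ never occurs, so its diagonal pair $(1b)(1b)\leftrightarrow 11bb$ is transient for the product bisubstitution.) Minimality does not help because the restriction of the product bisubstitution to $\mcE_i\tilde{\mcE}_j$ need not be primitive, so the letter simply never appears in the subshift. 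The paper's one-sentence derivation likewise only establishes the containment you proved, not the stated equality of transient parts, so the proposition as written needs an additional hypothesis or a weakened conclusion (each ergodic class lies in some $\mcE_i\tilde{\mcE}_j$, and the transient part \emph{contains} $\mcA^2\tilde{\mcT}\cup\mcT\tilde{\mcA}^2$).
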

\noindent Our last example is a substitution possessing all three pure types in its spectrum and allows us to illustrate an interesting property of the substitution product, see also [\ref{BGG}, \S 2].
\begin{exmp}\label{substitution product example}
Consider the $2$-substitutions $\tau$ (Thue-Morse) and $\rho$ (Rudin-Shapiro) represented on the alphabets $\mcA_\tau = \{\bar{\hspace{4px}},\underline{\hspace{4px}}\bump\}$ and $\mcA_\rho = \{a,b,c,d\}$ respectively, by
$$\scalebox{0.9}{$\tau: \begin{cases} \vspace{8px} \end{cases} \hspace{-8px} \begin{matrix} \bar{\hspace{4px}} \longmapsto \bar{\hspace{4px}} \bump \underline{\hspace{4px}} \\ \underline{\hspace{4px}} \longmapsto \underline{\hspace{4px}} \bump \bar{\hspace{4px}} \end{matrix} \tbump{8px}{ and } \rho :  \begin{cases} \vspace{8px} \end{cases} \hspace{-8px} \begin{matrix} a \longmapsto ac, & b \longmapsto dc \\ c \longmapsto ab, & d \longmapsto db \end{matrix}$}$$
\noindent and consider the substitution $\mcS$ of constant length $2$ on $\mcA = \{\bar{a},\bar{b},\bar{c},\bar{d}, \underline{a},\underline{b}, \underline{c}, \underline{d}\},$  given by
$$\scalebox{0.9}{$\mcS:  \begin{cases} \vspace{8px} \end{cases} \hspace{-8px} \begin{matrix} \bar{a} \longmapsto \bar{a} \underline{c}, \hspace{16px} \bar{b} \longmapsto \bar{d} \underline{c},  \hspace{16px}   \bar{c} \longmapsto \bar{a} \underline{b},  \hspace{16px}  \bar{d} \longmapsto \bar{d} \underline{b} \\ 
 \underline{a} \longmapsto \underline{a}\bar{c},  \hspace{16px} \underline{b} \longmapsto \underline{d} \bar{c},  \hspace{16px}  \underline{c} \longmapsto \underline{a} \bar{b},  \hspace{16px}  \underline{d} \longmapsto \underline{d} \bar{b} \end{matrix}$} $$
\noindent which is equivalent to the substitution product $\tau \otimes \rho$ via the obvious map $\mcA_\tau \mcA_\rho \to \mcA.$  Note that it is aperiodic being the substitution product of aperiodic substitutions, and primitivity follows as $M_{\mcS}^5$ is positive.  The spectrum is computed using our algorithm in detail in [\ref{bartlett}], where we use the above proposition to show that, in this case, the spectrum of the substitution product is equivalent to the sum of the spectra of its factors, or $\sigma_{\tau \otimes \rho} \sim \sigma_\tau + \sigma_\rho,$ so that $\tau \otimes \rho$ has discrete, singular continuous, and Lebesgue components in its spectrum.  These claims are confirmed by work of Baake, Gahler, and Grimm in [\ref{BGG}], where they consider an identical substitution (although there it is not formulated as a substitution product).  In general, it would be interesting to know the relationship between the spectrum of a substitution product and its factors and raises an interesting question: are there any substitutions with Lebesgue spectrum which do not arise from substitution product with substitutions of Frank type or their factors?
\end{exmp}


\subsection{Substitutions with Nontrivial Height}\label{height example section}

We now describe a class of aperiodic bijective commutative $\bq$-substitutions which attain any height lattice of the form $\bh \Zd.$  Fix $\bh \geq \bOne$ in $\Zd$ and let $\mcA = \mcA_\bh := \Zd / 2\bh \Zd$ be the quotient ring of $\Zd$ integers modulo $2\bh,$ using the residue class $[\bZero, 2\bh)$ to represent the letters.  Let $\bq := \bh + \bOne,$ so that $[\bZero,\bq) = [\bZero,\bh]$ and consider the $\bq$-substitution $\mcH := \mcH_\bh$ on $\mcA_\bh$ determined by the instructions
$$\mcR_\bk : \mcA_\bh \to \mcA_\bh \tbump{12px}{ with } \mcR_\bk : \alpha \longmapsto \alpha + \bk \bump (\text{mod } 2\bh) \tbump{12px}{ for } \bk \in [\bZero,\bh+\bOne) \text{ and } \bZero \leq \alpha \leq 2\bh $$
As cyclic permutations, the instructions of $\mcH_\bh$ are bijective and commute.  Moreover, these substitutions have height $\bh \Zd$ as discussed in [\ref{bartlett}, \S 4.4] where they are also shown to be recognizable and therefore aperiodic.  Although theorem \ref{abc theorem} tells us that such substitutions are purely singular, they are interesting as they are nontrivial height and are the only example we know of where $\mcK^\ast$ contains complex vectors.  Note that the Thue-Morse substitution $\tau = \mcH_1$, and is the only substitution in the above class with trivial height.  The following example has height $3$ and is discussed in [\ref{bartlett}, Ex 4.4.2] in detail; we summarize the results here.

\begin{exmp}\label{height example}
Let $\mcH = \mcH_3$ be the $4$-substitution of height $3$ described above, and given by
$${\tiny \mcH : \begin{cases} \vspace{0.25in} \end{cases} \hspace{-8px} \begin{matrix} 0 \longmapsto 0123 & & 3 \longmapsto 3450 \\ 1 \longmapsto 1234 & & 4 \longmapsto 4501  \\ 2 \longmapsto  2345 & & 5 \longmapsto 5012\end{matrix} } \tbump{12px}{ with instructions } \mcR_j : \nicefrac{\bbZ}{6 \bbZ} \to \nicefrac{\bbZ}{6 \bbZ} \tbump{12px}{ taking } \alpha \mapsto \alpha + j$$
\noindent As shown in [\ref{bartlett}], the ergodic classes of the bisubstitution of $\mcH_\bh$ correspond to the (permutation) matrices $\mcR_\bj$, so that a left $Q$-eigenvector $\bv$ of $C_{\mcH}$ has an associated matrix of  the form 
$${\small \mathring{\bv} = \sum_{j = 0}^5 w_j \mcR_j =  \scalebox{0.7}{$\begin{bmatrix} w_0 & w_1 & w_2 & w_3 & w_4 & w_5 \\  w_5 & w_0 & w_1 & w_2 & w_3 & w_4 \\ w_4 & w_5 & w_0 & w_1 & w_2 & w_3 \\  w_3 & w_4 & w_5 & w_0 & w_1 & w_2 \\  w_2 & w_3 & w_4 & w_5 & w_0 & w_1 \\ w_1 & w_2 & w_3 & w_4 & w_5 & w_0 \end{bmatrix}$}
 \tbump{12px}{ so that }
\mcK^\ast = \scalebox{0.6}{$\begin{cases}  \bv_1 = (1,1,1,1,1,1) \\ 
	 			\bv_2 = (1,-\frac{1}{2} - \frac{\sqrt{3}}{2}i, -\frac{1}{2} +\frac{\sqrt{3}}{2}i, 1, -\frac{1}{2} - \frac{\sqrt{3}}{2}i, -\frac{1}{2} + \frac{\sqrt{3}}{2}i ) \\  
				\bv_3 = (1, -\frac{1}{2} + \frac{\sqrt{3}}{2}i, -\frac{1}{2} - \frac{\sqrt{3}}{2}i, 1, -\frac{1}{2} + \frac{\sqrt{3}}{2}i, -\frac{1}{2} - \frac{\sqrt{3}}{2}i)  \\ 
				\bv_4 = (1, -1, 1, -1, 1, -1)  \\ 
				\bv_5 = (1,\frac{1}{2} -\frac{\sqrt{3}}{2}i,-\frac{1}{2} - \frac{\sqrt{3}}{2}i, -1, -\frac{1}{2}+\frac{\sqrt{3}}{2}i, \frac{1}{2} + \frac{\sqrt{3}}{2}i) \\ 
				\bv_6  = (1, \frac{1}{2} + \frac{\sqrt{3}}{2}i, -\frac{1}{2} + \frac{\sqrt{3}}{2}i, -1,-\frac{1}{2} - \frac{\sqrt{3}}{2}i, \frac{1}{2} - \frac{\sqrt{3}}{2}i) \end{cases}$}}$$
\noindent Now, as $\mcH$ is bijective, $\frac{1}{6}M_\mcH$ is both row and column stochastic, so its Perron vector is $\frac{1}{6} \bOne,$ and so $\widehat{\Sigma}(0) = \sum_{\gamma \in \mcA} \frac{1}{6} \be_{\gamma \gamma}.$  Using (\ref{sigma c}) and theorem \ref{fourier recursion} we compute $\widehat{\Sigma}(\bk)$ (in matrix form for space)
$$\scalebox{0.9}{$\widehat{\Sigma}^\circ(0) = \frac{1}{6} \mcR_0, \hspace{12px} \widehat{\Sigma}^\circ(1) = \frac{1}{30} \mcR_2 + \frac{4}{30} \mcR_5, \hspace{12px} \widehat{\Sigma}^\circ(2) = \frac{2}{30}\mcR_1 + \frac{3}{30}\mcR_4, \hspace{12px} \widehat{\Sigma}^\circ(3) = \frac{3}{30} \mcR_0 + \frac{2}{30} \mcR_3$}$$
\noindent Letting $\lambda_j := \lambda_{\bv_j} = \bv_j^t \Sigma,$ we compute the Fourier coefficients and obtain
\vspace{-0.06in}
\begin{multicols}{6}
{\tiny
\hspace{-0.25in}$\widehat{\lambda_1}(1) = 1$  
\vspace{6px}

\hspace{-0.25in}$\widehat{\lambda_1}(2) = 1$ 
\vspace{6px}

\hspace{-0.25in}$\widehat{\lambda_1}(3) = 1$ 
\vspace{6px}

\hspace{-0.55in}$\widehat{\lambda_2}(1) = -\frac{1}{2} - \frac{\sqrt{3}}{2}i$ 
\vspace{6px}

\hspace{-0.55in}$\widehat{\lambda_2}(2) = -\frac{1}{2} - \frac{\sqrt{3}}{2}i$ 
\vspace{6px}

\hspace{-0.55in}$\widehat{\lambda_2}(3) = 1$ 
\vspace{6px}

\hspace{-0.4in}$\widehat{\lambda_3}(1) = -\frac{1}{2} + \frac{\sqrt{3}}{2}i$  
\vspace{6px}

\hspace{-0.4in}$\widehat{\lambda_3}(2) = -\frac{1}{2} + \frac{\sqrt{3}}{2}i$  
\vspace{6px}

\hspace{-0.4in}$\widehat{\lambda_3}(3) = 1$ 
\vspace{6px}

\hspace{-0.25in}$\widehat{\lambda_4}(1) = -\frac{3}{5}$ 
\vspace{6px}

\hspace{-0.25in}$\widehat{\lambda_4}(2) = \frac{1}{5}$ 
\vspace{6px}

\hspace{-0.25in}$\widehat{\lambda_4}(3) = \frac{1}{5}$ 
\vspace{6px}

\hspace{-0.45in}$\widehat{\lambda_5}(1) = \frac{3}{10} + \frac{3\sqrt{3}}{10}i$ 
\vspace{6px}

\hspace{-0.45in}$\widehat{\lambda_5}(2) = -\frac{1}{10} + \frac{\sqrt{3}}{10}i$ 
\vspace{6px}

\hspace{-0.45in}$\widehat{\lambda_5}(3) =  \frac{1}{5}$ 
\vspace{6px}

\hspace{-0.25in}$\widehat{\lambda_6}(1) = \frac{3}{10} - \frac{3\sqrt{3}}{10}i$  
\vspace{6px}

\hspace{-0.25in}$\widehat{\lambda_6}(2) = -\frac{1}{10} - \frac{\sqrt{3}}{10}i$ 
\vspace{6px}

\hspace{-0.25in}$\widehat{\lambda_6}(3) = \frac{1}{5} $ 
\vspace{6px}
}
\end{multicols}

\vspace{-0.26in}
\begin{multicols}{2}
{\tiny
\hspace{-0.25in}$\widehat{\lambda_i}(k) = \widehat{\lambda_i}(\qr{k}_3)$ for $i = 1,2,3$ and $k \in \bbZ$
\vspace{6px}

\hspace{-0.25in}$\widehat{\lambda_4}(k) + \widehat{\lambda_5}(k) + \widehat{\lambda_6}(k) = \widehat{\lambda_4}(\qr{k}_3) + \widehat{\lambda_4}(\qr{k}_3) + \widehat{\lambda_6}(\qr{k}_3)$ for $k \in \bbZ$
\vspace{6px}
}
\end{multicols}

\vspace{-0.06in}
\noindent and one checks that $\lambda_1 + \lambda_2 + \lambda_3 = 3\bnu_3$ by (\ref{lattice measure}), which represents the entire discrete spectrum by Dekking's theorem, and one checks that $\lambda_4 + \lambda_5 + \lambda_6$ is equal to $\bnu_3 \ast \lambda$ for some singular continuous measure $\lambda$ on $\bbT$, and $\sigmaX \sim \bomega_4 \ast \bnu_3 \ast ( \bdelta + \lambda )$.  
\end{exmp}


\section{Appendix}\label{appendix}

The purpose of the appendix is to prove theorem \ref{queffelec two}, based on Queff\'elec's work for one-sided substitutions of constant length in one-dimension.  First, however, we need to extend  [\ref{queffelec}, Corollary 7.1] of Queff\'elec allowing for the diagonalization of an ergodic matrix of measures on the unit circle group.  Our approach, though similar, is more direct and allows for the diagonalization of a matrix of measures ergodic for a continuous action on a compact metric space.   Both rely on a local representation of the dual space to measures due to \v{S}reider.

\subsection{Generalized Functionals on $\mcM(X)$}\label{measures appendix}

Let $X$ be a metrizable compact space.  By a \textit{measure (on $X$)} we mean a complex Borel measure of finite total variation, and denote by $\mcM := \mcM(X)$ the Banach space of measures on $X$ under the total variation norm $\| \mu \| := |\mu|(X)$, where $|\mu| = \mu^+ + \mu^-$ is the total variation measure of $\mu \in \mcM$.  Let $\mcM^\ast$ denote the Banach space dual of $\mcM$, consisting of continuous linear functionals $\mcM \to \bbC.$  We extend the notions of absolute continuity, mutual singularity, equivalence of measures, almost-everywhere, and null-sets to complex measures using their total variation measures.  We will often view $\mcM$ via Riesz representation as continuous linear functionals on $C(X)$, expressing a measure as $\mu$ or $d\mu$ depending on the context.

For $\mu \in \mcM$, let $\mcL(\mu) := \{ \nu \in \mcM : \nu \ll \mu \}$ be the \textit{$\mcL$-space for $\mu,$} consisting of all measures absolutely continuous with respect to $\mu$, and $\mcL(\mu)^\perp$ those measures which are mutually singular to $\mu.$  Lebesgue decomposition gives $\mcM = \mcL(\mu) \oplus \mcL(\mu)^\perp,$ and implies that $\mcL(\mu)$ is a closed subspace.
 
\begin{thm}\label{RN}
	For each $\mu \in \mcM,$ there exist isometric isomorphisms $\partial_\mu, \partial_\mu^\ast$, such that for $\nu \ll \mu,$
	{\small
	\begin{align*}
		\textstyle \partial_\mu : \mcL(\mu) \xrightarrow{\sim} L^1(\mu) && \text{ with } &&  d\nu = \partial_\mu \nu \bump d\mu \hspace{12px} && \text{ and } && \partial_\mu \vert_{\mcL(\nu)} = \partial_\mu \nu \cdot \partial_\nu
		\\
		\textstyle \partial_\mu^\ast : \mcL(\mu)^\ast \xrightarrow{\sim} L^\infty(\mu) && \text{ with } && F(\nu) = {\textstyle \int_X \partial_\mu^\ast F d\nu}  &&\text{ and } && \partial_\nu^\ast \vert_{\mcL(\mu)^\ast}= \partial_\mu^\ast \hspace{8px}
	\end{align*}}
\end{thm}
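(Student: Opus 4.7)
The plan is to build both isometries directly from the classical Radon--Nikodym theorem and the $L^1$--$L^\infty$ duality, with a small polar-decomposition trick to handle the fact that $\mu$ is allowed to be complex rather than positive. Concretely, write $\mu = h \cdot |\mu|$ with $|h| = 1$ $|\mu|$-a.e. and note that $\mcL(\mu) = \mcL(|\mu|)$ since $\mu$ and $|\mu|$ share null sets. For $\nu \in \mcL(\mu)$, the complex Radon--Nikodym theorem applied to $\nu$ against the positive finite measure $|\mu|$ produces a unique $f_\nu \in L^1(|\mu|)$ with $d\nu = f_\nu \, d|\mu|$. Set $\partial_\mu \nu := \bar h \, f_\nu$, interpreting $L^1(\mu) := L^1(|\mu|)$. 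Then $\partial_\mu \nu \cdot d\mu = \bar h f_\nu \cdot h \, d|\mu| = f_\nu \, d|\mu| = d\nu$, linearity is immediate, the isometry $\|\nu\| = \int|f_\nu|\,d|\mu| = \|\partial_\mu\nu\|_{L^1(\mu)}$ uses $|\bar h| = 1$, and surjectivity is clear since any $g \in L^1(\mu)$ gives $g \cdot \mu \in \mcL(\mu)$ with $\partial_\mu(g\cdot\mu) = g$. The chain rule $\partial_\mu|_{\mcL(\nu)} = \partial_\mu\nu \cdot \partial_\nu$ drops out of the uniqueness of $\partial_\mu$: for $\lambda \ll \nu \ll \mu$, $d\lambda = \partial_\nu\lambda \, d\nu = (\partial_\nu\lambda)(\partial_\mu\nu) \, d\mu$.

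The dual isomorphism $\partial_\mu^\ast$ is obtained by transposing $\partial_\mu$ and invoking the classical duality $L^1(\mu)^\ast \cong L^\infty(\mu)$, which applies because $|\mu|$ is a finite (hence $\sigma$-finite) positive measure. Explicitly, for $F \in \mcL(\mu)^\ast$, the linear functional $g \mapsto F(g \cdot \mu)$ on $L^1(\mu)$ has operator norm $\|F\|$ (by the isometry above), so it is represented by a unique $G \in L^\infty(\mu)$ of the same norm; set $\partial_\mu^\ast F := G$. Then for $\nu \in \mcL(\mu)$, writing $\nu = \partial_\mu\nu \cdot \mu$ and unpacking gives
$$F(\nu) = \int_X G \cdot \partial_\mu\nu \, d\mu = \int_X G \, d\nu,$$
establishing the integral representation, and the norm identity shows $\partial_\mu^\ast$ is an isometric isomorphism.

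Finally, the compatibility $\partial_\nu^\ast|_{\mcL(\mu)^\ast} = \partial_\mu^\ast$ for $\nu \ll \mu$ should be read as follows: the inclusion $\mcL(\nu) \subset \mcL(\mu)$ yields a restriction map $r : \mcL(\mu)^\ast \to \mcL(\nu)^\ast$, while $\nu \ll \mu$ forces every $\mu$-null set to be $\nu$-null, giving a natural inclusion $L^\infty(\mu) \hookrightarrow L^\infty(\nu)$; the claim is that $\partial_\nu^\ast \circ r = \partial_\mu^\ast$ under this embedding. This is immediate from the integral representation: for any $\lambda \in \mcL(\nu)$,
$$\int_X \partial_\mu^\ast F \, d\lambda = F(\lambda) = (rF)(\lambda) = \int_X \partial_\nu^\ast(rF) \, d\lambda,$$
and varying $\lambda$ over $\mcL(\nu)$ forces $\partial_\nu^\ast(rF) = \partial_\mu^\ast F$ as $\nu$-a.e. equivalence classes.

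There is no real obstacle: the statement bundles together the Radon--Nikodym theorem, the Riesz $L^1$--$L^\infty$ duality, and the chain rule for densities. The only subtle points are the polar decomposition reduction needed to make sense of $\partial_\mu$ for complex $\mu$, and parsing the dual compatibility so that the implicit inclusion $L^\infty(\mu) \hookrightarrow L^\infty(\nu)$ is recognized. Both are handled above.
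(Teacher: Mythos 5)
Your proof is correct and follows essentially the same route the paper takes: the paper disposes of this theorem in one line, observing that the claims about $\partial_\mu$ come directly from the Radon--Nikodym theorem for complex measures and the claims about $\partial_\mu^\ast$ from the $L^1$--$L^\infty$ duality valid for measures of finite total variation. Your write-up is a careful unpacking of those same two ingredients, with the polar decomposition $\mu = h\,|\mu|$ making explicit the reduction to the positive case that the paper leaves implicit (a small wrinkle you could tighten: the $L^\infty$ representative from classical duality is taken relative to $d|\mu|$, so $\partial_\mu^\ast F$ picks up the same unimodular twist by $\bar h$ that $\partial_\mu$ does, which preserves the isometry but is worth saying).
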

\noindent The statements about $\partial_\mu$ follow immediately from the Radon-Nikodym theorem for complex measures and those about $\partial_\mu^\ast$ follow as $L^1(\mu)^\ast$ is isometrically isomorphic to $L^\infty(\mu)$ when $\mu$ has finite total variation.   Here, the identity $d\nu = \partial_\mu \nu \bump d\mu$ holds in the sense of the Riesz Representation theorem.  Using density arguments, this extends to integration of $L^1(\nu)$ functions, so that $\nu(A) = \int_A \partial_\mu \nu d\mu$ for $A$ Borel, and they agree pointwise as measures.  In this sense, we can think of $\mcL(\mu)$ as $L^1(\mu)\bump d\mu,$ and one can use this to show that multiplication by $\partial_\mu \nu$ is a map from $\mcL(\nu) \to \mcL(\mu).$  We now use the maps $\partial_\mu^\ast$ to describe a similar localization of $\mcM^\ast$ due to \v{S}reider [\ref{sreider}], giving rise to an action on $\mcM$ used by Queff\'elec.
\begin{defn}\label{generalized functional}
A \textit{generalized functional} $\varphi$ on $\mcM$ is an association of $\mcM$ with essentially bounded functions on $X$ sending $\mu \in \mcM \mapsto \varphi_\mu \in L^\infty(\mu)$, such that for all $\mu, \nu \in \mcM$
$$\textstyle \nu \ll \mu \tbump{2px}{ $\implies$ } \varphi_\mu = \varphi_\nu \hspace{3px} \nu\text{-ae} \tbump{24px}{ and } \|\varphi\| := \sup_{\|\mu\| = 1} \|\varphi_\mu\|_{L^\infty(\mu)} < \infty$$
\end{defn}	
\noindent By the above theorem, it is clear that $\mu \mapsto \partial_\mu^\ast F$ takes every $F \in \mcM^\ast$ to a generalized functional.  Moreover, each generalized functional $\varphi$ determines a functional $F$ on $\mcM$ given by $F(\nu) := \int_X \varphi_\nu d\nu$ with $\|F\| = \|\varphi\| ,$ which is the content of the following result of \v{S}reider, see [\ref{sreider}, Thm 1].
\begin{thm}[\v{S}reider]\label{sreider thm}
	The Banach dual $\mcM^\ast$ coincides with generalized functionals on $\mcM.$
\end{thm}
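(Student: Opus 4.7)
The plan is to exhibit a linear isometric bijection between $\mcM^\ast$ and the space of generalized functionals. The forward direction is supplied almost verbatim by Theorem \ref{RN}: given $F \in \mcM^\ast$, restrict $F$ to each subspace $\mcL(\mu)$ and set $\varphi_\mu := \partial_\mu^\ast(F\vert_{\mcL(\mu)}) \in L^\infty(\mu)$. The compatibility $\partial_\nu^\ast \vert_{\mcL(\mu)^\ast} = \partial_\mu^\ast$ ensures $\varphi_\mu = \varphi_\nu$ holds $\nu$-a.e. whenever $\nu \ll \mu$, and the isometry statement yields $\|\varphi_\mu\|_{L^\infty(\mu)} = \|F\vert_{\mcL(\mu)}\| \leq \|F\|$, so $\|\varphi\| \leq \|F\|$.

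For the reverse direction, given a generalized functional $\varphi$, define $F(\nu) := \int_X \varphi_\nu \, d\nu$. Boundedness is immediate since $|F(\nu)| \leq \|\varphi_\nu\|_{L^\infty(\nu)}\|\nu\| \leq \|\varphi\|\|\nu\|$. The main technical obstacle is linearity: the consistency axiom is stated only when one measure is absolutely continuous with respect to another, so to compute $F(\nu_1 + \nu_2)$ I would introduce the common positive dominating measure $\mu := |\nu_1| + |\nu_2|$, noting that $|\nu_1 + \nu_2| \leq |\nu_1| + |\nu_2|$ forces each of $\nu_1, \nu_2, \nu_1+\nu_2$ into $\mcL(\mu)$. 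Then $\varphi_{\nu_i} = \varphi_\mu$ $\nu_i$-a.e. and $\varphi_{\nu_1+\nu_2} = \varphi_\mu$ $(\nu_1+\nu_2)$-a.e., so
\[ F(\nu_1+\nu_2) = \int \varphi_\mu \, d(\nu_1+\nu_2) = \int \varphi_\mu \, d\nu_1 + \int \varphi_\mu \, d\nu_2 = F(\nu_1) + F(\nu_2), \]
with scalar homogeneity handled identically. This in effect shows that all the local representatives $\{\varphi_\nu\}$ are determined by a single $\varphi_\mu \in L^\infty(\mu)$ for any dominating $\mu$, and it is this reduction that drives every subsequent computation.

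To upgrade the norm bound to equality $\|F\| = \|\varphi\|$, I would use Radon-Nikodym duality: for any $\mu \in \mcM$ with $\|\mu\| = 1$ and any $\epsilon > 0$, pick $f \in L^1(|\mu|)$ with $\|f\|_1 \leq 1$ and $\big|\int f\varphi_\mu \, d\mu \big| \geq \|\varphi_\mu\|_{L^\infty(\mu)} - \epsilon$. Setting $\nu := f \, d\mu \ll \mu$, we have $\|\nu\| = \|f\|_{L^1(|\mu|)} \leq 1$ and $\varphi_\nu = \varphi_\mu$ $\nu$-a.e. by consistency, so $|F(\nu)| = \big|\int \varphi_\mu f \, d\mu \big| \geq \|\varphi_\mu\|_{L^\infty(\mu)} - \epsilon$. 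Supremizing over $\mu$ and letting $\epsilon \to 0$ yields $\|F\| \geq \|\varphi\|$.

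Finally, the two constructions are mutual inverses. Starting from $F$, the defining identity $F(\nu) = \int \partial_\mu^\ast(F\vert_{\mcL(\mu)}) \, d\nu$ for $\nu \ll \mu$ shows that integrating the resulting $\varphi_\nu$ against $\nu$ returns $F(\nu)$. Starting from $\varphi$, the resulting $\tilde F$ has localization $\partial_\mu^\ast(\tilde F\vert_{\mcL(\mu)})$ that agrees with $\varphi_\mu$ in $L^\infty(\mu)$: testing against arbitrary $g \in L^1(\mu)$ and writing $d\lambda := g \, d\mu$ gives $\int g\,\partial_\mu^\ast(\tilde F\vert_{\mcL(\mu)}) \, d\mu = \tilde F(\lambda) = \int \varphi_\lambda \, d\lambda = \int \varphi_\mu g \, d\mu$, the last equality again using $\lambda \ll \mu$. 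Combining these reverses establishes the claimed identification $\mcM^\ast \cong \{\text{generalized functionals}\}$.
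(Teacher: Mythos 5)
Your proof is correct, and it is worth noting the context: the paper itself does not prove this theorem.  It merely observes that $\mu \mapsto \partial_\mu^\ast F$ produces a generalized functional and that, conversely, $F(\nu) = \int_X \varphi_\nu \, d\nu$ recovers a bounded functional, deferring the verification to \v{S}reider's original paper.  Your argument supplies precisely the details that the paper leaves implicit, along the same lines as its sketch.

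The key steps are all sound.  The forward direction is a direct consequence of the compatibility and isometry statements of Theorem \ref{RN}.  In the reverse direction, the dominating-measure trick $\mu := |\nu_1| + |\nu_2|$ is exactly the right way to extract linearity from a consistency axiom stated only for pairs in an absolute-continuity relation; it also implicitly observes that a generalized functional is locally determined by any single representative $\varphi_\mu$ on $\mcL(\mu)$, which is the working principle behind every later step.  The norm-equality argument via $L^1$--$L^\infty$ duality within each $\mcL(\mu)$ is correct (one needs the small observation that for a complex $\mu$ with polar decomposition $d\mu = u\, d|\mu|$, multiplication by the unimodular density $u$ is a bijection on the unit ball of $L^1(|\mu|)$, so the supremum in the pairing still attains $\|\varphi_\mu\|_{L^\infty(\mu)}$; you use this tacitly and it is fine).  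The mutual-inverse verification closes the loop cleanly by testing against $L^1(\mu)$.  No gaps.
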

As our measures have finite total variation, $L^\infty(\mu) \subset L^1(\mu)$ for all $\mu \in \mcM$.  Thus, we can compose $\partial_\mu^{-1} \circ \partial_\mu^\ast : \mcL(\mu)^\ast \hookrightarrow \mcL(\mu),$ sending $F \mapsto \partial_\mu^\ast F d\mu$, giving rise to an action of $\mcM^\ast$ on $\mcM$
$$\mcM^\ast \times \mcM \to \mcM \tbump{22px}{ sending } F,\mu \longmapsto F \adot \mu \tbump{22px}{ with } d(F \adot \mu) = \partial_\mu^\ast F d\mu$$
\noindent A map $\psi: \mcM \to \mcM$ is \textit{absolutely continuous} if $\psi : \mcL(\mu) \to \mcL(\mu)$ for all $\mu \in \mcM.$
\begin{prop}\label{properties of action}
	The action of $\mcM^\ast$ on $\mcM$ is by absolutely continuous commuting operators.
\end{prop}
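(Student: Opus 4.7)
The plan is to unwind both claims to the statement that the action is, locally in each $\mcL(\mu)$, pointwise multiplication by $\partial_\mu^\ast F \in L^\infty(\mu)$; under this identification, both properties become obvious facts about $L^\infty(\mu)$. The key tool is the compatibility $\partial_\nu^\ast|_{\mcL(\mu)^\ast} = \partial_\mu^\ast$ for $\nu \ll \mu$ provided by theorem \ref{RN}, which is exactly the defining property of a generalized functional in the sense of \v{S}reider (theorem \ref{sreider thm}).

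For absolute continuity, fix $F \in \mcM^\ast$ and $\mu \in \mcM$. I would show that if $\nu \in \mcL(\mu)$, then $F \adot \nu \in \mcL(\mu)$. By definition $d(F \adot \nu) = \partial_\nu^\ast F \, d\nu$, which is absolutely continuous with respect to $\nu$; combined with $\nu \ll \mu$ this gives $F\adot \nu \ll \mu$. So the operator $\nu \mapsto F \adot \nu$ restricts to a map $\mcL(\mu) \to \mcL(\mu)$.

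For commutativity, fix $F,G \in \mcM^\ast$ and $\mu \in \mcM$ and set $\nu := G \adot \mu$. By definition $d\nu = \partial_\mu^\ast G \, d\mu$, so $\nu \ll \mu$ and $\partial_\mu \nu = \partial_\mu^\ast G$ in $L^1(\mu)$. The generalized functional property applied to $F$ gives $\partial_\nu^\ast F = \partial_\mu^\ast F$ $\nu$-a.e., and therefore
$$d(F \adot (G \adot \mu)) = \partial_\nu^\ast F \, d\nu = (\partial_\mu^\ast F)(\partial_\mu^\ast G)\, d\mu.$$
Swapping the roles of $F$ and $G$ yields $d(G \adot (F \adot \mu)) = (\partial_\mu^\ast G)(\partial_\mu^\ast F)\, d\mu$. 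Since multiplication in $L^\infty(\mu)$ is commutative, the two measures agree.

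The only subtlety — really the heart of the matter — is the transfer step $\partial_\nu^\ast F = \partial_\mu^\ast F$ $\nu$-a.e.\ when passing the action through a change of base measure. This is not a computation but a structural fact about the localization of $\mcM^\ast$ and is precisely what theorem \ref{RN} and \v{S}reider's identification theorem \ref{sreider thm} were set up to provide. Once this compatibility is invoked, both statements reduce to trivial observations about $L^\infty(\mu)$.
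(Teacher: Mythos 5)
Your proof is correct and follows essentially the same route as the paper: absolute continuity from $d(F\adot\nu) = \partial_\nu^\ast F\,d\nu \ll \nu \ll \mu$, and commutativity by reducing both compositions to pointwise multiplication by $\partial_\mu^\ast F \cdot \partial_\mu^\ast G$ in $L^\infty(\mu)$ via the compatibility $\partial_{G\adot\mu}^\ast F = \partial_\mu^\ast F$ supplied by theorem \ref{RN}.
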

\begin{proof}
	That the action is absolutely continuous follows as $\partial_\mu^{-1} \circ \partial_\mu^\ast$ is a map into $\mcL(\mu)$ and so $\nu \ll \mu$ implies $F\adot \nu \ll \nu \ll \mu.$  For $F, G \in \mcM^\ast,$ theorem \ref{RN} tells us that $\partial_{G \adot \mu}^\ast F = \partial_\mu^\ast F \in L^1(G\adot \mu)$ so
$$d(F \adot G \adot \mu) = \partial_{G \adot \mu}^\ast F d (G \adot \mu) = \partial_\mu^\ast F d(G \adot \mu) = \partial_\mu^\ast F \partial_\mu^\ast G d \mu = \partial_{F \adot \mu}^\ast G d(F \adot \mu) = d(G \adot F \adot \mu)$$
\noindent which shows that the action is commutative.
\end{proof}
We say $F \in \mcM^\ast$ \textit{acts invariantly} on $\mu \in \mcM$ if there exists $F_\mu \in \bbC$ with $F \adot \mu = F_\mu \mu,$ a constant multiple of $\mu.$   We refer to $F_\mu$ as the \textit{eigenvalue for $F$ on $\mu.$}  If $\nu \in \mcL(\mu)$ then $F_\mu = F_\nu$ $\nu$-ae by theorem \ref{RN} so that if $F$ acts invariantly on $\mu,$ it acts invariantly on $\mcL(\mu)$ with the same eigenvalue.
\begin{prop}
	If $F \in \mcM^\ast$ acts invariantly on $\mu, \nu \in \mcM^\ast$ and $F_\mu \neq F_\nu,$ then $\mu \perp \nu.$
\end{prop}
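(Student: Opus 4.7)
The plan is to argue by contradiction using Lebesgue decomposition and the fact, already noted in the text, that if $F$ acts invariantly on a measure then it acts invariantly with the same eigenvalue on every measure absolutely continuous with respect to it. The main observation is that a common absolutely continuous component of $\mu$ and $\nu$ would have to carry both eigenvalues $F_\mu$ and $F_\nu$ simultaneously.

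First I would suppose $\mu \not\perp \nu$ and produce a nonzero measure $\rho$ with $\rho \ll \mu$ and $\rho \ll \nu$. This is the content of Lebesgue decomposition: write $\mu = \mu_{\mathrm{ac}} + \mu_{\mathrm{s}}$ with $\mu_{\mathrm{ac}} \ll \nu$ and $\mu_{\mathrm{s}} \perp \nu$; the failure of mutual singularity is exactly the statement $\mu_{\mathrm{ac}} \neq 0$, and one may take $\rho := \mu_{\mathrm{ac}}$, which satisfies $\rho \ll \mu$ automatically and $\rho \ll \nu$ by construction.

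Next I would invoke the remark preceding the proposition: since $F \adot \mu = F_\mu \mu$ and $\rho \in \mcL(\mu)$, Theorem \ref{RN} gives $\partial_\rho^\ast F = \partial_\mu^\ast F = F_\mu$ $\rho$-a.e., so $F \adot \rho = F_\mu \rho$. Symmetrically, $\rho \ll \nu$ gives $F \adot \rho = F_\nu \rho$. Equating these yields
\[
(F_\mu - F_\nu)\,\rho = 0,
\]
and since $\rho \neq 0$ we conclude $F_\mu = F_\nu$, contradicting the hypothesis.

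I do not expect any real obstacle here; the only subtle step is recognizing that one must pass to a common absolutely continuous component rather than trying to compare $\partial_\mu^\ast F$ and $\partial_\nu^\ast F$ directly (these live in different $L^\infty$ spaces and need not even be defined on a common set). The generalized-functional formalism of Theorem \ref{sreider thm} is precisely what allows this localization, so once one writes $\rho$ down the proof is essentially a one-line consequence of the Radon--Nikodym identities in Theorem \ref{RN}.
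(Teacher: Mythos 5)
Your proof is correct and takes essentially the same route as the paper: both arguments reduce to observing that a nonzero $\rho \in \mcL(\mu) \cap \mcL(\nu)$ forces $F_\mu = F_\rho = F_\nu$ via the eigenvalue-propagation remark preceding the proposition. You merely make explicit, via Lebesgue decomposition of $\mu$ against $\nu$, why such a $\rho$ must exist when $\mu \not\perp \nu$, a step the paper leaves implicit.
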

\begin{proof}
	If $\rho \in \mcL(\mu) \cap \mcL(\nu)$ is not $\bZero$, then $F$ will also act invariantly on $\rho$ and the eigenvalues will satisfy $F_\mu=F_\rho=F_\nu,$ giving a contradiction.  This implies the measures are mutually singular.
\end{proof}
We now discuss some preliminaries allowing us to extend the action of $\mcM^\ast$ to matrices with entries in $\mcM.$  For $n \geq 1,$ let $\bM_n(\bbC)$ denote the space of $n \times n$ complex matrices and $\mcM_n :=\mcM_n(X)$ the collection of $\bM_n(\bbC)$-valued Borel measures of finite total variation.   We abuse notation by using the symbol $\bZero$ to denote both the zero matrix and zero matrix of measures, for any dimension.  For $\mcW = (\omega_{ij})_{1 \leq i,j \leq n} \in \mcM_n(X)$ we write $|\mcW| := \sum_{i,j} |\omega_{ij}|$ for the \textit{total variation measure of $\mcW$}, so that $A$ is $|\mcW|$-null if and only if $\mcW(B) = \bZero$ for every $B \subset A.$  For readability, we will often denote $\bomega := |\mcW|.$ The following shows that the total variation's type is a similarity invariant (over $\bbC.$)
\begin{prop}\label{nullity invariance}
	If $S \in \bM_n(\bbC)$ is invertible and $\mcW \in \mcM_n(X),$ then $|\mcW| \sim |S \mcW S^{-1}|.$
\end{prop}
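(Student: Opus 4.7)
The plan is to show both $|S\mcW S^{-1}| \ll |\mcW|$ and the reverse absolute continuity, which together give equivalence of the total variation types. The key elementary fact is that any finite linear combination $\nu = \sum_i c_i \mu_i$ of complex measures satisfies $|\nu| \ll \sum_i |\mu_i|$, since if $\sum_i |\mu_i|(A) = 0$ then each $\mu_i$ vanishes on every Borel $B \subset A$, hence $\nu(B)=0$ and so $|\nu|(A)=0$.

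To apply this, first expand the $(i,j)$-entry of $S\mcW S^{-1}$ as the linear combination
\[
    (S\mcW S^{-1})_{ij} \;=\; \sum_{k,l} S_{ik}\,(S^{-1})_{lj}\, \omega_{kl},
\]
where the coefficients $S_{ik}(S^{-1})_{lj} \in \bbC$ are constants. By the elementary fact above, $\bigl|(S\mcW S^{-1})_{ij}\bigr| \ll \sum_{k,l} |\omega_{kl}| = |\mcW|$ for every $i,j$. Summing over $i,j$ then gives $|S\mcW S^{-1}| \ll |\mcW|$.

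For the reverse direction, invertibility of $S$ lets us write $\mcW = S^{-1} (S\mcW S^{-1}) S$, which is again of the same form with the roles of $S$ and $S^{-1}$ swapped and with $S\mcW S^{-1}$ in place of $\mcW$. Applying the preceding paragraph verbatim to this identity yields $|\mcW| \ll |S\mcW S^{-1}|$, and the two inequalities combine to give $|\mcW| \sim |S \mcW S^{-1}|$.

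There is no real obstacle here; the statement is genuinely routine once one notices that similarity acts entrywise as a scalar linear combination of the coordinate measures, and that scalar linear combinations cannot enlarge the null ideal of a sum of total variations. The only thing to be slightly careful about is the convention for $|\mcW|$: since the paper defines it as $\sum_{i,j}|\omega_{ij}|$ (a positive scalar measure on $X$, rather than a $\bM_n(\bbC)$-valued object), equivalence as measures means equivalence of null sets, which is exactly what the two absolute-continuity inclusions above provide.
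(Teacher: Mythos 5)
Your proof is correct and takes essentially the same route as the paper's: both reduce to the observation that similarity is a pointwise matrix operation, so a set on which $\mcW$ vanishes (equivalently, on which all $\omega_{ij}$ vanish) is also one on which $S\mcW S^{-1}$ vanishes, and both reverse via $\mcW = S^{-1}(S\mcW S^{-1})S$. You simply unpack the matrix-level statement $\mcW(B)=\bZero \implies S\mcW(B)S^{-1}=\bZero$ into an entrywise linear-combination argument, which is a more verbose rendering of the same idea.
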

\begin{proof}
	If $A$ is $|\mcW|$-null then $A$ is $\omega_{ij}$-null for $1 \leq i,j \leq n,$ so that $\mcW(B) = \bZero$ for every $B \subset A$ and $S \mcW(B) S^{-1} = \bZero$ for all $B \subset A.$  Thus $|S \mcW S^{-1}| \ll |\mcW|$ and conversely as $\mcW = S^{-1} (S \mcW S^{-1}) S.$ 
\end{proof}
Integration with respect to $\mcW$ is done as follows: for $f \in L^1(\bomega)$ the integral $\int_X f d\mcW \in \bM_n(\bbC)$ is the matrix with the integrals $\int_X f d\omega_{ij}$ as its components.  All of the above notions can be extended to rectangular matrices as well as vectors, although the square case is of primary interest to us and we will not need the others beyond formalities.    A matrix of measures is \textit{positive definite} if it is \textit{pointwise positive semidefinite} when evaluated on sets. 

Extend the action of $\mcM^\ast$ to $\mcM_n$ componentwise (as $\partial_{\bomega}^\ast F$ is a scalar function for $F \in \mcM^\ast$)
$$\mcM^\ast \times \mcM_n \longrightarrow \mcM_n \tbump{12px}{sending} F, \mcW \longmapsto F \adot \mcW  \tbump{12px}{with} d(F \adot \mcW) = \partial_{\bomega}^\ast F d\mcW$$
\noindent We say that $F$ \textit{acts invariantly} on $\mcW \in \mcM_n$ if there exists a $F_\mcW \in \bM_n(\bbC)$ such that $F \adot \mcW = \mcW F_\mcW$ (or $= F_\mcW \mcW$) and refer to $F_\mcW$ as a right (or left) \textit{eigenmatrix} for $F$ on $\mcW.$  As $(F \adot \mcW)^\ast = \overline{F} \adot \mcW^\ast$	
$$F \adot \mcW = F_\mcW \mcW \tbump{12px}{ $\iff$ } \overline{F} \adot \mcW^\ast = \mcW^\ast F_\mcW^\ast$$
\noindent we may restrict our attention to pairs $F,\mcW$ for which the eigenmatrices act from the right without loss of generality.  Although in general eigenmatrices for a given functional $F$ need not be unique, they must be whenever $\mcW(A)$ is invertible for some $A \subset X,$ as
$$\textstyle F \adot \mcW = \mcW F_\mcW \tbump{12px}{ $\implies$ } F_\mcW = \mcW(A)^{-1} \int_A \partial_{\bomega}^\ast F d\mcW$$
\noindent Let $\mcM_n^\circ$ denote the collection of \textit{nondegenerate matrix measures}, or
$$\mcM_n^\circ := \{ \mcW \in \mcM_n : \text{ there exists } A \subset X \text{ Borel with } \mcW(A) \text{ invertible} \}$$
\noindent Thus when $F$ acts invariantly on $\mcW \in \mcM_n^\circ$ we can speak of \textit{the eigenmatrix} $F_\mcW$ for the action of $F$ on $\mcW.$  For the action of $\mcM^\ast$ on a (general) matrix measure $\mcW,$ the (right) eigenmatrices are
$$\text{Eig}_{\text{R}}(\mcW) := \{ \bB \in \bM_n(\bbC) : F \adot \mcW = \mcW \bB \text{ for some } F \in \mcM^\ast \}$$
\begin{prop}\label{simultaneous diagonalization}
	For $\mcV \in \mcM_n^\circ,$ the matrices in $\text{Eig}_{\text{R}}(\mcV)$ are simultaneously diagonalizable.
\end{prop}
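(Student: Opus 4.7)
The plan is to proceed in two steps: first I would show that the matrices in $\text{Eig}_{\text{R}}(\mcV)$ pairwise commute, and then that each is individually diagonalizable; simultaneous diagonalization follows from the standard linear algebra fact that a commuting family of diagonalizable matrices is simultaneously diagonalizable.

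For commutativity, the key observation is that the generalized-functional action respects right multiplication by constant matrices: $F \adot (\mcV B) = (F \adot \mcV) B$. This is because $|\mcV B| \ll |\mcV|$ for any constant $B$, so by theorem \ref{RN} both sides use the same generalized derivative $\partial_{|\mcV|}^\ast F$ on the support of $|\mcV B|$. Iterating with $F_i \adot \mcV = \mcV B_i$ for $i=1,2$ gives $F_1 \adot F_2 \adot \mcV = \mcV B_1 B_2$ and $F_2 \adot F_1 \adot \mcV = \mcV B_2 B_1$. Commutativity of the $\mcM^\ast$-action (proposition \ref{properties of action}) forces $\mcV B_1 B_2 = \mcV B_2 B_1$, and evaluating at a Borel set $A$ with $\mcV(A)$ invertible (which exists by nondegeneracy) yields $B_1 B_2 = B_2 B_1$.

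For the individual diagonalizability of $B \in \text{Eig}_{\text{R}}(\mcV)$, I would use its Jordan decomposition $B = SJS^{-1}$ and consider the conjugated matrix measure $\tilde{\mcV} := \mcV S$, which is still nondegenerate (since $\mcV(A) S$ is invertible whenever $\mcV(A)$ is) and satisfies $|\tilde{\mcV}| \sim |\mcV|$. Pushing the action through $S$ as before gives $F \adot \tilde{\mcV} = \tilde{\mcV} J$; writing $d\tilde{\mcV} = \tilde{W} d\bomega$ with $\bomega = |\tilde{\mcV}|$ and $\varphi = \partial_\bomega^\ast F$, this translates to the pointwise identity $\varphi \tilde{W} = \tilde{W} J$ $\bomega$-a.e. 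If $J$ had a Jordan chain $Jv = \lambda v$, $Jw = \lambda w + v$ of length two, then applying the identity to $v$ gives $(\varphi - \lambda) \tilde{W} v = 0$, while applying it to $w$ gives $(\varphi - \lambda) \tilde{W} w = \tilde{W} v$; splitting according to whether $\varphi = \lambda$ or not forces $\tilde{W} v = 0$ $\bomega$-a.e., hence $\tilde{\mcV}(A) v = 0$ for every Borel set $A$, contradicting invertibility of some $\tilde{\mcV}(A)$. So $J$ is diagonal and $B$ is diagonalizable.

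The main obstacle is the matrix-measure bookkeeping: verifying that the generalized-functional action passes through right multiplication by a constant matrix and that nondegeneracy is preserved under such multiplication. Both reduce to understanding how the total variation $|\mcV|$ behaves under linear operations. Once these technicalities are in hand, the Jordan-chain calculation is the only substantive step, and commutativity plus individual diagonalizability assemble into simultaneous diagonalizability by a standard argument.
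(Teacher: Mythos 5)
Your proposal is correct and follows essentially the same two-step strategy as the paper: first establish that the eigenmatrices pairwise commute via the commutativity of the $\mcM^\ast$-action together with compatibility of that action with right multiplication by constants, then show each eigenmatrix is individually diagonalizable and invoke the standard fact about commuting diagonalizable families. The commutativity step is identical in substance. For individual diagonalizability the paper and you argue slightly differently: the paper takes an arbitrary generalized eigenvector $\bv$ with $(F_\mcV - \lambda I)^k\bv = 0$, deduces $(\partial_{|\mcV|}^\ast F - \lambda)^k = 0$ a.e.\ with respect to $\mcV\bv$, and observes that $L^\infty$ has no nilpotents to conclude $\partial_{|\mcV|}^\ast F = \lambda$ a.e., hence $\bv$ is a genuine eigenvector; you instead conjugate to Jordan form $\tilde{\mcV} = \mcV S$, read off the pointwise identity $\varphi\tilde{W} = \tilde{W}J$, and rule out a length-two Jordan chain by a case split on $\{\varphi = \lambda\}$ versus $\{\varphi \neq \lambda\}$, using nondegeneracy to derive a contradiction. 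Both arguments hinge on the same insight -- a scalar pointwise identity $\bomega$-a.e.\ kills the off-diagonal part -- so the approaches are equivalent in content; the paper's phrasing via generalized eigenvectors avoids an explicit Jordan basis, while yours makes the contradiction with nondegeneracy a bit more concrete. No gap.
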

\begin{proof}
	If $F, G \in \mcM^\ast$ act invariantly on $\mcV \in \mcM_n^\circ$ with $\bnu := |\mcV|$, then as the action of $\mcM^\ast$ on $\mcM_n$ is componentwise, we have $F \adot G \adot \mcV = G \adot F \adot \mcV$ by proposition \ref{properties of action} and so
	$$\mcV F_\mcV G_\mcV = G \adot F \adot \mcV = F \adot G \adot \mcV =  \mcV G_\mcV F_\mcV$$
	\noindent so that, as $\mcV(A)$ is invertible for some $A$ Borel, we can cancel and the eigenmatrices commute.  Now, suppose $F \in \mcM^\ast$ acts invariantly on $\mcV \in \mcM_n^\circ$ we show that $F_\mcV$ is diagonalizable.  Let $\lambda \in \bbC$ be an eigenvalue of $F_\mcV$ and $\bv \in \bbC^n$ a generalized eigenvector of degree $k > 0,$ so that $(F_\mcV - \lambda I)^k \bv = 0.$ The claim will follow by showing that $k = 1,$ as this implies the eigenvalues of $F_\mcV$ are simple, and thus diagonalized by putting into Jordan form.  As $F \adot \mcV = \mcV F_\mcV,$ we have
	$$\mcV (F_\mcV - \lambda I)^k \bv = (F - \lambda)^k \adot \mcV \bv = \bZero$$
	\noindent where $(F-\lambda)^k$ is the action of the functional $\mu \mapsto F(\mu) - \lambda\mu(X)$ on $\mcV,$ $k$ consecutive times; as $\partial_{\bnu}^\ast F - \lambda \in L^\infty(\bnu),$ this defines a generalized functional on $\mcL(\bnu).$  The above then gives us
	$$( \partial_{\bnu}^\ast F - \lambda)^k = 0 \hspace{8px} \mcV\bv\text{-ae} \tbump{8px}{ $\implies$ } \partial_{\bnu}^\ast F = \lambda \hspace{8px} \mcV \bv\text{-ae}$$
	\noindent  In particular, this shows $\mcV F_\mcV \bv = F \adot \mcV \bv = \lambda \mcV \bv$ and so $F_\mcV \bv = \lambda \bv$ as $\mcV \in \mcM_n^\circ.$ Thus, $\bv$ is an eigenvector for $F_\mcV$ which is diagonalizable as it has no generalized eigenvectors.

	Thus $\text{Eig}_{\text{R}}(\mcV)$ is a commuting family of diagonalizable matrices, and therefore can be simultaneously diagonalized, as commuting matrices share invariant subspaces and diagonalizable matrices have one-dimensional invariant subspaces spanning their domain.
\end{proof}


\subsubsection{Diagonalization of $\mcM_n(X,S)$}\label{matrix diagonalization}

For a continuous $\Zd$ action $S$ on $X,$ let $\mcM(X,S)$ denote the collection of $S$-invariant complex Borel measures on $X$; note the distinction from the prequel, where it represents only the positive probability measures.  Restricting to $S$-invariant probability measures gives a compact convex set, the extreme points of which are ergodic and mutually singular.  By ergodic decomposition, $\mcM(X,S)$ is the $\bbC$-span of $\mcE = \mcE(X,S),$ the ergodic $S$-invariant probability measures, see [\ref{walters}].

For each $\mu \in \mcM(X,S)$ we have the $\sigma$-algebra (and where $\Delta$ denotes the symmetric difference)
$$\mcB_\mu = \{ A \subset X \text{ Borel such that } A \Delta S^{\bk}A \text{ is } \mu\text{-null } \forall \, \bk \in \Zd \}$$
\noindent consisting of the \textit{$\mu$-ae $S$-invariant Borel subsets} of $X.$  The simple functions over this $\sigma$-algebra generate a closed subspace of $L^2(\mu)$ which we denote $L^2(\mu,S)$ and the orthogonal projection $\bbE_\mu : L^2(\mu) \to L^2(\mu,S)$ sends $f$ to $\bbE_\mu(f) = \bbE(f \vert \mcB_\mu),$ the conditional expectation of $f$ given the $\sigma$-algebra of $\mu$-ae $S$-invariant Borel sets.  Thus, by the mean ergodic theorem, the ergodic averages
$$\textstyle{A_m(f) := \frac{1}{m^d} \sum_{\bk \in [\bZero, m\bOne]} f \circ S^{-\bk}} \tbump{6px}{$\xrightarrow{ \text{ as } n \to \infty}$} \bbE_\mu(f) \tbump{6px}{$\in$}  L^2(\mu)$$

Using this, we construct a collection of functionals on $\mcM$ determined by a bounded Borel function on $X.$  For such a function, $f,$ the ergodic averages $\mcA_m(f)$ are all bounded by the supremum norm of $f,$ so that $\|\bbE_\mu(f)\|_{\infty} \leq \|f\|_{\infty}$ and $\bbE_\mu(f) \in L^\infty(\mu).$  As all our measures have finite total variation, $L^2$ convergence implies convergence in $L^1,$ and so subsequences of ergodic averages converge pointwise almost everywhere with respect to an invariant measure.   Thus, if $\nu \ll \mu$ are both invariant, we can pass to the almost everywhere pointwise subsequential limits of $\mcA_m(f)$ and show that $\bbE_\mu(f) = \bbE_\nu(f)$ $\nu$-ae.  For each bounded $f,$ \v{S}reider's theorem \ref{sreider thm} shows that $\mu \mapsto \bbE_\mu(f)$ gives rise to a functional on $\mcM(X,S),$  a closed subspace of $\mcM,$ which can be extended to $\mcM$ via the Hahn-Banach theorem; let $[f]$ denote the collection of all such extensions.  Thus, we obtain:
\begin{prop}\label{induced functional}
	For each bounded function $f$ on $X$, there exists a nonempty collection of functionals $[f] \subset \mcM^\ast$ such that $F \in [f]$ and $\mu \in \mcM(X,S)$ give $\partial_\mu^\ast F = \bbE_\mu(f)$ $ \mu\text{-ae}$
\end{prop}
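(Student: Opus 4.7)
The plan is to first construct a functional directly on the closed subspace $\mcM(X,S) \subset \mcM$ using the assignment $\mu \mapsto \bbE_\mu(f)$, verify that it is continuous using \v{S}reider's characterization, and then pull it back to $\mcM$ via Hahn-Banach. The content of the proposition is really the identity $\partial_\mu^\ast F = \bbE_\mu(f)$ on invariant measures, which is forced once we check that the assignment is a well-defined generalized functional on $\mcM(X,S)$.

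First I would verify that $\mu \mapsto \bbE_\mu(f)$ defines a generalized functional on the sub-Banach space $\mcM(X,S)$ in the sense of Definition \ref{generalized functional}. The boundedness condition is immediate since $\|\bbE_\mu(f)\|_{L^\infty(\mu)} \le \|f\|_\infty$ uniformly in $\mu$. For the compatibility under absolute continuity, take invariant $\nu \ll \mu$, pass along a subsequence $A_{m_k}(f)$ that converges $|\mu|$-almost everywhere (hence $|\nu|$-almost everywhere) to $\bbE_\mu(f)$, and on a further subsequence convergent $|\nu|$-almost everywhere to $\bbE_\nu(f)$. Both limits agree pointwise $|\nu|$-a.e., giving $\bbE_\mu(f) = \bbE_\nu(f)$ $\nu$-ae. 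This is exactly the compatibility required of a generalized functional.

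Next I would invoke \v{S}reider's theorem \ref{sreider thm}, applied inside $\mcM(X,S)$, to obtain a bounded linear functional $F_0$ on $\mcM(X,S)$ whose representing generalized functional is $\mu \mapsto \bbE_\mu(f)$, so that $F_0(\mu) = \int_X \bbE_\mu(f)\,d\mu$ for invariant $\mu$ and $\|F_0\| \le \|f\|_\infty$. Strictly speaking, \v{S}reider's theorem was stated for all of $\mcM$, but the same proof (or a direct application together with the fact that $\mcM(X,S)$ is itself a Banach lattice of complex measures) applies. By the Hahn-Banach theorem, extend $F_0$ to a bounded linear functional $F \in \mcM^\ast$ with the same norm, and define $[f]$ to be the (nonempty) set of all such extensions.

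Finally I would read off the claimed identity: for any $\mu \in \mcM(X,S)$ and any extension $F \in [f]$, we have by construction
\[
\int_A \partial_\mu^\ast F\,d\mu = F(\mu\vert_A) = F_0(\mu\vert_A) = \int_A \bbE_\mu(f)\,d\mu
\]
for every Borel $A \subset X$ (since $\mu\vert_A$ is still $S$-absolutely continuous with respect to $\mu$ and the generalized functional representation is compatible with $\mcL(\mu)$), which forces $\partial_\mu^\ast F = \bbE_\mu(f)$ $\mu$-ae by theorem \ref{RN}. The only subtlety I anticipate is the step ensuring the $|\mu|$-ae equality $\bbE_\mu(f) = \bbE_\nu(f)$ on $\nu \ll \mu$, because $\bbE_\mu$ is a priori an $L^2(\mu)$-object; handling this cleanly via subsequential pointwise convergence of ergodic averages, as indicated above, is the main technical point of the argument.
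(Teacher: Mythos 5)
Your plan mirrors the paper's: show that $\mu \mapsto \bbE_\mu(f)$ is a generalized functional on the closed subspace $\mcM(X,S)$, invoke \v{S}reider to obtain a bounded functional $F_0$ there, and extend by Hahn--Banach. The compatibility check via subsequential a.e.\ convergence of the ergodic averages is correct and is exactly the paper's argument. The gap is in your last step: the middle equality $F(\mu\vert_A) = F_0(\mu\vert_A)$ requires $\mu\vert_A \in \mcM(X,S)$, and for a general Borel set $A$ the restriction of an invariant measure is not $S$-invariant --- it is invariant only when $A \in \mcB_\mu$. Restricting to such $A$ shows only $\bbE_\mu(\partial_\mu^\ast F) = \bbE_\mu(f)$, not the pointwise identity $\partial_\mu^\ast F = \bbE_\mu(f)$. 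The underlying problem is that $\partial_\mu^\ast F$ is determined by the action of $F$ on all of $\mcL(\mu)$, whereas a Hahn--Banach extension of $F_0$ is pinned down only on the proper closed subspace $\mcL(\mu) \cap \mcM(X,S)$; different extensions genuinely produce different $\partial_\mu^\ast F$, and a generic one need not equal $\bbE_\mu(f)$. The subtlety you flagged (the a.e.\ compatibility $\bbE_\mu(f) = \bbE_\nu(f)$) you handle correctly; the real difficulty is here.

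The repair is to build the functional directly on all of $\mcM$ so that its action on each $\mcL(\mu)$ is forced. Fix a Banach limit $L$ on $\ell^\infty(\bbN)$ and set $F(\nu) := L\bigl( \bigl( \int_X A_m(f)\,d\nu \bigr)_m \bigr)$ for every $\nu \in \mcM$; this is linear, has norm at most $\|f\|_\infty$, and extends $F_0$ since for invariant $\mu$ every $\int A_m(f)\,d\mu$ equals $\int f\,d\mu$. For $\mu \in \mcM(X,S)$ and any $\nu \ll \mu$, the uniform bound $\|A_m(f)\|_\infty \le \|f\|_\infty$ together with convergence $A_m(f) \to \bbE_\mu(f)$ in $|\mu|$-measure gives $\int A_m(f)\,d\nu \to \int \bbE_\mu(f)\,d\nu$ as an honest limit (dominated convergence along subsequences), so $F(\nu) = \int \bbE_\mu(f)\,d\nu$ and hence $\partial_\mu^\ast F = \bbE_\mu(f)$ $\mu$-a.e.\ by theorem \ref{RN}. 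Taking $[f]$ to be the set of such $F$ over choices of Banach limit gives the proposition. For what it is worth, the paper's own sketch also stops at a bare Hahn--Banach extension, so you have not misread its intent; the extra construction above is what actually makes the conclusion go through.
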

Note that for every $F \in [f]$ and $\mu \in \mcM(X,S)$ we have $F(\mu) = \int_X \bbE_\mu(f) d\mu = \int_X f d\mu$ as $X$ is always an invariant set, and $\bbE_\mu(f)$ is the conditional expectation.  We use the above to prove a diagonalization result for ergodic matrices of measures relative to $S: X \to X$ continuous.  Let $\mcM_n(X,S)$ denote those measures in $\mcM_n$ all of whose components are in $\mcM(X,S).$  A matrix of measures $\mcW \in \mcM_n(X,S)$ with total variation $\bomega$ is ergodic provided for all $f, g \in L^2(\bomega)$	
\begin{equation}\label{ergodic MoM} \textstyle{\frac{1}{m^d} \sum_{\bk \in [\bZero, m\bOne]} \int_X f \circ S^{-\bk} \cdot \, g \, d\mcW \longrightarrow \int_X f \, d\mcW \, \int_X g \, d \mcW}\end{equation}
\noindent as $m \to \infty.$ Characterizations for mixing are similar, and just as in the $n=1$ case, mixing implies ergodic by looking at component measures, and so the familiar Fourier characterizations for mixing hold in this setting as well.  Note that in the above, we do not include the case where the limits converge to $\left( \int g d\mcW\right) \left( \int f d\mcW \right)$ as possibilities, as the following theorem shows that these matrices will always commute as long as the matrix of measures is ergodic.

We say a matrix of measures $\mcW$ is \textit{diagonalizable} if there exists a $\bQ \in \bM_n(\bbC)$ invertible with $\bQ \mcW \bQ^{-1}$ a diagonal matrix of measures; the measures appearing on the diagonal are called \textit{eigenmeasures} of $\mcW.$  Recall that the nondegenerate matrices of measures are the collection $\mcM_n^\circ$ of matrices of measures which are invertible when evaluated on some Borel subset of $X$.
\begin{thm}\label{ergodic diagonalization}
If $\mcW \in \mcM_n(X,S) \cap \mcM_n^\circ$ is ergodic, it is diagonalizable with ergodic eigenmeasures.
\end{thm}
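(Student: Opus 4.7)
The plan is to exploit the functionals $[f]$ from Proposition~\ref{induced functional} to manufacture a commuting family of diagonalizable eigenmatrices acting on $\mcW$, simultaneously diagonalize them by a single conjugation $\bQ$, and then observe that in the conjugated measure the integral of every bounded Borel function is already diagonal, so the off-diagonal entries are forced to be the zero measure.

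For Step~1, fix a bounded Borel $f$ and $F\in[f]$. Since every component of $\mcW$ is $S$-invariant, so is $\bomega:=|\mcW|$, and hence $\partial_\bomega^\ast F=\bbE_\bomega(f)$ by the defining property of $[f]$. Thus $F\adot\mcW=\bbE_\bomega(f)\,d\mcW$, and for any bounded Borel $g$ the mean ergodic theorem applied in $L^2(\bomega)$, together with $S$-invariance of $\mcW$, gives
$$\int g\,d(F\adot\mcW)\;=\;\int g\cdot\bbE_\bomega(f)\,d\mcW\;=\;\lim_{m\to\infty}\frac{1}{m^d}\sum_{\bk\in[\bZero,m\bOne]}\int (f\circ S^{-\bk})\,g\,d\mcW.$$
The ergodicity hypothesis~(\ref{ergodic MoM}) identifies this limit with $\bA(f)\int g\,d\mcW$, where $\bA(f):=\int f\,d\mcW$. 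Rewriting the same time average via $S$-invariance as $\frac{1}{m^d}\sum_\bk\int g\,(f\circ S^\bk)\,d\mcW$ and reapplying~(\ref{ergodic MoM}) with $f,g$ reversed produces the same limit in the form $(\int g\,d\mcW)\,\bA(f)$. Since bounded Borel functions separate matrix-valued measures, I conclude $F\adot\mcW=\bA(f)\mcW=\mcW\bA(f)$ and the commutativity $\bA(f)\bA(g)=\bA(g)\bA(f)$. Taking $g=\IND_X$ with any $f$ for which $\bA(f)$ is invertible (which exists because $\mcW\in\mcM_n^\circ$) gives $\bA(f)=\bA(f)\mcW(X)$, and cancellation forces $\mcW(X)=\bI$.

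For Step~2, Proposition~\ref{simultaneous diagonalization} applied to $\mcW^\ast$ (to convert the left actions of Step~1 into right actions) shows each $\bA(f)$ is diagonalizable, and because the family commutes there is an invertible $\bQ\in\bM_n(\bbC)$ making $\bD(f):=\bQ\bA(f)\bQ^{-1}$ diagonal for every bounded Borel $f$. Set $\mcV:=\bQ\mcW\bQ^{-1}$; by Proposition~\ref{nullity invariance} $\mcV\in\mcM_n(X,S)\cap\mcM_n^\circ$, $\mcV$ remains ergodic since conjugation commutes with the scalar time averages in~(\ref{ergodic MoM}), and $F\adot\mcV=\bQ(F\adot\mcW)\bQ^{-1}=\bD(f)\mcV$. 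Now apply the conclusion of Step~1 to $\mcV$ itself, giving $F\adot\mcV=\bA_\mcV(f)\mcV$ with $\bA_\mcV(f)=\int f\,d\mcV$. Comparing with $F\adot\mcV=\bD(f)\mcV$ yields $(\bA_\mcV(f)-\bD(f))\mcV(A)=\bZero$, and evaluating at a Borel $A$ with $\mcV(A)$ invertible forces $\int f\,d\mcV=\bD(f)$. Writing $\mcV=(\nu_{ij})$, the diagonality of $\bD(f)$ translates to $\int f\,d\nu_{ij}=0$ for every bounded Borel $f$ whenever $i\ne j$, and by Riesz representation this means $\nu_{ij}\equiv\bZero$. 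Hence $\mcV$ is diagonal with entries $\nu_1,\dots,\nu_n$, and the identity $\bbE_{\nu_i}(f)=D(f)_{ii}=\int f\,d\nu_i$ holding $\nu_i$-a.e.\ for every $f$ says the $S$-invariant $\sigma$-algebra of $\nu_i$ is trivial, which is precisely ergodicity of $\nu_i$.

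The main obstacle I expect is Step~1, specifically symmetrizing~(\ref{ergodic MoM}) to obtain both the left- and right-acting forms of the eigenmatrix identity (and with it the commutativity of the family $\{\bA(f)\}$), and then passing from the scalar integrated equality to the measure-level equality $F\adot\mcW=\bA(f)\mcW$ via the separation of matrix measures by bounded functions; once both sides of the action and $\mcW(X)=\bI$ are in hand, Step~2 reduces to the bookkeeping of simultaneous diagonalization and a direct application of Proposition~\ref{simultaneous diagonalization} and nondegeneracy.
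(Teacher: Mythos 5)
Your proof is correct and follows essentially the same route as the paper: extract the matrices $\int f\,d\mcW$ as eigenmatrices of $\mcW$ from the ergodicity hypothesis (\ref{ergodic MoM}) via the functionals $[f]$ of Proposition~\ref{induced functional}, then invoke Proposition~\ref{simultaneous diagonalization} to simultaneously diagonalize them and read off diagonality and ergodicity of the resulting eigenmeasures. Your additional steps (the two-sided eigenmatrix identity and its attendant commutativity, the observation $\mcW(X)=\bI$, and re-running Step~1 on $\mcV$) are elaborations rather than a different route --- the paper works only with $f=\IND_A$, notes $\mcW(A)$ are eigenmatrices, and concludes diagonality of $\bQ\mcW\bQ^{-1}$ directly from that of all $\bQ\mcW(A)\bQ^{-1}$ --- and the symmetrization of the time average in Step~1 implicitly relies on forward and backward cube averages having the same $L^2(\bomega)$ limit, a standard point worth making explicit.
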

\begin{proof}
	The reverse direction is immediate, the forward follows from propositions \ref{simultaneous diagonalization} and \ref{induced functional}: for $A \subset X$ Borel and $F \in [\IND_A],$ consider the measure $F \cdot \mcW \in \mcM_n(X)$.  As $\mcW$ is ergodic, for $g \in C(X)$
	$${\small  \int_X g \, d(F \cdot \mcW) = \int_X \partial_{\bomega}^\ast F \, g \, d\mcW = \int_X \bbE_{\bomega}(\IND_A) \, g \, d\mcW = \lim_{m \to \infty} \int_X \mcA_m(\IND_A) \, g \, d\mcW =  \mcW(A) \int_X g \, d\mcW}$$
	\noindent by the Lebesgue dominated convergence theorem and the mean ergodic theorem.  As measures in $\mcM(X)$ are characterized by integration against continuous functions, this implies that $F \cdot \mcW = \mcW(A) \mcW,$ and so  $\mcW(A) \in \text{Eig}_{\text{R}}(\mcW)$ for all $A \subset X$ Borel.  By proposition \ref{simultaneous diagonalization}, all the matrices $\mcW(A)$ for $A$ Borel are simultaneously diagonalizable over $\bbC,$ and there exists a matrix $\bQ \in \bM_n(\bbC)$ such that $\bQ \mcW(A) \bQ^{-1}$ is diagonal, for all $A \subset X$ Borel.  This implies that $\bQ \mcW \bQ^{-1}$ is a diagonal matrix of measures, and so $\mcW$ is diagonalizable over $\bbC.$  The claim of ergodicity and invariance follows as
	$$\textstyle \bQ \left( \int f d\mcW \right) \bQ^{-1} = \int f \,d(\bQ \mcW \bQ^{-1})$$
	\noindent and using the descriptions of invariance and ergodicity given earlier.
\end{proof}


\subsection{Proof of Theorem \ref{queffelec two}}\label{proof of queffelec}

We now proceed to the proof of our main result, based on Queff\'elec's work.  We already know that $\sigmaX$ is related to linear combinations of correlation measures (theorem \ref{queffelec one}), and we know that the spectral hull gives rise to $\bq$-shift invariant measures (proposition \ref{K invariance}) in the span of the correlation measures.  To finish, we need to show that the spectral hull actually generates the entire spectrum, and the extreme points give rise to measures with the specified ergodic properties. This is accomplished via Queff\'elec's \textit{bicorrelation measure} $\mcZ$, a matrix of measures related to the correlation measures of the bisubstitution $\mcS \otimes \mcS.$    Recall that all our $\bq$-substitutions $\mcS$ on $\mcA$ have been telescoped so that $\mcS$ and $\mcS \otimes \mcS$ have index of imprimitivity $1$, see proposition \ref{PRF} so that
\begin{equation}\label{projection matrix}\textstyle \bP := \lim_{n \to \infty} \frac{1}{Q^{n}} M_{\mcS}^n \tbump{8px}{ and } \mcP := \lim_{n \to \infty} \frac{1}{Q^n} M_{\mcS \otimes \mcS}^n = \lim_{n \to \infty} \frac{1}{Q^n} \sum_{\bj \in [\bZero,\bq^n)} \mcR_\bj^{(n)} \otimes \mcR_\bj^{(n)} \end{equation}
\noindent exist by the primitive reduced form of proposition \ref{PRF} and results in [\ref{gantmacher}, Chap III \S 7], converging to nonorthogonal projections onto the $Q$-eigenspaces of the substitution and coincidence matrices.  
\begin{prop}\label{coefficients exist} 
	Let $\mcS$ be an aperiodic $\bq$-substitution on $\mcA.$  If, for $\bk \in \Zd$ and $n \geq 0$, we write 
	$$\bC_\bk^{(n)} := \textstyle \frac{1}{Q^n} \sum_{\bj \in [\bZero,\bq^n)} \mcR_\bj^{(n)} \otimes \mcR_{\bj+\bk}^{(n)} \in \bM_{\mcA^2}(\bbC) \tbump{12px}{ then } \bC_\bk := \lim_{n \to \infty} \bC_\bk^{(n)} \in \bM_{\mcA^2}(\bbC)$$
	\noindent exists for every $\bk \in \Zd,$ and form the Fourier coefficients for a matrix of measures $\mcZ \in \mcM_{\mcA^2}(\bbT^d).$
\end{prop}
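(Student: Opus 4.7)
I plan to prove convergence of $\bC_\bk^{(n)}$ via a Cauchy argument using an approximate semigroup identity (Part 1), then identify the limits as Fourier coefficients of a matrix of measures via an outer-product Riesz construction (Part 2).

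\emph{Part 1 (Convergence).} For $\bk = \bZero$ the assertion is immediate from (\ref{projection matrix}): $\bC_\bZero^{(n)} = Q^{-n}M_{\mcS\otimes\mcS}^n \to \mcP$. For general $\bk$, the key step is the approximate semigroup identity
\begin{equation}\label{semigroup}
\bC_\bk^{(n+p)} \;=\; \bC_\bk^{(n)}\,\bC_\bZero^{(p)} + \mathbf{E}_{n,p}, \qquad \|\mathbf{E}_{n,p}\| \le 2\,|\Delta_n(\bk)|/Q^n \to 0 \text{ as } n \to \infty,
\end{equation}
uniformly in $p$, by lemma \ref{small carries}. To derive (\ref{semigroup}), decompose $\bj \in [\bZero,\bq^{n+p})$ as $\bj = \bi + \bq^n \bl$ with $\bi \in [\bZero,\bq^n)$ and $\bl \in [\bZero,\bq^p)$: for $\bi \notin \Delta_n(\bk)$ the sum $\bi + \bk$ stays in $[\bZero,\bq^n)$ and $\bj + \bk = (\bi + \bk) + \bq^n\bl$, so proposition \ref{qsub} together with the mixed product property (\ref{mixed product property}) factors
$$\mcR_\bj^{(n+p)} \otimes \mcR_{\bj+\bk}^{(n+p)} = \bigl(\mcR_\bi^{(n)} \otimes \mcR_{\bi+\bk}^{(n)}\bigr)\bigl(\mcR_\bl^{(p)} \otimes \mcR_\bl^{(p)}\bigr).$$
Summing over $\bi,\bl$ and dividing by $Q^{n+p}$ yields the product $\bC_\bk^{(n)}\,\bC_\bZero^{(p)}$; the $\bi \in \Delta_n(\bk)$ contribution is bounded in norm by $|\Delta_n(\bk)|/Q^n$ using that each Kronecker-product instruction matrix is column-stochastic (hence of unit operator norm). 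Given (\ref{semigroup}) and $\bC_\bZero^{(p)} \to \mcP$, a standard $\epsilon/3$ argument shows $\{\bC_\bk^{(n)}\}$ is Cauchy: fix $n_0$ with $\|\mathbf{E}_{n_0,p}\| < \epsilon/3$ for all $p$, then take $p, p'$ large so $\|\bC_\bZero^{(p)} - \bC_\bZero^{(p')}\|$ is small and compare $\bC_\bk^{(n_0+p)}$ with $\bC_\bk^{(n_0+p')}$ via (\ref{semigroup}). Thus $\bC_\bk := \lim_n \bC_\bk^{(n)}$ exists.

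\emph{Part 2 (Matrix of measures).} Following the matrix Riesz-product idea of [\ref{queffelec}, \S 8.1], set $\Phi_n(\bz) := \sum_{\bj \in [\bZero,\bq^n)} \text{vec}(\mcR_\bj^{(n)})\,\bz^\bj \in \bbC^{\mcA^2}$ and define the positive matrix of measures
$$\mcW^{(n)} := \tfrac{1}{Q^n}\,\Phi_n(\bz)\,\Phi_n(\bz)^*\,d\bz \in \mcM_{\mcA^2}(\bbT^d).$$
Its total variation equals $\text{tr}\,\widehat{\mcW^{(n)}}(\bZero) = \text{Card}(\mcA)$, uniformly bounded. Direct expansion gives
$$\bigl(\widehat{\mcW^{(n)}}(\bk)\bigr)_{(\alpha\beta),(\alpha'\beta')} = \bigl(\bC_\bk^{(n)}\bigr)_{(\alpha\alpha'),(\beta\beta')} + \mcO\bigl(|\Delta_n(\bk)|/Q^n\bigr),$$
the $\mcO$-term recording the discrepancy between $\mcR_{\bj+\bk}^{(n)}$ (defined via reduction mod $\bq^n$) and its naive extension on $[\bZero,\bq^n)$, and the two index orderings differing by a fixed linear bijection $\Psi \colon \bM_{\mcA^2}(\bbC) \to \bM_{\mcA^2}(\bbC)$ reshuffling the four-tuple $(\alpha,\beta,\alpha',\beta') \leftrightarrow (\alpha,\alpha',\beta,\beta')$. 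Setting $\mcZ^{(n)} := \Psi(\mcW^{(n)})$ (applied entrywise) gives matrices of measures with uniformly bounded total variation and Fourier coefficients $\bC_\bk^{(n)}$ up to the carry correction. Banach--Alaoglu yields a weak-$\ast$ limit $\mcZ \in \mcM_{\mcA^2}(\bbT^d)$ along a subsequence; combining with Part 1 and weak-$\ast$ continuity of Fourier coefficients forces $\widehat{\mcZ}(\bk) = \bC_\bk$ for every $\bk$, and uniqueness of Fourier coefficients upgrades the subsequential convergence to convergence of the full sequence $\mcZ^{(n)}$.

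\emph{Main obstacle.} Part 1 reduces cleanly to (\ref{semigroup}) and the Perron-Frobenius asymptotics of the coincidence matrix embodied in (\ref{projection matrix}). The principal technicality is the bookkeeping in Part 2: verifying that the reshuffling $\Psi$ (which is not a simultaneous row/column conjugation by a permutation matrix) is a bounded linear bijection on $\bM_{\mcA^2}(\bbC)$, so that applying it entrywise to the positive matrix of measures $\mcW^{(n)}$ produces a genuine matrix of measures of comparable total variation; and symmetrically handling the case when components of $\bk$ are negative, where $\Delta_n(\bk)$ sits at the opposite corners of the semi-rectangle $[\bZero,\bq^n)$. Both are routine once one notes that the proof of lemma \ref{small carries} is insensitive to the signs of the coordinates of $\bk$.
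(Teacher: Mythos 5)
Your proof is correct, and Part 1 takes a genuinely different route from the paper's. The paper gets the existence of $\bC_\bk$ indirectly via lemma \ref{block frequency}: it first shows that the superword block-frequency vectors $\bh_\omega^{(n)}$ converge, by squeezing $\bh_\omega^{(n+p)}$ between $Q^{-p}(M_\mcS^p)^t\bh_\omega^{(n)}$ and the same plus a carry-set error and invoking monotonicity of $\bP^t\bh_\omega^{(n)}$, and then verifies $\big|\be_{\alpha\beta}^t\bC_\bk^{(n)} - (\bh_\omega^{(n)})^t\big|\to 0$. Your approximate semigroup identity $\bC_\bk^{(n+p)} = \bC_\bk^{(n)}\bC_\bZero^{(p)} + \mathbf{E}_{n,p}$, which follows from proposition \ref{qsub}, the mixed product property (\ref{mixed product property}) and lemma \ref{small carries}, lets you run a direct Cauchy argument once $\bC_\bZero^{(p)}\to\mcP$ is in hand; this is more self-contained and isolates the Perron--Frobenius input to the single limit (\ref{projection matrix}), at the cost of losing the dynamical interpretation of $\bC_\bk$ in terms of pattern frequencies that the paper uses elsewhere. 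In Part 2 you land on essentially the paper's construction: its $\mcZ_n(\bw) = \tfrac{1}{Q^n}\sum_{\bi,\bj}\mcR_\bj^{(n)}\otimes\mcR_\bi^{(n)}\bw^{\bi-\bj}$ is your $\Psi(\mcW^{(n)})$ up to the fixed index reshuffle; the paper bounds $\|\mcZ_n\|$ entrywise via Cauchy--Schwarz on the factored polynomials, while you get the bound from positive-definiteness of the outer product and the trace, which incidentally delivers for free the positivity that the paper only establishes later as lemma \ref{Ztilde pos def}. Both finish with Banach--Alaoglu and Fourier unicity. Two cosmetic patches: $\|\mcW^{(n)}\|$ is \emph{dominated by a dimensional constant times} $\text{tr}\,\widehat{\mcW^{(n)}}(\bZero)$ rather than equal to it (off-diagonal entries contribute to total variation); and with $\Phi_n(\bz)=\sum_\bj\text{vec}(\mcR_\bj^{(n)})\bz^\bj$ the Fourier coefficient of $\Psi(\mcW^{(n)})$ works out to $\bC_{-\bk}^{(n)}$ under the convention (\ref{spectral coefficients}), so replace $\bz^\bj$ by $\bz^{-\bj}$ (or compose $\Psi$ with the transpose). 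Both are bookkeeping, not gaps, as you anticipated.
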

\begin{proof}
	Note that $C_\bZero = \mcP$ by definition.  For $\omega \in \mcA^+$ and $n \geq 0,$ let $\bh_\omega^{(n)}$ be the vector in $\bbC^\mcA$ recording the frequencies with which $\omega$ appears in superblocks $\mcS^n(\gamma)$, so that for each $\gamma \in \mcA,$
$$\be_\gamma^t \bh_\omega^{(n)} := \scalebox{1.2}{$\nicefrac{1}{Q^n}$} \,\text{Card}\big\{ \bj \in \Zd \tbump{2px}{$:$} \omega \leq T^\bj \mcS^n(\gamma) \}$$
	\noindent Although limits of such frequencies are known to exist in general, we show it here for completeness.
\begin{lem}\label{block frequency}
	If $\mcS$ is an aperiodic $\bq$-substitution on $\mcA$, then $\lim_{n \to \infty} \bh_\omega^{(n)}$ exists for every $\omega \in \mcA^+$
\end{lem}
\begin{proof}
	See also [\ref{queffelec}, Proof of Prop 10.4].  If $\omega$ is a subblock of $\mcS^{n+p}(\gamma)$ for $n, p > 0,$ then either:  $\omega$ appears as a subblock of $\mcS^n(\alpha)$ for some $\alpha$ appearing in $\mcS^p(\gamma),$ or $\omega$ overlaps two or more such blocks.
	First, the number of ways $\alpha$ can appear in $\mcS^p(\gamma)$ is $\be_\alpha^t M_\mcS^p \be_\gamma.$  Second, if $\supp(\omega)$ is contained in $[\bj,\bj + \bk)$ for $\bj, \bk \in \Zd,$ then the number of ways $\omega$ can overlap multiple such blocks is bounded by  $Q^p \text{Card}(\Delta_p(\bk)),$ as $\bj$ will be translated into one of the $Q^p$ subblocks of $[\bZero,\bq^{n+p})$ in the $\bq^n \Zd$ lattice. Lastly, the number of ways $\omega$ appears in $\mcS^n(\alpha)$ is $\big(\bh_\omega^{(n)}\big)_\alpha$ so that adding over $\alpha$
	{\small $${\sum}_{\alpha \in \mcA} \big(\bh_\omega^{(n)}\big)_\alpha \big( M_\mcS^p \big)_{\alpha,\gamma} \leq \big(\bh_\omega^{(n+p)}\big)_\gamma\leq   Q^p \text{Card}(\Delta_n(\bk)) + {\sum}_{\alpha \in \mcA} \big(\bh_\omega^{(n)}\big)_\alpha \big( M_\mcS^p \big)_{\alpha,\gamma}$$}
	\noindent Dividing through by $Q^{n+p}$ and taking the transpose gives componentwise inequalities
	\begin{equation}\label{block frequency inequality 1}Q^{-p}  (M_\mcS^p)^t \bh_\omega^{(n)} \leq \bh_\omega^{(n+p)} \leq Q^{-n}\text{Card}(\Delta_n(\bk)) \bOne + Q^{-p} (M_\mcS^p)^t \bh_\omega^{(n)}\end{equation}
	\noindent where $\bOne$ is again the vector of $1$'s.  Letting $p \to \infty$, we have
	$$ \bP^t \bh_\omega^{(n)}  \leq {\liminf}_{p \to \infty} \bh_\omega^{(n+p)} \leq {\limsup}_{p \to \infty} \bh_\omega^{(n+p)} \leq Q^{-n}\text{Card}(\Delta_n(\bk))\bOne + \bP^t \bh_\omega^{(n)}$$
	\noindent The first inequality in (\ref{block frequency inequality 1}) together with the identity $M_\mcS \bP = Q \bP$ gives $\bP^t \bh_\omega^{(n)} \leq \bh_\omega^{(n+p)} \bP$ so that, as $\bZero \leq \bh_\omega^{(n)} \leq \bOne$ and $\bP$ is nonnegative, the limit $\bh_\omega^{\infty} := \lim_{n \to \infty} \bP^t \bh_\omega^{(n)}$ exists.  Letting $n \to \infty$ in the above inequalities and using lemma \ref{small carries} shows that the limit of $\bh_\omega^{(n)}$ exists, as $\bh_\omega^{\infty} \leq \liminf_{p \to \infty} \bh_\omega^{(p)} \leq \limsup_{p \to \infty} \bh_\omega^{(p)} \leq \bh_\omega^{\infty}$, completing the proof.
\end{proof}

	We now proceed to prove the above proposition, see also [\ref{queffelec}, Prop 10.4].  Recall from \S \ref{aperiodicity} that the $n$-th desubstitute of $\omega \in \mcA^+$ corresponds to the smallest (subset ordering on support) blocks $\eta \in \mcA^+$ extending $\omega$ after $n$-steps of the iterated function system $T^\bk\mcS$ for $\bk \in [\bZero,\bq)$, or 
	$$\mcS^{-n}(\omega) = \{ (\bj,\eta) \in [\bZero,\bq^n) \times \mcA^+ \tbump{0px}{$:$} \supp(\eta) = \qq{\bj + \supp(\omega)}_n \text{ and } \omega \leq T^\bj \mcS^n(\eta) \}$$
	\noindent By corollary \ref{topological structure}.3, the cylinders of $X_\mcS$ can be decomposed via the iterated function system
	$$[\omega] = \scalebox{1.3}{${\bigsqcup}$}_{(\bj,\eta) \in \mcS^{-n}(\omega)} T^\bj \mcS^n[\eta]$$
	\noindent so that $\bh_\omega^{(n)}$ is counting those pairs $(\bj,\eta)$ for which $\supp(\eta) = \bZero,$ i.e. when $\bj \notin \Delta_n(\supp(\omega)).$  In the case where $\omega$ sends $\bZero \mapsto \alpha$ and $\bk \mapsto \beta,$ this means $\bh_\omega^{(n)}$ is \textit{undercounting}, while the proof of theorem \ref{fourier recursion} shows $\be_{\alpha \beta}^t \sum_{\bj \in [\bZero,\bq^n)} \mcR_\bj^{(n)} \otimes \mcR_{\bj+\bk}^{(n)}$ is \textit{overcounting}, $\mcS^{-n}(\omega)$ each by a bounded factor of $\text{Card}(\Delta_n(\bk)).$  Thus $\big| \be_{\alpha \beta}^t C_\bk^{(n)} - (\bh_\omega^{(n)})^t \big| \to 0$ as $n \to \infty$ and $C_\bk$ exists for $\bk \in \Zd$.
	We now show $C_\bk$ are Fourier coefficients for a matrix of measures $\mcZ \in \mcM_{\mcA^2}(\bbT^d)$.  For $\bw \in \bbT^d$ and $\bk \in \Zd$ write
	$$\textstyle \mcZ_n(\bw) := \frac{1}{Q^n} \sum_{\bi, \bj \in [\bZero,\bq^n)} \mcR_\bj^{(n)} \otimes \mcR_\bi^{(n)} \bw^{\bi - \bj} \tbump{12px}{ where } \bw^\bk = (w_1^{k_1}, \ldots, w_d^{k_d})$$
	\noindent Viewing $\mcZ_n(\bw) d\bw \in \mcM_{\mcA^2}(\bbT^d)$, for each $\alpha \beta, \gamma \delta \in \mcA^2$ the total variation norm of $\be_{\alpha \beta}^t \mcZ_n \be_{\gamma \delta}$ is
	$$\small{ \left\| \be_{\alpha \beta}^t \mcZ_n \be_{\gamma \delta} \right\| = \int_{\bbT^d} \left| \be_{\alpha\beta}^t \mcZ_n(\bw) \be_{\gamma \delta} \right| d\bw  = \frac{1}{Q^n} \int_{\bbT^d} \Big| \sum_{\bi, \bj \in [\bZero,\bq^n)} ( \be_\alpha^t \mcR_\bj^{(n)} \be_\gamma \bw^{-\bj}) (\be_\beta^t \mcR_\bi^{(n)} \be_\delta \bw^\bi) \Big| d\bw}$$
	\noindent Using the Cauchy-Schwartz inequality, this gives (where $s = \text{Card} \mcA$)
	{\small \begin{align*}
		\left\| \be_{\alpha \beta}^t \mcZ_n \be_{\gamma \delta} \right\|  &\leq \frac{1}{Q^n} \Big( \int \big| {\sum}_{\bj \in [\bZero,\bq^n)} \be_\alpha^t \mcR_\bj^{(n)} \be_\gamma \bw^{-\bj} \big|^2 \, d\bw \Big)^{1/2} \Big( \int \big| {\sum}_{\bi \in [\bZero,\bq^n)} \be_\beta^t \mcR_\bi^{(n)} \be_\delta \bw^\bi \big|^2 \, d\bw\Big)^{1/2} \\
			& = \frac{1}{Q^n} \Big( \sum_{\bj \in [\bZero,\bq^n)} (\be_\alpha^t \mcR_\bj^{(n)} \be_\gamma)^2 \Big)^{1/2} \Big( \sum_{\bi \in [\bZero,\bq^n)} (\be_\beta^t \mcR_\bi^{(n)} \be_\delta)^2 \Big)^{1/2}  = \frac{1}{Q^n} (\be_\alpha^t \mcM_\mcS^n \be_\gamma)^{\frac{1}{2}} (\be_\beta^t \mcM_\mcS^n \be_\delta)^{\frac{1}{2}} \leq 1
	\end{align*}}
	\noindent as $\be_\iota^t \mcR_\bk^{(n)} \be_\kappa = 0$ or $1$, and $M_\mcS^n = \sum_{\bj\in [\bZero,\bq^n)} \mcR_\bj^{(n)} $. As $\| \mcZ_n \| \leq s^4,$ the measures $\mcZ_n$ are uniformly bounded and a subsequence converges in the weak-star topology to some measure $\mcZ.$  However, as
	$$\textstyle \widehat{\mcZ_n}(\bk) = \frac{1}{Q^n} \sum_{\bj \in [\bZero,\bq^n)} \mcR_\bj^{(n)} \otimes \mcR_{\bj+\bk}^{(n)}$$
	\noindent the Fourier coefficients all converge to the same limit, so $\mcZ$ is the unique limit point and
	$$\textstyle \widehat{\mcZ}(\bk) = \lim_{n \to \infty} \widehat{\mcZ_n}(\bk) \lim_{n \to \infty} C_\bk^{(n)} =  \lim_{n \to \infty} \frac{1}{Q^n} \sum_{\bj \in [\bZero,\bq^n)} \mcR_\bj^{(n)} \otimes \mcR_{\bj+\bk}^{(n)}$$
	\noindent completing the proof.
\end{proof}
The matrix $\mcZ = (\sigma_{\alpha \beta}^{\gamma \delta})_{\alpha \beta, \gamma \delta \in \mcA^2}$ is the \textit{bicorrelation matrix,} after Queff\'elec, as its components $\sigma_{\alpha \beta}^{\gamma \delta}$ are the correlation measures of the bisubstitution.  Note that in the definition of $\mcZ$, the limit above can be split at the $p$-th scale $\bq$-adicly letting us write
$${\textstyle\widehat{\mcZ}(\bk) = C_\bk = \left(\frac{1}{Q^p} {\sum}_{\qr{\bj}_p \in [\bZero,\bq^p)} \mcR_\bj^{(p)} \otimes \mcR_{\bj+\bk}^{(p)} \right) \lim_{n \to \infty} \left( \frac{1}{Q^{n-p}} {\sum}_{\qq{\bj}_p \in [\bZero,\bq^{n-p})} \mcR_{\qq{\bj}_p}^{(n-p)} \otimes \mcR_{\qq{\bj+\bk}_p}^{(n-p)} \right)} $$
\noindent which gives a Fourier recursion for $\widehat{\mcZ}(\bk)$ similar to theorem \ref{fourier recursion} for $\Sigma$ and letting us write
\begin{equation}\label{Z coefficients}\widehat{\mcZ}(\bk) = \frac{1}{Q^p} {\sum}_{\bj \in [\bZero,\bq^p)} \mcR_\bj^{(p)} \otimes \mcR_{\bj+\bk}^{(p)} \, \widehat{\mcZ}(\qq{\bj+\bk}_p) = \lim_{p \to \infty} \frac{1}{Q^p} {\sum}_{j \in [\bZero,\bq^p)} \mcR_\bj^{(p)} \otimes \mcR_{\bj+\bk}^{(p)} \widehat{\mcZ}(\bZero)\end{equation}
\noindent using lemma \ref{small carries} as in theorem \ref{fourier recursion}.  Thus $\widehat{\mcZ}(\bk) = \widehat{\mcZ}(\bk) \widehat{\mcZ}(\bZero)$ for all $\bk$.  As $\widehat{\mcZ}(\bZero) = \mcP$ by definition,
\begin{equation}\label{P invariance of Z}\mcZ = \mcZ \widehat{\mcZ}(\bZero) = \mcZ \mcP \end{equation}	
\begin{prop}\label{diagonalization of PZ}
	 $\mcP \mcZ$ is \scalebox{0.8}{$\bM_{\mcA^2}(\bbC)$}-diagonalizable with eigenmeasures ergodic for the $\bq$-shift.
\end{prop}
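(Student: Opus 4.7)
The plan is to invoke theorem \ref{ergodic diagonalization}: after a harmless restriction to the image of $\mcP$, the matrix of measures $\mcP\mcZ$ will be an ergodic, $\bq$-shift invariant, nondegenerate matrix of measures, and the conclusion follows.

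First, I would establish $\bq$-shift invariance of $\mcP\mcZ$. Applying the Fourier recursion (\ref{Z coefficients}) at scale one (mirroring (\ref{q invariance of sigma}) for $\Sigma$) gives $\widehat{\mcZ}(\ba\bq) = \frac{1}{Q}\, C_\mcS\, \widehat{\mcZ}(\ba)$. Since $\mcP$ from (\ref{projection matrix}) is the nonorthogonal projection onto the $Q$-eigenspace of $C_\mcS$, it satisfies $\mcP\, C_\mcS = Q\mcP$, so $\widehat{\mcP\mcZ}(\ba\bq) = \mcP\,\widehat{\mcZ}(\ba\bq) = \mcP\,\widehat{\mcZ}(\ba) = \widehat{\mcP\mcZ}(\ba)$, which is the componentwise Fourier characterization of $\bS_\bq$-invariance coming from proposition \ref{spectral qshift identities}.

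Second, I would upgrade invariance to strong mixing (which implies ergodicity) by establishing, for all $\ba,\bb \in \Zd$, the asymptotic identity $\lim_{p \to \infty} \widehat{\mcP\mcZ}(\bb + \ba\bq^p) = \widehat{\mcP\mcZ}(\bb)\,\widehat{\mcP\mcZ}(\ba)$. Applying (\ref{Z coefficients}) at scale $p$, the sum over $\bj \in [\bZero,\bq^p)$ splits into the regular part $\bj \notin \Delta_p(\bb)$, where the first $p$ digits of $\bj+\bb+\ba\bq^p$ agree with those of $\bj+\bb$ (so $\mcR^{(p)}_{\bj+\bb+\ba\bq^p} = \mcR^{(p)}_{\bj+\bb}$ and $\qq{\bj+\bb+\ba\bq^p}_p = \ba$), and the carry part $\bj \in \Delta_p(\bb)$, whose contribution is $O(|\Delta_p(\bb)|/Q^p)$ and thus vanishes by lemma \ref{small carries}. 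The regular part converges to $\widehat{\mcZ}(\bb)\,\widehat{\mcZ}(\ba)$; left-multiplying by $\mcP$ and invoking $\widehat{\mcZ}(\bb) = \widehat{\mcZ}(\bb)\mcP$ from (\ref{P invariance of Z}) yields the stated identity. Density of the characters in $L^2(|\mcP\mcZ|)$ then upgrades this Fourier mixing to the strong mixing condition analogous to (\ref{ergodic MoM}), which gives ergodicity.

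The main obstacle is nondegeneracy: since $\mcP\mcZ(\bbT^d) = \mcP\,\widehat{\mcZ}(\bZero) = \mcP^2 = \mcP$ is a proper projection whenever $C_\mcS$ has eigenvalues other than $Q$, one cannot apply theorem \ref{ergodic diagonalization} to $\mcP\mcZ$ directly. To sidestep this, restrict to the image $V := \text{Im}(\mcP) \subset \bbC^{\mcA^2}$. The identity $\mcZ = \mcZ\mcP$ forces $\mcP\mcZ$ to annihilate $\ker \mcP$, and $\mcP\mcZ$ clearly maps $V$ into $V$, so in a basis adapted to the splitting $\bbC^{\mcA^2} = V \oplus \ker\mcP$ the matrix of measures $\mcP\mcZ$ is block-diagonal with blocks $\widetilde{\mcZ} := \mcP\mcZ|_V$ and $\bZero$. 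Now $\widetilde{\mcZ}(\bbT^d) = \mcP|_V = \text{Id}_V$ is invertible, placing $\widetilde{\mcZ}$ in $\mcM_{\dim V}^\circ$, and the ergodicity and invariance established above restrict componentwise to $\widetilde{\mcZ}$. Theorem \ref{ergodic diagonalization} then diagonalizes $\widetilde{\mcZ}$ over $\bbC$ with ergodic eigenmeasures, and assembling with the trivial block on $\ker\mcP$ produces the $\bM_{\mcA^2}(\bbC)$-diagonalization of $\mcP\mcZ$ whose eigenmeasures are $\bq$-shift ergodic, as claimed.
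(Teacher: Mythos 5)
Your proof follows the same route as the paper's: $\bq$-shift invariance via the $p=1$ Fourier recursion (\ref{Z coefficients}) combined with $\mcP C_\mcS = Q\mcP$, strong mixing via the scale-$p$ recursion split along the carry sets $\Delta_p(\bb)$ with lemma \ref{small carries} and a final $\mcP$-insertion using $\mcZ = \mcZ\mcP$, and degeneracy handled by restricting to $\text{Im}(\mcP)$ before invoking theorem \ref{ergodic diagonalization}. This is essentially identical to the paper's argument; the only cosmetic difference is that the paper first factors the $n+p$ scale sum via the mixed product property before estimating the carry contribution, whereas you apply the recursion directly at scale $p$ and cite proposition \ref{coefficients exist} for the outer limit.
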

\begin{proof}
	First, we show that $\mcP\mcZ$ has the required ergodic properties. For $\ba \in \Zd$ use (\ref{Z coefficients}) to write	
\begin{equation}\label{PZ invariant}\widehat{\mcP \mcZ}(\ba \bq) = \frac{1}{Q}\mcP {\sum}_{\bj \in [\bZero,\bq)} \mcR_\bj \otimes \mcR_{\bj + \ba \bq} \mcZ(\qq{\bj+\ba\bq}_1) =  \frac{1}{Q}\mcP {\sum}_{\bj \in [\bZero,\bq)} \mcR_\bj \otimes \mcR_\bj \widehat{\mcZ}(\ba) =  \widehat{\mcP \mcZ}(\ba) \end{equation}
\noindent by the same operations used in \S \ref{spectral theory} to show (\ref{q invariance of sigma}) and the definition (\ref{projection matrix}) of $\mcP$, so that $\mcP \mcZ$ is $\bq$-shift invariant.  Fixing $\ba, \bb \in \Zd$ and writing $\bk \in [\bZero, \bq^{n+p})$ as $\bi + \bj \bq^p$ for $\bZero \leq \bi < \bq^p$ and $\bZero \leq \bj < \bq^n,$
{\small \begin{align*}
	\widehat{\mcZ}(\bb + \ba \bq^p) &= \lim_{n \to \infty} \frac{1}{Q^{n+p}} {\sum}_{\scalebox{0.7}{\small{$\bi \in [\bZero,\bq^p), \, \bj \in [\bZero,\bq^n)$}}} \mcR_{\bi + \bj \bq^p}^{(n+p)} \otimes \mcR_{\bi + \bj \bq^p + \bb + \ba \bq^p}^{(n+p)} \\
		&= \lim_{n \to \infty} \frac{1}{Q^{n+p}} {\sum}_{\scalebox{0.7}{\small{$\bi \in [\bZero,\bq^p), \, \bj \in [\bZero,\bq^n)$}}} \mcR_\bi^{(p)} \mcR_\bj^{(n)} \otimes \mcR_{\bi + \bb}^{(p)} \mcR_{\bj + \ba + \qq{\bi+\bb}_p}^{(n)}
\end{align*}}\noindent using the identities of proposition \ref{qsub}.  Using the mixed product property of the Kronecker product
$$\scalebox{0.9}{$\small{\lim_{n \to \infty}\frac{1}{Q^{n+p}} \sum_{\bi \in [\bZero,\bq^p)} \mcR_\bi^{(p)} \otimes \mcR_{\bi + \bb}^{(p)} \sum_{\bj \in [\bZero,\bq^n)} \mcR_\bj^{(n)} \otimes \mcR_{\bj + \ba + \qq{\bi+\bb}_p}^{(n)} =  \frac{1}{Q^p} \sum_{\bi \in [\bZero,\bq^p)} \mcR_\bi^{(p)} \otimes \mcR_{\bi + \bb}^{(p)} \widehat{\mcZ}(\ba + \qq{\bi+\bb}_p)}$}$$
As $\mcZ = \mcZ \mcP,$ we let $p \to \infty$ and use lemma \ref{small carries} again as in the proof of theorem \ref{fourier recursion} and obtain
\begin{equation}\label{PZ mixing}
	\textstyle \lim_{p \to \infty} \widehat{\mcP \mcZ}(\bb + \ba \bq^p) = \widehat{\mcP \mcZ}(\bb) \widehat{\mcP \mcZ}(\ba)
\end{equation}
\noindent so that $\mcP\mcZ$ defines a matrix of measures which is ergodic for the $\bq$-shift on $\bbT^d$, see (\ref{ergodic MoM}) in \S \ref{matrix diagonalization}.

To finish, we need to extend theorem \ref{ergodic diagonalization} to $\mcP\mcZ$ which allows for diagonalization of nondegenerate ergodic matrices of measures: $\mcP \mcZ$ is often degenerate, as $\mcP$ is a projection operator.  As $\mcP \mcZ = \mcP \mcZ \mcP$ by (\ref{P invariance of Z}),  $\mcP \mcZ$ is zero on the kernel of $\mcP,$ and will be diagonal on that subspace with respect to any basis for the kernel of $\mcP.$  Thus, we can restrict to the image of $\mcP,$ where $\mcP$ is the identity.  This implies that $\mcP \mcZ(\bbT^d) = \mcP \widehat{\mcZ}(\bZero) = \mcP^2 = \mcP$ is the identity on the image of $\mcP$, so that we may consider $\mcP \mcZ \in \mcM_n^\circ$ as an \textit{operator valued measure} on the image of $\mcP.$  As $\mcP\mcZ$ is ergodic for the $\bq$-shift, theorem \ref{ergodic diagonalization} tells us that $\mcP\mcZ$ is diagonalizable with respect to the image of $\mcP.$  As both the image and the kernel are invariant subspaces for $\mcP \mcZ$, it follows that $\mcP\mcZ$ is diagonalizable.  Ergodicity of the eigenmeasures then follows from the ergodic property of $\mcP\mcZ.$
\end{proof}
\begin{cor}
	Both $\mcZ$ and $\mcP$ preserve strong semipositivity on $\bbC^{\mcA^2}.$
\end{cor}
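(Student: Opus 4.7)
The plan is to reduce both claims to a single algebraic fact about the Kronecker product via identity (\ref{associated kronecker product}), namely $(A\otimes B)^\circ \bv = A\mathring{\bv} B^t$, combined with the observation that all instruction matrices $\mcR_\bj^{(n)}$ are real. Throughout, recall that $\bv\stpos 0$ means $\mathring{\bv}\in\bM_\mcA(\bbC)$ is positive semidefinite; for a (matrix-valued) measure $\Theta$ on $\bbT^d$, we will interpret strong semipositivity pointwise on Borel sets, equivalently by integration against nonnegative test functions.

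For $\mcP$, the argument is essentially immediate. For any $n$ and $\bj$, applying $(A\otimes A)^\circ \bv = A\mathring{\bv} A^t$ with $A=\mcR_\bj^{(n)}$ real yields $(\mcR_\bj^{(n)}\otimes \mcR_\bj^{(n)}\bv)^\circ = \mcR_\bj^{(n)} \mathring{\bv}(\mcR_\bj^{(n)})^t$, which is positive semidefinite whenever $\mathring{\bv}$ is. Averaging over $\bj\in[\bZero,\bq^n)$, dividing by $Q^n$, and letting $n\to\infty$ preserves positive semidefiniteness since the cone of positive semidefinite matrices is convex and closed, and by the limit definition (\ref{projection matrix}) of $\mcP$ this gives $(\mcP\bv)^\circ\succeq 0$.

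For $\mcZ$, the key is to exploit the factored form of $\mcZ_n$ used in the proof of proposition \ref{coefficients exist}. Setting $P_n(\bw):=\sum_{\bj\in[\bZero,\bq^n)}\mcR_\bj^{(n)}\bw^\bj$, one checks directly that
\[
\mcZ_n(\bw)=\tfrac{1}{Q^n}\,P_n(\bw^{-1})\otimes P_n(\bw),
\]
so by the associated-matrix identity, $(\mcZ_n(\bw)\bv)^\circ = \tfrac{1}{Q^n} P_n(\bw^{-1})\mathring{\bv}\, P_n(\bw)^t$. On $\bbT^d$ we have $\bw^{-1}=\overline{\bw}$ componentwise, and since each $\mcR_\bj^{(n)}$ is real this gives $P_n(\bw^{-1})=\overline{P_n(\bw)}$. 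For any Hermitian positive semidefinite $M$ and any complex matrix $P$, the matrix $\overline{P}\,M\,P^t$ is Hermitian positive semidefinite: Hermiticity is direct from $M^*=M$, and writing $\bx=P^t\bu$ one computes $\bu^*\overline{P}MP^t\bu = \bx^*M\bx\ge 0$. Applied with $M=\mathring{\bv}$, this shows $(\mcZ_n(\bw)\bv)^\circ$ is pointwise positive semidefinite, so $(\mcZ_n \bv)^\circ\,d\bw$ is a positive semidefinite matrix-valued measure on $\bbT^d$.

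Finally, from the proof of proposition \ref{coefficients exist} the measures $\mcZ_n\,d\bw$ converge to $\mcZ$ in weak-$\ast$, hence $(\mcZ_n\bv)^\circ d\bw\to(\mcZ\bv)^\circ$ weak-$\ast$. Testing against an arbitrary nonnegative $f\in C(\bbT^d)$ gives $\int f\,d(\mcZ\bv)^\circ = \lim_n\int f(\mcZ_n\bv)^\circ d\bw$, a limit of positive semidefinite matrices, so positive semidefinite; thus $\mcZ\bv\stpos 0$. The only delicate point is the justification that $\overline{P}MP^t\succeq 0$ when $M\succeq 0$ (with $P$ complex and no conjugate transpose), but the change of variable $\bx=P^t\bu$ handles this cleanly; everything else is convexity of the PSD cone together with the factored form of $\mcZ_n$ and realness of the instruction matrices.
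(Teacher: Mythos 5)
Your proof is correct, and it takes a genuinely different route from the paper. You handle $\mcP$ directly from the limit definition (\ref{projection matrix}), pushing $\mcR_\bj^{(n)}\otimes\mcR_\bj^{(n)}$ through the associated-matrix identity (\ref{associated kronecker product}) to get $\mcR_\bj^{(n)}\mathring{\bv}(\mcR_\bj^{(n)})^t$, whereas the paper deduces the $\mcP$ statement from the $\mcZ$ statement via $\mcP = \mcZ(\bbT^d)$. More substantively, for $\mcZ$ you exploit the factored Riesz-product form $\mcZ_n(\bw)=\frac{1}{Q^n}P_n(\bw^{-1})\otimes P_n(\bw)$ with $P_n(\bw)=\sum_\bj\mcR_\bj^{(n)}\bw^\bj$, so that $(\mcZ_n(\bw)\bv)^\circ = \frac{1}{Q^n}\overline{P_n(\bw)}\mathring{\bv}P_n(\bw)^t$ is pointwise positive semidefinite by the change of variable $\bx = P_n(\bw)^t\bu$, and then pass to the weak-$\ast$ limit. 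The paper instead reduces via the spectral decomposition $\mathring{\bv}=\sum\lambda_\kappa\bw_\kappa\bw_\kappa^\ast$ to rank-one coefficients and proves the auxiliary Lemma \ref{Ztilde pos def} (that the re-indexed matrix $\widetilde{\mcZ}$ is positive definite) by exhibiting a positive-definite sequence of Fourier coefficients and invoking Bochner's theorem. Your argument is more self-contained at the level of this corollary and avoids both the spectral decomposition of $\mathring{\bv}$ and the Bochner step; the trade-off is that Lemma \ref{Ztilde pos def} is reused in the paper (notably in Corollary \ref{trace Z generates sigmaX}), so the paper's detour pays for itself, while your route yields the corollary more cheaply but would still require a separate argument for $\widetilde{\mcZ}\succeq 0$ later. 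The small points you flag (realness of the instruction matrices, $\bw^{-1}=\overline{\bw}$ on $\bbT^d$, closedness and convexity of the PSD cone, and that $\overline{P}MP^t$ is Hermitian PSD when $M$ is) are all correctly handled.
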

\begin{proof}
	As $\mcP = \widehat{\mcZ}(\bZero) = \mcZ(\bbT^d)$ and strong semipositivity of a vector-valued measure is determined by its value on sets, the statement for $\mcP$ is inherited from $\mcZ.$  Letting $\bv$ be strongly semipositive, we must show that $\mcZ \bv = \sum_{\gamma \delta \in \mcA^2} v_{\gamma \delta} \sigma_{\alpha \beta}^{\gamma \delta}$ gives rise to a positive definite matrix of measures, or
	$$\textstyle \bz^\ast \mathring{(\mcZ \bv)} \bz = \sum_{\alpha, \beta \in \mcA} \, \sum_{\gamma \delta \in \mcA^2} \, \overline{z_\alpha} \, z_\beta \, v_{\gamma \delta} \, \sigma_{\alpha \beta}^{\gamma \delta}$$
	\noindent	is a positive measure for nonzero $\bz \in \bbC^\mcA.$  As $\mathring{\bv} \stpos 0,$ there exists a orthonormal basis $\{\bw_\kappa\}_{\kappa \in \mcA}$ of eigenvectors with eigenvalues $\lambda_\kappa \geq 0$ (for $\kappa \in \mcA$) such that $\mathring{\bv} = \sum \lambda_\kappa \bw_\kappa \bw_\kappa^\ast.$ By the above and nonnegativity of the $\lambda_\kappa,$ it suffices to show for $\bw \in \bbC^\mcA$ the expression  $\sum_{\alpha,\beta,\gamma, \delta \in \mcA} \, \overline{z_\alpha} \, z_\beta \, \overline{w_\gamma}\, w_\delta \, \sigma_{\alpha \beta}^{\gamma \delta}$ is a positive measure.   Let $\tilde{\mcZ} = (\sigma_{\alpha \beta}^{\gamma \delta})_{\alpha \gamma, \beta \delta \in \mcA^2},$ noting the change in indexing from $\mcZ = (\sigma_{\alpha \beta}^{\gamma \delta})_{\alpha \beta, \gamma \delta \in \mcA^2}$.
\begin{lem}\label{Ztilde pos def}
	The matrix $\widetilde{\mcZ}$ is a positive definite matrix of measures.
\end{lem}
\begin{proof}
	Let $\{t_{\alpha \gamma}\}_{\alpha \gamma \in \mathcal{A}^2} \subset \bbC$ and denote $\nu := \sum_{\alpha \gamma, \beta \delta} t_{\alpha \gamma} \overline{t_{\beta \delta}} \sigma_{\alpha \beta}^{\gamma \delta}.$  We show that $\nu$ is a positive measure by showing that its Fourier coefficients form a positive definite sequence, and appealing to Bochner's theorem.  Fixing $n > 0$ and $\{ a_\bj \}_{\bj \in [\bZero,\bq^n)} \subset \bbC$, we must show that the measure
	$${\sum}_{\bj,\bk \in [\bZero,\bq^n)} a_\bk \, \overline{a_\bj} \, \widehat{\nu}(\bj - \bk) = {\sum}_{\alpha \gamma, \beta \delta} \, {\sum}_{\bj, \bk \in [\bZero,\bq^n)} a_\bk  \, \overline{a_\bj } \, t_{\alpha \gamma} \, \overline{t_{\beta \delta}} \, \widehat{\sigma_{\alpha \beta}^{\gamma \delta}}(\bj - \bk)$$
	\noindent is nonnegative.  For $\bj, \bk \in \Zd,$ one can use the Kronecker product to write
	{\small \begin{equation*}
		\widehat{\sigma_{\alpha \beta}^{\gamma\delta}}(\bj - \bk) = \lim_{n \to \infty} \frac{1}{Q^n} {\sum}_{\bi \in [\bZero,\bq^n)} \be_{\alpha \beta}^t \mcR_\bi^{(n)} \otimes \mcR_{\bi + \bj - \bk}^{(n)} \be_{\gamma \delta} = \lim_{n \to \infty} \frac{1}{Q^n} {\sum}_{\bi \in [\bZero,\bq^n)} (\be_\alpha^t \mcR_{\bi + \bk}^{(n)} \be_\gamma) (\be_\beta^t \mcR_{\bi + \bj}^{(n)} \be_\delta)
	\end{equation*}}
	\noindent as the definition of the $\mcR^{(n)}$ give invariance of the sum under translation in its index ($\bi \mapsto \bi + \bk$) and the defining property of the Kronecker product.  Thus, for any $n > 0$, if $\{a_\bj\}_{\bj \in [\bZero, \bq^n)} \subset \bbC$,
	{\small \begin{align*}
		\sum_{\bj,\bk \in [\bZero,\bq^n)} a_\bj \overline{a_\bk} \bump \widehat{\nu}(\bj - \bk) & = \lim_{n \to \infty} \frac{1}{Q^n} \sum_{\alpha \gamma, \beta \delta}\, \sum_{\bj, \bk \in [\bZero,\bq^n)} \, \sum_{\bi \in [\bZero,\bq^n)} \left( (a_\bk \, t_{\alpha \gamma} \, \be_\alpha^t \mcR_{\bi+\bk}^{(n)} \be_\gamma) (\overline{a_\bj \, t_{\beta \delta}} \, \be_\beta^t \mcR_{\bi + \bj}^{(n)} \be_\delta) \right) \\
		&= \lim_{n \to \infty} \frac{1}{Q^n} \Big\| {\sum}_{\bi, \bk \in [\bZero,\bq^n)} \, {\sum}_{\gamma \alpha \in \mcA^2} a_\bi \, t_{\gamma \alpha} \, \be_\alpha^t \mcR_{\bi+\bk}^{(n)}\be_\gamma \Big\|^2 \geq 0
	\end{align*}}
	\noindent and so $\{\widehat{\nu}(\bk)\}_{\bk \in \Zd}$ forms a positive definite $\Zd$-sequence.  By Bochner's theorem, $\nu$ is a positive measure on $\bbT^d,$ and it follows that $\widetilde{\mcZ}$ is a positive definite matrix of measures.
\end{proof}
\noindent As our goal was to show that $\sum_{\alpha,\beta,\gamma, \delta \in \mcA} \, \overline{z_\alpha} \, z_\beta \, \overline{w_\gamma}\, w_\delta \, \sigma_{\alpha \beta}^{\gamma \delta}$ is a positive measure, the result follows by setting $t_{\alpha \gamma} = z_\alpha w_\gamma$ and applying the above.
\end{proof}
\begin{cor}\label{positive diagonalization}
	 $\mcP \mcZ$ is diagonalizable with respect to strongly semipositive eigenvectors in $\bbC^{\mcA^2}$.
\end{cor}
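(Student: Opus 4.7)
The plan is to use the spectral projections arising from the diagonalization in Proposition \ref{diagonalization of PZ} together with the preservation of strong semipositivity established in the preceding corollary. By Proposition \ref{diagonalization of PZ}, $\mcP\mcZ$ is $\bM_{\mcA^2}(\bbC)$-diagonalizable with eigenmeasures ergodic for the $\bq$-shift; the distinct nonzero ones are therefore mutually singular. I will choose pairwise disjoint Borel sets $A_j \subset \bbT^d$ with $\lambda^{(j)}(A_j) = 1$ and $\lambda^{(i)}(A_j) = 0$ for $i \neq j$ (one for each distinct nonzero eigenmeasure), so that in the diagonalizing basis $\mcP\mcZ(A_j)$ is the scalar diagonal matrix with $1$ in all positions corresponding to the $\lambda^{(j)}$-eigenspace and $0$ elsewhere. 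Thus $P_j := \mcP\mcZ(A_j)$ is precisely the spectral projection onto the $\lambda^{(j)}$-eigenspace of $\mcP\mcZ$.

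The key observation is that $P_j = \mcP \cdot \mcZ(A_j)$ preserves strong semipositivity: since the preceding corollary shows $\mathring{\mcZ \bv}$ is a positive definite matrix of measures whenever $\bv \stpos 0$, its evaluation at $A_j$ is a positive semidefinite matrix, so $\mcZ(A_j)$ preserves strong semipositivity, and the same corollary handles $\mcP$. To extract a basis of strongly semipositive eigenvectors, I will use that the strongly semipositive cone $\bbC$-spans $\bbC^{\mcA^2}$: via $\mathring{\cdot}$ it corresponds to the cone of positive semidefinite Hermitian matrices in $\bM_\mcA(\bbC)$, which $\bbR$-spans the Hermitian matrices and hence $\bbC$-spans $\bM_\mcA(\bbC)$. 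Applying the linear map $P_j$, the $\bbC$-span of $P_j(\text{cone})$ equals $\text{image}(P_j)$, so $P_j(\text{cone})$ contains a $\bbC$-basis of $\text{image}(P_j)$ consisting of strongly semipositive eigenvectors of $\mcP\mcZ$ with eigenmeasure $\lambda^{(j)}$. Taking the union over $j$ gives a basis of $\text{image}(\mcP)$ of strongly semipositive eigenvectors; extending by any basis of $\ker(\mcP) \subseteq \ker(\mcP\mcZ)$ completes the diagonalization.

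The most delicate step is the last extraction when eigenspaces have dimension greater than one, which occurs if multiple ergodic classes of $\mcS \otimes \mcS$ share the same ergodic eigenmeasure. One must confirm that a Hermitian-valued convex cone whose complex span equals the eigenspace actually contains a complex basis of that eigenspace; this is routine once the $\bbC$-spanning property is established, but it is the reason for applying $P_j$ to the full strongly semipositive cone rather than to a single convenient vector such as $\bOne_{\mcA^2}$, which would suffice only if the eigenspaces were all one-dimensional.
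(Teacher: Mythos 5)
Your proposal is correct and follows the same essential strategy as the paper: both proofs construct the spectral projections $P_j$ of $\mcP\mcZ$ onto the individual eigenspaces, show that each $P_j$ preserves the strongly semipositive cone, and then use the fact that this cone $\bbC$-spans $\bbC^{\mcA^2}$ to extract a strongly semipositive basis of each eigenspace. The only real difference is cosmetic: you realize $P_j$ by evaluating $\mcP\mcZ$ at a Borel set $A_j$ carrying $\lambda^{(j)}$ and null for the other eigenmeasures, whereas the paper obtains the same projection via the $\mcL$-space projection $D_{\lambda}(\mcP\mcZ)=\lambda P_{\lambda}$; your version is a bit more concrete and avoids the auxiliary $D_{\lambda}$ machinery. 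One small point you elide that the paper addresses explicitly: Proposition \ref{diagonalization of PZ} a priori gives eigenmeasures that are only constant complex multiples of ergodic probability measures, so one must check the normalization (the paper notes $\widehat{\mcP\mcZ}(\bZero)=\mcP$ is the identity on $\mathrm{image}(\mcP)$, forcing $\widehat{\lambda}(\bZero)=1$) before writing $\lambda^{(j)}(A_j)=1$ and concluding $\mcP\mcZ(A_j)$ is literally a projection. That said, your closing observation that one should simply extend by an arbitrary basis of $\ker(\mcP)$, without claiming strong semipositivity there, is actually more careful than the paper's own phrasing, which tacitly asserts the full basis is strongly semipositive; for Thue--Morse one can check directly that $\ker(\mcP)$ contains no nonzero strongly semipositive vectors.
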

\begin{proof}
	We know by proposition \ref{diagonalization of PZ} that $\mcP\mcZ$ can be diagonalized, all that remains is the choice of eigenvectors.  Note that \textit{a priori} the eigenmeasures are complex valued.  Being ergodic, however, they must be constant complex multiplies of positive $\bq$-shift invariant measures.  As $\widehat{\mcP\mcZ}(\bZero) = \mcP$ is the identity on the image of $\mcP,$ it follows that $\widehat{\lambda}(\bZero) = 1$ for every eigenmeasure $\lambda$ of $\mcP \mcZ.$  Thus the eigenmeasures are in fact probability measures, and therefore are either equal or mutually singular by ergodic decomposition.  Fix an eigenmeasure $\lambda$ of $\mcP \mcZ,$ and let $D_\lambda$ be the projection onto $\mcL(\lambda),$ the $\mcL$-space of measures absolutely continuous w.r.t. $\lambda$.    As $\mcP$ and $\mcZ$ preserve strong semipositivity, $D_\lambda(\mcP \mcZ)$ preserves strong semipositivity as well, as this property is determined pointwise on Borel sets.  Thus, the matrix of measures $D_\lambda(\mcP \mcZ)$ is similar (via the same similarity diagonalizing $\mcP \mcZ$) to a diagonal matrix of measures, with $\lambda$ or $\bZero$ on the diagonal, and so $D_\lambda(\mcP\mcZ) = \lambda P_\lambda$ for some projection operator $P_\lambda \in \bM_{\mcA^2}(\bbC).$  This implies that $P_\lambda$ preserves strong semipositivity, and as strongly semipositive vectors span $\bbC^{\mcA^2}$ (as $\bM_n(\bbC)$ is the $\bbC$-span of the (semi)positive definite matrices) it follows that the image of $P_\lambda$ is spanned by strongly semipositive vectors.  Thus the eigenspace corresponding to each eigenmeasure $\lambda$ is spanned by its strongly semipositive vectors, and we can therefore choose a basis of strongly semipositive eigenvectors for $\mcP\mcZ,$ as desired.
\end{proof}
\begin{prop}\label{Z lower triangular}
	There is a strongly semipositive basis in which $\mcZ$ is similar to
	\scalebox{0.9}{$\tiny{\begin{pmatrix} \Lambda & \bZero \\ \mcW & \bZero \end{pmatrix}}$} and
	where $\Lambda$ is a diagonal matrix of measures with $|\mcW| \ll \bomega_\bq \ast |\Lambda|$.
\end{prop}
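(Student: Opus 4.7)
The plan is to construct the claimed block decomposition using the splitting $\bbC^{\mcA^2} = \mathrm{im}\,\mcP \oplus \ker\mcP$, where $\mcP$ is the $Q$-eigenprojector of the coincidence matrix $C_\mcS$. Since (\ref{P invariance of Z}) gives $\mcZ = \mcZ\mcP$, every $\bu \in \ker\mcP$ satisfies $\mcZ\bu = \mcZ\mcP\bu = 0$, so the $\ker\mcP$-columns of $\mcZ$ in any such block basis vanish, producing the right-hand block of zeros. The $\mathrm{im}\,\mcP$-part of the basis is furnished by Corollary \ref{positive diagonalization}: strongly semipositive vectors $\bv_1,\ldots,\bv_k$ spanning $\mathrm{im}\,\mcP$ and diagonalizing $\mcP\mcZ$ there, with ergodic $\bq$-shift probability eigenmeasures $\lambda_1,\ldots,\lambda_k$ (Proposition \ref{diagonalization of PZ}); this is paired with any basis $\bu_1,\ldots,\bu_{s-k}$ of $\ker\mcP$ to complete the block basis.

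In this basis, the $\bv_j$-coordinate of $\mcZ\bv_i$ equals the $\bv_j$-coordinate of $\mcP\mcZ\bv_i = \lambda_i\bv_i$, which vanishes for $j \neq i$ and equals $\lambda_i$ for $j = i$; thus $\Lambda = \mathrm{diag}(\lambda_1,\ldots,\lambda_k)$ occupies the top-left block. The entries of $\mcW$ record the $\ker\mcP$-components $(I-\mcP)\mcZ\bv_i$ in the $\bu_l$-coordinates, and these are the measures whose absolute continuity must be controlled.

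For $|\mcW| \ll \bomega_\bq \ast |\Lambda|$ I start with a Fourier-side contraction identity. Applying $(I-\mcP)$ to the $p=1$ instance of (\ref{Z coefficients}), and using that $\mcP$ commutes with $C_\mcS$ with $\mcP C_\mcS = Q\mcP$, the operator $\bR := Q^{-1}C_\mcS|_{\ker\mcP}$ satisfies the recursion
\[
\widehat{\mcW}(\ba\bq) \;=\; \bR\,\widehat{\mcW}(\ba), \qquad \ba \in \Zd.
\]
By lemma \ref{perron decomposition} the spectral radius of $\bR$ is strictly less than $1$, so $\widehat{\mcW}(\ba\bq^n) = \bR^n\widehat{\mcW}(\ba)$ decays geometrically in $n$, matching the $\bq$-adic scaling on which $\widehat{\bomega_\bq}$ is concentrated.

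The hardest step will be lifting these Fourier identities to absolute continuity at the level of measures. My intended route, in parallel with Proposition \ref{coefficients exist}, is to iterate (\ref{Z coefficients}) at scale $p$ to express $\widehat{\mcW}(\bk)$ as a sum over $\bj\in[\bZero,\bq^p)$ of weighted evaluations of $\widehat{\Lambda}$ at $\qq{\bj+\bk}_p$: the ``no-carry'' indices $\bj \notin \Delta_p(\bk)$ contribute $\bnu_{\bq^p}$-style averaged translates of $\Lambda$, while the ``carry'' indices $\bj \in \Delta_p(\bk)$ contribute an error vanishing as $p\to\infty$ by lemma \ref{small carries}. The contraction from $\bR$ provides absolute summability across scales, and the weak-$\ast$ limit realizes $\mcW$ as dominated in total variation by a convergent combination of $\bnu_{\bq^n}\ast|\Lambda|$, which is in turn absorbed in $\bomega_\bq\ast|\Lambda|$. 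The main technical obstacle is ensuring that the off-lattice behavior of $\mcW$ is fully captured by this $\bq$-adic convolution, rather than requiring absolute continuity with respect to a strictly finer measure.
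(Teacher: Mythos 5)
Your block-basis setup is exactly the paper's: split $\bbC^{\mcA^2}=\mathrm{im}\,\mcP\oplus\ker\mcP$, get the zero right block from $\mcZ=\mcZ\mcP$, and use Corollary \ref{positive diagonalization} for the strongly semipositive eigenbasis of $\mathrm{im}\,\mcP$. Your Fourier contraction $\widehat{\mcW}(\ba\bq)=\bR\,\widehat{\mcW}(\ba)$ with $\bR=Q^{-1}C_\mcS|_{\ker\mcP}$ is also correct and is essentially the relation $\bS_\bq\mcZ=\tfrac{1}{Q}C_\mcS\mcZ$ that the paper uses.

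But the remaining part of your plan does not close, and the obstacle you flag at the end is a real one. Two problems. First, the scale-$p$ recursion (\ref{Z coefficients}) writes $\widehat{\mcZ}(\bk)$ in terms of $\widehat{\mcZ}(\qq{\bj+\bk}_p)$ with weights $\mcR_\bj^{(p)}\otimes\mcR_{\bj+\bk}^{(p)}$; these weights do \emph{not} commute with $\mcP$ when $\bj\neq\bj+\bk$, so projecting onto $\ker\mcP$ does not produce a self-contained recursion expressing $\widehat{\mcW}(\bk)$ in terms of $\widehat{\Lambda}$. The mixed terms couple the two blocks and there is no ``weighted translates of $\Lambda$'' expansion for $\mcW$ as written. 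Second, even granting geometric decay of $\widehat{\mcW}(\ba\bq^n)$, exponential decay of Fourier coefficients along a $\bq$-adic lattice does not imply $|\mcW|\ll\bomega_\bq\ast|\Lambda|$: a measure can have that decay and still be mutually singular to every $\bnu_{\bq^n}\ast\lambda_i$.

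The ingredient that your argument is missing — and which the paper's proof uses crucially — is positivity. The paper takes $\bw:=\sum_i\bw_i$ (a strongly semipositive vector in $\mathrm{im}\,\mcP$) and notes that $\mcZ\bw$ is strongly semipositive because $\mcZ$ preserves strong semipositivity. Projecting with $D^\perp$ onto the $\mcL$-space singular to $\bomega_\bq\ast|\Lambda|$ preserves strong semipositivity (it is a pointwise operation and the $\lambda_i$ lie in the complementary $\mcL$-space), so $D^\perp\mcW\bw=D^\perp(\mcZ\bw)$ is again strongly semipositive. The pushforward relation then shows $\bS_\bq^n D^\perp\mcW\bw=\tfrac{1}{Q^n}C_\mcS^n D^\perp\mcW\bw\to\mcP D^\perp\mcW\bw=D^\perp\mcP\mcW\bw=\bZero$, and since pushforward preserves the $\bZero$-th Fourier coefficient, $\widehat{D^\perp\mcW\bw}(\bZero)=\bZero$. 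A strongly semipositive matrix of measures with zero total mass is zero. This is the step that converts the purely Fourier information into actual absolute continuity, and it is exactly the content your plan cannot reproduce. If you want to salvage your approach, you would still need to introduce this positivity-plus-zero-mass argument; the Fourier contraction alone is not enough.
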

\begin{proof}
	The specified basis is provided by corollary \ref{positive diagonalization}.  Let $\bw_1, \ldots, \bw_n$ be the strongly semipositive eigenvectors for nonzero eigenmeasures of $\mcP\mcZ$, and $\tilde{\bw}_{n+1}, \ldots, \tilde{\bw}_{\mcA^2}$ those corresponding to the zero eigenmeasures.   Then $\Lambda$ corresponds to $\mcZ$ on the span of the $\bw_j$ and $\mcW$ corresponds to $\mcZ$ on the span of the $\tilde{\bw}_j.$  That the last block column of $\mcZ$ is zero follows as $\mcZ = \mcZ \mcP.$  That $\Lambda$ is diagonal follows as $\mcP$ is the identity on the span of the $\bw_j,$ which diagonalize $\mcP \mcZ.$
	
	We now show that  $|\mcW| \ll \bomega_\bq \ast |\Lambda|$: let $\bL = \mcL(\bomega_\bq \ast |\Lambda|)$ be those measures absolutely continuous with respect to $ \sum \bomega_\bq \ast \lambda$, summed over the eigenmeasures $\lambda$ of $\mcP\mcZ$.  Recalling that $\bS_\bq$ is the $\bq$-shift on $\bbT^d$ sending $\bz \mapsto \bz^\bq$, we write $\bS_\bq \mu := \mu \circ \bS_\bq^{-1}$ for measures $\mu \in \mcM(\bbT^d).$  Note that the support of $|\Lambda| \ast \bomega_\bq$ is $\bS_\bq$-invariant, as the $\bq$-adic rationals and the support of $\Lambda$ are $\bS_\bq$-invariant.  Similarly, the null sets of $|\Lambda| \ast \bomega_\bq$ are also $\bS_\bq$-invariant, so that $\bL$ and $\bL^\perp$ are $\bS_\bq$-invariant $\mcL$-spaces of measures; see \S \ref{measures appendix} and also [\ref{queffelec}, Lemma 10.4]. Let $D$ and $D^\perp$ represent the projections onto the $\mcL$-spaces $\bL$ and $\bL^\perp$ respectively.  Writing $\bw := \sum \bw_i,$ then $\mcP \bw = \bw$ implies $C_\mcS \bw = Q \bw$ and
	$$\textstyle \mcZ \bw = \sum \lambda_i \bw_i + \mcW \bw \tbump{8px}{ $\implies$ } C_\mcS \mcZ \bw = Q \sum \lambda_i \bw_i + C_\mcS \mcW \bw \tbump{8px}{ and } \mcP \mcW \bw = \bZero$$
	\noindent Using identity (\ref{Z coefficients}) and $\widehat{\bS_\bq\mu}(\ba) = \widehat{\mu}(\ba \bq),$ one checks that $\bS_\bq \mcZ = \frac{1}{Q}C_\mcS \mcZ,$ and so
	$$\textstyle \bS_\bq \mcZ \bw = \sum \lambda_i \bw_i + \frac{1}{Q}\bS_\bq \mcW \bw \tbump{6px}{ $\implies$ } \frac{1}{Q}C_\mcS \mcW \bw = \bS_\bq \mcW \bw \tbump{6px}{ $\implies$ } \frac{1}{Q^n} C_\mcS^n D^\perp \mcW \bw = \bS_\bq^n D^\perp \mcW \bw$$
	\noindent as $\bL$ and $\bL^\perp$ are $\bS_\bq$-invariant.  As $\mcP \mcW \bw = \bZero,$ this implies that $\bS_\bq^n(D^\perp \mcW \bw) \to \bZero$ in norm.   Thus
	$$\widehat{\bS_\bq^n D^\perp \mcW \bw}(\bZero) = \widehat{D^\perp \mcW \bw}(\bZero) = \bZero$$
	\noindent As strong-positivity is determined pointwise and the $\lambda_i \in \bL,$ $D^\perp(\mcZ\bw) = D^\perp \mcW \bw$ is strongly semipositive, and thus $\bZero.$  It follows that $\mcW \bw = D(\mcW \bw).$  As the eigenvectors of the nonzero eigenmeasures of $\mcP \mcZ$ span the image of $\mcP,$ it follows that $|\mcW| \ll |\Lambda \ast \bomega_\bq|,$ completing the proof.
\end{proof}

The following lets us compute the eigenmeasures of $\mcP\mcZ$, see \S \ref{spectral hull} for a description of $\mcK^\ast.$
\begin{prop}\label{spectrum of PZ is given by K}
	The map $\bv \mapsto \lambda_\bv = \bv^t \Sigma$ takes $\mcK^\ast$ onto the eigenmeasures of $\mcP \mcZ.$
\end{prop}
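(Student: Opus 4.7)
The plan is to build an explicit correspondence between $\mcK^\ast$ and the ergodic eigenmeasures of $\mcP\mcZ$ by transferring the strongly semipositive diagonalization of proposition \ref{Z lower triangular} to the left eigenvectors of $\mcZ$. Let $\{\bw_j\}$ be the strongly semipositive basis in which $\mcZ$ takes the lower block-triangular form, with $\bw_1,\ldots,\bw_n$ spanning the image of $\mcP$ and corresponding to the nonzero eigenmeasures $\lambda_1,\ldots,\lambda_n$ of $\mcP\mcZ$. Reading off the block form, the dual vectors $\bv_j^\ast$ for $j\le n$ satisfy $(\bv_j^\ast)^t\mcZ = \lambda_j (\bv_j^\ast)^t$; evaluating at $\bZero$ gives $(\bv_j^\ast)^t\mcP = (\bv_j^\ast)^t$, placing each $\bv_j^\ast$ in the $Q$-eigenspace of $C_\mcS^t$ and furnishing a basis of that space.

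The crucial step is to ensure that the $\bv_j^\ast$ are themselves strongly semipositive with nonzero $(\bv_j^\ast)^t\widehat{\Sigma}(\bZero)$, so that after normalization they land in $\mcK$. I would exploit the Hermitian property $\mcZ^t = \overline{\mcZ}$ of the bicorrelation matrix (a consequence of $\widehat{\sigma^{\gamma\delta}_{\alpha\beta}}(\bk) = \widehat{\sigma^{\alpha\beta}_{\gamma\delta}}(-\bk)$ together with reality of these Fourier coefficients): for a real eigenvalue $\lambda_j$, conjugating a strongly semipositive right eigenvector of $\mcZ$ yields a strongly semipositive left eigenvector, and strict positivity of the Perron weights $\bu$ on the ergodic classes forces $(\bv_j^\ast)^t\widehat{\Sigma}(\bZero) > 0$ since $\mathring{\bv_j^\ast}$ must have a nontrivial nonnegative diagonal. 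Setting $\bv_j := \bv_j^\ast/a_j$ with $a_j = (\bv_j^\ast)^t\widehat{\Sigma}(\bZero)$, the identity $\Sigma = \mcZ\widehat{\Sigma}(\bZero)$ yields $\lambda_{\bv_j} = \bv_j^t\Sigma = \lambda_j\,\bv_j^t\widehat{\Sigma}(\bZero) = \lambda_j$, so the map $\phi: \bv \mapsto \lambda_\bv$ hits every ergodic eigenmeasure.

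For the overall structure, any $\bv \in \mcK$ expands as $\bv = \sum_j c_j\bv_j$ in the basis of the image of $\mcP^t$, giving $\lambda_\bv = \sum_j c_j\lambda_j$. Mutual singularity of the $\lambda_j$ (as distinct ergodic $\bq$-shift measures) combined with positivity of $\lambda_\bv$ forces $c_j\ge 0$, while the normalization $\bv^t\widehat{\Sigma}(\bZero) = 1$ gives $\sum_j c_j = 1$. Thus $\phi(\mcK) = \operatorname{conv}\{\lambda_j\}$, and since each $\lambda_j$ is extreme in $\mcM(\bbT^d,\bS_\bq)$ by ergodicity, it is extreme in $\phi(\mcK)$; the reverse Krein-Milman direction for the continuous affine map $\phi$ places each $\lambda_j$ in $\phi(\mcK^\ast)$, giving surjectivity. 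Conversely, for $\bv \in \mcK^\ast$ the decomposition $\bv = \sum c_j\bv_j$ must reduce to a single term, hence $\phi(\bv) = \lambda_j$.

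The main obstacle is the strong-semipositivity claim for the left eigenvectors $\bv_j^\ast$: corollary \ref{positive diagonalization} provides it for right eigenvectors of $\mcP\mcZ$, but transferring it to the dual/left side via the Hermitian property $\mcZ = \mcZ^\ast$ requires a careful argument, essentially amounting to a self-adjoint-style spectral theorem for $\mcZ$ restricted via $\mcP$ to the image of $\mcP$. Coupled with the verification that $(\bv_j^\ast)^t\widehat{\Sigma}(\bZero) \neq 0$ (using strict positivity of the Perron structure on ergodic classes), this step is the technical heart of the proof.
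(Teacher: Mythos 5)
Your proposal follows the same overall skeleton as the paper's argument: (i) use the strongly semipositive diagonalization from proposition \ref{Z lower triangular} / corollary \ref{positive diagonalization}; (ii) for each ergodic eigenmeasure $\lambda_j$ produce a strongly semipositive left eigenvector, rescale it to lie in $\mcK$, and read off $\lambda_{\bv_j}=\lambda_j$ via $\Sigma=\mcZ\,\widehat\Sigma(\bZero)$; (iii) expand a general $\bv\in\mcK$ as a nonnegative combination and invoke convexity of the affine map. The one place you genuinely diverge from the paper is step (iii): you derive $c_j\geq 0$ from mutual singularity of the ergodic $\lambda_j$'s together with positivity of $\lambda_\bv$, whereas the paper gets $c_j=\bv^t\mcP_j\widehat\Sigma(\bZero)\geq 0$ directly from lemma \ref{schur inner product} (Schur product). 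Both routes are legitimate; yours is slightly more measure-theoretic, the paper's slightly more algebraic.

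The gap is exactly where you flag it, and the patch you sketch does not actually close it. You write that the Hermitian property $\mcZ^t=\overline{\mcZ}$ lets you conjugate a strongly semipositive right eigenvector of $\mcZ$ into a strongly semipositive left eigenvector. But corollary \ref{positive diagonalization} supplies strongly semipositive right eigenvectors of $\mcP\mcZ$, not of $\mcZ$, and those two operators have different right eigenvectors in general: if $\mcP\mcZ\,\bw=\lambda\bw$ with $\lambda\neq\bZero$ then $\mcP\bw=\bw$, but that gives $\mcZ\bw=\mcZ\mcP\bw=\mcZ\bw$, which is vacuous, not $\mcZ\bw=\lambda\bw$. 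And one cannot pass the Hermitian symmetry of $\mcZ$ through $\mcP$ because $\mcP$ is a nonorthogonal projection: $(\mcP\mcZ)^\ast=\mcZ\mcP^\ast$, which is not $\mcP\mcZ$. So ``self-adjoint-style spectral theorem for $\mcZ$ restricted via $\mcP$'' cannot be carried out in the form you describe. The route that does work, and that fills the same hole the paper's own first paragraph leaves tacit, is to use self-duality of the strongly semipositive cone under the trace pairing $\<\bv,\bw\>=\operatorname{tr}(\mathring{\bv}^\ast\mathring{\bw})$: since the spectral projection $\mcP_j$ preserves this self-dual cone (corollary \ref{positive diagonalization}) and has real entries, its transpose $\mcP_j^t$ also preserves the cone, and the image of $\mcP_j^t$ is precisely the left $\lambda_j$-eigenspace of $\mcP\mcZ$. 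That furnishes the strongly semipositive left eigenvector, after which your normalization $(\bv_j^\ast)^t\widehat\Sigma(\bZero)>0$ (which you argue correctly from strict positivity of $\bu$ and the nonzero diagonal of a positive semidefinite $\mathring{\bv}_j^\ast$) and the computation $\lambda_{\bv_j}=\lambda_j$ go through as you wrote them.
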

\begin{proof}
	Let $\lambda$ be an eigenmeasure of $\mcP \mcZ$ and $\bv \stpos 0$ with $\bv^t \mcP \mcZ = \lambda \bv^t.$  As $\mcZ = \mcZ \mcP,$
	$$\lambda \bv^t = \bv^t \mcP \mcZ = \bv^t \mcP \mcZ \mcP = \lambda \bv^t \mcP \tbump{8px}{ $\implies$ } \bv^t \mcP = \bv^t$$
	\noindent so that $\bv$ is a left $Q$-eigenvector of $C_\mcS$ and $v_{\alpha \alpha}$ is constant for $\alpha \in \mcA$ by stochasticity of $C_\mcS$ and primitivity on ergodic classes.   As $\bv$ is strongly semipositive, this implies $\bv_{\alpha \alpha}\neq 0$ and thus $\bv \in \mcK$.  Now, if $\bu$ is the vector of initial weights $u_{\gamma} = \mu[\gamma]$,  then
	$$\lambda_{\bv} = \bv^t \Sigma = \bv^t \mcP \Sigma = \bv^t \mcP \mcZ \widehat{\Sigma}(\bZero) = \lambda \bv^t \widehat{\Sigma}(\bZero) = \lambda {\sum}_{\gamma \in \mcA} v_{\gamma \gamma} u_\gamma = \lambda v_{\alpha \alpha} {\sum}_{\gamma \in \mcA} u_\gamma = v_{\alpha \alpha} \lambda$$
	\noindent so that $\lambda = \lambda_\bw$ for some $\bw \in \mcK.$  As $\mcP\mcZ$ is strongly semipositively diagonalizable, we can write $\mcP \mcZ = \sum \lambda_j \mcP_j$ where $\mcP_j$ preserves strong semipositivity as in the proof of corollary \ref{positive diagonalization}.  If $\bv \in \mcK$
	$$\bv^t \Sigma = \bv^t \mcP \mcZ \widehat{\Sigma}(\bZero) = \sum \lambda_j \bv^t \mcP_j \widehat{\Sigma}(\bZero) = \sum c_j \lambda_j$$
	\noindent with $c_j \geq 0$ by lemma \ref{schur inner product}.  Thus the map $\bv \mapsto \lambda_\bv$ takes $\mcK$ onto the positive span of eigenmeasures of $\mcP\mcZ.$  As affine maps preserve convexity, this takes $\mcK^\ast$ onto the eigenmeasures of $\mcP\mcZ.$
\end{proof}
\noindent As theorem \ref{fourier recursion} gives $\Sigma = \mcZ \widehat{\Sigma}(\bZero),$ and theorem \ref{queffelec one} relates $\Sigma$ and $\sigmaX,$ the following is no surprise.  
\begin{cor}
	\label{trace Z generates sigmaX}
	The maximal spectral type of $(X_\mcS,\mu)$ is equivalent to $\Pi(|\mcZ|)$, with $\Pi$ as in (\ref{invariant contracted}). 
\end{cor}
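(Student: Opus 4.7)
The plan is to compare $\Pi(|\mcZ|)$ and $\sigmaX$ by first showing that $|\mcZ|$ is equivalent (same null sets) to the scalar measure $\sum_{\alpha\in\mcA}\sigma_{\alpha\alpha}$, and then applying $\Pi$ to invoke theorem \ref{queffelec one}, which already tells us that $\sigmaX \sim \sum_{\alpha}\Pi(\sigma_{\alpha\alpha}) = \Pi(\sum_\alpha \sigma_{\alpha\alpha})$ by linearity of $\Pi$.

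For the first inclusion $\sum_\alpha\sigma_{\alpha\alpha}\ll|\mcZ|$, I would use the matrix identity $\Sigma = \mcZ\,\widehat{\Sigma}(\bZero)$, which follows from identity (\ref{Z coefficients}) since both sides have identical Fourier coefficients given by theorem \ref{fourier recursion}. Since $\widehat{\Sigma}(\bZero) = \sum_\gamma u_\gamma \be_{\gamma\gamma}$ with $u_\gamma > 0$ by theorems \ref{michel theorem} and \ref{invariant measures}, picking off the $\alpha\alpha$-component gives
$$\sigma_{\alpha\alpha} = \sum_{\gamma\in\mcA} u_\gamma\,\sigma_{\alpha\alpha}^{\gamma\gamma},$$
which immediately yields $\sum_\alpha \sigma_{\alpha\alpha} \ll \sum_{\alpha,\gamma}\sigma_{\alpha\alpha}^{\gamma\gamma} \ll |\mcZ|$ (and in fact equivalence with $\sum_{\alpha,\gamma}\sigma_{\alpha\alpha}^{\gamma\gamma}$ since all $u_\gamma$ are strictly positive).

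For the reverse inclusion $|\mcZ|\ll\sum_\alpha\sigma_{\alpha\alpha}$, I would invoke lemma \ref{Ztilde pos def}: the rearranged matrix $\widetilde{\mcZ} = (\sigma_{\alpha\beta}^{\gamma\delta})_{\alpha\gamma,\beta\delta}$ is a positive definite matrix of measures. For each Borel $A\subset\bbT^d$ the matrix $\widetilde{\mcZ}(A)$ is positive semidefinite, and the usual Cauchy--Schwarz inequality for its entries gives
$$|\sigma_{\alpha\beta}^{\gamma\delta}(B)|^2 \leq \sigma_{\alpha\alpha}^{\gamma\gamma}(B)\cdot\sigma_{\beta\beta}^{\delta\delta}(B)$$
for every Borel $B$. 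Hence any set null for both $\sigma_{\alpha\alpha}^{\gamma\gamma}$ and $\sigma_{\beta\beta}^{\delta\delta}$ is null for all $B$-evaluations of $\sigma_{\alpha\beta}^{\gamma\delta}$, so $|\sigma_{\alpha\beta}^{\gamma\delta}|\ll\sigma_{\alpha\alpha}^{\gamma\gamma}+\sigma_{\beta\beta}^{\delta\delta}$. Summing over $\mcA^2\times\mcA^2$ yields $|\mcZ|\ll\sum_{\alpha,\gamma}\sigma_{\alpha\alpha}^{\gamma\gamma}\sim\sum_\alpha\sigma_{\alpha\alpha}$.

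Combining these gives $|\mcZ|\sim\sum_\alpha\sigma_{\alpha\alpha}$. Applying $\Pi$ (a positive linear combination of iterates of $\pi$, each of which can be realized as a sum of pushforwards by local inverse branches of $\bS_\bq$ and therefore preserves absolute continuity) gives $\Pi(|\mcZ|)\sim\Pi(\sum_\alpha\sigma_{\alpha\alpha})\sim\sigmaX$, the last equivalence being theorem \ref{queffelec one}. The main obstacle is step two: verifying that $\Pi$ actually respects the equivalence of measure types. This amounts to checking that each $\pi^n$, defined via its action on Fourier coefficients, is realized as a concrete bounded linear operator on $\mcM(\bbT^d)$ that sends null sets of the reference measure to null sets of the image measure; once we realize $\pi$ as $\frac{1}{Q}\sum_j (\bS_\bq|_{D_j})^{-1}_\ast$ on fundamental domains of $\bS_\bq$, the preservation of $\ll$ is immediate and iteration plus summation yield the same property for $\Pi$.
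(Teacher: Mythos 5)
Your strategy is essentially the paper's: first show $|\mcZ| \sim \sum_\alpha \sigma_{\alpha\alpha}$ using lemma \ref{Ztilde pos def}, then apply $\Pi$ and invoke theorem \ref{queffelec one}. The detailed mechanism differs slightly. The paper first establishes $\sigma_{\alpha\beta}^{\gamma\gamma} = \sigma_{\alpha\beta}$ (for all $\gamma$) directly via $\mcZ = \mcZ\mcP$ and $\mcP\be_{\gamma\gamma} = \widehat{\Sigma}(\bZero)$, and then runs a Cauchy--Schwarz argument on $\widetilde{\mcZ}$ to get $\sigma_{\alpha\beta}^{\gamma\delta} \ll \sigma_{\alpha\alpha}$ directly. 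You instead use the identity $\Sigma = \mcZ\widehat{\Sigma}(\bZero)$ (i.e.\ $\sigma_{\alpha\alpha} = \sum_\gamma u_\gamma \sigma_{\alpha\alpha}^{\gamma\gamma}$) together with the entrywise Cauchy--Schwarz bound $|\sigma_{\alpha\beta}^{\gamma\delta}|^2 \leq \sigma_{\alpha\alpha}^{\gamma\gamma}\sigma_{\beta\beta}^{\delta\delta}$, which gives $|\mcZ| \ll \sum_{\alpha,\gamma}\sigma_{\alpha\alpha}^{\gamma\gamma}$. Both routes work, and yours is a bit cleaner at the Cauchy--Schwarz step. Your explicit attention to the fact that $\Pi$ respects equivalence of types, realizing $\pi$ via the inverse branches of $\bS_\bq$, is a genuine improvement in rigor over the paper, which silently treats this as obvious.

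There is one real gap. You assert that $\sum_{\alpha,\gamma}\sigma_{\alpha\alpha}^{\gamma\gamma} \sim \sum_\alpha\sigma_{\alpha\alpha}$ "since all $u_\gamma$ are strictly positive," but this is false when $\mcS$ is not primitive: the vector $\bu$ is a strictly positive combination of the Perron vectors of the primitive components, and those vanish on the transient part, so $u_\gamma = 0$ for $\gamma \in \mcT$. Consequently $\sum_\gamma u_\gamma\sigma_{\alpha\alpha}^{\gamma\gamma}$ only sees the non-transient $\gamma$, and you have not shown $\sigma_{\alpha\alpha}^{\gamma\gamma} \ll \sigma_{\alpha\alpha}$ for transient $\gamma$. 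To close the gap, use (\ref{P invariance of Z}): since $\mcZ = \mcZ\mcP$ and $\mcP\be_{\gamma\gamma} = \sum_{\gamma'}(\bP\be_\gamma)_{\gamma'}\be_{\gamma'\gamma'}$ with $\bP$ supported on non-transient letters, each $\sigma_{\alpha\alpha}^{\gamma\gamma}$ (for any $\gamma$) is a nonnegative combination of the $\sigma_{\alpha\alpha}^{\gamma'\gamma'}$ with $\gamma' \notin \mcT$, and the latter are $\ll \sigma_{\alpha\alpha}$ since $u_{\gamma'} > 0$. With that observation the rest of your argument goes through.
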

\begin{proof}
	First, we note that for each $\alpha, \beta,\gamma \in \mathcal{A}$, $\sigma_{\alpha \beta}^{\gamma \gamma} = \sigma_{\alpha \beta}$, which can be seen immediately by comparing the respective Fourier coefficients and using the identities $\mcZ = \mcZ \mcP$ and $\mcP \be_{\gamma \gamma} = \widehat{\Sigma}(\bZero)$ for $\gamma \in \mcA.$  For $x, y \in \bbC$ not both $0$, let $\bv = \bv_{\alpha \beta}^{\gamma \delta} \in \bbC^{s^2}$ be the vector with components	
$${\small v_{\alpha' \beta'} = \scalebox{0.7}{$\begin{cases} \vspace{20px}\end{cases} \hspace{-9px} \begin{matrix} x & \text{ if  } \alpha' \beta' = \gamma \alpha \\ y & \text{ if  } \alpha' \beta' = \delta \beta \\ 0 & otherwise. \end{matrix}$}}$$
\noindent Then, as $\widetilde{\mcZ}$ is positive definite, we have for all measurable $A \subset \bbT^d$,
\begin{align*}
		\scalebox{0.8}{$\bv^\ast \widetilde{\mcZ}(A)$} \bv & \scalebox{0.8}{$= |x|^2 \sigma_{\alpha \alpha}^{\gamma \gamma}(A) + |y|^2 \sigma_{\beta \beta}^{\delta \delta}(A) + x \overline{y} \sigma_{\beta \alpha}^{\delta \gamma}(A) + y \overline{x} \sigma_{\alpha \beta}^{\gamma \delta}(A) \geq 0$}\\
			&= \scalebox{0.85}{$|x|^2 \sigma_{\alpha\alpha}(A) + |y|^2 \sigma_{\beta\beta}(A) + x \overline{y} \sigma_{\beta \alpha}^{\delta \gamma}(A) + y \overline{x} \sigma_{\alpha \beta}^{\gamma \delta}(A) \geq 0$}
\end{align*}
\noindent Let $A$ be such that $\sigma_{\alpha\alpha} (A) = 0,$ and let $y = -1$ in the above, so we obtain:
$$\sigma_{\beta\beta}(A) - x \sigma_{\beta \alpha}^{\delta \gamma} (A) - \overline{x} \sigma_{\alpha \beta}^{\gamma \delta}(A) \geq 0$$	
\noindent Letting $x \to \pm \infty$ along the real and imaginary axes we obtain $\sigma_{\beta\alpha}^{\delta \gamma}(A) = \sigma_{\alpha \beta}^{\gamma \delta}(A) = 0,$ so $\sigma_{\alpha \beta}^{\gamma \delta} \ll \sigma_{\alpha \alpha}$.  Similarly, fixing $x = -1,$ and letting $y \to \pm \infty$ along each axis shows the same for $\sigma_{\beta \beta} .$  Thus, 
$$\textstyle |\mcZ| = \sum |\sigma_{\alpha \beta}^{\gamma \delta}| \sim \sum_{\alpha = \beta} |\sigma_{\alpha \beta}^{\gamma \delta}| + \sum_{\alpha \neq \beta} |\sigma_{\alpha \beta}^{\gamma \delta}| \sim \sum |\sigma_{\alpha \alpha}| = \sum \sigma_{\alpha \alpha}$$
\noindent as the $\sigma_{\alpha \alpha}$ are positive measures, and the result follows as a corollary of theorem \ref{queffelec one}.
\end{proof}

With the above results established, we can prove Queff\'elec's theorem \ref{queffelec two}.  
\begin{theorem*}
	If $\mcS$ is an aperiodic $\bq$-substitution $\mcS$ and $\lambda_\bv := \bv^t \Sigma$, then the spectrum of $\mcS$ is
	$$\textstyle{\sigmaX \sim  \bomega_\bq \ast \sum_{\bw \in \mcK^\ast} \lambda_\bw} $$
	\noindent and the measures $\lambda_\bw$ for $\bw \in \mcK^\ast$ are ergodic $\bq$-shift invariant probability measures on $\bbT^d$.
\end{theorem*}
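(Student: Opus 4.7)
The plan is to assemble the structural results developed in the appendix in the right order. The first step is to invoke Corollary \ref{trace Z generates sigmaX}, which gives $\sigmaX \sim \Pi(|\mcZ|)$ where $\mcZ$ is the bicorrelation matrix-measure of Proposition \ref{coefficients exist}. This replaces the abstract maximal spectral type by a concrete measure on $\bbT^d$ carrying the matrix-algebraic structure inherited from $C_\mcS$, and reduces the theorem to a computation of $\Pi(|\mcZ|)$ up to measure equivalence.

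Second, I would pass to a strongly semipositive basis via Proposition \ref{Z lower triangular}, in which $\mcZ$ is similar to a block matrix with diagonal block $\Lambda$ and subdiagonal block $\mcW$ satisfying $|\mcW| \ll \bomega_\bq \ast |\Lambda|$. The diagonal entries of $\Lambda$ are the eigenmeasures of $\mcP\mcZ$, which by Proposition \ref{spectrum of PZ is given by K} are precisely the measures $\lambda_\bw$ as $\bw$ runs over $\mcK^\ast$; by Corollary \ref{positive diagonalization} and the normalization $\bw^t \widehat{\Sigma}(\bZero) = 1$ built into $\mcK$, these are $\bq$-shift ergodic probability measures, which already establishes the second assertion of the theorem. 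Since total variation is a similarity invariant (Proposition \ref{nullity invariance}),
$$|\mcZ| \sim |\Lambda| + |\mcW| \sim \sum_{\bw \in \mcK^\ast} \lambda_\bw + |\mcW|.$$

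Third, I would apply $\Pi$ and show that the $|\mcW|$ contribution is absorbed. Each $\lambda_\bw$ is $\bq$-shift invariant by Proposition \ref{K invariance}, so Lemma \ref{invariant contracted map} gives $\Pi(\lambda_\bw) = \bomega_\bq \ast \lambda_\bw$. To handle $\mcW$, I would realize $\pi$ explicitly as $\pi\mu = \nu_\bq \ast (\mu \circ \phi^{-1})$ for a Borel section $\phi$ of the $\bq$-shift $\bS_\bq$ (a direct Fourier computation confirms this formula agrees with definition (\ref{invariant contracted})); this representation shows $\pi$ preserves positivity and absolute continuity of positive measures, and hence so does $\Pi$. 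Combined with the domination $|\mcW| \ll \bomega_\bq \ast |\Lambda|$, the $\bq$-shift invariance of the dominating measure, and the equivalence $\bomega_\bq \ast \bomega_\bq \sim \bomega_\bq$ (both have the same support on the group of $\bq$-adic torsion points of $\bbT^d$), this yields $\Pi(|\mcW|) \ll \bomega_\bq \ast |\Lambda|$, and therefore
$$\sigmaX \sim \Pi(|\mcZ|) \sim \Pi(|\Lambda|) \sim \bomega_\bq \ast \sum_{\bw \in \mcK^\ast} \lambda_\bw.$$

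The main obstacle is this last step: verifying that $\pi$ preserves absolute continuity and tracking the domination through $\Pi$. The precise formulation of Proposition \ref{Z lower triangular} — with the explicit bound $|\mcW| \ll \bomega_\bq \ast |\Lambda|$ rather than merely $|\mcW| \ll |\Lambda|$ — is exactly what is needed here, because the off-diagonal entries of $\mcZ$ generically introduce frequencies beyond those of $\Lambda$, but only along the $\bq$-adic translates that convolution with $\bomega_\bq$ already supplies. Once this absorption is in place, the desired equivalence follows immediately, and the pure-type classification of each $\bomega_\bq \ast \lambda_\bw$ is then simply Corollary \ref{pure types}.
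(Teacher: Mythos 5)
Your proposal is correct and follows the same route as the paper's proof: invoking Corollary \ref{trace Z generates sigmaX} to get $\sigmaX \sim \Pi(|\mcZ|)$, then Propositions \ref{nullity invariance} and \ref{Z lower triangular} to rewrite $|\mcZ| \sim |\Lambda| + |\mcW|$ with $|\mcW| \ll \bomega_\bq\ast|\Lambda|$, Lemma \ref{invariant contracted map} for $\Pi(\lambda_i) = \bomega_\bq\ast\lambda_i$, and Proposition \ref{spectrum of PZ is given by K} to identify the eigenmeasures with $\lambda_\bw$, $\bw\in\mcK^\ast$; the paper leaves the absorption of $\Pi(|\mcW|)$ implicit, and your push-forward realization $\pi\mu = \nu_\bq\ast(\mu\circ\phi^{-1})$ fills that step in cleanly. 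One small imprecision: $\bomega_\bq\ast|\Lambda|$ is not itself $\bq$-shift invariant as a measure (neither is $\bomega_\bq$, since $\widehat{\bomega_\bq}(\ba\bq) = \tfrac12 + \tfrac12\widehat{\bomega_\bq}(\ba)$); what the absorption actually uses --- and what the paper's proof of Proposition \ref{Z lower triangular} records --- is that the null sets of $\bomega_\bq\ast|\Lambda|$ are $\bS_\bq$-invariant, combined with $\Pi(\bomega_\bq\ast\rho) = \Pi(\bomega_\bq)\ast\rho$ for $\bq$-shift invariant $\rho$ and the idempotence $\bomega_\bq\ast\bomega_\bq\sim\bomega_\bq$ that you already invoke.
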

\begin{proof}
	Let $S \in \bM_{\mcA^2}(\bbC)$ be the similarity matrix of proposition \ref{Z lower triangular}, and $\lambda_i$ denote eigenmeasures of $\mcP\mcZ.$   Combining propositions \ref{nullity invariance} and \ref{Z lower triangular} with corollary \ref{trace Z generates sigmaX} and lemma \ref{invariant contracted map} gives equivalences
	$$\textstyle \sigmaX \sim \Pi(|\mcZ|) \sim \Pi( |S \mcZ S^{-1}| ) =  \Pi(|\mcP \mcZ|) = \Pi \left(|\mcW| + \sum_i \lambda_i \right) \sim \sum_i \Pi(\lambda_i) \sim \sum \bomega_\bq \ast \lambda_i$$
	\noindent Using proposition \ref{spectrum of PZ is given by K} and positivity of measures in $\lambda(\mcK^\ast)$ gives $\bomega_\bq \ast \sum_i \lambda_i \sim \bomega_\bq \ast \sum_{\bw \in \mcK^\ast} \lambda_\bw$.  
\end{proof}

\centerline{\Large{\textbf{Acknowledgement}}}

I would like to thank my advisor Boris Solomyak, Michael Baake, and an anonymous referee for their many valuable comments and insights that went into the preparation of this work.

\vspace{0.25in}

\centerline{\Large{\textbf{Notation Quick Reference}}}

{\small
\begin{multicols}{2}
{ {\large\underline{Substitutions}} \hfill \S \ref{sds section} \\ \vspace{0.07in} the alphabet $\alpha, \beta, \gamma, \delta \in \mcA$, and blocks $\omega, \eta \in \mcA^+$  \\ \vspace{0.07in} $T$ is the shift, and $\AZd$ the full $\Zd$-shift on $\mcA$ \\ \vspace{0.07in} $\mcL_\mcS$ is the language, $X_\mcS$ the substitution subshift \\ \vspace{0.07in} $[\alpha] = \{ \bA \in X_\mcS : \alpha = \bA(0) \}$ initial cylinders of $X_\mcS$ \\ \vspace{0.07in} $[\omega] = \{ \bA \in X_\mcS : \omega \leq \bA \}$ cylinders of $X_\mcS$ \\ \vspace{0.07in} $\mu \in \mcM(X_\mcS,T)$ an invariant probability measure \\ \vspace{0.07in} $\mcE, \mcT$: ergodic and transient classes of $\mcS$ \hfill (prop \ref{PRF})  \\ \vspace{0.07in} $\mcS^{-n}(\omega)$: the $n$-th desubstitutes of $\omega$ \hfill (def \ref{inverse substitution}) \\ \vspace{0.11in} }

{ {\large \underline{$\bq$-adic Arithmetic}} \hfill \S \ref{arithmetic} \\ \vspace{0.07in}   Write $\bk \in \Zd$ as $\qr{\bk}_n + \qq{\bk}_n \, \bq^n$ for $n \in \bbN$ \\ \vspace{0.07in}  $\qq{\bk}_n \in \Zd$ is the quotient of $\bk$ mod $\bq^n$ \\ \vspace{0.07in}  $\qr{\bk}_n \in [\bZero,\bq^n)$ is the remainder of $\bk$ mod $\bq^n$ \\ \vspace{0.07in}  $\bk = \bk_0 + \bk_1 \bq + \cdots + \bk_{n-1} \bq^{n-1} + \qq{\bk}_n \bq^n$ \\ \vspace{0.07in}  $\bk_n$ the $n$-th $\bq$-adic digit of $\bk$ \\ \vspace{0.07in}  $\mfp(\bk)$ the power of $\bk$ base $\bq$ \\ \vspace{0.07in}  Carry set $\Delta_n(\bk) = \{ \bj \in [\bZero,\bq^n) \, : \, \qq{\bj+\bk}_n \neq \bZero \}$ \\ \vspace{0.11in} }

{ {\large \underline{Configurations}} \hfill \S \ref{configurations} \\ \vspace{0.07in}  $\mcR_\bj : \mcA \to \mcA$, instructions of $\mcS$, $\mcR_\bj: \alpha \mapsto (\mcS \alpha)(\bj)$ \\ \vspace{0.07in}  $\mcR_\bj^{(n)}$ are the generalized instructions (of $\mcS^n$) \\ \vspace{0.07in}  $\mcR_\bj^{(n)} = \mcR_{\bj_0} \mcR_{\bj_1} \cdots \mcR_{\bj_{n-1}}$ for $\bj \in \Zd$ and $n > 0$ \\ \vspace{0.07in}  $\mcS \otimes \mcS$ is the bisubstitution, on the alphabet $\mcA^2$ \\ \vspace{0.11in} }

{ {\large\underline{Matrix Representation}} \hfill \S \ref{configurations} \\ \vspace{0.07in}  $\bM_\mcA(\bbC)$ the $\mcA \times \mcA$ indexed complex matrices \\ \vspace{0.07in}  $\otimes$ Kronecker product $(A \otimes B)_{\alpha \beta, \gamma \delta} = A_{\alpha \gamma} B_{\beta \delta}$ \\ \vspace{0.07in} $(\bA \otimes \bB) (\bC \otimes \bD) = (\bA \bC) \otimes (\bB \bD)$ \\ \vspace{0.07in}  Represent $\mcR\tbump{-2px}{$:$}\mcA \tbump{-2px}{$\to$} \mcA$ in $\bM_\mcA(\bbC)$ by $\mcR_{\alpha,\beta} \tbump{-2px}{$=$} \big\{ \scalebox{0.9}{${\tiny \begin{matrix} 1 \text{ if } \alpha = \mcR(\beta) \\   0 \text{ if } \alpha \neq \mcR(\beta) \end{matrix}}$}$ \\ \vspace{0.07in}  $M_\mcS = \sum \mcR_\bj$ the substitution matrix in $\bM_\mcA(\bbC)$ \\ \vspace{0.07in}  $\bu$ is a positive combination of Perron vectors of $M_\mcS$ \\ \vspace{0.07in}  $C_\mcS = \sum \mcR_\bj \otimes \mcR_\bj$ the coincidence matrix in $\bM_{\mcA^2}(\bbC)$ \\ }

\vspace{0.09in}

{ {\large \underline{Spectral Theory}}  \hfill \S \ref{spectral theory} \\ \vspace{0.07in} $\bnu_\mcL$ is Haar measure for $\mcL$-th roots of unity \hfill (\ref{lattice measure}) \\ \vspace{0.07in}  $\bomega_\bq$ is the $\bq$-adic support measure $\sum 2^{-n} \bnu_{\bq^n}$ \hfill \S \ref{height} \\ \vspace{0.07in} $\bz^\bq := (z_1^{q_1}, \ldots, z_d^{q_d})$ for $\bq \in \Zd$, $\bz \in \bbT^d$ \\ \vspace{0.07in}  $\bS_\bq$ the $\bq$-shift $\bz \mapsto \bz^\bq$ on $\bbT^d$ \\ \vspace{0.07in}  $\pi$ the invariant contracted map and $\Pi = \sum 2^{-n} \pi^n$ \\ \vspace{0.07in}   $\sigmaX$ the maximal spectral type of $\mcS$ \hfill \S \ref{spectral theory section} \\ \vspace{0.07in}  $\Sigma := (\sigma_{\alpha \beta})_{\alpha \beta \in \mcA^2}$ the correlation vector \hfill \S \ref{sigma section} \\ \vspace{0.07in}  $\widehat{\Sigma}(\bZero) = \sum_{\alpha \in \mcA} u_\alpha \be_{\alpha \alpha}$ \\ \vspace{0.07in}  $\widehat{\Sigma}(\bk) = \frac{1}{Q^n} {\sum}_{\bj \in [\bZero,\bq^n)} \mcR_\bj^{(n)} \otimes \mcR_{\bj+\bk}^{(n)} \widehat{\Sigma}(\qq{\bj+\bk}_n)$  \\ \vspace{0.07in}  For $\bv \in \bbC^{\mcA^2}$, write $\lambda_\bv := \bv^t \Sigma \ll \sigmaX$ \\ \vspace{0.07in}  For $\bv \in \bbC^{\mcA^2}$ its associated matrix $\mathring{\bv} \in \bM_\mcA(\bbC)$  \hfill \S \ref{spectral hull} \\ \vspace{0.07in}  $\bv \stpos 0$ is strongly positive if $\mathring{\bv}$ is positive semidefinite \\ \vspace{0.07in}  $\mcK^\ast$ are the extreme points of the spectral hull $\mcK(\mcS)$ \\ \vspace{0.07in}  $\mcK(\mcS) = \{ \bv \in \bbC^{\mcA^2} \hspace{-2px}: C_\mcS^t \bv = Q \bv, \, \bv \stpos 0, \, \bv^t \widehat{\Sigma}(\bZero) = 1 \}$ \\ \vspace{0.07in}}

\end{multicols}

}
\newpage

\singlespacing

\begin{center} {\Large References} \end{center}

{\footnotesize
\noindent \begin{enumerate}
	\item \label{BGG}  M. Baake, F. G\"ahler and U. Grimm, Examples of substitution systems and their factors \textit{J. Int.Seq.} \textbf{16} (2013) art. 13.2.14 (18 pp)
	\item \label{baake and grimm} M. Baake and U. Grimm, Squirals and beyond: Substitution tilings with singular continuous spectrum \textit{Ergodic Th. and Dynam. Syst.} \textbf{34} (2014) 1077-1102
	\item \label{TAO} M. Baake, and U. Grimm, Aperiodic Order. Vol 1. A Mathematical Invitation. \textit{Enc. of Math. and its Appl.}, \textbf{149}. Cambridge: Cambridge University Press, 2013
	\item \label{BLV} M. Baake, D. Lenz, and A. Van Enter, Dynamical Versus Diffraction Spectrum for Structures with Finite Local Complexity,  arXiv:1307.7518
	\item \label{bartlett} A. Bartlett (2015), Spectral Theory of $\Zd$ Substitutions (doctoral dissertation).  Seattle, University of Washington.
	\item \label{BKMS}  S. Bezuglyi, J. Kwiatkowski, K. Medynets and B. Solomyak, Invariant measures on stationary Bratteli diagrams. \textit{Ergodic Th. and Dynam. Syst.}, \textbf{30} (2010), pp 973-1007.
	\item \label{cortez and solomyak} M. Cortez, and B. Solomyak. Invariant Measures for Non-Primitive Tiling Substitutions. \textit{J. d'Analyse Math.} \textbf{115.1} (2011): 293-342.
	\item \label{dekking} F. Dekking, The Spectrum of a Dynamical System Arising from Substitutions of Constant Length.  \textit{Zeit. Wahr. Verw.  Gebiete} \textbf{41} (1978), 221-239.
	\item \label{dworkin} S. Dworkin, Spectral theory and X-ray diffraction, \textit{J. Math. Phys.} \textbf{34} (1993) 2965-2967.
	\item \label{frank zd} N.P. Frank, Multidimensional constant-length substitution sequences, \textit{Topol. Appl.}, \textbf{152} (1Ð2) (2005), pp. 44-69
	\item \label{frank lebesgue} N.P. Frank, Substitution sequences in $\Zd$ with a nonsimple Lebesgue component in the spectrum, \textit{Ergodic Th. and Dynam. Syst.} \textbf{23} (2)(2003) 519-532.
	\item \label{gantmacher} F.R. Gantmacher, Applications of the Theory of Matrices, New York: Dover Publications, Inc., 2005.
	\item \label{HJ} R. Horn and C. Johnson, Topics in Matrix Analysis. Cambridge: Cambridge University Press, 1991.
	\item \label{LMS} J.-Y. Lee, R.V. Moody and B. Solomyak, Pure point dynamical and diffraction spectra, \textit{Ann. Henri Poincar\'e}, \textbf{3} (2002) 1003-1018
	\item \label{michel} P. Michel, Stricte ergodicit\'e d'ensembles minimaux de substitutions, \textit{C.R. Acad. Sc. Paris} \textbf{278} (1974), 811-813.
	\item \label{mosse} B. Moss\'e, Reconnaissabilit\'e des substitutions et complexit\'e des suites automatiques, \textit{Bull. Soc. Math. France} \textbf{124} (1996), 329-346.
	\item \label{mozes} S. Mozes, Tilings, substitution systems and dynamical systems generated by them,  \textit{J. d'Analyse Math.} \textbf{53}, (1989), 139-186.
	\item \label{pansiot} J. Pansiot. Decidability of periodicity for infinite words. \textit{RAIRO Inform. Th\'eor. App.} \textbf{20} (1986), 43-46.
	\item \label{queffelec} M. Queff\'elec, Substitution Dynamical Systems, Spectral Analysis. 2nd Ed. Berlin: Springer-Verlag, 2010.
	\item \label{radin} C. Radin, Miles of Tiles, AMS, Providence, RI, 1999
	\item \label{rudin} W. Rudin, Fourier Analysis on Groups, Wiley, New York 1962
	\item \label{robinson} E.A. Robinson, On the table and the chair, \textit{Indag. Math.} \textbf{10} (4) (1999) 581-599.
	\item \label{solomyak tiling} B. Solomyak, Dynamics of self-similar tilings. \textit{Ergodic Th. Dynam. Syst.}, \textbf{17} (1997), pp. 695-738
	\item \label{solomyak aperiodicity} B. Solomyak, Nonperiodicity Implies Unique Composition for Self-Similar Translationally Finite Tilings. \textit{Disc. and Comp. Geom.}, \textbf{20.2} (1998): 265-279.
	\item \label{sreider} Y.A. \v{S}reider, The Structure of Maximal Ideals in Rings of Measures with Convolution, \textit{Mat. Sbornik N S} 27 (69), 297-318 (1950), Am. Math. Soc. Translation no. 81, Providence, 1953
	\item \label{walters} P. Walters, An Introduction to Ergodic Theory. New York: Springer-Verlag, 1982.
\end{enumerate}
}

\end{document}